\numberwithin{equation}{section}
\let\originalleft\left
\let\originalright\right
\renewcommand{\left}{\mathopen{}\mathclose\bgroup\originalleft}
\renewcommand{\right}{\aftergroup\egroup\originalright}
\newlength{\bibitemsep}
\newlength{\bibparskip}\setlength{\bibparskip}{0pt}
\let\oldthebibliography\thebibliography
\renewcommand\thebibliography[1]{\oldthebibliography{#1}
  \setlength{\parskip}{\bibitemsep}
  \setlength{\itemsep}{\bibparskip}}
\DeclareMathOperator{\im}{Im}
\DeclareMathOperator{\Tr}{Tr}
\DeclareMathOperator{\tr}{tr}
\DeclareMathOperator{\Var}{Var}
\DeclareMathOperator{\Cov}{Cov}
\DeclareMathOperator{\OO}{O}
\DeclareMathOperator{\oo}{o}
\DeclareMathOperator{\Id}{Id}
\DeclareMathOperator{\supp}{supp}
\DeclareMathOperator{\Spec}{Spec}
\DeclareMathOperator{\rank}{rank}
\newcommand{\mc}[1]{\mathcal{#1}}
\newcommand{\mf}[1]{\mathfrak{#1}}
\newcommand{\ms}[1]{\mathscr{#1}}
\newcommand{\rd}{{\rm d}}
\newcommand{\ii}{\mathrm{i}}
\renewcommand{\epsilon}{\varepsilon}
\newcommand{\e}{{\varepsilon}}
\renewcommand{\leq}{\leqslant}
\renewcommand{\geq}{\geqslant}
\renewcommand{\P}{\mathbb{P}}
\newcommand{\E}{\mathbb{E}}
\newcommand{\R}{\mathbb{R}}
\newcommand{\C}{\mathbb{C}}
\newcommand{\N}{\mathbb{N}}
\newcommand{\1}{\mathbbm{1}}
\newcommand{\abs}[1]{\left\lvert #1 \right\rvert}
\newcommand{\vertiii}[1]{{\left\vert\kern-0.25ex\left\vert\kern-0.25ex\left\vert #1 
    \right\vert\kern-0.25ex\right\vert\kern-0.25ex\right\vert}}
\newcommand{\ip}[1]{\left\langle #1 \right\rangle}
\newcommand{\diff}{\mathop{}\!\mathrm{d}}
\theoremstyle{plain} 
\newtheorem{thm}{Theorem}[section]
\newtheorem{lem}[thm]{Lemma}
\newtheorem{cor}[thm]{Corollary}
\newtheorem{prop}[thm]{Proposition}
\newtheorem*{theorem*}{Theorem}
\newtheorem{rem}[thm]{Remark}
\theoremstyle{definition}
\newtheorem{assump}{Assumption}
\newenvironment{subcorollary}[1]{%
  \def\subcorollarycounter{#1}%
  \refstepcounter{#1}%
  \protected@edef\theparentnumber{\csname the#1\endcsname}%
  \setcounter{parentnumber}{\value{#1}}%
  \setcounter{#1}{0}%
  \expandafter\def\csname the#1\endcsname{\theparentnumber.\Alph{#1}}%
  \expandafter\def\csname theH#1\endcsname{thm.\theparentnumber.\Alph{#1}}%
  \unskip\ignorespaces
}{%
  \setcounter{\subcorollarycounter}{\value{parentnumber}}%
  \ignorespacesafterend
}
\newcounter{parentnumber}
\renewcommand{\subsection}{\@startsection
{subsection}
{2}
{0mm}
{-\baselineskip}
{0 \baselineskip}
{\normalfont\bf\itshape}} 
\renewcommand{\subsubsection}{\@startsection
{subsubsection}
{3}
{0mm}
{-\baselineskip}
{0 \baselineskip}
{\normalfont\bf\itshape}} 
\def\author#1{\par
    {\centering{\authorfont#1}\par\vspace*{0.05in}}
}
\def\titlefont{\fontsize{13}{15}\bfseries\boldmath\selectfont\centering{}}
\def\authorfont{\fontsize{13}{15}}
\let\affiliationfont\rhfont
\def\address#1{\par
    {\centering{\affiliationfont#1\par}}\par\vspace*{11pt}
}
\def\title#1{
    \thispagestyle{plain}
    \vspace*{-14pt}
    \vskip 79pt
    {\centering{\titlefont #1\par}}%
    \vskip 1em
}
\newcommand{\setword}[2]{%
  \phantomsection
  #1\def\@currentlabel{\unexpanded{#1}}\label{#2}%
}
\renewcommand{\section}{\@startsection
{section}
{1}
{0mm}
{-2\baselineskip}
{1\baselineskip}
{\normalfont\large\scshape\centering}} 
\begin{document}

~\vspace{-1.4cm}

\title{Exponential Growth of Random Determinants Beyond Invariance}

\vspace{1cm}
\noindent
\begin{minipage}[c]{0.33\textwidth}
\author{G\'{e}rard Ben Arous}
\address{Courant Institute\\
    New York University\\
    E-mail: benarous@cims.nyu.edu}
\end{minipage}
\begin{minipage}[c]{0.33\textwidth}
\author{Paul Bourgade}
\address{Courant Institute\\
    New York University\\
    E-mail: bourgade@cims.nyu.edu}
\end{minipage}
\begin{minipage}[c]{0.33\textwidth}
\author{Benjamin M\textsuperscript{c}Kenna}
\address{Courant Institute\\
    New York University\\
    E-mail: mckenna@cims.nyu.edu}
\end{minipage}

\begin{abstract}
We give simple criteria to identify the exponential order of magnitude of the absolute value of the determinant for wide classes of random matrix models, not requiring the assumption of invariance. These include Gaussian matrices with covariance profiles, Wigner matrices and covariance matrices with subexponential tails, Erd{\H o}s-R\'enyi and $d$-regular graphs for any polynomial sparsity parameter, and non-mean-field random matrix models, such as random band matrices for any polynomial bandwidth. 
The proof builds on recent tools, including the theory of the Matrix Dyson Equation as  developed in \cite{AjaErdKru2019}. 

We use these asymptotics as an important input to identify the complexity of classes of Gaussian random landscapes in our companion papers \cite{BenBouMcK2021II, McK2021}.
\end{abstract}

{
	\hypersetup{linkcolor=black}
	\tableofcontents
}


\vspace{-0.5cm}

\section{Introduction}


\subsection{Overview.}\
In this paper, our goal is to study the expected absolute values of the determinants of general $N \times N$ real symmetric random matrices $H_N$, specifically at exponential scale in the large-$N$ limit:
\begin{equation}
\label{eqn:exponentialdetcon}
	\lim_{N \to \infty} \frac{1}{N} \log \E[\abs{\det(H_N)}].
\end{equation}
We identify two sets of simple criteria that lead to asymptotics of this type (Theorems \ref{thm:convex_functional} and \ref{thm:concentrated_input}), and apply them to a wide variety of matrix models.

Initiated in the 1930s, and developed early on by Tur{\'a}n, Fortet, Tukey, Nyquist, Rice, Riordan, Pr{\'e}kopa, and others, the study of random determinants has focused on three distinct questions: the singularity probability (that the determinant of a discrete random matrix vanishes), Gaussian fluctuations, and asymptotics of the type \eqref{eqn:exponentialdetcon}. We will describe this history below in greater detail. The third direction is useful for the topological ``landscape complexity'' program, which studies the geometry of high-dimensional random functions via the Kac-Rice formula, and which motivates our present work.

Most studies in this direction have focused on the invariant Gaussian ensembles. We study random determinants in contexts where the distribution of the matrix $H_N$ is not necessarily invariant by orthogonal conjugacy,  evaluating \eqref{eqn:exponentialdetcon} for matrix models including Gaussian matrices with variance profiles, large zero blocks, or even correlations; Wigner matrices and sample covariance matrices whose entries have subexponential tails; Erd\H{o}s-R{\'e}nyi graphs with parameter $p \geq N^\epsilon/N$; uniform $d$-regular graphs with parameter $N^\epsilon \leq d \leq N^{2/3-\epsilon}$; band matrices with any  bandwidth $W \geq N^\epsilon$; and the classical free-convolution model $A + OBO^T$ with $O$ uniform on the orthogonal group. For example, denoting $\rho_{\rm sc}$ the semicircle density on $[-2,2]$,  for any $E$ we prove that
\[
	\lim_{N \to \infty} \frac{1}{N} \log \E[\abs{\det(W_N - E)}] = \int \log\abs{\lambda - E} \rho_{\text{sc}}(\lambda) \diff \lambda,
\]
whenever $W_N$ is a Wigner matrix (Corollary \ref{cor:Wigner}) or a random band matrix (Corollary \ref{cor:BM}), under the above decay and bandwidth assumptions.

In the companion papers \cite{BenBouMcK2021II, McK2021}, we use these results to study the landscape complexity of non-invariant random functions. There, we 
prove formulas of Fyodorov and Le Doussal \cite{FyoLeD2020} on the classical ``elastic manifold'' from statistical physics, which models a point configuration with local self-interactions in a disordered environment. We also find a new phase transition, with universal near-critical behavior, for a certain anisotropic signal-plus-noise model. 

In fact, for these geometric applications we need to understand asymptotics like \eqref{eqn:exponentialdetcon} when the matrix $H_N$ has long-range correlations, for example when all the diagonal entries are correlated with each other. In the last section of this paper, we show how to give exact variational formulas for asymptotics of this type, based on the (simpler) formulas for matrices with short-range correlations.

Theorem \ref{thm:convex_functional} and \ref{thm:concentrated_input} below prove that we can obtain the asymptotics \ref{eqn:exponentialdetcon} under three general conditions which do not use invariance, stated informally as follows.
\begin{enumerate}[label=(\arabic*)]
\item We can discard the contribution of extremely large and small eigenvalues (at scales $e^{N^\epsilon}$ and $e^{-N^\epsilon}$).
\item Some form of concentration of the empirical spectral measure $\hat{\mu}_{H_N}$ about its mean $\E[\hat{\mu}_{H_N}]$ holds.
\item There exists a deterministic sequence $(\mu_N)_{N=1}^\infty$ of probability measures, sufficiently regular, that are mildly good approximations for the mean spectral measure $\E[\hat{\mu}_{H_N}]$.
\end{enumerate}
Overall, our proof strategy is to write the determinant as an almost-continuous test function integrated against $\hat{\mu}_{H_N}$, regularize the logarithm using (1), prove concentration of this test statistic about its mean using (2), and relate this mean to something more recognizable using (3). Checking condition (1) is typically model-specific, but conditions (2) and (3) can be discussed in general.

To prove condition (2) on concentration of $\hat{\mu}_{H_N}$, we identify two distinct criteria, corresponding to our general theorems:
\begin{itemize}
\item[--] Either (the \emph{convexity-preserving functional} case, Theorem \ref{thm:convex_functional}) $H_N$ is built in a convexity-preserving and Lipschitz way from arbitrary independent random variables,
\item[--] or (the \emph{concentrated input} case, Theorem \ref{thm:concentrated_input}) linear statistics of $H_N$ are already known to concentrate. This is meant to be applied if, e.g., $H_N$ satisfies log-Sobolev, or Gromov-Milman concentration on compact groups.
\end{itemize}

To prove condition (3) regarding convergence of  $\E[\hat{\mu}_{H_N}]$, in the case of classical random matrices the approximating sequence  $(\mu_N)_{N=1}^\infty$ is well-known (and in fact constant): For example, one should choose the semicircle law for Wigner matrices, or the Mar{\v c}enko-Pastur law for sample covariance models. But the good choice of $\mu_N$ for non-invariant Gaussian ensembles, which are the most important matrices for applications to complexity, has only been understood recently, a consequence of the theory of the Matrix Dyson Equation (MDE) as developed in \cite{AjaErdKru2019, AltErdKruNem2019}. Given nice $H_N$, the MDE produces a probability measure $\mu_N$ found by solving a constrained problem over matrices. The existence, uniqueness, and regularity theory of the MDE is an important input for our work. 

The organization of the paper is as follows: In the rest of this section, we give some history on determinants of random matrices, then state our main results. We prove our general results, Theorems \ref{thm:convex_functional} and \ref{thm:concentrated_input}, in Section \ref{sec:determinantproofs}, then prove our applications to matrix models in Section \ref{sec:modelproofs}. In Section \ref{sec:longrange}, we discuss determinants in the presence of long-range correlations. In Appendix \hyperlink{sec:multipoint}{A} we  extend our results to product of determinants, showing
\begin{equation}
\label{eqn:intro_multipoint}
	\lim_{N \to \infty} \frac{1}{N}\log \E\left[ \prod_{i=1}^\ell |\det(H_N^{(i)})| \right] = \sum_{i=1}^\ell \left( \lim_{N \to \infty} \frac{1}{N} \log \E[|\det(H_N^{(i)})|] \right)
\end{equation}
for any fixed $\ell$ and random matrices $H_N^{(1)}, \ldots, H_N^{(\ell)}$ which may be correlated with each other. This asymptotic factoring holds regardless of the correlation structure between the $H_N^{(i)}$'s.  Finally, in Appendix \hyperlink{sec:secondmoment}{B} we find a transition for the exponential order of determinants of Wigner matrices $W_N$, showing that, for any $p \geq 1$, the quantity $\limsup_{N \to \infty} \frac{1}{N} \log \E[\abs{\det(W_N-E)^p}]$ is finite if and only if the entries have finite $(2p)$-th moment.


\subsection{History.}\
The earliest research on random-matrix determinants covered non-Hermitian matrices with i.i.d. entries, discussing an extremal problem on the determinant of Bernoulli matrices \cite{SzeTur1937} (extended in \cite{Tur1955}) and exact formulas at finite $N$ for small moments of determinants \cite{For1951, ForTuk1952, NyqRicRio1954, Pre1967} (see also Girko's book \cite{Gir1990}). Later in the literature, we identify three main strands of research on determinants.

First, one can ask for the probability that an $N \times N$ discrete matrix (Bernoulli, say) is singular, i.e., that its determinant is zero, for large $N$. In the non-Hermitian case, Koml{\'o}s showed that this probability is $o(1)$ \cite{Kom1967, Kom1968}. Recently K. Tikhomirov established the long-standing conjecture that this probability is $(\frac{1}{2}+o(1))^N$ \cite{Tik2020}; earlier exponential estimates in this direction include \cite{KahKomSze1995, TaoVu2006, TaoVu2007, BouVuWoo2010}.

Second, one can show that the determinant, appropriately normalized, has Gaussian fluctuations. In the non-Hermitian case, if the entries are Gaussian this follows from work of Goodman \cite{Goo1963}. Gaussianity was replaced by an exponential-tails assumption in \cite{NguVu2014} and a fourth-moment assumption in \cite{BaoPanZho2015}. In the Hermitian case, Gaussian matrices were studied in \cite{DelLeC2000}. Gaussianity was relaxed to a four-moment-matching assumption in \cite{TaoVu2012}, then to a two-moment-matching assumption in \cite{BouMod2019,BouModPai2022}. Some other ensembles were treated in \cite{CaiLiaZho2015, Rou2007}, and more about determinants for Gaussian ensembles  was discussed in \cite{BorLaC2015, EdeLaC2015}. 

Third, one can study the same question we do here, namely the asymptotics of $\E[\abs{\det(H_N)}]$, usually in the same context of studying complexity for high-dimensional random fields. Here we just discuss the types of random matrices that have appeared; for a discussion of what these prior results mean for complexity, we refer to the companion paper \cite{BenBouMcK2021II}. Fyodorov \cite{Fyo2004} studied Gaussian matrices of type $\text{GOE} + \mc{N}(0,1/N)\Id$  using supersymmetry, and a similar model was addressed in Auffinger \emph{et al.} \cite{AufBenCer2013} using known large-deviations principles (LDPs) \cite{BenGui1997, BenDemGui2001}. Rank-one perturbations of GOE appeared in \cite{BenMeiMonNic2019}, using an LDP of Ma{\"i}da \cite{Mai2007}. An upper bound for full-rank perturbations of GOE appeared in \cite{FanMeiMon2021}, based on free probability and large deviations. Upper and lower bounds for Gaussian matrices with a certain covariance structure were given in \cite{AufChe2014}. The (Gaussian) real elliptic ensemble was discussed in \cite{BenFyoKho2021}, based on a new result on large deviations for its spectral measure. Baskerville \emph{et al.} cover finite-rank perturbations of GOE in \cite{BasKeaMezNaj2021} and a specific ensemble of Gaussian matrices with a variance profile, inspired by a two-layer spin-glass model, in \cite{BasKeaMezNaj2022}. In both cases the determinant analysis is performed through supersymmetry, for the asymptotic spectral density and for Wegner estimates. Our corollaries \ref{cor:GaussianCovarianceA}, \ref{cor:GaussianCovarianceB}, and \ref{cor:GaussianBlock} about general Gaussian ensembles provide alternative derivations for all these results about Hermitian matrices. These corollaries also make rigorous the analysis of random determinants by Fyodorov and Le Doussal \cite{FyoLeD2020} (see \cite{BenBouMcK2021II} for corresponding complexity results).

Finally, asymptotics for a \emph{pair} of determinants, in the style of \eqref{eqn:intro_multipoint} with $\ell = 2$, appeared for a particular pair of random matrices from spin glasses, closely related to correlated GOE matrices, in \cite{Sub2017, AufGol2020, BenSubZei2020}. These arguments were based on known LDPs for Gaussian ensembles. 


\subsection*{Notations.}\
We write $\|\cdot\|$ for the operator norm on elements of $\C^{N \times N}$ induced by the ${\rm L}^2$ distance on $\C^N$. We let
$
    \|f\|_{\textup{Lip}} = \sup_{x \neq y}\frac{\abs{f(x)-f(y)}_{{\rm L}^2}}{\abs{x-y}_{{\rm L}^2}}
$
for  functions $f : \R^m \to \R^n$, and consider the following three distances on probability measures on the real line (called bounded-Lipschitz, Wasserstein-$1$, and Kolmogorov-Smirnov, respectively):
\begin{align*}
    d_{\textup{BL}}(\mu,\nu) &= \sup\left\{\abs{\int_\R f(x) (\mu - \nu)(\diff x)} : \|f\|_{\text{Lip}} + \|f\|_{L^\infty} \leq 1\right\}, \\
    {\rm W}_1(\mu,\nu) &= \sup\left\{\abs{\int_\R f(x) (\mu - \nu)(\diff x)} : \|f\|_{\text{Lip}} \leq 1\right\}, \\
    d_{\textup{KS}}(\mu,\nu) &= \sup\{ \abs{\mu((-\infty,x]) - \nu((-\infty,x])} : x \in \R\}.
\end{align*}
We normalize the semicircle law as
$
	\rho_{\text{sc}}(\diff x) = \frac{\sqrt{4-x^2}}{2\pi} \, \mathbf{1}_{x \in [-2,2]} \diff x.
$
We write $\mathtt{l}(\mu)$ for the left edge (respectively, $\mathtt{r}(\mu)$ for the right edge) of a compactly supported measure $\mu$. For an $N \times N$ Hermitian matrix $M$, we write $\lambda_{\min{}}(M) = \lambda_1(M) \leq \cdots \leq \lambda_N(M) = \lambda_{\max{}}(M)$ for its eigenvalues and 
$
    \hat{\mu}_M = \frac{1}{N}\sum_{i=1}^N \delta_{\lambda_i(M)}
$
for its empirical measure. We write $\ms{S}_N$ for the set of all $N \times N$ real symmetric matrices, which we often identify with the space $\R^{\frac{N(N+1)}{2}}$, and on which we therefore put the norm 
\begin{equation}
\label{eqn:frobenius-like-norm}
	\|T\|_{\widetilde{F}}^2 = \sum_{1 \leq i \leq j \leq N} T_{ij}^2
\end{equation}
(slightly different from the Frobenius norm since we only sum on and above the diagonal; but we will also use the Frobenius norm $\|T\|_F^2 = \sum_{1 \leq i, j \leq N} T_{ij}^2$). We also write $\boxplus$ for the free (additive) convolution of probability measures. 

We write $B_R$ for the ball of radius $R$ around 0 in the relevant Euclidean space. We use $(\cdot)^T$ for the matrix transpose, which is distinguished both from  the matrix conjugate transpose $(\cdot)^\ast$, and from the matrix trace $\Tr(\cdot)$.


\subsection{General theorem for convexity-preserving functional.}\
The following Theorem \ref{thm:convex_functional} is our first general result. It applies to random matrices without any a priori concentration hypothesis, but requires the tools of convex analysis, in particular results of Talagrand.

To state the hypotheses, we denote $\kappa>0$ an arbitrarily small control parameter which does not depend on $N$. Let $M = M_N \geq 1$. Consider
$X=(X_1,\dots,X_M)$ a random vector. We now consider the following set of assumptions.

\begin{enumerate}
\item[\hypertarget{assn:I}{(I)}]  The $X_i$'s are independent and real-valued.  
\item[\hypertarget{assn:M}{(M)}] Matrix model. Let $H=H_N=\Phi(X)$ where $\Phi:\mathbb{R}^M\to\mathscr{S}_N$ is deterministic and Lipschitz (with respect to the norm \eqref{eqn:frobenius-like-norm}), and $\Phi^{-1}(A)$ is convex for any convex set $A$. 
\item[\hypertarget{assn:E}{(E)}] Expectation. A sequence of probability measures $\mu_N$ exists satisfying the following properties. First, 
\begin{equation}
\label{eqn:dBL}
    d_{\textup{BL}}(\E\hat\mu_{\Phi(X)},\mu_N)\leq N^{-\kappa}.
\end{equation}
Moreover, the $\mu_N$'s are supported in a common compact set, and each has a density $\mu_N(\cdot)$ in the same neighborhood $(-\kappa,\kappa)$ around $0$, which satisfies $\mu_N(x)<\kappa^{-1}|x|^{-1+\kappa}$ for all $|x|<\kappa$ and all $N$.
\item[\hypertarget{assn:C}{(C)}] Coarse bounds. Write $(\lambda_i)_{i=1}^N$ for the eigenvalues of $\Phi(X)$. For every $\epsilon > 0$,
\begin{align}
&\lim_{N \to \infty} \frac{1}{N} \log \E\left[ \prod_{i=1}^N (1+\abs{\lambda_i}\mathds{1}_{\abs{\lambda_i} > e^{N^\epsilon}})\right] = 0, \label{eqn:detconub_coarse}
\\
&\lim_{N \to \infty} \P(\Phi(X) \text{ has no eigenvalues in }[-e^{-N^\epsilon},e^{-N^\epsilon}]) = 1. \label{eqn:wegner}
\end{align}
In addition, there exists $\delta > 0$ such that
\begin{equation}
\label{eqn:detconub_NlogN} 
    \limsup_{N \to \infty} \frac{1}{N \log N} \log \E[\abs{\det(H_N)}^{1+\delta}] < \infty.
\end{equation}
\item[\hypertarget{assn:S}{(S)}] Spectral stability. Let  $(X_{\rm cut})_i=X_i\mathds{1}_{|X_i|<N^{-\kappa}/\|\Phi\|_{\textup{Lip}}}$ (recalling again the norm \eqref{eqn:frobenius-like-norm}). We have
\begin{equation}
\label{eqn:ss}
\lim_{N \to \infty} \frac{1}{N\log N}\log\mathbb{P}\left(d_{\textup{KS}}(\hat\mu_{\Phi(X)},\hat\mu_{\Phi(X_{\rm cut})})>N^{-\kappa}\right)=-\infty.
\end{equation}  \label{assn:S}
\end{enumerate}

\begin{thm}
\label{thm:convex_functional}
\textbf{(Convexity-preserving functional)}
Under the assumptions \hyperlink{assn:I}{(I)}, \hyperlink{assn:M}{(M)}, \hyperlink{assn:E}{(E)}, \hyperlink{assn:C}{(C)}, \hyperlink{assn:S}{(S)}, we have
\begin{equation}\label{eqn:conv}
\lim_{N\to\infty}\left(\frac{1}{N}\log\E[\abs{\det(H_N)}]-\int_\R \log|\lambda| \mu_N(\diff \lambda)\right)=0.
\end{equation}
\end{thm}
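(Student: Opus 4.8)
The plan is to write $\log|\det H_N| = N\int_\R \log|\lambda|\,\hat\mu_{H_N}(\diff\lambda)$ and control this linear statistic in four stages corresponding to the four structural assumptions. First I would \emph{regularize the logarithm}: fix a small $\e>0$ and replace $x\mapsto\log|x|$ by the truncated function $f_\e(x) = (\log|x|)\vee(-N^\e)\wedge(N^\e)$, which is bounded and Lipschitz (with constant $e^{N^\e}$) away from the issue at $0$. Assumption \hyperlink{assn:C}{(C)} is exactly designed so that the error in this replacement is negligible at the scale $\frac{1}{N}\log$: equation \eqref{eqn:detconub_coarse} handles the contribution of eigenvalues above $e^{N^\e}$ inside $\E[|\det H_N|]$, equation \eqref{eqn:wegner} rules out eigenvalues in the window $[-e^{-N^\e},e^{-N^\e}]$ with overwhelming probability, and \eqref{eqn:detconub_NlogN} gives the uniform integrability / Cauchy--Schwarz bound needed to discard the rare bad event (whose probability is $\oo(1)$, but one needs a moment bound beating $e^{N\log N}$ to kill $\E[|\det H_N|\mathds{1}_{\text{bad}}]$). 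After this step the target becomes $\frac1N\log\E[\exp(N\int f_\e\,\rd\hat\mu_{H_N})]$ up to $\oo(1)$, with the further reduction that we may work on the good event where all eigenvalues lie in $[e^{-N^\e}, e^{N^\e}]$ in absolute value.

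Second, I would \emph{pass from the cut matrix to the original matrix} using Assumption \hyperlink{assn:S}{(S)}. Because $\Phi$ is Lipschitz in the norm \eqref{eqn:frobenius-like-norm}, the truncation $X\mapsto X_{\rm cut}$ perturbs $H_N$ only by a matrix of small Hilbert--Schmidt norm plus a low-rank part; combined with \eqref{eqn:ss}, the Kolmogorov--Smirnov distance between $\hat\mu_{\Phi(X)}$ and $\hat\mu_{\Phi(X_{\rm cut})}$ is $\leq N^{-\kappa}$ off an event of probability $\exp(-\omega(N\log N))$, again negligible against \eqref{eqn:detconub_NlogN}. Since $f_\e$ has bounded variation, $|\int f_\e\,\rd(\hat\mu_{\Phi(X)}-\hat\mu_{\Phi(X_{\rm cut})})| \lesssim e^{N^\e}\cdot N^{-\kappa}\cdot N^\e$ via integration by parts against the KS distance — wait, one must be a little careful here because $e^{N^\e}$ beats $N^{-\kappa}$; the point is rather that on the good event the eigenvalues are confined, $f_\e$ restricted to $[-e^{N^\e},e^{N^\e}]^c\cup\{$near $0\}$ is irrelevant, and on the bulk region $f_\e$ is genuinely Lipschitz with a constant that is only polynomial, so the KS bound does close. (This is the one place where the interplay of the various $\e$, $\kappa$, $\delta$ parameters has to be arranged with care.) The upshot is that we have reduced to studying $\E[\exp(N\int f_\e\,\rd\hat\mu_{\Phi(X_{\rm cut})})]$, where $X_{\rm cut}$ has bounded coordinates, so $\Phi(X_{\rm cut})$ is a Lipschitz image of a bounded-support product measure.

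Third — the heart of the argument — I would \emph{establish concentration of the linear statistic} $\int f_\e\,\rd\hat\mu_{\Phi(X_{\rm cut})}$ about its mean, at speed $e^{-N^{1+c}}$ or so (anything beating $e^{N\log N}$ works after using \eqref{eqn:detconub_NlogN} to exchange $\log\E[e^{N(\cdot)}]$ with $N\,\E[(\cdot)]$). Here Assumptions \hyperlink{assn:I}{(I)} and \hyperlink{assn:M}{(M)} enter: the map $X\mapsto \int f_\e\,\rd\hat\mu_{\Phi(X)}$ is a composition of the Lipschitz map $\Phi$ with the map $M\mapsto \frac1N\Tr f_\e(M)$ on $\ms S_N$, and the latter is Lipschitz in $\|\cdot\|_{\widetilde F}$ with constant $\tfrac1N\|f_\e\|_{\rm Lip}\lesssim e^{N^\e}/N$ by the standard inequality $|\Tr g(A)-\Tr g(B)|\leq \|g\|_{\rm Lip}\|A-B\|_F$. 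Moreover — and this is why the convexity hypothesis in \hyperlink{assn:M}{(M)} is needed — the preimage structure of $\Phi$ lets one invoke Talagrand's concentration inequality for convex Lipschitz functions of independent bounded random variables (the coordinates of $X_{\rm cut}$ live in an interval of length $2N^{-\kappa}/\|\Phi\|_{\rm Lip}$). One has to check that $X\mapsto\int f_\e\,\rd\hat\mu_{\Phi(X)}$ is, if not convex, at least of the form "$1$-Lipschitz composed with convex", or handle convexity/concavity of $\Tr f_\e$ separately; since $f_\e$ is neither convex nor concave one likely splits $f_\e = f_\e^{(1)}-f_\e^{(2)}$ into a difference of convex Lipschitz functions (always possible for BV functions with controlled derivative), applies Talagrand to each of $X\mapsto \Tr f_\e^{(i)}(\Phi(X))$ using that $\Phi$ preserves convexity, and unions the two tail bounds. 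The scale of concentration is then $\lesssim \exp(-c N^2 t^2 \|\Phi\|_{\rm Lip}^2 / (N^{-2\kappa} e^{2N^\e}))$ for deviations $t$, which for fixed $t$ beats $e^{-N^{1+\kappa/2}}$ once $\e<\kappa/2$ — comfortably enough.

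Finally, I would \emph{identify the mean}. Steps one through three give $\frac1N\log\E[|\det H_N|] = \E\!\big[\int f_\e\,\rd\hat\mu_{H_N}\big] + \oo(1) = \int f_\e\,\rd\E\hat\mu_{H_N} + \oo(1)$. Assumption \hyperlink{assn:E}{(E)} replaces $\E\hat\mu_{H_N}$ by $\mu_N$ at cost $N^{-\kappa}$ in bounded-Lipschitz distance — admissible because, after the regularization, $f_\e$ differs from a bounded-Lipschitz function only near $0$ and near $\pm\infty$, regions controlled by the density bound $\mu_N(x)<\kappa^{-1}|x|^{-1+\kappa}$ (which makes $\int_{|x|<\kappa}|\log|x||\,\mu_N(\diff x)$ finite and makes the near-$0$ truncation error in $\int f_\e\,\rd\mu_N$ versus $\int\log|x|\,\rd\mu_N$ vanish as $\e\to0$) and by the common compact support (controlling the large-$|x|$ region). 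Sending $N\to\infty$ and then $\e\to0$ yields $\lim_N\big(\frac1N\log\E[|\det H_N|]-\int_\R\log|\lambda|\,\mu_N(\diff\lambda)\big)=0$, which is \eqref{eqn:conv}. I expect the main obstacle to be Step three together with the bookkeeping across Steps one and two: getting a concentration speed that genuinely dominates the $e^{N\log N}$ coming from \eqref{eqn:detconub_NlogN} \emph{after} the logarithm has been regularized at the coarse scale $e^{\pm N^\e}$, i.e. making sure the Lipschitz constant $e^{N^\e}$ of the regularized log does not destroy the Talagrand bound — this forces the hierarchy $\e\ll\kappa$ and is the delicate quantitative core of the proof.
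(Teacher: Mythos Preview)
Your four-stage architecture (regularize the logarithm, pass to the cut variables, apply Talagrand, identify the mean) is exactly the paper's, and your decomposition of the regularized log into convex pieces for the Talagrand step is precisely what the paper does. But your choice of regularization does not close, and the arithmetic you flag as ``the delicate quantitative core'' is in fact incorrect.

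With $f_\e(x) = (\log|x|)\vee(-N^\e)\wedge N^\e$ one has $\|f_\e\|_{\rm Lip} = e^{N^\e}$, so the composite map $X_{\rm cut}\mapsto \tfrac1N\Tr f_\e(\Phi(X_{\rm cut}))$ has Lipschitz constant of order $e^{N^\e}\|\Phi\|_{\rm Lip}/\sqrt{N}$, the cube side is $N^{-\kappa}/\|\Phi\|_{\rm Lip}$, and Talagrand gives
\[
\P\bigl(|\cdots|>t\bigr)\ \lesssim\ \exp\!\left(-c\,\frac{t^2 N^{1+2\kappa}}{e^{2N^\e}}\right),
\]
whose exponent tends to $0$ (not $-\infty$) for \emph{every} $\e>0$ and every $\kappa$: no polynomial $N^{1+2\kappa}$ can beat $e^{2N^\e}$. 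Thus your concentration bound is vacuous, and there is no hierarchy $\e\ll\kappa$ that saves it.

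The paper's cure is a different regularization with \emph{polynomial} Lipschitz constant: set $\eta=N^{-\kappa/2}$ and use $\log_\eta(\lambda)=\log|\lambda+\ii\eta|$ (capped at height $\log_\eta(K)$, $K=e^{N^\e}$), which is $\tfrac{1}{2\eta}$-Lipschitz. Then Talagrand gives a genuinely useful bound of order $\exp(-c\,t^2\eta^2 N^{1+2\kappa})$. The price is that $\log_\eta$ differs from $\log|\cdot|$ not only in the exponentially thin window $[-e^{-N^\e},e^{-N^\e}]$ but on the whole polynomial window $[-\eta,\eta]$, so the Wegner estimate alone no longer suffices to undo the regularization. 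The paper handles this with a \emph{two-scale} argument: an additional event $\mc{E}_b=\{\int b\,\rd\hat\mu\le p_b\}$ (with $b$ a bump on $[-w_b,w_b]$, $w_b$ polynomial) guarantees that only $p_bN=o(N)$ eigenvalues lie in the polynomial window; for those few, the gap event $\mc{E}_{\rm gap}$ bounds each contribution $|\log|\lambda|-\log_\eta(\lambda)|\le\tfrac12\log(1+e^{2N^\e}\eta^2)\sim N^\e$; and the product $p_b\cdot N^\e\to0$ closes the estimate. This interplay between a polynomial smoothing scale (for concentration) and an exponential gap scale (for undoing the smoothing) is the idea your outline is missing.

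Separately, your Step~2 worry (``$e^{N^\e}$ beats $N^{-\kappa}$'') is resolved by the same change: with $\log_\eta^K$ in place of $f_\e$, the BV norm is $\|\log_\eta^K\|_{\rm TV}=\log(1+K^2/\eta^2)\sim N^\e$, so the KS bound $N^{-\kappa}$ times the BV norm gives $N^{\e-\kappa}\to0$ directly, with no need to restrict to a good event.
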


\noindent {\it Comments on the result.}\ (i) 
A polynomial rate in \eqref{eqn:detconub_coarse} is enough to give a polynomial rate of convergence $$\abs{\frac{1}{N} \log \E[\abs{\det(H_N)}] - \int_\R \log\abs{\lambda} \mu_N(\diff \lambda)} \leq N^{-\epsilon}$$ for some $\epsilon > 0$ and $N \geq N_0(\epsilon)$. Indeed, an examination of the proof shows that $\epsilon$ depends only on $\kappa$ and the polynomial rate in \eqref{eqn:detconub_coarse}, but $N_0(\epsilon)$ also depends on the rates of convergence in \eqref{eqn:wegner} and \eqref{eqn:ss}, and on the permissible values of $\delta$ and the value of the $\limsup$ in \eqref{eqn:detconub_NlogN}. 

(ii) The matrix $H_N$ does not need to be centered. As an elementary example, we can choose $H_N = W_N - E$ for $W_N$ a Wigner matrix and obtain concentration around $\int_\R \log\abs{\lambda-E} \rho_{\text{sc}}(\lambda) \diff \lambda$; see Corollary \ref{cor:Wigner} below.

(iii) The proof uses Talagrand's classic concentration inequality for product measures. We want to recognize the determinant almost as a Lipschitz, convex function of independent, bounded random variables. Ideally these would be the $X_i$'s, but they are not bounded; however, we truncate them using assumption \hyperlink{assn:S}{(S)}. The functional $H = \Phi(X)$ gives the Lipschitz, convex condition, after regularizing the logarithm using assumption \hyperlink{assn:C}{(C)}.

\bigskip

\noindent {\it Comments on the assumptions.}\ We discuss briefly why our assumptions are reasonable and close to optimal. In our applications, $\Phi$ is linear so assumption \hyperlink{assn:M}{(M)} is trivially satisfied, but $\Phi$ is also allowed to create correlations between the entries in a nonlinear fashion. Equation \eqref{eqn:wegner} avoids a non-trivial kernel, an obviously necessary condition for \eqref{eqn:conv}. Equation \eqref{eqn:detconub_NlogN} asks for slightly more integrability than finiteness of $\limsup N^{-1}\log\E[|\det H_N|]$ which is implied by the result and assumption \hyperlink{assn:E}{(E)}. In Section \ref{sec:assumptions} we show the importance \eqref{eqn:detconub_coarse} (which is a constraint on large eigenvalues) and assumption \hyperlink{assn:S}{(S)} (which essentially states that the spectrum should not depend too much on a small number of $X_i$'s): for each of these, we give an example of a distribution on matrices satisfying every \emph{other} assumption but not this one, for which the result of the theorem fails.


\subsection{General theorem for concentrated input.}\
Here we consider the problem of exponential growth for random matrices $H_N$ that already satisfy some concentration property directly, without having to cut the tails and apply a result of Talagrand as in (the proof of) Theorem \ref{thm:convex_functional}. For example, in applications we will take matrices whose upper triangles satisfy a log-Sobolev inequality (even if correlated), or Gromov-Milman-type concentration. We remark that the dichotomy in Theorems \ref{thm:convex_functional} and \ref{thm:concentrated_input} -- namely, proving the similar results, once under product-measure assumptions and once under log-Sobolev-style assumptions -- first appeared in the classic concentration paper of Guionnet-Zeitouni \cite{GuiZei2000}. We have termed these models ``concentrated input,'' to contrast with the previous section's ``convexity-preserving functional'' where $H_N$ is written as $\Phi(X)$ and concentration is provided by convexity-preserving properties of $\Phi$ (and tail bounds). In this section, we will therefore consider $H_N$ directly. We will also replace some of the assumptions above with the following.

\begin{enumerate}
\item[\hypertarget{assn:W}{(W)}] Wasserstein-$1$.
A sequence of probability measures $\mu_N$ exists satisfying the following properties. First, 
\begin{equation}
\label{eqn:wasserstein}
    {\rm W}_1(\E\hat\mu_{H_N},\mu_N)\leq N^{-\kappa}.
\end{equation}
Moreover, the $\mu_N$'s are supported in a common compact set, and each has a density $\mu_N(\cdot)$ in the same neighborhood $(-\kappa,\kappa)$ around $0$, which satisfies $\mu_N(x)<\kappa^{-1}|x|^{-1+\kappa}$ for all $|x|<\kappa$ and all $N$.
\item[\hypertarget{assn:L}{(L)}] Concentration for Lipschitz traces.
There exists $\epsilon_0 > 0$ with the following property: For every $\zeta > 0$, there exists $c_\zeta > 0$ such that, whenever $f : \R \to \R$ is Lipschitz, we have for every $\delta > 0$
\begin{equation}
\label{eqn:lipschitztrace}
    \P\left(\abs{\frac{1}{N}\Tr(f(H_N)) - \frac{1}{N}\E[\Tr(f(H_N))]} > \delta\right) \leq \exp\left(-\frac{c_\zeta}{N^\zeta} \min\left\{ \left(\frac{N\delta}{\|f\|_{\textup{Lip}}}\right)^2, \left(\frac{N\delta}{\|f\|_{\textup{Lip}}}\right)^{1+\epsilon_0}\right\}\right).
\end{equation}
\end{enumerate}

On a first pass readers can drop the $N^{-\zeta}$ factor in \eqref{eqn:lipschitztrace}. It is included because, for Gaussian matrices as in Section \ref{sec:GausCov}, our assumption on the correlation structure implies \eqref{eqn:lipschitztrace} for every $\zeta > 0$ but not necessarily for $\zeta = 0$.

\begin{thm}
\label{thm:concentrated_input}
\textbf{(Concentrated input)} Under the assumptions \hyperlink{assn:W}{(W)}, \hyperlink{assn:L}{(L)}, and the gap assumption \eqref{eqn:wegner}, we have
\[
    \lim_{N \to \infty} \left( \frac{1}{N}\log \E[\abs{\det(H_N)}] - \int_\R \log\abs{\lambda} \mu_N(\diff \lambda) \right) = 0.
\]
\end{thm}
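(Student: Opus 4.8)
The plan is to write $\log|\det(H_N)| = N\int_\R \log|\lambda|\, \hat\mu_{H_N}(\diff\lambda)$ and split the integral at the two scales $e^{-N^\epsilon}$ and $e^{N^\epsilon}$, then show that only the bulk part $\int_{e^{-N^\epsilon}\le|\lambda|\le e^{N^\epsilon}}\log|\lambda|\,\hat\mu_{H_N}(\diff\lambda)$ contributes to first order, and that this bulk part concentrates around its mean, which in turn is close to $\int_\R \log|\lambda|\,\mu_N(\diff\lambda)$ by assumption \hyperlink{assn:W}{(W)}. Concretely, fix a small $\epsilon > 0$ and let $g_\epsilon(x)$ be a Lipschitz regularization of $\log|x|$ that agrees with $\log|x|$ on $|x|\in[e^{-N^\epsilon},e^{N^\epsilon}]$, is set to $0$ (or frozen) for $|x| < e^{-N^\epsilon}$, and is capped for $|x| > e^{N^\epsilon}$; then $\|g_\epsilon\|_{\textup{Lip}} \le e^{N^\epsilon}$. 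Applying \eqref{eqn:lipschitztrace} with $f = g_\epsilon$, $\delta$ a small constant, and the choice $\zeta = \epsilon/2$ say, the right-hand side is $\exp(-c_\zeta N^{-\epsilon/2}\min\{(N\delta e^{-N^\epsilon})^2, (N\delta e^{-N^\epsilon})^{1+\epsilon_0}\})$. This is where the first real care is needed: the Lipschitz norm $e^{N^\epsilon}$ is subexponential, so $N\delta/\|g_\epsilon\|_{\textup{Lip}} = N\delta e^{-N^\epsilon}$ still tends to $0$, and naively the bound is useless. The fix is the standard one for this kind of statement: one does not need concentration at a fixed $\delta$; since $\frac1N\log\E[|\det H_N|]$ is what we want, it suffices to control $\frac1N\E[\,|\,\Tr g_\epsilon(H_N) - \E\Tr g_\epsilon(H_N)|\,]$, i.e. integrate the tail bound over $\delta$, and the relevant contribution comes from $\delta$ of order $e^{N^\epsilon}N^{-1+\epsilon'}$ where the exponent in \eqref{eqn:lipschitztrace} is of order $N^{\epsilon'}\to\infty$; this gives fluctuations of $\frac1N\Tr g_\epsilon(H_N)$ that are $o(1)$, in fact $N^{-c}$, hence negligible at the scale $\frac1N\log\E[|\det|]$.

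Next I would handle the large-eigenvalue tail $\int_{|\lambda|>e^{N^\epsilon}}\log|\lambda|\,\hat\mu_{H_N}(\diff\lambda)$, i.e. bound $\E\big[\prod_{|\lambda_i|>e^{N^\epsilon}}|\lambda_i|\big]$ — but unlike Theorem \ref{thm:convex_functional}, here there is no assumption \hyperlink{assn:C}{(C)}; so the mechanism must be: assumption \hyperlink{assn:W}{(W)} says $\E\hat\mu_{H_N}$ is within $N^{-\kappa}$ in $\mathrm{W}_1$ of $\mu_N$, and $\mu_N$ is supported in a fixed compact set, so $\E\hat\mu_{H_N}$ has almost no mass far out; combined with the concentration \hyperlink{assn:L}{(L)} applied to a Lipschitz bump supported on $\{|x| > e^{N^\epsilon}\}$ (which has tiny mean, hence by concentration is tiny with probability $1 - \exp(-N^{\omega})$ for the exceptional set), the number of eigenvalues above $e^{N^\epsilon}$ is $o(N)$ off an event of stretched-exponentially small probability. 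On that small exceptional event one still needs an a priori bound on $\E[|\det H_N|\mathbf 1_{\text{exceptional}}]$; since $\mathrm{W}_1$-closeness to a compactly supported measure forces $\E\frac1N\sum\lambda_i^2$ to be $O(1)$ plus the $\mathrm{W}_1$ error (really one needs $\E\|H_N\|$ or $\E\Tr H_N^2$ bounded, which follows from \hyperlink{assn:W}{(W)} after noting $\int x^2$ is $2$-Lipschitz on the relevant compact range and the tail is controlled by \hyperlink{assn:L}{(L)}), one gets $\E[|\det H_N|] \le \E[\|H_N\|^N] \le (CN)^{N/2}$-type bounds by a crude argument — finite at scale $\frac1N\log$, so the exceptional event contributes $0$. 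The small-eigenvalue regime $|\lambda| < e^{-N^\epsilon}$ is exactly what the gap assumption \eqref{eqn:wegner} rules out: with probability $1-o(1)$ there is no eigenvalue in $[-e^{-N^\epsilon},e^{-N^\epsilon}]$, so on that event $\log|\det H_N| \ge -N^{1+\epsilon}$ trivially, again negligible, and one only needs a matching lower-bound contribution which is handled by the same regularized statistic.

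Assembling: on the good event (no tiny eigenvalue, $o(N)$ huge eigenvalues, $\frac1N\Tr g_\epsilon(H_N)$ within $N^{-c}$ of its mean), $\frac1N\log|\det H_N| = \frac1N\Tr g_\epsilon(H_N) + o(1) = \frac1N\E\Tr g_\epsilon(H_N) + o(1)$, and $\frac1N\E\Tr g_\epsilon(H_N) = \int g_\epsilon\, \diff\E\hat\mu_{H_N}$, which by \hyperlink{assn:W}{(W)} (since $g_\epsilon$ is Lipschitz with norm $\le e^{N^\epsilon}$, giving error $\le e^{N^\epsilon}N^{-\kappa}$ — this forces $\epsilon$ small enough that this is still $o(1)$, which is fine since we may take $\epsilon \to 0$ at the end) is $\int g_\epsilon\,\diff\mu_N + o(1) = \int_\R \log|\lambda|\,\mu_N(\diff\lambda) + o(1)$; the last step uses the density bound $\mu_N(x) < \kappa^{-1}|x|^{-1+\kappa}$ near $0$ to see that $\int_{|x|<e^{-N^\epsilon}}|\log|x||\,\mu_N(\diff x) = o(1)$, so replacing $g_\epsilon$ by $\log|\cdot|$ against $\mu_N$ costs nothing. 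Taking expectations and then $\epsilon\to0$ finishes the upper bound; the lower bound $\E[|\det H_N|] \ge \E[|\det H_N|\mathbf 1_{\text{good}}] \ge e^{N(\int\log|\lambda|\mu_N - o(1))}\P(\text{good})$ and $\P(\text{good}) = 1-o(1)$ finishes the matching lower bound. The main obstacle I anticipate is making the interplay between the subexponential Lipschitz constant $e^{N^\epsilon}$, the $N^{-\zeta}$ loss in \eqref{eqn:lipschitztrace}, and the $N^{-\kappa}$ rate in \eqref{eqn:wasserstein} quantitatively consistent — i.e. choosing $\epsilon = \epsilon(\kappa,\zeta,\epsilon_0)$ small enough that every error term is genuinely $o(1)$ while the concentration exponent $N^{-\zeta}\min\{(N\delta e^{-N^\epsilon})^2,(\cdot)^{1+\epsilon_0}\}$ still diverges for the relevant range of $\delta$; this is a bookkeeping issue rather than a conceptual one, but it is the crux, and is presumably the reason the theorem is stated with the flexible $\zeta$ parameter.
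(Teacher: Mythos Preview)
Your regularization scale is wrong, and this is a genuine obstruction rather than bookkeeping. With $g_\epsilon$ matching $\log|\cdot|$ down to $|x|=e^{-N^\epsilon}$, the Lipschitz constant is $e^{N^\epsilon}$, which is superpolynomial. Both crucial comparisons then blow up for every fixed $\epsilon>0$: the Wasserstein step gives error $\|g_\epsilon\|_{\textup{Lip}}\cdot {\rm W}_1(\E\hat\mu_{H_N},\mu_N)\le e^{N^\epsilon}N^{-\kappa}\to\infty$, and the bound \eqref{eqn:lipschitztrace} only becomes nontrivial once $N\delta/\|g_\epsilon\|_{\textup{Lip}}\gtrsim 1$, i.e.\ for $\delta\gtrsim e^{N^\epsilon}/N\to\infty$, so it cannot pin $\frac1N\Tr g_\epsilon(H_N)$ to within $o(1)$ of its mean. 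Your claim that integrating the tail gives fluctuations ``of order $N^{-c}$'' is an arithmetic slip: the same computation shows $\E[e^{N\int g_\epsilon(\hat\mu-\E\hat\mu)}]$ is only bounded by $\exp(c\,e^{(1+1/\epsilon_0)N^\epsilon})$-type quantities, far from $e^{o(N)}$. Sending $\epsilon\to0$ ``at the end'' does not help, because the $N\to\infty$ limit comes first and every error diverges at fixed $\epsilon$.

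The paper regularizes at a \emph{polynomial} scale: $\log_\eta(\lambda)=\log|\lambda+\ii\eta|$ with $\eta=N^{-\kappa/2}$, so $\|\log_\eta\|_{\textup{Lip}}=\tfrac1{2\eta}=\tfrac12 N^{\kappa/2}$. Now the Wasserstein error is $\tfrac1{2\eta}N^{-\kappa}=\tfrac12 N^{-\kappa/2}\to0$, and \eqref{eqn:lipschitztrace} yields a genuine exponential-moment bound $\E[e^{N\int\log_\eta(\hat\mu-\E\hat\mu)}]\le e^{o(N)}$ once $\zeta$ is chosen small. The upper bound is then immediate from the deterministic inequality $|\det H_N|\le e^{N\int\log_\eta\,\diff\hat\mu}$: no separate treatment of large eigenvalues, no a priori coarse bound on $\E[|\det H_N|]$, no good/bad event splitting. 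For the lower bound, the discrepancy $\int(\log|\lambda|-\log_\eta(\lambda))\,\hat\mu(\diff\lambda)$ is controlled on the intersection of three likely events: $\mc{E}_{\textup{gap}}$ from \eqref{eqn:wegner}, a concentration event $\mc{E}_{\textup{Lip}}$, and an event $\mc{E}_b$ that at most a $p_b=N^{-\kappa^2/8}$ fraction of eigenvalues lie in $[-N^{-\kappa/4},N^{-\kappa/4}]$. On $\mc{E}_b\cap\mc{E}_{\textup{gap}}$ the few eigenvalues near zero each cost at most $\tfrac12\log(1+\eta^2 e^{2N^\epsilon})\approx N^\epsilon$, so their total contribution is $\lesssim p_b N^\epsilon=N^{-\kappa^2/16}\to0$ with $\epsilon=\kappa^2/16$. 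The gap scale $e^{-N^\epsilon}$ thus enters only as a per-eigenvalue cap on this small set, not as the regularization scale itself.
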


As in Theorem \ref{thm:convex_functional}, by examining the proof one can find a small polynomial rate $N^{-\epsilon}$ in Theorem \ref{thm:concentrated_input}. 

Compared to \cite{GuiZei2000}, we do not require bounded entries in Theorem \ref{thm:convex_functional}, our matrix models are more general, and we consider logarithmic singularities. On the other hand, \cite{GuiZei2000} identifies the correct \emph{scale} of fluctuations, analogous to a rate of convergence of order $N^{-1}$  in \eqref{eqn:conv}, for test functions without singularities.


\subsection{Wigner matrices.}\
\label{subsec:wignerstatement}
We now discuss determinant asymptotics for Wigner matrices $W_N$ whose entries have subexponential tails. (In Appendix \hyperlink{sec:secondmoment}{B} below, we consider what can be said when the entries only have finite second moment.)

Let $\mu$ be a centered probability measure with unit variance that has subexponential tails, in the sense that there exist constants $\alpha, \beta > 0$ such that, if $X \sim \mu$, then
\begin{equation}
\label{eqn:subexponentialtails}
    \P(\abs{X} \geq t^\alpha) \leq \beta e^{-t}
\end{equation}
for all $t > 0$. Let $W_N$ be a real symmetric $N \times N$ Wigner matrix associated with $\mu$, by which we mean that the entries of $\sqrt{N}W_N$ are independent up to symmetry and each distributed according to $\mu$. The following corollary uses Theorem \ref{thm:convex_functional}.

\begin{cor}\label{cor:Wigner}
\textbf{(Wigner matrices with exponential tails)}
For every $E \in \R$ we have
\[
    \lim_{N \to \infty} \frac{1}{N}\log \E[\abs{\det(W_N - E)}] = \int_\R \log\abs{\lambda - E} \rho_{\text{sc}}(\lambda) \diff \lambda.
\]
\end{cor}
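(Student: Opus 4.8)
The plan is to verify the five hypotheses \hyperlink{assn:I}{(I)}, \hyperlink{assn:M}{(M)}, \hyperlink{assn:E}{(E)}, \hyperlink{assn:C}{(C)}, \hyperlink{assn:S}{(S)} of Theorem \ref{thm:convex_functional} for the matrix $H_N = W_N - E$, with the approximating sequence taken to be the (constant) shifted semicircle law $\mu_N(\diff\lambda) = \rho_{\text{sc}}(\lambda + E)\diff\lambda$, so that $\int \log|\lambda|\,\mu_N(\diff\lambda) = \int \log|\lambda - E|\,\rho_{\text{sc}}(\lambda)\diff\lambda$. For assumptions \hyperlink{assn:I}{(I)} and \hyperlink{assn:M}{(M)}, we take $X$ to be the vector of independent entries of $\sqrt N W_N$ on and above the diagonal, and $\Phi$ the affine map reconstructing $W_N - E$ from them; $\Phi$ is linear up to a constant shift, hence Lipschitz with $\|\Phi\|_{\textup{Lip}} = N^{-1/2}$ (after accounting for the off-diagonal entries appearing twice), and preimages of convex sets are convex because $\Phi$ is affine. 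Assumption \hyperlink{assn:E}{(E)} is the classical fact that $\E\hat\mu_{W_N} \to \rho_{\text{sc}}$ in bounded-Lipschitz distance with a polynomial rate (e.g.\ $N^{-1/2}$ or better from standard local-law or moment-method inputs), and the required density bound holds since $\rho_{\text{sc}}(\cdot + E)$ is bounded near any fixed point, in particular near $0$ — so the singularity bound $\mu_N(x) < \kappa^{-1}|x|^{-1+\kappa}$ is trivially satisfied for small $\kappa$.

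The substantive work is checking assumption \hyperlink{assn:C}{(C)}. The eigenvalue rigidity / large-deviation bounds for the largest eigenvalue of a subexponential-tail Wigner matrix give $\P(\|W_N\| > K) \leq e^{-N^{c}}$ type estimates for $K$ large, and combined with crude deterministic bounds $|\lambda_i| \leq \|W_N\| + |E|$ one gets \eqref{eqn:detconub_coarse}: the product $\prod_i(1 + |\lambda_i|\1_{|\lambda_i| > e^{N^\epsilon}})$ is either $1$ (overwhelmingly likely) or at most $(e^{N^\epsilon} + \text{poly})^N$ on an event of probability $\leq e^{-N^c}$ with $c$ possibly depending on $\epsilon$, which after taking logs and dividing by $N$ vanishes. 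The Wegner-type estimate \eqref{eqn:wegner} — no eigenvalue of $W_N - E$ in $[-e^{-N^\epsilon}, e^{-N^\epsilon}]$ — is the delicate smallest-singular-value / level-repulsion input; for $E$ inside the bulk this should follow from an eigenvalue-gap or density-of-states lower bound (local semicircle law, or the Wegner estimate of Erdős–Schlein–Yau and later refinements) giving that the number of eigenvalues in a window of size $e^{-N^\epsilon}$ around $E$ is zero with probability $1 - o(1)$; for $E$ at or outside the edge it is easier. The last bound \eqref{eqn:detconub_NlogN}, asking $\limsup (N\log N)^{-1}\log\E|\det(W_N-E)|^{1+\delta} < \infty$, follows from a moment computation — writing $\E|\det(W_N-E)|^{1+\delta}$ via, say, $\E|\det|^{2} \lesssim$ (product over eigenvalues) and Hölder, or more simply from the crude bound $|\det(W_N - E)| \leq (\|W_N\| + |E|)^N$ together with the stretched-exponential tail of $\|W_N\|$, which gives a bound of order $e^{CN\log N}$ on the $(1+\delta)$-moment for any $\delta$ with a small enough exponent relative to the subexponential tail parameter $\alpha$.

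Finally, assumption \hyperlink{assn:S}{(S)} (spectral stability under truncation of the entries at level $N^{-\kappa}/\|\Phi\|_{\textup{Lip}} = N^{1/2-\kappa}$) follows because truncating $X_i$ at $N^{1/2-\kappa}$ changes very few entries of $\sqrt N W_N$ — by the subexponential tail \eqref{eqn:subexponentialtails}, each entry exceeds $N^{1/2-\kappa}$ with probability at most $e^{-N^{c'}}$, so a union bound over the $\sim N^2$ entries shows that with probability $1 - N^2 e^{-N^{c'}}$ no entry is truncated at all and $\hat\mu_{\Phi(X)} = \hat\mu_{\Phi(X_{\rm cut})}$ exactly; hence the probability in \eqref{eqn:ss} is superexponentially small, giving the required $-\infty$ limit after dividing by $N\log N$. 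The main obstacle is the Wegner estimate \eqref{eqn:wegner} at a general energy $E$: this requires a quantitative lower bound on eigenvalue spacings (or an upper bound on the density of states at scale $e^{-N^\epsilon}$) that is genuinely a result about Wigner matrices rather than soft input, and one must cite the appropriate local law or level-repulsion estimate — taking care that it applies for entries with only subexponential (not bounded, not Gaussian) tails, and at edge and outlier energies as well as in the bulk. Once \hyperlink{assn:C}{(C)} is established the conclusion of Theorem \ref{thm:convex_functional} with our choice of $\mu_N$ is exactly the claimed formula.
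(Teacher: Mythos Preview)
Your overall strategy --- verify the five hypotheses of Theorem \ref{thm:convex_functional} --- matches the paper exactly, and your treatment of \hyperlink{assn:I}{(I)}, \hyperlink{assn:M}{(M)}, \hyperlink{assn:E}{(E)}, and the Wegner estimate \eqref{eqn:wegner} (handled by citing an external result, as the paper does via Nguyen \cite{Ngu2012}) is fine. However, there is a genuine gap in your verification of \hyperlink{assn:S}{(S)}.

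Your union-bound argument gives $\P(\text{some entry truncated}) \leq N^2\beta\exp(-N^{(1/2-\kappa)/\alpha})$, and you need $(N\log N)^{-1}$ times the logarithm of this to tend to $-\infty$. That requires the exponent $(1/2-\kappa)/\alpha$ to exceed $1$, i.e.\ $\alpha < 1/2 - \kappa < 1/2$. But the subexponential assumption \eqref{eqn:subexponentialtails} allows \emph{any} $\alpha > 0$; for $\alpha \geq 1/2$ (e.g.\ entries with stretched-exponential tails like $e^{-\sqrt{t}}$) your bound gives only $\exp(-N^{c'})$ with $c' < 1$, which is \emph{not} $o(e^{-CN\log N})$ for all $C$ and so does not yield the required limit. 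The paper's argument (following \cite{BorCapCha2011,BorCap2014}) is more robust: rather than insisting that no entry be truncated, it uses rank interlacing to bound $d_{\textup{KS}}(\hat\mu_{\Phi(X)},\hat\mu_{\Phi(X_{\textup{cut}})}) \leq \frac{2}{N}\sum_{i\leq j}\mathds{1}_{|W_{ij}|>N^{-\kappa}}$, and then applies Bennett's inequality to this Bernoulli sum. This yields a bound of the form $\exp(-CN^{1-\kappa+(1/2-\kappa)/(2\alpha)})$, whose exponent exceeds $1$ for \emph{every} $\alpha$ once $\kappa < \frac{1}{2(2\alpha+1)}$.

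Separately, your argument for \eqref{eqn:detconub_coarse} is not correct as stated: on the bad event the product is not bounded by $(e^{N^\epsilon}+\text{poly})^N$ but only by $(1+\|W_N-E\|)^N$, and $\|W_N\|$ can be arbitrarily large there. One can rescue this via Cauchy--Schwarz together with a high-moment bound on $\|W_N\|^{2N}$ (which subexponential tails do afford), but the paper instead writes $W=A+B$ with $A$ the entrywise truncation, uses Weyl's inequalities to transfer the large-eigenvalue indicator to $B$, and then Fischer's inequality to bound the product by $\prod_i(1+10\sum_j|B_{ij}|)$ --- an argument that works under only $2+\epsilon$ moments. Similarly, the paper verifies \eqref{eqn:detconub_NlogN} by the permutation expansion rather than by bounding $\|W_N\|^{N(1+\delta)}$.
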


An examination of the proof shows local uniformity in $E$, meaning that for every compact $K \subset \R$ we have
\[
	\lim_{N \to \infty} \sup_{E \in K} \left( \frac{1}{N} \log \E[\abs{\det(W_N - E)}] - \int_\R \log\abs{\lambda-E} \rho_{\text{sc}}(\lambda) \diff \lambda \right) = 0.
\]

\begin{rem}
One would also be interested in results of the form
\begin{equation}
\label{eqn:deformedwigner}
	\lim_{N \to \infty} \frac{1}{N} \log \E[\abs{\det(W_N + D_N)}] = \int_\R \log\abs{\lambda} (\rho_{\text{sc}} \boxplus \mu_D)(\diff \lambda),
\end{equation}
where $(D_N)_{N=1}^\infty$ is a sequence of deterministic matrices whose empirical measures tend to some compactly-supported $\mu_D$ (at some polynomial speed and without outliers, say). Our techniques could likely be extended to prove such a result under the assumption of subexponential tails on the Wigner matrices. We do not pursue this direction further here; however, in the companion paper \cite{BenBouMcK2021II}, we prove \eqref{eqn:deformedwigner} with a different approach when $W_N$ is a GOE matrix. For a related problem, see the free-addition model below, in Corollary \ref{cor:freeaddition}.
\end{rem}


\subsection{Erd\H{o}s-R{\'e}nyi matrices.}\
We now consider Erd\H{o}s-R{\'e}nyi matrices with near-optimal sparsity parameter $p \geq N^\epsilon/N$, i.e., when each vertex has expected degree $N^\epsilon$. It is classical that the limiting spectral distribution of such matrices is semicircular as long as $p = \omega(1/N)$ (see, e.g., \cite{TraVuWan2013}), but not semicircular anymore if $p = \alpha/N$ for $\alpha$ fixed (see, e.g., \cite{BauGol2001}).

Fix some $\epsilon > 0$, and let $H_N$ be an $N \times N$ Erd{\H{o}}s-R{\'e}nyi random matrix with parameter $1-\epsilon \geq p_N \geq \frac{N^\epsilon}{N}$. scaled so that the bulk eigenvalues are order one. This means that the entries are independent up to symmetry and
\[
    (H_N)_{ij} = \frac{1}{\sqrt{Np_N(1-p_N)}} \begin{cases} 1 & \text{with probability } p_N, \\ 0 & \text{with probability } 1-p_N. \end{cases}
\]
The following corollary uses Theorem \ref{thm:convex_functional}.

\begin{cor}\label{cor:ER}
\textbf{(Erd\H{o}s-R{\'e}nyi matrices with $p \geq N^\epsilon/N$)}
For any $E \in \R$ with $\abs{E} \neq 2$ we have
\[
    \lim_{N \to \infty} \frac{1}{N}\log \E[\abs{\det(H_N-E)}] = \int_\R \log\abs{\lambda-E} \rho_{\text{sc}}(\lambda) \diff \lambda.
\]
\end{cor}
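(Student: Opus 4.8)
The plan is to deduce Corollary \ref{cor:ER} from the general Theorem \ref{thm:convex_functional}, so the work reduces to verifying the five assumptions \hyperlink{assn:I}{(I)}, \hyperlink{assn:M}{(M)}, \hyperlink{assn:E}{(E)}, \hyperlink{assn:C}{(C)}, \hyperlink{assn:S}{(S)} for the matrix $H_N - E$. We take $X = (X_{ij})_{i \le j}$ to be the independent rescaled Bernoulli entries, so \hyperlink{assn:I}{(I)} is immediate; and $\Phi(X) = H_N - E$ is linear (affine, which is fine since the constant shift does not affect convexity or Lipschitz constant), so \hyperlink{assn:M}{(M)} holds with $\|\Phi\|_{\textup{Lip}}$ an explicit constant of order $1$ (each entry of the Bernoulli matrix enters with coefficient $1/\sqrt{Np_N(1-p_N)}$, which is at most $\sqrt{2}$ under the standing assumption $p_N \le 1-\epsilon$; note that here $M = \binom{N+1}{2}$ and, after multiplying through, $\Phi$ maps into $\ms S_N$ with the norm \eqref{eqn:frobenius-like-norm} being comparable to the natural $\ell^2$ norm on the entries up to a factor $2$). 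For assumption \hyperlink{assn:E}{(E)} we take $\mu_N$ to be the shifted semicircle law $\rho_{\text{sc}}(\cdot + E)$: this is compactly supported and has a bounded density away from the edges, and since $|E| \ne 2$ the point $0$ lies either in the bulk (where the density is bounded, so the $|x|^{-1+\kappa}$ bound holds trivially) or outside $[-2+E, 2+E]$ (where the density vanishes near $0$). The bound $d_{\textup{BL}}(\E \hat\mu_{H_N - E}, \rho_{\text{sc}}(\cdot+E)) \le N^{-\kappa}$ is then equivalent to $d_{\textup{BL}}(\E \hat\mu_{H_N}, \rho_{\text{sc}}) \le N^{-\kappa}$, which follows from quantitative local semicircle laws for sparse Erd\H{o}s--R\'enyi matrices with $p_N \ge N^{\epsilon-1}$ (e.g.\ \cite{ErdKnoYauYin2013} and the sparse local law literature), which give control of $\E \hat\mu_{H_N}$ against $\rho_{\text{sc}}$ in much stronger metrics than bounded-Lipschitz, with polynomial rate; one picks $\kappa$ small enough relative to $\epsilon$.

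The substantive model-specific work is in assumptions \hyperlink{assn:C}{(C)} and \hyperlink{assn:S}{(S)}. For the coarse bounds \hyperlink{assn:C}{(C)}: the operator norm bound $\|H_N\| \le 3$ (say) holds with probability at least $1 - N^{-D}$ for any $D$ once $p_N \ge N^{\epsilon-1}$, again by the sparse local law / norm bounds for sparse random matrices; on this event all $|\lambda_i| \le 3 + |E|$, so the product in \eqref{eqn:detconub_coarse} equals $1$, and on the complement one uses the crude bound $\prod_i(1+|\lambda_i|) \le (1+\|H_N-E\|)^N \le (C N)^N$ from $\|H_N - E\|_F \le C\sqrt{N} \cdot N$ deterministically (entries bounded by $CN$ in absolute value at worst -- actually the entries are bounded by $\sqrt{2}$, so $\|H_N\| \le \|H_N\|_F \le \sqrt2 N$), combined with the probability $N^{-D}$; taking $D$ large kills the $\frac1N\log$. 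For the gap estimate \eqref{eqn:wegner}, i.e.\ that $H_N - E$ has no eigenvalue in $[-e^{-N^\epsilon}, e^{-N^\epsilon}]$ with probability $\to 1$: if $|E| > 2$ this is trivial from the norm bound since then $0$ is at distance $\ge c > 0$ from the spectrum; if $|E| < 2$ we need a Wegner-type estimate showing $\P(\exists\, \lambda_i(H_N) \in [E - e^{-N^\epsilon}, E + e^{-N^\epsilon}]) \to 0$, which follows from the local law controlling the number of eigenvalues in a window of size $\eta$ around a bulk point $E$ by $O(N\eta)$ with high probability (valid down to $\eta \gg N^{-1}$, hence certainly for $\eta = e^{-N^\epsilon}$). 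For the integrability bound \eqref{eqn:detconub_NlogN}, I would use a smoothed/counting estimate: $\log|\det(H_N-E)| = \sum_i \log|\lambda_i - E|$, split into $|\lambda_i - E| \ge 1$ (controlled by $N\log\|H_N\| = O(N)$ with overwhelming probability, and by $O(N\log N)$ always) and $|\lambda_i - E| < 1$; for the latter, the number of eigenvalues within distance $2^{-k}$ of $E$ is $O(N 2^{-k})$ for $k \lesssim \log N$ by the local law with high probability, and absorbing the very small gaps needs the lower tail on $\lambda_{\text{nearest}} - E$, which is again a Wegner estimate; carefully summing gives $\E[|\det(H_N-E)|^{1+\delta}] \le e^{O(N\log N)}$, with the rare bad event contributing negligibly because $|\det(H_N-E)| \le (CN)^N$ always. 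The cleanest route is probably: take $\delta$ small, bound $\E[|\det|^{1+\delta}] \le \E[|\det|^{1+\delta} \mathbf 1_{\text{good}}] + (CN)^{(1+\delta)N}\P(\text{bad})$, and on the good event $|\det(H_N-E)| \le e^{CN\log N}$ directly from the eigenvalue counting.

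For spectral stability \hyperlink{assn:S}{(S)}: here $X_{\rm cut}$ zeroes out the (rescaled) Bernoulli entries whose absolute value exceeds $N^{-\kappa}/\|\Phi\|_{\textup{Lip}}$. Since every nonzero entry of the rescaled matrix equals $1/\sqrt{Np_N(1-p_N)}$, which is of order $(N p_N)^{-1/2} \le N^{-\epsilon/2}$, the cutoff threshold $N^{-\kappa}/\|\Phi\|_{\textup{Lip}}$ is, for $\kappa < \epsilon/2$, larger than all the entries, so $X_{\rm cut} = X$ identically and the probability in \eqref{eqn:ss} is literally $0$ for $N$ large. (More carefully: when $p_N$ is close to $1-\epsilon$ the entries are of order $N^{-1/2}$, still far below $N^{-\kappa}$ for small $\kappa$; the only regime to watch is $p_N$ near $1$, excluded by $p_N \le 1-\epsilon$.) Thus \hyperlink{assn:S}{(S)} is free. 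Assembling: Theorem \ref{thm:convex_functional} gives $\frac1N\log\E[|\det(H_N-E)|] \to \int \log|\lambda| \, \rho_{\text{sc}}(\lambda + E)\diff\lambda = \int \log|\lambda - E|\,\rho_{\text{sc}}(\lambda)\diff\lambda$, as claimed. The main obstacle is assumption \hyperlink{assn:C}{(C)}, specifically the $N\log N$-integrability bound \eqref{eqn:detconub_NlogN} and the small-eigenvalue Wegner estimate \eqref{eqn:wegner} near a bulk energy $E$ with $|E|<2$: these require invoking sufficiently strong sparse local semicircle laws (down to essentially the optimal scale $\eta \gg N^{\epsilon-1}$, consistent with $p_N \ge N^{\epsilon-1}$) and carefully controlling the contribution of the unlikely event where the operator norm is atypically large, rather than any genuinely new difficulty; the hypotheses $|E|\ne 2$ and $p_N \ge N^\epsilon/N$ enter exactly here.
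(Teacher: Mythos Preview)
Your overall strategy---verify the hypotheses of Theorem \ref{thm:convex_functional}---is exactly the paper's, and your treatment of \hyperlink{assn:I}{(I)}, \hyperlink{assn:M}{(M)}, \hyperlink{assn:S}{(S)} is correct (in particular, the observation that the Bernoulli entries are never truncated for small $\kappa$ is the paper's argument too). However, there are two genuine gaps, both stemming from the fact that the Erd\H{o}s--R\'enyi matrix as normalized here is \emph{not} centered: $\E[H_N]$ is a rank-one matrix with nonzero eigenvalue $\sqrt{Np_N/(1-p_N)}\to\infty$, so by Weyl's inequality $\lambda_N(H_N)\to\infty$ and your claim ``$\|H_N\|\le 3$ with probability $1-N^{-D}$'' is simply false. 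For \eqref{eqn:detconub_coarse} this does not matter, since your deterministic backup $\|H_N\|\le\|H_N\|_F\le CN\ll e^{N^\epsilon}$ already makes the product identically $1$ (this is the paper's argument, stated more cleanly as $\|H_N\|^2\le\sum_{i,j}H_{ij}^2\le N^{2-\epsilon}/\epsilon$). But for the Wegner estimate \eqref{eqn:wegner} when $E>2$, you cannot say ``$0$ is at distance $\ge c$ from the spectrum'': the outlier $\lambda_N(H_N)$ crosses every fixed $E>2$, so one must separately argue that $\lambda_N(H_N)\ne E$ with high probability. The paper does this via $\lambda_N(H_N)\ge\lambda_N(\E H_N)+\lambda_1(H_N-\E H_N)\ge N^{\epsilon/2}/\sqrt{\epsilon}-3\to\infty$, using \cite{Vu2007} to control $\lambda_1(H_N-\E H_N)$.

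The more serious gap is your bulk Wegner argument for $|E|<2$. You write that the local law bounds the number of eigenvalues in a window of size $\eta=e^{-N^\epsilon}$ by $O(N\eta)\to 0$. This is not what local laws give: they control the eigenvalue count in a window of size $\eta$ by $N\eta\,\rho_{\text{sc}}(E)+O(N^{o(1)})$, with an additive error that is at best $O(1)$ and only valid for $\eta\gg N^{-1}$. At scale $\eta=e^{-N^\epsilon}$ the local law says nothing; equivalently, rigidity $|\lambda_i-\gamma_i|\le N^{-1+o(1)}$ does not exclude $\lambda_i=E$. The paper instead invokes bulk fixed-energy universality from \cite{LanSosYau2019}, which gives that the local eigenvalue statistics near $E$ match GOE; in particular $\P(\exists\,\lambda_i\in[E-N^{-2},E+N^{-2}])\to 0$ via the Painlev\'e V description of the GOE gap probability. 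This is a substantially deeper input than the local law and is where the restriction $|E|\ne 2$ genuinely enters in the bulk. Your proposal for \eqref{eqn:detconub_NlogN} is also more complicated than needed: the deterministic norm bound gives $\E[|\det(H_N-E)|^{1+\delta}]\le(\|H_N-E\|)^{(1+\delta)N}\le(CN)^{(1+\delta)N}$ directly.
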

This result is locally uniform for $E$ away from the edges, meaning $E$ in any compact subset of $\R \setminus \{-2, 2\}$.


\subsection{$d$-regular matrices.}\
We now consider $d$-regular random graphs for $N^\epsilon \leq d \leq N^{2/3-\epsilon}$, i.e., we fix once and for all an $\epsilon > 0$ and let $H'_N$ be the adjacency matrix of a (uniformly) random, simple $d$-regular graph on $N$ vertices for some sequence $d = d_N$ satisfying 
\[
	N^\epsilon \leq d_N \leq N^{\frac{2}{3}-\epsilon}.
\]
Then we consider the normalization
\begin{equation}
\label{eqn:dreg-law}
	H_N = \frac{1}{\sqrt{d(1-\frac{d}{N})}} H'_N.
\end{equation}
Tran, Vu, and Wang \cite{TraVuWan2013} showed that the limiting empirical spectral measure of $H_N$ is semicircular as long as $d_N \to \infty$.

\begin{prop}
\label{prop:dreg}
\textbf{($d$-regular matrices with $N^\epsilon \leq d \leq N^{2/3-\epsilon}$)}
For any $E \in \R$ with $\abs{E} \neq 2$ we have
\[
	\lim_{N \to \infty} \frac{1}{N} \log \E[\abs{\det(H_N-E)}] = \int_\R \log\abs{\lambda-E} \rho_{\text{sc}}(\lambda) \diff \lambda.
\]
\end{prop}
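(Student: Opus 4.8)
The plan is to invoke Theorem \ref{thm:concentrated_input} with the role of its matrix played by $H_N-E$ and with $\mu_N$ the semicircle law translated by $-E$, i.e.\ the law of $\lambda-E$ for $\lambda\sim\rho_{\mathrm{sc}}$, so that $\int_\R\log|\lambda|\,\mu_N(\diff\lambda)=\int_\R\log|\lambda-E|\,\rho_{\mathrm{sc}}(\lambda)\,\diff\lambda$ is the claimed limit. (Theorem \ref{thm:convex_functional} is not available here, since the uniform law on $d$-regular graphs is carried by a discrete set of matrices and so cannot be written as a convexity-preserving functional of independent coordinates.) A constant translation changes neither Lipschitz constants, traces, nor the event in \eqref{eqn:wegner}, so it suffices to check \hyperlink{assn:W}{(W)} and \hyperlink{assn:L}{(L)} for $H_N$ and the gap assumption \eqref{eqn:wegner} for $H_N-E$. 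The regularity requirements on $\mu_N$ are immediate: it is supported on the fixed compact interval $[-2-E,2-E]$, and since $|E|\neq 2$ the translated semicircle density is bounded near $0$ (if $|E|<2$) or vanishes identically there (if $|E|>2$), so $\mu_N(x)<\kappa^{-1}|x|^{-1+\kappa}$ holds for all small $\kappa$.

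For \hyperlink{assn:W}{(W)} I would use the quantitative semicircle law for random regular graphs of intermediate degree: for $N^\epsilon\le d\le N^{2/3-\epsilon}$ a local semicircle law holds down to spectral scales $N^{-1+\oo(1)}$ with probability $1-N^{-D}$ for every fixed $D$ (the qualitative convergence is \cite{TraVuWan2013}; what is needed is the polynomial rate supplied by the local-law and rigidity theory in this range). Integrating the Stieltjes transform gives ${\rm W}_1(\hat\mu_{H_N},\rho_{\mathrm{sc}})\le N^{-c}$ on that event; the Perron eigenvector contributes the lone deterministic eigenvalue $\sqrt{d/(1-d/N)}=\OO(N^{1/3})$ and, by the growing-degree edge bound, no other outlier, so taking expectations yields ${\rm W}_1(\E\hat\mu_{H_N},\rho_{\mathrm{sc}})\le N^{-\kappa}$, which is \eqref{eqn:wasserstein}.

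For \hyperlink{assn:L}{(L)} I would import the concentration of linear eigenvalue statistics in this degree range: $\frac1N\Tr f(H_N)$ fluctuates about its mean at the scale $N^{\oo(1)}/N$, with sub-Gaussian behavior close to the mean and a sub-exponential tail beyond, which is precisely \eqref{eqn:lipschitztrace} for some $\epsilon_0>0$ and every $\zeta>0$ (the $N^{-\zeta}$ slack absorbs the $N^{\oo(1)}$ losses intrinsic to regular-graph estimates). A crude martingale over the $N$ vertices --- using that re-sampling one vertex's neighborhood perturbs $H_N$ in rank $\OO(d)$ and Frobenius norm $\OO(1)$, hence $|\Tr f(H_N)-\Tr f(H_N')|\le\|f\|_{\textup{Lip}}\|H_N-H_N'\|_1=\OO(\sqrt d\,\|f\|_{\textup{Lip}})$ --- is the right heuristic but only gives a variance proxy of order $\|f\|_{\textup{Lip}}^2 d/N$, far too weak for \eqref{eqn:lipschitztrace}; obtaining the optimal $1/N$-scale bound genuinely uses the rigidity machinery available precisely for $d\le N^{2/3-\epsilon}$.

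It remains to verify the gap assumption \eqref{eqn:wegner}: for every $\epsilon>0$, with probability $1-\oo(1)$ the matrix $H_N$ has no eigenvalue within $e^{-N^\epsilon}$ of $E$. When $|E|>2$ this is soft: for large $N$, $E$ lies strictly between the bulk $[-2-\oo(1),2+\oo(1)]$ and the Perron value $\sqrt{d/(1-d/N)}\to\infty$, and the edge confinement of the non-trivial spectrum together with the size of the Perron value are known in this range, so $H_N$ avoids a fixed neighborhood of $E$ with probability $1-\oo(1)$. When $|E|<2$ one needs a genuine Wegner-type anti-concentration estimate --- equivalently, a lower bound on the least singular value of $H_N-E$ --- valid at the stretched-exponentially small scale $e^{-N^\epsilon}$. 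This is the shifted analogue of the smallest-singular-value problem for adjacency matrices of random regular graphs, and I expect it to be the main obstacle, as it concerns pathologically close eigenvalues of a genuinely discrete model and is invisible to the local law. I would attack it by combining rigidity at polynomial scale, which confines to $E$ only $N^{\oo(1)}$ eigenvalues, with an anti-concentration bound for those eigenvalues that survives conditioning on the remainder of the spectrum, along the lines of the invertibility arguments for random regular graphs. Granting \hyperlink{assn:W}{(W)}, \hyperlink{assn:L}{(L)}, and \eqref{eqn:wegner}, Theorem \ref{thm:concentrated_input} applied to $H_N-E$ gives the stated limit, and its local uniformity for $E$ in compact subsets of $\R\setminus\{-2,2\}$ follows from the uniformity of these three inputs.
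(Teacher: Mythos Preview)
Your overall framework is right --- this is a concentrated-input argument, not a convexity-preserving one --- but there are two genuine gaps, and in both places the paper's route is quite different from what you sketch.

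\textbf{Concentration (assumption \hyperlink{assn:L}{(L)}).} You concede the vertex-by-vertex martingale is too weak and then appeal to ``the rigidity machinery available precisely for $d\le N^{2/3-\epsilon}$'' to get \eqref{eqn:lipschitztrace}. But rigidity/local-law results control the empirical measure, not the exponential moments of Lipschitz traces that \eqref{eqn:lipschitztrace} demands; there is no off-the-shelf sub-Gaussian/sub-exponential concentration for linear statistics of random regular graphs at the $1/N$ scale in this range. The paper sidesteps this entirely: it does \emph{not} verify \hyperlink{assn:L}{(L)}, and instead re-opens the proof of Theorem \ref{thm:concentrated_input} to isolate the two specific estimates actually needed (the analogues of \eqref{eqn:dreg-lb} and \eqref{eqn:dreg-ub} for the single test function $\log_\eta(\cdot-E)$). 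It then realizes the uniform $d$-regular law as Erd\H{o}s--R\'enyi $G(N,d/N)$ conditioned on $d$-regularity, uses the Guionnet--Zeitouni convex-Lipschitz concentration \cite{GuiZei2000} for the \emph{Erd\H{o}s--R\'enyi} matrix (where entries are independent and bounded), and transfers the resulting bound via the conditioning-probability estimate $\P_{ER}(\text{$d$-regular})\ge e^{-CN\sqrt d}$ from \cite{TraVuWan2013}. The resulting exponent $dN^{1-\frac{3\kappa}{2}}$ beats the conditioning cost $N\sqrt d$ for $\kappa$ small, which is exactly what makes the argument close. This conditioning trick is the missing idea in your proposal.

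\textbf{Wegner estimate in the bulk.} For $|E|<2$ you propose to combine polynomial-scale rigidity with an anti-concentration/invertibility argument for the few eigenvalues near $E$. That programme is plausible but is a research project in itself for this discrete model. The paper instead invokes the fixed-energy bulk universality of Landon--Sosoe--Yau \cite{LanSosYau2019}: the gap probability at $E$ converges to the universal Painlev\'e-V limit $1-F(\epsilon)$, and since $F(\epsilon)\to 0$ as $\epsilon\downarrow 0$, the probability of an eigenvalue within $N^{-2}$ of $E$ tends to zero. This gives \eqref{eqn:wegner} directly, with no model-specific anti-concentration needed. Your treatment of $|E|>2$ via edge rigidity and the Perron eigenvalue is correct and matches the paper.

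Your handling of \hyperlink{assn:W}{(W)} via the Bauerschmidt--Knowles--Yau local law is essentially what the paper does (through Proposition \ref{prop:bai}), so that part is fine.
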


We call this a ``proposition'' rather than a ``corollary'' because it is not a direct consequence of our theorems, but rather can be proved in a similar way. We give details in Section \ref{sec:dreg}.

We note that the assumption $d_N \leq N^{\frac{2}{3}-\epsilon}$ is only to verify Assumption \hyperlink{assn:W}{(W)} and the Wegner estimate \eqref{eqn:wegner} using (much stronger) local laws of Bauerschmidt-Knowles-Yau and Bauerschmidt-Huang-Knowles-Yau \cite{BauKnoYau2017, BauHuaKnoYau2020}. It is likely the result holds up to $d_N \leq N^{1-\epsilon}$ (of course, if $d_N = N$, the determinant is zero).


\subsection{Band matrices.}\
In this section we consider random band matrices $H_N$, i.e., matrices whose $(i,j)$th entry is zero unless $i$ and $j$ are less than some $W$ apart. Many statistics of $H_N$ are believed to undergo a phase transition at $W \sim N^{1/2}$. For example, the eigenvectors are supposed to be localized on $\oo(N)$ sites for $W \ll N^{1/2}$ and delocalized for $W \gg N^{1/2}$. However, we establish that the determinant asymptotics do not see this phase transition: They are the same as long as $W \to +\infty$ polynomially in $N$. For a full discussion, we direct the reader to \cite{Bou2018}.

Let $\mu$ be a centered probability measure with unit variance that has subexponential tails in the sense of \eqref{eqn:subexponentialtails}. Suppose also that $\mu$ has a bounded density $\mu(\cdot)$. Fix any $\epsilon > 0$. Let $H_N$ be an $N \times N$ band matrix with bandwidth $W = W_N \geq N^\epsilon$ corresponding to $\mu$. This means that $H_N$ has independent entries up to symmetry with
\[
    (H_N)_{ij} \begin{cases} = 0 & \text{if } \|i - j\| > W, \\ \sim \frac{X}{\sqrt{2W+1}} & \text{if } \|i - j\| \leq W. \end{cases}
\]
(Here we take periodic distance $\|i-j\| = \min(\abs{i-j},N-\abs{i-j})$.) The following corollary uses Theorem \ref{thm:convex_functional}.

\begin{cor}\label{cor:BM}
\textbf{(One-dimensional band matrices with bandwidth $W \geq N^\epsilon$)}
Under the above assumptions,
\[
    \lim_{N \to \infty} \frac{1}{N} \log \E[\abs{\det(H_N - E)}] = \int_\R \log\abs{\lambda-E} \rho_{\text{sc}}(\lambda) \diff \lambda.
\]
\end{cor}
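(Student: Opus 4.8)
\noindent\textit{Proof proposal.}\ The plan is to apply Theorem~\ref{thm:convex_functional} to $H_N - E$, i.e.\ with $\Phi(X) = H_N - E$ viewed as an affine function of the $\simeq NW$ independent band entries $X = (X_{ij})_{i\le j,\,\|i-j\|\le W}$ (the entries of $\sqrt{2W+1}\,H_N$). Assumptions~\hyperlink{assn:I}{(I)} and~\hyperlink{assn:M}{(M)} are then immediate: $\Phi$ is affine, hence convexity-preserving, with $\|\Phi\|_{\textup{Lip}} = (2W+1)^{-1/2}$ in the norm~\eqref{eqn:frobenius-like-norm}. For assumption~\hyperlink{assn:E}{(E)} we take the fixed, $N$-independent measure $\mu_N(\diff x) = \rho_{\text{sc}}(x+E)\,\diff x$, the pushforward of $\rho_{\text{sc}}$ under $\lambda\mapsto\lambda-E$, so that $\int_\R\log|\lambda|\,\mu_N(\diff\lambda) = \int_\R\log|\lambda-E|\,\rho_{\text{sc}}(\lambda)\,\diff\lambda$ is exactly the asserted limit; its density near $0$ equals $\rho_{\text{sc}}(x+E)$, which is bounded (and vanishes when $|E|>2$), so the singularity bound in~\hyperlink{assn:E}{(E)} holds once $\kappa$ is small, for \emph{every} $E\in\R$ --- including $E=\pm2$. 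It remains to verify the quantitative inputs: the bounded-Lipschitz bound~\eqref{eqn:dBL}, the gap estimate~\eqref{eqn:wegner} and the coarse bounds~\eqref{eqn:detconub_coarse}--\eqref{eqn:detconub_NlogN} of~\hyperlink{assn:C}{(C)}, and the spectral stability~\eqref{eqn:ss} of~\hyperlink{assn:S}{(S)}.

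Because $d_{\textup{BL}}$ is translation invariant, \eqref{eqn:dBL} reduces to $d_{\textup{BL}}(\E\hat\mu_{H_N},\rho_{\text{sc}})\le N^{-\kappa}$. For this I would invoke the local semicircle law for band matrices, which applies here since the variance profile $S_{ij} = (2W+1)^{-1}\mathbf{1}_{\|i-j\|\le W}$ has all row sums equal to $1$ and $\max_{ij}S_{ij} = (2W+1)^{-1}$, so the governing scale parameter is $2W+1\ge N^\epsilon$, while~\eqref{eqn:subexponentialtails} supplies the required moment bounds. At any spectral scale $\eta\gg 1/W$ this controls $\frac1N\Tr(H_N-z)^{-1}$ to within $N^{-c}$ of the Stieltjes transform of $\rho_{\text{sc}}$, with probability at least $1-N^{-D}$ for every $D$. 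Feeding this into a Helffer--Sjöstrand representation of $\int f\,\diff(\hat\mu_{H_N}-\rho_{\text{sc}})$ for bounded-Lipschitz $f$ --- after mollifying $f$ at a small polynomial scale and truncating its far tail using the crude moment bound $\E[\tfrac1N\Tr H_N^{2p}]\le(1+\oo(1))\,C_p$ with $C_p$ the $p$-th Catalan number --- yields $d_{\textup{BL}}(\hat\mu_{H_N},\rho_{\text{sc}})\le N^{-\kappa}$ with the same probability, hence in expectation by convexity of $\nu\mapsto d_{\textup{BL}}(\nu,\rho_{\text{sc}})$ (the exceptional event being negligible and $d_{\textup{BL}}\le 2$ always). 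I expect this to be the main obstacle: the entire quantitative burden lies here, since one must extract a genuine \emph{polynomial} rate all the way down to the smallest admissible bandwidth $W=N^\epsilon$. Everything else is soft.

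For the gap estimate~\eqref{eqn:wegner} I would use the bounded-density hypothesis on $\mu$, which is imposed precisely for this purpose. Conditioning on all entries outside the $k$-th row and column, and on the off-diagonal part of that row, the entry $(H_N)_{kk}$ is still random, with density at most $\|\mu\|_{L^\infty}\sqrt{2W+1}$; a Schur-complement identity then shows that the spectrum of $H_N$ meeting an $\eta$-neighbourhood of $E$ forces $(H_N)_{kk}$ into an interval of length $\OO(\eta)$ (eigenvalues shared with the minor are handled by interlacing), so that $\P(\operatorname{dist}(E,\Spec H_N)\le\eta)\le CN^{3/2}\|\mu\|_{L^\infty}\eta$ by a union bound over $k$. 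With $\eta=e^{-N^\epsilon}$ this tends to $0$, uniformly in $E$ --- which is exactly why the corollary holds for all $E$, edges included. For the coarse bounds I would use the deterministic inequality $|\det(H_N-E)|\le(\|H_N\|+|E|)^N$ together with $\|H_N\|\le\sqrt{2W+1}\max_{ij}|X_{ij}|$. By~\eqref{eqn:subexponentialtails}, $\max_{ij}|X_{ij}|\le e^{N^\epsilon/2}$ with probability at least $1-e^{-e^{cN^\epsilon}}$; on that event no eigenvalue of $H_N-E$ exceeds $e^{N^\epsilon}$, so the product in~\eqref{eqn:detconub_coarse} equals $1$, while the contribution of the complementary event to the expectation is at most $e^{\OO(N^{1+\epsilon})}e^{-e^{cN^\epsilon}}\to0$ --- giving~\eqref{eqn:detconub_coarse}. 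Similarly $\E[(\sqrt{2W+1}\max_{ij}|X_{ij}|+|E|)^{(1+\delta)N}]\le e^{\OO(N\log N)}$ for any fixed $\delta>0$, since~\eqref{eqn:subexponentialtails} gives $\E[|X|^q]\le e^{\OO(q\log q)}$ and $\E[\max_{ij}|X_{ij}|^q]\le N^2\E[|X|^q]$; this is~\eqref{eqn:detconub_NlogN}.

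Finally, for~\eqref{eqn:ss}: the cut-off $X_{\textup{cut}}$ truncates each entry at level $N^{-\kappa}\sqrt{2W+1}\ge N^{\epsilon/2-\kappa}$, and $\Phi(X)-\Phi(X_{\textup{cut}})$ is supported on the rows and columns indexed by the truncated entries. By the rank bound $d_{\textup{KS}}(\hat\mu_A,\hat\mu_B)\le\frac1N\rank(A-B)$, the event in~\eqref{eqn:ss} is contained in $\{K>N^{1-\kappa}/2\}$, where $K$ counts the truncated band entries --- a sum of at most $N^2$ independent $\mathrm{Bernoulli}(q)$ variables with $q=\P(|X|\ge N^{-\kappa}\sqrt{2W+1})\le\beta e^{-N^{\epsilon/(4\alpha)}}$ once $\kappa<\epsilon/4$. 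A Chernoff bound gives $\P(K>N^{1-\kappa}/2)\le e^{-cN^{1-\kappa+\epsilon/(4\alpha)}}$, so choosing $\kappa<\epsilon/(4\alpha)$ makes the exponent exceed any fixed multiple of $N\log N$, which is~\eqref{eqn:ss}. With all assumptions verified for $\kappa=\kappa(\epsilon,\alpha)>0$ chosen small enough, Theorem~\ref{thm:convex_functional} gives the corollary; and since every estimate above is locally uniform in $E$, so is the conclusion.
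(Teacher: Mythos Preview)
Your proposal is correct and follows the same strategy as the paper: apply Theorem~\ref{thm:convex_functional} and verify \hyperlink{assn:I}{(I)}, \hyperlink{assn:M}{(M)}, \hyperlink{assn:E}{(E)}, \hyperlink{assn:C}{(C)}, \hyperlink{assn:S}{(S)} one by one. The implementations differ in two places. For~\hyperlink{assn:E}{(E)}, the paper uses Proposition~\ref{prop:bai} (Bai's method), feeding in the local law of \cite{ErdYauYin2012} for the bulk and the tail bound of \cite{BenPec2014} for the integral outside $[-A,A]$; your Helffer--Sj\"ostrand sketch is equivalent in spirit, but note that a \emph{fixed}-$p$ bound $\E[\tfrac1N\Tr H_N^{2p}]\le(1+\oo(1))C_p$ gives only an $N$-independent tail and cannot deliver the polynomial rate in~\eqref{eqn:dBL}; you need $p=p_N$ growing mildly with $N$, which is exactly what \cite{BenPec2014} provides. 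For the coarse bounds~\eqref{eqn:detconub_coarse} and~\eqref{eqn:detconub_NlogN}, the paper simply reuses the Wigner arguments (Weyl/Fischer decomposition and permutation expansion, needing only $2+\epsilon$ moments); your operator-norm route via $\|H_N\|\le\sqrt{2W+1}\max_{ij}|X_{ij}|$ is shorter and also valid, though it leans on the subexponential tails. One small fix: the ``$e^{\OO(N^{1+\epsilon})}$'' factor you assign to the bad event in~\eqref{eqn:detconub_coarse} is not a deterministic bound there---use H\"older together with your own moment estimate $\E[(\sqrt{2W+1}\max_{ij}|X_{ij}|+|E|)^{2N}]\le e^{\OO(N\log N)}$ to get $e^{\OO(N\log N)}\cdot e^{-e^{cN^\epsilon}/2}\to 0$. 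The Wegner and~\hyperlink{assn:S}{(S)} arguments match the paper's (Proposition~\ref{prop:schur} with Lemma~\ref{lem:schur_bdd_density}, and the Bennett/Chernoff rank bound as in the Wigner case).
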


This result is locally uniform in $E$.

We now comment on the significance of this result. In the companion paper \cite{BenBouMcK2021II}, we solve a problem of Fyodorov-Le Doussal \cite{FyoLeD2020} on a model called the ``elastic manifold.'' They consider a mean-field version of this model, corresponding to block-banded random matrices with bandwidth order $N$, and find the ``Larkin mass'' separating ordered and disordered phases. An important open problem is the behavior of the elastic manifold beyond mean field, when the corresponding random matrix is block-banded with sublinear bandwidth. It does not seem to be clear in which regimes this Larkin transition should persist, but Corollary \ref{cor:BM} may suggest that the transition remains for any polynomial bandwidth.


\subsection{Sample covariance matrices.}\
\label{subsec:covarianceMatstatement}
Let $\mu$ be a centered probability measure on $\R$ with unit variance and subexponential tails in the sense of \eqref{eqn:subexponentialtails}. We assume $\mu$ has density $f=e^{-g}$ with $f$ smooth enough in the sense that, for any $a\geq 1$, there exists $C_a>0$ such that for any $s\in\mathbb{R}$
\begin{equation}\label{eqn:smooth}
|\widehat{f}(s)|+|\widehat{f g''}(s)|\leq \frac{C_a}{(1+s^2)^a}.
\end{equation}

  Let $Y_{p,N}$ be a $p \times N = p_N \times N$ matrix whose entries are independent copies of $\mu$. Suppose that
\[
	\gamma = \lim_{N \to \infty} \frac{p_N}{N} \in (0,1].
\]
If $\gamma < 1$, we require a mild speed-of-convergence assumption
\begin{equation}
\label{eqn:wishartspeed}
	\abs{\gamma - \frac{p_N}{N}} \leq N^{-\epsilon}
\end{equation}
for some $\epsilon > 0$; if $\gamma = 1$, then for technical reasons we require $p_N = N$, i.e., we require the matrices to be exactly square rather than asymptotically square. Write $\mu_{\textup{MP},\gamma}$ for the Mar{\v{c}}enko-Pastur distribution
\begin{equation}\label{eqn:MP}
    \mu_{\textup{MP},\gamma}(\diff x) = \frac{\sqrt{(b_\gamma - x)(x - a_\gamma)}}{2\pi \gamma x} \mathds{1}_{[a_\gamma,b_\gamma]} \diff x
\end{equation}
where $a_\gamma = (1-\sqrt{\gamma})^2$, $b_\gamma = (1+\sqrt{\gamma})^2$. 

\begin{prop}\label{prop:Cov}
\textbf{(Sample covariance matrices with subexponential tails)} Under the above assumptions, 
for every $E \in \R$, we have
\[
    \lim_{N \to \infty} \frac{1}{p_N}\log\E\left[\abs{\det\left(\frac{1}{N}Y_{p,N}(Y_{p,N})^T - E\right)}\right] = \int \log\abs{\lambda - E} \mu_{\textup{MP},\gamma}(\lambda) \diff \lambda.
\]
\end{prop}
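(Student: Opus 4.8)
The plan is to deduce Proposition~\ref{prop:Cov} from Theorem~\ref{thm:concentrated_input} applied to the $p_N \times p_N$ sample covariance matrix $S_N = \frac{1}{N} Y_{p,N}(Y_{p,N})^T - E$, with the approximating sequence $\mu_N$ taken to be (essentially) the Mar\v{c}enko-Pastur law $\mu_{\textup{MP},\gamma}$ shifted by $-E$. First I would check Assumption~\hyperlink{assn:W}{(W)}: the standard quantitative Mar\v{c}enko-Pastur convergence results (e.g. Bai-type bounds, or the local Marchenko-Pastur law) give ${\rm W}_1(\E \hat\mu_{S_N}, \mu_{\textup{MP},\gamma}(\cdot - E)) \leq N^{-\kappa}$ for some $\kappa > 0$ under the speed assumption \eqref{eqn:wishartspeed}; I would actually want to pick $\mu_N$ to be the spectral density of the deterministic self-consistent (Mar\v{c}enko-Pastur) equation for the exact ratio $p_N/N$, which kills the finite-$N$ discrepancy coming from $p_N/N \neq \gamma$. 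The density/singularity condition on $\mu_N$ near $0$ is the delicate point: when $E$ is chosen so that $-E$ lands at an edge of $\mu_{\textup{MP},\gamma}$ (the relevant case is $\gamma < 1$, $E = -(1-\sqrt\gamma)^2$ or $E = -(1+\sqrt\gamma)^2$, or $\gamma = 1$, $E = 0$ where the density has a $1/\sqrt{x}$ blowup at the soft/hard edge), one must verify $\mu_N(x) < \kappa^{-1}|x|^{-1+\kappa}$; at a soft edge the density vanishes like a square root and this is trivial, while at the hard edge ($\gamma = 1$, $p_N = N$) the density behaves like $x^{-1/2}$, which still satisfies the bound with $\kappa \le 1/2$. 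For generic $E$, $-E$ is in the interior of the support or outside it, and the condition is immediate because $\mu_N$ is bounded near $0$ (or $0 \notin \supp \mu_N$).

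Next I would verify the concentration Assumption~\hyperlink{assn:L}{(L)} for Lipschitz traces of $S_N$. Since $S_N$ is a Lipschitz function of the Gaussian-like entries of $Y_{p,N}$ only when $\mu$ is Gaussian, here we do not have a direct log-Sobolev input; instead I expect the argument to route through the smoothness hypothesis \eqref{eqn:smooth}, which is exactly the hypothesis that lets one run a log-Sobolev (or modified-log-Sobolev / Bakry-\'Emery-type) argument after a change of variables, or alternatively a comparison/interpolation argument reducing to the Gaussian sample covariance case where concentration of $\frac{1}{N}\Tr(f(S_N))$ around its mean with the sub-Gaussian/sub-exponential two-regime rate in \eqref{eqn:lipschitztrace} is classical. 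The map $y \mapsto \frac{1}{N} y y^T$ is quadratic, not globally Lipschitz, so I anticipate the concentration statement for $\Tr(f(S_N))$ as a function of the raw entries requires either restricting to the overwhelmingly-likely event $\{\|Y_{p,N}\| \leq C\sqrt N\}$ (on which the map is Lipschitz with the right constant) and controlling the complement via subexponential tails, or invoking a product-measure concentration result tailored to quadratic functions. This is the step I expect to be the main obstacle, and I suspect the proof in Section~\ref{sec:modelproofs} handles it by a truncation-plus-Talagrand or truncation-plus-log-Sobolev scheme analogous to the one sketched for Theorem~\ref{thm:convex_functional}, which is presumably why \eqref{eqn:smooth} is imposed.

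Finally I would verify the gap/Wegner estimate \eqref{eqn:wegner}, i.e. that with probability tending to $1$ the matrix $\frac{1}{N}Y_{p,N}(Y_{p,N})^T - E$ has no eigenvalue in $[-e^{-N^\epsilon}, e^{-N^\epsilon}]$, equivalently that $\frac{1}{N}Y_{p,N}(Y_{p,N})^T$ has no eigenvalue in $[E - e^{-N^\epsilon}, E + e^{-N^\epsilon}]$. For $E$ outside the support this is immediate from edge rigidity; for $E$ in the bulk it follows from a local-law lower bound on eigenvalue spacings / a bound on the smallest singular value of $\frac{1}{N}Y Y^T - E$, which for sample covariance matrices with the smoothness \eqref{eqn:smooth} is available from the literature (smallest-singular-value bounds of Rudelson-Vershynin type, or local Marchenko-Pastur laws). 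The case $\gamma = 1$, $E = 0$ is special --- then one needs the hard-edge behaviour of the smallest singular value of a square matrix with i.i.d. smooth entries, which is precisely why $p_N = N$ exactly (rather than asymptotically) is assumed. Once \hyperlink{assn:W}{(W)}, \hyperlink{assn:L}{(L)}, and \eqref{eqn:wegner} are in hand, Theorem~\ref{thm:concentrated_input} gives $\frac{1}{p_N}\log\E[|\det S_N|] - \int \log|\lambda|\,\mu_N(\diff\lambda) \to 0$, and since $\int \log|\lambda|\,\mu_N(\diff\lambda) \to \int \log|\lambda - E|\,\mu_{\textup{MP},\gamma}(\diff\lambda)$ (using the near-$0$ regularity to pass the singularity of $\log|\cdot|$ through the ${\rm W}_1$/edge convergence, exactly the kind of estimate already needed inside the proof of the theorem), the claim follows.
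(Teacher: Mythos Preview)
Your plan diverges from the paper's in a way that leaves the central step unresolved. The paper does \emph{not} apply Theorem~\ref{thm:concentrated_input}; it explicitly notes that neither general theorem covers this model as a black box, and instead \emph{mimics the proof of Theorem~\ref{thm:convex_functional}} (convexity-preserving functional). The obstacle you correctly identify---that $Y \mapsto \tfrac1N YY^T$ is quadratic, hence neither Lipschitz nor convexity-preserving---is resolved by the Hermitization trick: one passes to the $(p+N)\times(p+N)$ block matrix $\mf{H}_N = N^{-1/2}\bigl(\begin{smallmatrix}0 & Y\\ Y^T & 0\end{smallmatrix}\bigr)$, which depends \emph{linearly} on the entries of $Y$ and satisfies $\tr f(\mf{H}_N^2)=2\tr f(YY^T/N)+(N-p)f(0)$. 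This makes the concentration step a Talagrand-for-product-measures argument exactly as in Lemma~\ref{lem:prob_econc}, once one decomposes $x\mapsto\log_\eta^K(x^2-E)$ into a bounded number of convex/concave $\tfrac{C_E}{\eta^2}$-Lipschitz pieces (Lemma~\ref{lem:convex-log-decomposition}). Your proposal to restrict to $\{\|Y\|\le C\sqrt N\}$ and claim Lipschitz there does not by itself yield assumption~\hyperlink{assn:L}{(L)} in the required form, and the paper's route avoids this entirely.

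You also misread the role of the smoothness hypothesis~\eqref{eqn:smooth}: it is not used for concentration at all, but is exactly the input to the Wegner estimate~\eqref{eqn:wegner}, via a Schur-complement computation for $((H-z)^{-1})_{ii}$ in terms of $X_i^* R_i(z)X_i$ and an anti-concentration lemma from \cite{BouErdYauYin2016} (which needs the decay of $\widehat f$ and $\widehat{fg''}$). The remaining checks---assumption~\hyperlink{assn:E}{(E)} via Tikhomirov's $d_{\textup{KS}}$ bound plus~\eqref{eqn:wishartspeed}, the coarse bounds~\eqref{eqn:detconub_coarse} and~\eqref{eqn:detconub_NlogN} via singular-value Weyl inequalities and Fischer's inequality, and spectral stability~\hyperlink{assn:S}{(S)} with the modified truncation threshold $N^{-\kappa+1/2}$---parallel the Wigner case.
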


As for $d$-regular matrices, this is called a ``proposition'' rather than a ``corollary'' because it is proved along the same lines as our theorems, rather than following from them in a strict sense. The details are in Section \ref{sec:covarianceMat}. The proof also shows, as usual, that the limit holds uniformly in $E$.

Proposition \ref{prop:Cov} complements a 1989 result of Dembo \cite{Dem1989}, who gave an exact formula at finite $N$ for the averaged determinant in the special case $E = 0$, without requiring the assumption of a bounded density. In our normalization, he showed by a combinatorial method that
\[
	\E\left[\abs{\det\left(\frac{1}{N}Y_{p,N}(Y_{p,N})^T\right)}\right] = \E\left[\det\left(\frac{1}{N}Y_{p,N}(Y_{p,N})^T\right)\right] = \frac{N!}{N^p(N-p)!},
\]
and one can check from the known log-potential of the Mar{\v c}enko-Pastur law that $\lim_{N \to \infty} \frac{1}{N}\log \left( \frac{N!}{N^p(N-p)!} \right)$ is the same as given by our proposition.


\subsection{Gaussian matrices with a (co)variance profile.}\ \label{sec:GausCov}
Let $H_N$ be an $N \times N$ real symmetric Gaussian matrix, possibly with a mean, a variance profile, and/or correlated entries, satisfying the technical assumptions below. These are essentially the assumptions needed for the local law of Erd{\H{o}}s \emph{et al.} \cite{ErdKruSch2019} which we will use in the proof. We first give an easier statement for matrices with independent entries up to symmetry (Corollary \ref{cor:GaussianCovarianceA}), then a more involved statement for matrices with correlations (Corollary \ref{cor:GaussianCovarianceB}). In the statement, we decompose $H_N = A_N + W_N$ where $A_N = \E[H_N]$. These corollaries use Theorem \ref{thm:concentrated_input}.

In the following mean-field conditions, the arbitrary parameter $p > 0$ is fixed.
\begin{itemize}
\item[\hypertarget{assn:B}{(B)}] Bounded mean. We have $\sup_N \|A_N\| < \infty$.
\item[\hypertarget{assn:F}{(F)}] Flatness. For each $N$, 
\[
     T \in \C^{N \times N}, \quad T \text{ positive semi-definite} \quad \implies \quad \frac{1}{p} \frac{\Tr(T)}{N} \leq \E[W_N T W_N] \leq p \frac{\Tr(T)}{N}.
\]
\end{itemize}

Let $\mu_N$ be the measure from the size-$N$ Matrix Dyson Equation, that is, the measure with density $\mu_N(\cdot)$ whose Stieltjes transform at $z \in \mathbb{H}$ is $\frac{1}{N}\Tr(M_N(z))$, where $M_N(z)$ is the (unique, deterministic) solution to the following constrained equation over $\C^{N \times N}$:
\begin{align*}
    &\Id_{N \times N} + (z \Id_{N \times N} - A_N + \E[W_NM_N(z)W_N]) M_N(z) = 0 \\
    &\text{subject to} \quad \im M_N(z) = \frac{M_N(z) - M_N(z)^\ast}{2\ii } > 0 \quad \text{in the sense of quadratic forms}.
\end{align*}

\begin{subcorollary}{thm}
\begin{cor}
\textbf{(Gaussian matrices with a variance profile)}
\label{cor:GaussianCovarianceA}
If $H_N$ has independent entries up to symmetry, then under assumptions \hyperlink{assn:B}{(B)} and \hyperlink{assn:F}{(F)} we have
\[
    \lim_{N \to \infty} \left( \frac{1}{N}\log \E[\abs{\det(H_N)}] - \int_\R \log\abs{\lambda} \mu_N(\lambda) \diff \lambda \right) = 0.
\]
\end{cor}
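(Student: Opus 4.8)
The plan is to derive Corollary \ref{cor:GaussianCovarianceA} from Theorem \ref{thm:concentrated_input}, so the work is entirely in verifying the three hypotheses of that theorem — assumption \hyperlink{assn:W}{(W)} on the approximating measures, assumption \hyperlink{assn:L}{(L)} on concentration of Lipschitz traces, and the spectral gap estimate \eqref{eqn:wegner} — with $\mu_N$ taken to be the measure produced by the Matrix Dyson Equation. The choice of $\mu_N$ is dictated by the MDE theory of \cite{AjaErdKru2019, AltErdKruNem2019}: under the bounded-mean assumption \hyperlink{assn:B}{(B)} and the flatness assumption \hyperlink{assn:F}{(F)}, the solution $M_N(z)$ exists, is unique, and its normalized trace is the Stieltjes transform of a probability measure with the regularity we need.

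First I would record the structural facts about $\mu_N$ that come out of the MDE. Flatness is precisely the hypothesis under which \cite{AltErdKruNem2019} (and the local law of \cite{ErdKruSch2019} for Gaussian matrices with a variance profile) gives: (a) $\mu_N$ is supported in a fixed compact set, uniformly in $N$, because $\|A_N\|$ is bounded and the self-energy operator has bounded norm; and (b) $\mu_N$ has a bounded density away from finitely many edges, and near any point — in particular near $0$ — the density either is bounded or has at worst a square-root or cube-root type singularity, hence in all cases satisfies $\mu_N(x) < \kappa^{-1}|x|^{-1+\kappa}$ on a fixed neighborhood of $0$ for suitable small $\kappa$. This gives the regularity half of \hyperlink{assn:W}{(W)}. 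For the quantitative closeness ${\rm W}_1(\E\hat\mu_{H_N},\mu_N) \leq N^{-\kappa}$: the local law of \cite{ErdKruSch2019} controls $\hat\mu_{H_N}$ against $\mu_N$ at a scale slightly above $N^{-1}$ with overwhelming probability, which after integrating against Lipschitz test functions and taking expectations yields a polynomially small Wasserstein bound (one must be slightly careful about the contribution of possible extreme eigenvalues to $\E\hat\mu_{H_N}$, but Gaussian tails and the norm bound on $A_N$ make that contribution negligible). The gap estimate \eqref{eqn:wegner} follows from the same local law: since the density of $\mu_N$ is either bounded or vanishing like a power at $0$, the number of eigenvalues of $H_N$ in $[-e^{-N^\e},e^{-N^\e}]$ is, with probability tending to one, controlled by $N$ times the $\mu_N$-mass of a slightly enlarged interval plus the local-law error, both of which are $o(1)$; in fact one typically gets no eigenvalues there.

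The assumption \hyperlink{assn:L}{(L)} on concentration of Lipschitz traces is where the Gaussian structure is used directly rather than through the MDE. For a real symmetric Gaussian matrix $H_N = A_N + W_N$ whose entries (on and above the diagonal) form a centered Gaussian vector, the flatness condition \hyperlink{assn:F}{(F)} bounds the covariance: the variance of any entry is $\OO(1/N)$, so the covariance matrix of the upper triangle has operator norm $\OO(1/N)$ — more precisely, flatness forces the map $T \mapsto \E[W_N T W_N]$ to be comparable to $\frac{\Tr T}{N}\Id$, which translates into a uniform bound on the covariance operator of the entries. Then, for Lipschitz $f$, the map $H \mapsto \frac{1}{N}\Tr(f(H))$ is $\frac{\|f\|_{\textup{Lip}}}{N}$-Lipschitz from $(\ms S_N, \|\cdot\|_F)$ to $\R$, so the standard Gaussian concentration inequality (Gaussian measure with covariance of norm $\OO(1/N)$, Lipschitz function) gives a subgaussian tail $\exp(-c N^2\delta^2/\|f\|_{\textup{Lip}}^2)$ — comfortably inside the bound demanded in \eqref{eqn:lipschitztrace}, for any $\zeta > 0$ and in fact for $\zeta = 0$ in the independent-entries case. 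This is where one sees why the statement allows the extra $N^{-\zeta}$ slack: it is needed for the correlated Corollary \ref{cor:GaussianCovarianceB}, not here. Having verified \hyperlink{assn:W}{(W)}, \hyperlink{assn:L}{(L)}, and \eqref{eqn:wegner}, Theorem \ref{thm:concentrated_input} applies verbatim and gives the conclusion.

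The main obstacle, I expect, is not any single step but assembling the MDE inputs with the right uniformity: one needs the support of $\mu_N$, the near-$0$ density bound, and the speed in the local law all to hold \emph{uniformly in $N$} with constants depending only on the flatness parameter $p$ and the bound in \hyperlink{assn:B}{(B)}. The cleanest route is to quote the relevant statements of \cite{AjaErdKru2019, AltErdKruNem2019, ErdKruSch2019} in the precise quantitative form needed — stability of the MDE solution, the $1/3$-Hölder regularity of the self-consistent density, and the anisotropic/averaged local law down to scale $N^{-1+\e}$ — and check that their hypotheses reduce exactly to \hyperlink{assn:B}{(B)} and \hyperlink{assn:F}{(F)}. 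A secondary technical point is controlling $\E\hat\mu_{H_N}$ (as opposed to $\hat\mu_{H_N}$ on a high-probability event) in Wasserstein distance, since a priori a rare event with a huge eigenvalue could distort the mean; this is handled by the Gaussian tail bound on $\|H_N\|$, which makes the bad event contribute $o(N^{-\kappa})$ to ${\rm W}_1$.
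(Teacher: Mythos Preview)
Your overall plan is the paper's: verify \hyperlink{assn:W}{(W)}, \hyperlink{assn:L}{(L)}, and \eqref{eqn:wegner} and invoke Theorem \ref{thm:concentrated_input}, with $\mu_N$ the MDE measure. The checks of \hyperlink{assn:W}{(W)} (MDE regularity plus the local law of \cite{ErdKruSch2019}, with Gaussian tails on $\|H_N\|$ to control the contribution of rare events to $\E\hat\mu_{H_N}$) and of \hyperlink{assn:L}{(L)} (Gaussian concentration, since under independence and flatness the covariance of the upper triangle is diagonal with entries $\leq p/N$) are essentially what the paper does.

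The Wegner step, however, does not follow from the local law in the way you sketch. The local law of \cite{ErdKruSch2019} controls eigenvalue counts only down to window sizes $\eta \gtrsim N^{-1+\e}$; at that scale there are typically order $N^{\e}$ eigenvalues in $[-\eta,\eta]$, and rigidity only pins each one to within $N^{-1+\e}$ of its classical location. Neither statement rules out an eigenvalue inside $[-e^{-N^\e},e^{-N^\e}]$, which is exponentially below the resolution of the local law. Your sentence ``$N$ times the $\mu_N$-mass of a slightly enlarged interval plus the local-law error, both of which are $o(1)$'' fails on either reading: if the enlarged interval has length $\sim N^{-1+\e'}$ then $N\mu_N(\cdot)\sim N^{\e'}$ is not $o(1)$; if it has length $\sim e^{-N^\e}$ then the local law says nothing.

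The paper obtains \eqref{eqn:wegner} instead via the Schur-complement route (Proposition \ref{prop:schur} and Lemma \ref{lem:schur_bdd_density}). Flatness forces $\Var(H_{jj})\geq \frac{1}{pN}$ (Lemma \ref{lem:mfv}); since the entries are independent, the conditional law of each diagonal entry given the rest is Gaussian with this same variance, hence has density bounded by $C N^{1/2}$. Lemma \ref{lem:schur_bdd_density} then gives \eqref{eqn:schurassumption} with $\eta=o(N^{-3/2})$, which is far stronger than the $e^{-N^\e}$ gap required. Once you replace the local-law argument for Wegner by this, your outline is complete and matches the paper.
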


\noindent The following assumptions are needed if $H_N$ has correlations among its entries beyond the symmetry constraints.
\begin{itemize}
\item[\hypertarget{assn:wF}{(wF)}] Weak fullness. Whenever $T \in \R^{N \times N}$ is real symmetric, 
\[
    \E\left[ (\Tr(BW))^2 \right] \geq N^{-1-p}\Tr(B^2).
\]
(The $p = 0$ case is called ``fullness'' in \cite{AjaErdKru2019}.)
\item[\hypertarget{assn:D}{(D)}] Decay of correlations. Write $\kappa$ for multivariate cumulants (for any number of arguments), and consider the distance on subsets of $\llbracket 1, N \rrbracket^2$ given by $d(A,B) = \min\{ \min\{\abs{\alpha - \beta}, \abs{\alpha^t - \beta}\} : \alpha \in A, \beta \in B\}$ where $(\cdot)^t$ switches the elements of an ordered pair. For the order-two cumulants we assume 
\[
    \abs{\kappa(f_1(W_N), f_2(W_N))} \leq \frac{C}{1+d(\supp f_1, \supp f_2)^s} \|f_1\|_2 \|f_2\|_2
\]
for some $s > 12$ and all $L^2$ functions $f_1, f_2$ on $N \times N$ matrices. For order-$k$ cumulants, $k \geq 3$, we consider, for any $L^2$ functions $f_1, \ldots, f_k$, the complete graph on $\{1, \ldots, k\}$ with the edge-weights $d(\{i,j\}) = d(\supp f_i, \supp f_j)$. Writing $T_{\textup{min}}$ for the minimal spanning tree on this graph (i.e., smallest sum of edge weights) and lifting covariance to edges as $\kappa(\{i,j\}) = \kappa(f_i,f_j)$, we assume
\[
    \abs{\kappa(f_1(W_N), \ldots, f_k(W_N))} \leq C_k \prod_{e \in E(T_{\textup{min}})} \abs{\kappa(e)}.
\]
(In fact, our results hold under some weaker correlation-decay conditions that are longer to state; see \cite[Example 2.12]{ErdKruSch2019}.)
\end{itemize}

\begin{cor}
\textbf{(Gaussian matrices with a (co)variance profile)}
\label{cor:GaussianCovarianceB}
Under assumptions \hyperlink{assn:B}{(B)}, \hyperlink{assn:F}{(F)}, \hyperlink{assn:wF}{(wF)}, and \hyperlink{assn:D}{(D)}, we have
\[
    \lim_{N \to \infty} \left( \frac{1}{N}\log \E[\abs{\det(H_N)}] - \int_\R \log\abs{\lambda} \mu_N(\lambda) \diff \lambda \right) = 0.
\]
\end{cor}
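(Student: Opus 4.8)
\textbf{Proof proposal for Corollary \ref{cor:GaussianCovarianceB}.}

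The plan is to apply Theorem \ref{thm:concentrated_input}, so the task reduces to verifying assumptions \hyperlink{assn:W}{(W)} and \hyperlink{assn:L}{(L)}, together with the gap estimate \eqref{eqn:wegner}, for the Gaussian matrix $H_N = A_N + W_N$ with the choice $\mu_N$ equal to the self-consistent density of states produced by the Matrix Dyson Equation. The key external input is the local law for correlated Gaussian matrices of Erd\H{o}s \emph{et al.}~\cite{ErdKruSch2019}, whose hypotheses are (by design) exactly assumptions \hyperlink{assn:B}{(B)}, \hyperlink{assn:F}{(F)}, \hyperlink{assn:wF}{(wF)}, \hyperlink{assn:D}{(D)}; the same strategy handles the independent-entry case of Corollary \ref{cor:GaussianCovarianceA} by instead invoking the cleaner local law of \cite{ErdKruSch2019} (or \cite{AjaErdKru2019, AltErdKruNem2019}) under just \hyperlink{assn:B}{(B)} and \hyperlink{assn:F}{(F)}.

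First I would establish \hyperlink{assn:W}{(W)}. The MDE solution $M_N(z)$ is by construction the Stieltjes transform of $\mu_N$, so $\mu_N$ is well-defined; its regularity (compact support uniformly in $N$, and the local bound $\mu_N(x) < \kappa^{-1}|x|^{-1+\kappa}$ near $0$) follows from the boundedness and H\"older-$1/3$ regularity of the self-consistent density established in the MDE theory of \cite{AjaErdKru2019, AltErdKruNem2019} under flatness; in fact the density is bounded, which is far stronger than the $|x|^{-1+\kappa}$ requirement. For the closeness \eqref{eqn:wasserstein}, the averaged (global) local law gives $|\frac{1}{N}\Tr(\hat{G}_{H_N}(z)) - \frac{1}{N}\Tr(M_N(z))| \prec N^{-c}/\im z$ for $z$ in a suitable domain; integrating this control against a contour/Helffer--Sj\"ostrand representation and taking expectations upgrades it to a polynomial bound on ${\rm W}_1(\E\hat\mu_{H_N},\mu_N)$ — this is a standard but slightly technical passage from resolvent control to Wasserstein distance, using the boundedness of $\mu_N$ to control the contribution near the real axis and the uniform compact support to truncate at large $|\lambda|$.

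Next, \hyperlink{assn:L}{(L)}: since $H_N$ is Gaussian with correlation structure controlled by \hyperlink{assn:D}{(D)}, its upper triangle is a Gaussian vector whose covariance has operator norm $\OO(1)$ (this is where the decay-of-correlations hypothesis enters quantitatively), so the Gaussian concentration inequality applies to the $1$-Lipschitz — hence $\OO(N^{-1/2})$-Lipschitz as a function of the $\sim N^2$ entries — map $H_N \mapsto \frac{1}{N}\Tr f(H_N)$; this yields \eqref{eqn:lipschitztrace} with the sub-Gaussian exponent, $\epsilon_0 = 1$, and even $\zeta = 0$. (If one only assumes the correlation matrix has polynomially bounded norm one picks up the $N^{-\zeta}$ factor, which is why it is in the statement.) Finally \eqref{eqn:wegner}: the smallest singular value bound $\P(\lambda_{\min}(|H_N|) \leq e^{-N^\epsilon}) \to 0$ follows because the real symmetric Gaussian law has a density, so one can condition and use the standard anti-concentration estimate for the least singular value of a matrix with a bounded-density entry (Wegner estimate); flatness guarantees the relevant entrywise density is bounded after rescaling.

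The main obstacle is the passage in \hyperlink{assn:W}{(W)} from the local law to a genuine ${\rm W}_1$ bound at a polynomial rate: the local law of \cite{ErdKruSch2019} is typically stated as an entrywise/averaged resolvent estimate on a spectral domain, and one must (i) verify that domain extends close enough to the real axis to feed a Wasserstein estimate, (ii) handle the expectation (the local law holds with high probability, not in expectation, so one combines it with a crude deterministic bound on the bad event, using that eigenvalues of the Gaussian matrix cannot be too large except with superpolynomially small probability), and (iii) deal with the edges of $\mu_N$, where the density vanishes and the naive Helffer--Sj\"ostrand bound degrades — here one uses that ${\rm W}_1$ (unlike $d_{\textup{KS}}$) is insensitive to edge behavior, so a soft argument suffices. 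All of this is routine in the local-law literature but requires care; none of it uses invariance, which is the point.
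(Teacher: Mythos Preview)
Your overall strategy matches the paper's --- apply Theorem \ref{thm:concentrated_input} by verifying \hyperlink{assn:W}{(W)}, \hyperlink{assn:L}{(L)}, and \eqref{eqn:wegner} via the MDE theory and the local law of \cite{ErdKruSch2019} --- but two steps are misstated in ways that would break the argument.

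For \hyperlink{assn:L}{(L)}, the covariance $\mc{C}$ of the upper triangle does \emph{not} have operator norm $\OO(1)$: individual entries are $\OO(1/N)$ by flatness (Lemma \ref{lem:mfv}), and assumption \hyperlink{assn:D}{(D)} forces each row of $|\mc{C}|$ to be summable up to $N^\zeta$, so in fact $\|\,|\mc{C}|\,\| \leq c_\zeta N^{\zeta-1}$. Combined with your (correct) Lipschitz constant $\|f\|_{\textup{Lip}}/\sqrt{N}$ for $H \mapsto \frac{1}{N}\Tr f(H)$, Gaussian concentration (equivalently log-Sobolev via Bakry--\'Emery) then gives the bound $\exp(-c_\zeta N^{-\zeta}(N\delta/\|f\|_{\textup{Lip}})^2)$, which is exactly \eqref{eqn:lipschitztrace} with $\epsilon_0=1$. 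With only $\|\mc{C}\| = \OO(1)$ you would get $\exp(-cN\delta^2/\|f\|_{\textup{Lip}}^2)$, which fails \hyperlink{assn:L}{(L)} for every $\epsilon_0>0$.

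For the Wegner estimate, in the correlated setting flatness \hyperlink{assn:F}{(F)} only lower-bounds the \emph{marginal} variance of $H_{jj}$, whereas the Schur-complement argument needs the \emph{conditional} variance given $H_{\widehat{jj}}$, which equals $1/(\mc{C}^{-1})_{jj} \geq \lambda_{\min}(\mc{C})$. The paper extracts $\lambda_{\min}(\mc{C}) \geq N^{-1-p}$ from weak fullness \hyperlink{assn:wF}{(wF)}, via the decomposition $W_N \stackrel{(d)}{=} N^{-p/2}W^{\textup{GOE}} + W'$ with $W'$ independent of $W^{\textup{GOE}}$; this is what feeds Lemma \ref{lem:schur_bdd_density} and hence Proposition \ref{prop:schur}. (In the independent-entry case of Corollary \ref{cor:GaussianCovarianceA}, $\mc{C}$ is diagonal and \hyperlink{assn:F}{(F)} alone does suffice, which may be what you had in mind.)
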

\end{subcorollary}

Corollary \ref{cor:GaussianCovarianceA} is an immediate consequence of Corollary \ref{cor:GaussianCovarianceB}, because it is easy to check that \hyperlink{assn:F}{(F)} implies both \hyperlink{assn:wF}{(wF)} and \hyperlink{assn:D}{(D)} if $H_N$ has independent entries up to symmetry. In Section \ref{sec:GausCovProof} we therefore only prove Corollary \ref{cor:GaussianCovarianceB}.

In some cases one can show that the sequence $(\mu_N)_{N=1}^\infty$ has a limit $\mu_\infty$, and obtain $\lim_{N \to \infty} \frac{1}{N}\log \E[\abs{\det(H_N)}] = \int \log\abs{\lambda} \mu_\infty(\diff \lambda)$. Notice this does not follow from our assumptions, because we do not assume any consistency in $N$. For example, this corollary applies to the contrived example $H_N = \text{GOE} + (-1)^N \Id$. In the companion paper \cite{BenBouMcK2021II}, we show how to use the (well-established) stability theory of the Matrix Dyson Equation to find a limit $\mu_\infty$ when it exists.


\subsection{Block-diagonal Gaussian matrices.}\
In this section, we are interested in Gaussian random matrices with large zero blocks. These are \emph{not} covered by Corollary \ref{cor:GaussianCovarianceB}, since the ``flatness'' assumption there implies that all entries have variance in some $[\frac{c}{N}, \frac{C}{N}]$. In the landscape complexity program, such  block-diagonal matrices describe random functions whose components in certain directions are \emph{independent} of those in other directions. In the companion paper \cite{BenBouMcK2021II}, we study one such random function from statistical physics, called the ``elastic manifold.''

Consider matrices $H_N = A_N + W_N$, with $A_N = \E[H_N]$, that have the following special form. 
Fix once and for all some $K \in \N$ (the number of blocks), and consider matrices in $\R^{K \times K} \otimes \R^{N \times N}$, i.e., matrices with $K^2$ blocks each of size $N \times N$. Write $E_{ii}$ for the matrix with a one in the $(i,i)$th entry and zeros otherwise; depending on the context this will be either an $N \times N$ matrix or a $K \times K$ matrix.
\begin{itemize}
\item[\hypertarget{assn:MS}{(MS)}] Bounded mean structure. Consider a deterministic triangular array $(a_i)_{i=1}^N = (a_{i,N})_{i=1}^N$ with each $a_i \in \R^{K \times K}$, and define 
\[
    A_N = \sum_{i=1}^N a_i \otimes E_{ii}.
\]
In particular $A_N$ can only have nonzero entries on the diagonals of each block. Assume
\[
    \sup_N \|A_N\| < \infty.
\]
\item[\hypertarget{assn:MF}{(MF)}] Mean-field randomness in diagonal blocks. The Gaussian random matrix $W_N$ has the form
\[
    W_N = \sum_{i=1}^K E_{ii} \otimes X_i = \begin{pmatrix} X_1 & 0 & \cdots & 0 \\ 0 & X_2 & \cdots & 0 \\ \vdots & \vdots & \ddots & \vdots \\ 0 & 0 & \cdots & X_K \end{pmatrix},
\]
where the $X_i$'s are independent $N \times N$ Gaussian random matrices, each of which has centered independent entries up to symmetry. Write $x_{jk}^{(i)}$ for the $(j,k)$th entry of $X_i$ and $s_{jk}^{(i)}$ for its variance. For some parameter $p$, each $i \in \llbracket 1, K \rrbracket$, and each $j, k \in \llbracket 1, N \rrbracket$, we have
\[
    s_{jk}^{(i)} \leq \frac{p}{N}, \quad s_{jj}^{(i)} \geq \frac{1}{pN}.
\]
Notice the lower bound is only along the diagonal.
\item[\hypertarget{assn:R}{(R)}] Regularity of MDE solution. Given $\mathbf{r} = (r_1, \ldots, r_N) \in (\C^{K \times K})^N$, define
\begin{equation}
\label{eqn:block_stability_operators}
    \ms{S}_i[\mathbf{r}] = \sum_{k=1}^N \sum_{j=1}^K s_{ik}^{(j)} E_{jj} r_k E_{jj} \in \C^{K \times K}
\end{equation}
for each $i \in \llbracket 1, N \rrbracket$. The MDE in this context is a system of $N$ coupled equations over $K \times K$ matrices; we seek the unique solution $\mathbf{m}(z) = \mathbf{m}^{(N)}(z) = (m_1(z), \ldots, m_N(z)) = (m_1^{(N)}(z), \ldots, m_N^{(N)}(z)) \in (\C^{K \times K})^N$ to
\begin{align}
\label{eqn:block_mde}
\begin{split}
    &\Id_{K \times K} + (z\Id_{K \times K} - a_i + \ms{S}_i[\mathbf{m}(z)])m_i(z) = 0 \\
    &\text{subject to} \quad \im m_i(z) > 0 \quad \text{as a quadratic form.}
\end{split}
\end{align}
Consider the probability measure $\mu_N$ on $\R$ whose Stieltjes transform at the point $z$ is $\frac{1}{NK} \sum_{j=1}^N \Tr m_j(z)$. 

Assume that each $\mu_N$ admits a density with respect to Lebesgue measure, and that these densities are bounded in $L^\infty$, uniformly over $N$.
\end{itemize}

The following corollary uses Theorem \ref{thm:concentrated_input}.
\begin{cor}
\textbf{(Block-diagonal Gaussian matrices)}
\label{cor:GaussianBlock}
Under assumptions \hyperlink{assn:MS}{(MS)}, \hyperlink{assn:MF}{(MF)}, and \hyperlink{assn:R}{(R)}, we have
\[
    \lim_{N \to \infty} \left( \frac{1}{NK} \log \E[\abs{\det(H_N)}] - \int_\R \log\abs{\lambda} \mu_N(\lambda) \diff \lambda \right) = 0.
\]
(The normalization is $\frac{1}{NK}$ because $H_N$ is an $NK \times NK$ matrix.)
\end{cor}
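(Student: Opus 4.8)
The plan is to apply Theorem \ref{thm:concentrated_input} to the $NK \times NK$ matrix $H_N$, with the role of ``$N$'' in that theorem played by $\widetilde N := NK$ (note $K$ is fixed, so $N^{-\kappa}$ and $\widetilde N^{-\kappa}$ are comparable up to constants). This means I must verify the three inputs: the Wasserstein bound and density regularity in assumption \hyperlink{assn:W}{(W)}, the Lipschitz-trace concentration \hyperlink{assn:L}{(L)}, and the Wegner-type gap estimate \eqref{eqn:wegner}.

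For \hyperlink{assn:L}{(L)}: the matrix $H_N = A_N + W_N$ has Gaussian randomness only in the diagonal blocks, and $A_N$ is deterministic, so for any Lipschitz $f:\R\to\R$ the map $W_N \mapsto \frac1{\widetilde N}\Tr f(H_N)$ is a Lipschitz function of the independent Gaussian entries $\{x_{jk}^{(i)}\}$ with Lipschitz constant $O(\|f\|_{\textup{Lip}}/\widetilde N)$ with respect to the Euclidean norm on entries (using $|\Tr f(A) - \Tr f(B)| \le \|f\|_{\textup{Lip}}\sqrt{\widetilde N}\,\|A-B\|_F$ and that the entrywise variances are $\le p/N$). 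Standard Gaussian concentration (Borell--TIS / Gaussian log-Sobolev) then yields \eqref{eqn:lipschitztrace} with $\epsilon_0$ arbitrary (say $\epsilon_0 = 1$, i.e.\ a pure Gaussian tail) and $\zeta = 0$ — in fact this case is cleaner than the general (co)variance-profile setting because the entries here are genuinely independent Gaussians, so there is no correlation structure forcing a loss of $N^{-\zeta}$.

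For \hyperlink{assn:W}{(W)}: the density regularity near $0$ is supplied directly by assumption \hyperlink{assn:R}{(R)}, which posits that each $\mu_N$ has an $L^\infty$-bounded density uniformly in $N$ (so the required bound $\mu_N(x) < \kappa^{-1}|x|^{-1+\kappa}$ holds trivially, and a common compact support follows from \hyperlink{assn:MS}{(MS)}–\hyperlink{assn:MF}{(MF)} controlling operator norms). The content is the Wasserstein estimate ${\rm W}_1(\E\hat\mu_{H_N}, \mu_N) \le \widetilde N^{-\kappa}$: here I would invoke a local law for this block-structured Gaussian model. The matrix $H_N$ fits the self-energy/flatness framework of \cite{ErdKruSch2019, AjaErdKru2019}: the self-energy operator acting on $\C^{NK\times NK}$ is block-diagonal and, restricted to the diagonal blocks, matches the operator built from $(\ms S_i)$ in \eqref{eqn:block_stability_operators}; assumption \hyperlink{assn:MF}{(MF)}'s diagonal lower bound $s_{jj}^{(i)} \ge 1/(pN)$ provides just enough ``flatness along the diagonal'' for the MDE to have a Hölder-regular solution, and the $L^\infty$-density hypothesis in \hyperlink{assn:R}{(R)} is exactly what upgrades local-law control to a quantitative ${\rm W}_1$ (indeed $d_{\rm KS}$ or even sup-norm-of-Stieltjes-transform) bound between $\E\hat\mu_{H_N}$ and $\mu_N$ at a polynomial rate. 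The same local law, with its rigidity/eigenvalue-counting consequences, gives the Wegner estimate \eqref{eqn:wegner}: with probability tending to $1$ there are no eigenvalues in $[-e^{-\widetilde N^\epsilon}, e^{-\widetilde N^\epsilon}]$, because the bounded density forces $\E\hat\mu_{H_N}([-\eta,\eta]) = O(\eta)$ and concentration promotes this to a high-probability statement about the empirical measure — unless $0$ lies outside the support of $\mu_N$, in which case the gap is even easier.

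The main obstacle is the Wasserstein/local-law step. The block-diagonal structure has large zero blocks, so this model is \emph{not} flat in the sense of assumption \hyperlink{assn:F}{(F)} and is explicitly excluded from Corollary \ref{cor:GaussianCovarianceB}; consequently one cannot cite \cite{ErdKruSch2019} as a black box. One must instead observe that $H_N$ decouples into $K$ \emph{independent} Gaussian blocks $A^{(i)} + X_i$ (where $A^{(i)}$ collects the $i$-th diagonal entries $a_{\cdot}$), each of which is a genuine flat Gaussian matrix with a variance profile in the sense of \hyperlink{assn:F}{(F)}, so the within-block MDE and local law \emph{do} apply per block. The coupled system \eqref{eqn:block_mde} is then simply the collection of these $K$ independent scalar MDEs reorganized into $K\times K$-matrix notation (the off-diagonal parts of $m_i(z)$ vanish, matching $\im m_i > 0$ only in the relevant blocks), and $\hat\mu_{H_N} = \frac1K\sum_{i=1}^K \hat\mu_{A^{(i)}+X_i}$, $\mu_N = \frac1K\sum_{i=1}^K \mu_N^{(i)}$. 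Applying the per-block local law and averaging over the $K$ (fixed-many) blocks yields the $\widetilde N^{-\kappa}$ Wasserstein bound. The remaining care is purely bookkeeping: checking that \hyperlink{assn:R}{(R)}'s $L^\infty$ hypothesis on the averaged $\mu_N$ transfers appropriately (a bound on the average does give the needed singularity control near $0$), and tracking the factor $K$ through the normalizations so that Theorem \ref{thm:concentrated_input}'s conclusion, stated with $1/\widetilde N = 1/(NK)$, matches the displayed formula.
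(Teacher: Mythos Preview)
Your verification of assumption \hyperlink{assn:L}{(L)} via Gaussian log-Sobolev and your use of assumption \hyperlink{assn:R}{(R)} for the density bound are correct and match the paper. However, the core of your argument for \hyperlink{assn:W}{(W)} contains a structural error.

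You claim that $H_N$ decouples into $K$ independent blocks $A^{(i)}+X_i$. This is false in general: while $W_N=\sum_{i=1}^K E_{ii}\otimes X_i$ is block-diagonal in the $K\times K$ block structure, the mean $A_N=\sum_{i=1}^N a_i\otimes E_{ii}$ (with $a_i\in\R^{K\times K}$ and here $E_{ii}\in\R^{N\times N}$) is \emph{not}. Its $(k,\ell)$-th $N\times N$ block, for $k\neq\ell$, is the diagonal matrix $\diag((a_1)_{k\ell},\ldots,(a_N)_{k\ell})$, which is nonzero whenever the $a_i$ have off-diagonal entries. Consequently $H_N$ is not block-diagonal, the MDE \eqref{eqn:block_mde} is a genuinely coupled system over $K\times K$ matrices (the off-diagonal parts of $m_i(z)$ do \emph{not} vanish), and the per-block local-law approach collapses. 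A secondary issue: even in the decoupled situation, the blocks $X_i$ satisfy only the diagonal lower bound $s^{(i)}_{jj}\ge 1/(pN)$ from \hyperlink{assn:MF}{(MF)}, not full flatness \hyperlink{assn:F}{(F)}, so Corollary \ref{cor:GaussianCovarianceA} would not apply to them anyway.

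The paper instead cites the local law of Alt--Erd{\H o}s--Kr\"uger--Nemish \cite{AltErdKruNem2019}, which is built for exactly this Kronecker-type structure: their bound \cite[(3.32a)]{AltErdKruNem2019} gives the common compact support of the $\mu_N$'s, and their local law \cite[(B.5)]{AltErdKruNem2019} feeds into Proposition \ref{prop:bai} (together with a tail bound on $\|H_N\|$ obtained from log-Sobolev and $\sup_N\E\|W_N\|<\infty$) to produce the polynomial ${\rm W}_1$ rate. For the Wegner estimate \eqref{eqn:wegner}, the paper does not go through the local law or bounded density of $\mu_N$ as you propose; it uses the Schur-complement route (Proposition \ref{prop:schur} and Lemma \ref{lem:schur_bdd_density}) directly, since conditional on all other entries each diagonal entry $H_{jj}$ is Gaussian with variance $\ge 1/(pN)$ by \hyperlink{assn:MF}{(MF)}, giving a bounded conditional density.
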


In applications to landscape complexity, the description of these measures $\mu_N$ via the MDE is very important to prove properties of the limit measures $\mu_\infty$. For example, in our companion paper \cite{BenBouMcK2021II}, we use this MDE description to identify a crucial convexity property in a variational problem.


\subsection{Free addition.}\
Let $(A_N)_{N=1}^\infty$, $(B_N)_{N=1}^\infty$ be a sequence of deterministic, $N \times N$, real diagonal matrices, whose empirical measures tend to some $\mu_A$, $\mu_B$ respectively. We will be interested in the random matrix $A_N + O_NB_NO_N^T$, where $O_N$ is sampled from Haar measure on the orthogonal group $\mc{O}_N$. 

We require the following assumptions.
\renewcommand\labelitemi{--}
\begin{itemize}
\item The measures $\mu_A$ and $\mu_B$ admit densities $\rho_A$ and $\rho_B$, respectively. These densities have single nonempty interval supports $[E_-^A,E_+^A]$ and $[E_-^B,E_+^B]$, and each density is strictly positive on the interior of its support.
\item Each measure $\mu_A$ and $\mu_B$ has a power-law behavior with exponent in $(-1,1)$ at each of its edges; that is, there exist $\delta > 0$ and exponents $-1 < t_-^A, t_-^B, t_+^A, t_+^B < 1$ such that, for some $C > 1$,
\begin{align*}
    C^{-1} \leq \frac{\rho_A(x)}{(x-E_-^A)^{t_-^A}} \leq C \quad \text{for all } x \in [E_-^A, E_-^A+\delta], \qquad C^{-1} \leq \frac{\rho_B(x)}{(x-E_-^B)^{t_-^B}} \leq C \quad \text{for all } x \in [E_-^B, E_-^B+\delta], \\
    C^{-1} \leq \frac{\rho_A(x)}{(E_+^A-x)^{t_+^A}} \leq C \quad \text{for all } x \in [E_+^A-\delta, E_+^A], \qquad C^{-1} \leq \frac{\rho_B(x)}{(E_+^B-x)^{t_+^B}} \leq C \quad \text{for all } x \in [E_+^B-\delta, E_+^B].
\end{align*}
\item One of the measures $\mu_A$ and $\mu_B$ has a bounded Stieltjes transform.
\item The eigenvalues $(a_i)_{i=1}^N = (a_i^{(N)})_{i=1}^N$ of $A_N$, ordered increasingly, are close to the classical particle locations $a_i^\ast$ defined by
\[
    a_i^\ast = \inf\left\{s : \int_{-\infty}^s \mu_A(\diff y) = i/N\right\}
\]
in the sense that for any $c > 0$, 
$
    \sup_{1 \leq i \leq N} \abs{a_i - a_i^\ast} \leq N^{-1+c}
$
for $N$ sufficiently large. The analogous condition also holds for the eigenvalues of $B_N$.
\end{itemize}
For example, all of these assumptions are satisfied if $\mu_A$ is the semicircle law and $\mu_B$ is either a uniform measure, the Mar{\v c}enko-Pastur law, or the semicircle law; and if $A_N$ and $B_N$ store the relevant $\frac{1}{N}$-quantiles.

The following corollary uses Theorem \ref{thm:concentrated_input}.
\begin{cor}
\label{cor:freeaddition}
\textbf{(Free addition)}
If $O_N$ is chosen randomly from the Haar measure on the orthogonal group $\mc{O}_N$, then whenever $E$ is not an edge of $\mu_A \boxplus \mu_B$, we have
\[
    \lim_{N \to \infty} \frac{1}{N}\log \E[\abs{\det(A_N + O_NB_NO_N^T - E)}] = \int_\R \log\abs{\lambda-E} (\mu_A \boxplus \mu_B)(\lambda) \diff \lambda. 
\]
\end{cor}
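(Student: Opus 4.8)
### Proof proposal for Corollary \ref{cor:freeaddition}

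My plan is to apply Theorem~\ref{thm:concentrated_input} to the matrix $H_N = A_N + O_N B_N O_N^T - E$, so the work divides into three pieces: verifying the gap assumption \eqref{eqn:wegner}, verifying the concentration assumption \hyperlink{assn:L}{(L)}, and verifying the Wasserstein assumption \hyperlink{assn:W}{(W)} with $\mu_N$ chosen appropriately (translated by $-E$). The concentration input \hyperlink{assn:L}{(L)} is the cleanest: for $f$ Lipschitz, the map $O \mapsto \frac{1}{N}\Tr f(A_N + O B_N O^T - E)$ is Lipschitz on $\mathcal O_N$ with constant $O(\|f\|_{\textup{Lip}}/N)$ (each coordinate of $O$ moves the argument by a controlled amount in operator norm, and traces of Lipschitz functions are Lipschitz in operator norm after multiplying by $N$), so Gromov--Milman concentration on the orthogonal group — which has a spectral gap of order $N$ — gives a sub-Gaussian bound of exactly the form \eqref{eqn:lipschitztrace}, even with $\zeta = 0$. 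This is precisely the ``Gromov--Milman concentration on compact groups'' scenario flagged in the discussion preceding Theorem~\ref{thm:concentrated_input}.

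For assumption \hyperlink{assn:W}{(W)}, the natural choice is $\mu_N = (\mu_A \boxplus \mu_B)$ shifted by $-E$ — or more precisely, since $\mu_A,\mu_B$ are the limits and $A_N,B_N$ only store $\tfrac1N$-quantiles, I would take $\mu_N$ to be the free convolution $\hat\mu_{A_N} \boxplus \hat\mu_{B_N}$ (shifted), which the hypotheses on classical locations keep within $N^{-1+c}$ of $\mu_A\boxplus\mu_B$ in the relevant metrics. The key fact is that $\E \hat\mu_{A_N + O_N B_N O_N^T}$ is close to $\hat\mu_{A_N}\boxplus\hat\mu_{B_N}$ at a polynomial-in-$N$ speed in Wasserstein-$1$ (equivalently, by the boundedness of supports, in $d_{\textup{BL}}$); this is a quantitative single-ring / free-addition local law, which is available in the literature precisely under the density, edge power-law, and bounded-Stieltjes-transform hypotheses listed (e.g. from the work of Bao--Erd\H{o}s--Schnelli and related estimates on deformed ensembles). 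The density and power-law hypotheses at the edges of $\mu_A$ and $\mu_B$ are exactly what is needed for the free convolution $\mu_A\boxplus\mu_B$ to have a density with at-worst power-law singularities, and in particular to satisfy the local integrability bound $\mu_N(x) < \kappa^{-1}|x|^{-1+\kappa}$ near $0$ required in \hyperlink{assn:W}{(W)} — here one uses that a power law $|x|^{t}$ with $t \in (-1,1)$ at an edge, even if that edge happens to sit at the origin after the shift, is dominated by $|x|^{-1+\kappa}$ for small $\kappa$; and that $E$ is assumed not to be an edge handles the case where the singularity could otherwise be worse.

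For the gap estimate \eqref{eqn:wegner}, I need that with probability tending to one, $A_N + O_N B_N O_N^T$ has no eigenvalue in $[E - e^{-N^\epsilon}, E + e^{-N^\epsilon}]$. When $E$ lies in the bulk of $\mu_A\boxplus\mu_B$, this follows from rigidity of the eigenvalues (again from the free-addition local law): eigenvalues near $E$ are spaced at scale $N^{-1}$, so the chance of one landing in a window of exponentially small width is overwhelmingly small. When $E$ lies strictly outside the support, one needs that there are no outliers, i.e. $\|A_N + O_N B_N O_N^T\|$ and the smallest eigenvalue concentrate at the edges of $\mu_A\boxplus\mu_B$; this is standard for the Haar-conjugated model. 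The main obstacle, and the step requiring the most care, is assembling the quantitative free-convolution input — i.e., pinning down a clean reference for the polynomial-speed convergence $\mathrm{W}_1(\E\hat\mu_{A_N+O_NB_NO_N^T}, \hat\mu_{A_N}\boxplus\hat\mu_{B_N}) \le N^{-\kappa}$ together with the matching rigidity/no-outliers statement — under exactly the hypotheses imposed here, and checking that the edge regularity of $\mu_A,\mu_B$ transfers to the regularity of $\mu_A\boxplus\mu_B$ needed near the (shifted) origin. Everything else is routine: concentration on $\mathcal O_N$ is classical, and the reduction to Theorem~\ref{thm:concentrated_input} is immediate once the three assumptions are in hand.
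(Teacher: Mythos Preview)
Your overall strategy matches the paper's exactly: apply Theorem~\ref{thm:concentrated_input}, verify \hyperlink{assn:L}{(L)} via Gromov--Milman concentration on $\mathcal O_N$, verify \hyperlink{assn:W}{(W)} via the Bao--Erd\H{o}s--Schnelli local law \cite{BaoErdSch2020}, and treat the Wegner estimate \eqref{eqn:wegner} separately in the bulk and outside the support. The paper in fact takes $\mu_N \equiv \mu_A \boxplus \mu_B$ directly rather than $\hat\mu_{A_N} \boxplus \hat\mu_{B_N}$, but this is cosmetic.

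There is, however, a genuine gap in your bulk Wegner argument. You write that rigidity implies the chance of an eigenvalue landing in a window of width $e^{-N^\epsilon}$ is overwhelmingly small because ``eigenvalues near $E$ are spaced at scale $N^{-1}$.'' But rigidity only pins each $\lambda_i$ to within $N^{-1+o(1)}$ of its deterministic classical location $\gamma_i$; since the $\gamma_i$ near $E$ are themselves spaced at scale $N^{-1}$, some $\gamma_i$ is always within $O(N^{-1})$ of $E$, and rigidity tells you nothing about whether the corresponding $\lambda_i$ avoids the exponentially thin window $[E - e^{-N^\epsilon}, E + e^{-N^\epsilon}]$. The local law controls the empirical measure down to mesoscopic scales, not to exponentially small ones, so no local-law or rigidity statement alone can produce \eqref{eqn:wegner} in the bulk.

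The paper closes this gap differently: it invokes the fixed-energy universality result of Che--Landon \cite{CheLan2019}, which identifies the limiting gap probability at energy $E$ with the Painlev\'e~V function $F(\epsilon)$ from the GOE sine kernel. Since $F(\epsilon) \to 0$ as $\epsilon \downarrow 0$, one gets
\[
\liminf_{N\to\infty} \P\!\left(H_N \text{ has no eigenvalues in } \left(E - \tfrac{1}{N^2}, E + \tfrac{1}{N^2}\right)\right) \geq 1 - \limsup_{\epsilon \downarrow 0} F(\epsilon) = 1,
\]
which is more than enough for \eqref{eqn:wegner}. Your treatment of $E$ outside the support (no outliers via edge rigidity or large deviations) is fine and matches the paper.
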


\noindent This result is locally uniform in $E$ away from the edge, meaning in any compact subset of $\R \setminus \{\mathtt{l}(\mu_A \boxplus \mu_B), \mathtt{r}(\mu_A \boxplus \mu_B)\}$.

\bigskip

\emph{Comment on the assumptions.} For the proof, we check the assumptions of the concentrated-input Theorem \ref{thm:concentrated_input} using the local law of Bao-Erd{\H{o}}s-Schnelli \cite{BaoErdSch2020} and the fixed-energy universality of Che-Landon \cite{CheLan2019}. For concise writing, the assumptions we state here are a bit stronger than ``the union of the assumptions of these two papers,'' but in fact this union suffices for Corollary \ref{cor:freeaddition}. In fact, our result likely holds under even weaker assumptions than required in these papers, which handle more fine-grained questions.


\subsection*{Acknowledgements.}\
We wish to thank Nick Cook, Amir Dembo, L\'{a}szl\'{o} Erd\H{o}s, Yan Fyodorov, Torben Kr\"{u}ger, Pierre Le Doussal, Krishnan Mody, and Ofer Zeitouni for helpful discussions. We are also grateful to a referee for pointing out an error in an earlier version of the paper. GBA acknowledges support by the Simons Foundation collaboration Cracking the Glass Problem, PB was supported by NSF grant DMS-1812114 and a Poincar\'e chair, and BM was supported by NSF grant DMS-1812114.


\section{Proofs of determinant asymptotics}
\label{sec:determinantproofs}


\subsection{Proof of Theorem \ref{thm:convex_functional}.}\
The proof depends on a careful tuning of many $N$-dependent parameters; in the next section we define these parameters and prove some estimates that are common to both the upper and lower bounds. In the following subsections we then prove these upper and lower bounds in order.

\subsubsection{Definitions and common estimates.}\
Let $\kappa$ be as in the assumptions (i.e., given to us), and write $K, \eta, t, w_b, p_b$ for some $N$-dependent parameters. In fact we will choose
\begin{equation}
\label{eqn:detcon_parameters}
    \begin{cases}
        K = e^{N^\epsilon} \quad \text{for some $\epsilon$ small enough ($\epsilon = \kappa^2/16$ suffices),} \\
        \eta = N^{-\kappa/2}, \\
        t = N^{-\kappa/4}, \\
        w_b = N^{-\kappa/4}, \\
        p_b = N^{-\kappa^2/8},
    \end{cases}
\end{equation}
but we find it more transparent to work with the names $K$, $\eta$, and so on for the bulk of the proof, checking only at the end that these specific choices make the error estimates useful. We will work with the following regularizations of the logarithm:
\begin{align*}
    \log_\eta(\lambda) &= \log\abs{\lambda + \ii \eta}, \\
    \log_\eta^K(\lambda) &= \min(\log_\eta(\lambda),\log_\eta(K)).
\end{align*}
Let $b = b_N : \R \to \R$ be some smooth, even, nonnegative function that is identically one on $[-w_b,w_b]$, vanishes outside of $[-2w_b,2w_b]$, and is $\frac{1}{w_b}$-Lipschitz. Consider the following events:
\begin{align}
\label{eqn:manyevents}
\begin{split}
    \mc{E}_{\textup{gap}} &= \{\Phi(X) \text{ has no eigenvalues in } [-e^{-N^\epsilon},e^{-N^\epsilon}] \}, \\
    \mc{E}_{\textup{ss}} &= \{d_{\textup{KS}}(\hat{\mu}_{\Phi(X)},\hat{\mu}_{\Phi(X_{\textup{cut}})}) \leq N^{-\kappa}\}, \\
    \mc{E}_{\textup{conc}} &= \left\{\abs{\int \log_\eta^K(\lambda) (\hat{\mu}_{\Phi(X_{\textup{cut}})} - \E[\hat{\mu}_{\Phi(X_{\textup{cut}})}])(\diff \lambda)} \leq t\right\}, \\
    \mc{E}_b &= \left\{ \int b(\lambda) \hat{\mu}_{\Phi(X)}(\diff \lambda) \leq p_b\right\}.
\end{split}
\end{align}
It turns out that all of these events are likely. For $\mc{E}_{\textup{gap}}$ and $\mc{E}_{\textup{ss}}$ this is by assumption; we will prove that $\mc{E}_{\textup{conc}}$ and $\mc{E}_b$ are likely below.

Now we collect some estimates which will be useful for both the upper and lower bounds.
\begin{lem}
\label{lem:borcapcha2011}
We have
\[
    \abs{\int \log_\eta^K(\lambda) (\hat{\mu}_{\Phi(X)} - \hat{\mu}_{\Phi(X_{\textup{cut}})})(\diff \lambda)}\mathds{1}_{\mc{E}_{\textup{ss}}} \leq N^{-\kappa} \log\left(1+\frac{K^2}{\eta^2}\right).
\]
\end{lem}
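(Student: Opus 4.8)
The plan is to reduce the estimate, via integration by parts, to a bound on the total variation of the regularized logarithm $\log_\eta^K$, which is then controlled directly by the definition of $d_{\textup{KS}}$. For a fixed realization of $X$, both $\Phi(X)$ and $\Phi(X_{\textup{cut}})$ are finite real symmetric matrices, so $\hat\mu_{\Phi(X)}$ and $\hat\mu_{\Phi(X_{\textup{cut}})}$ are compactly supported; I would write $G(x) = \hat\mu_{\Phi(X)}((-\infty,x]) - \hat\mu_{\Phi(X_{\textup{cut}})}((-\infty,x])$ for the difference of their cumulative distribution functions, so that $G$ is compactly supported and $\sup_{x}\abs{G(x)} = d_{\textup{KS}}(\hat\mu_{\Phi(X)},\hat\mu_{\Phi(X_{\textup{cut}})})$. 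Since $\log_\eta(\lambda) = \tfrac12\log(\lambda^2+\eta^2)$ is bounded on compacts with $(\log_\eta)'(\lambda) = \lambda/(\lambda^2+\eta^2)$, hence $\abs{(\log_\eta)'} \leq 1/(2\eta)$ by AM--GM, the truncation $\log_\eta^K = \min(\log_\eta,\log_\eta(K))$ is bounded and absolutely continuous with $\abs{(\log_\eta^K)'} \leq \abs{(\log_\eta)'}$ almost everywhere. Lebesgue--Stieltjes integration by parts (the boundary terms vanishing since $G$ has compact support and $\log_\eta^K$ is bounded) then gives
\[
    \int \log_\eta^K(\lambda)(\hat\mu_{\Phi(X)} - \hat\mu_{\Phi(X_{\textup{cut}})})(\diff\lambda) = -\int_\R (\log_\eta^K)'(x)\, G(x)\, \diff x,
\]
so that the quantity on the left is, in modulus, at most $(\sup_x\abs{G(x)})\int_\R \abs{(\log_\eta^K)'(x)}\,\diff x$, the second factor being the total variation of $\log_\eta^K$.

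It then remains to compute that total variation. The function $\log_\eta$ is even, strictly decreasing on $(-\infty,0]$ and strictly increasing on $[0,\infty)$, with minimum value $\log\eta$ attained at $0$; moreover $\log_\eta(\lambda) \leq \log_\eta(K)$ precisely when $\abs{\lambda} \leq K$, so $\log_\eta^K$ equals $\log_\eta$ on $\{\abs{\lambda}\leq K\}$ and the constant $\log_\eta(K)$ on $\{\abs{\lambda}> K\}$. Hence $\log_\eta^K$ is constant equal to $\log_\eta(K)$ on $(-\infty,-K]$, decreases monotonically to $\log\eta$ on $[-K,0]$, increases monotonically back to $\log_\eta(K)$ on $[0,K]$, and is constant thereafter; its total variation is therefore $2(\log_\eta(K) - \log\eta) = \log(K^2+\eta^2) - \log\eta^2 = \log(1+K^2/\eta^2)$.

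Combining the two steps and restricting to the event $\mc{E}_{\textup{ss}}$, on which $\sup_x\abs{G(x)} = d_{\textup{KS}}(\hat\mu_{\Phi(X)},\hat\mu_{\Phi(X_{\textup{cut}})}) \leq N^{-\kappa}$, yields the claimed inequality. I do not expect a genuine obstacle here: the only points needing a little care are the justification of the integration-by-parts identity when the CDFs are step functions (standard for Lebesgue--Stieltjes integrals against an absolutely continuous integrand of compact support) and the elementary total-variation computation once the shape of $\log_\eta^K$ is understood. This lemma is simply the usual mechanism by which the Kolmogorov--Smirnov distance controls integrals of bounded-variation test functions.
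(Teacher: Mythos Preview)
Your proof is correct and follows essentially the same approach as the paper: the paper invokes \cite[Lemma C.2]{BorCapCha2011} to obtain $\abs{\int f\, d(\hat\mu_A - \hat\mu_B)} \leq \|f\|_{\textup{TV}}\, d_{\textup{KS}}(\hat\mu_A,\hat\mu_B)$ and then computes $\|\log_\eta^K\|_{\textup{TV}} = \log(1+K^2/\eta^2)$, whereas you spell out that lemma directly via integration by parts and carry out the same total-variation computation.
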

\begin{proof}
The proof of \cite[Lemma C.2]{BorCapCha2011} shows that, if $\hat{\mu}_A$ and $\hat{\mu}_B$ are empirical measures of matrices $A$ and $B$ (which have the same size as each other) and if $f$ is a test function of bounded variation, then
\[
    \abs{\int f(\lambda) \hat{\mu}_A(\diff \lambda) - \int f(\lambda) \hat{\mu}_B(\diff \lambda)} \leq \|f\|_{\textup{TV}} \cdot d_{\textup{KS}}(\hat{\mu}_A,\hat{\mu}_B).
\]
Then the result follows from the computation $\|\log_\eta^K\|_{\textup{TV}} = \log\left(1+\frac{K^2}{\eta^2}\right)$ and the definition of $\mc{E}_{\textup{ss}}$. 
\end{proof}

\begin{lem}
\label{lem:detcon_eps1}
With
\[
    \epsilon_1(N) := N^{-\kappa} \log\left(1+\frac{K^2}{\eta^2}\right) + 2\|\log_\eta^K\|_\infty \P((\mc{E}_{\textup{ss}})^c) + \left(\frac{1}{2\eta} + \|\log_\eta^K\|_\infty\right) N^{-\kappa},
\]
we have
\[
    \abs{\int \log_\eta^K(\lambda) (\E[\hat{\mu}_{\Phi(X_{\textup{cut}})}] - \mu_N)(\diff \lambda)} \leq \epsilon_1(N).
\]
\end{lem}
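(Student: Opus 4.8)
The plan is to compare $\E[\hat\mu_{\Phi(X_{\textup{cut}})}]$ with $\mu_N$ by interpolating through the intermediate measure $\E[\hat\mu_{\Phi(X)}]$: the first leg $\E[\hat\mu_{\Phi(X_{\textup{cut}})}] \leftrightarrow \E[\hat\mu_{\Phi(X)}]$ is controlled by the spectral-stability estimate of Lemma~\ref{lem:borcapcha2011}, and the second leg $\E[\hat\mu_{\Phi(X)}] \leftrightarrow \mu_N$ by the bounded-Lipschitz bound \eqref{eqn:dBL} of assumption~\hyperlink{assn:E}{(E)}. A triangle inequality then produces $\epsilon_1(N)$ as the sum of the three displayed terms, the first two coming from the first leg and the third from the second.

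For the first leg, I would bound $\abs{\int \log_\eta^K(\lambda)(\E[\hat\mu_{\Phi(X)}] - \E[\hat\mu_{\Phi(X_{\textup{cut}})}])(\diff\lambda)}$ by pulling the expectation outside the absolute value (Jensen) to get $\E\qa{\abs{\int \log_\eta^K(\lambda)(\hat\mu_{\Phi(X)} - \hat\mu_{\Phi(X_{\textup{cut}})})(\diff\lambda)}}$, and then split the expectation according to whether $\mc{E}_{\textup{ss}}$ holds. On $\mc{E}_{\textup{ss}}$, Lemma~\ref{lem:borcapcha2011} bounds the integrand by $N^{-\kappa}\log(1 + K^2/\eta^2)$; on $(\mc{E}_{\textup{ss}})^c$, I would use the crude bound $2\|\log_\eta^K\|_\infty$, valid because $\log_\eta^K$ is being integrated against a difference of probability measures. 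This yields exactly the first two terms of $\epsilon_1(N)$.

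For the second leg, the point is to feed $\log_\eta^K$ into the definition of $d_{\textup{BL}}$ after a renormalization. Here I would note that taking a minimum with a constant does not increase the Lipschitz constant, so $\|\log_\eta^K\|_{\textup{Lip}} \leq \|\log_\eta\|_{\textup{Lip}}$, and since $\frac{\diff}{\diff\lambda}\log_\eta(\lambda) = \lambda/(\lambda^2 + \eta^2)$ attains maximal absolute value $\frac{1}{2\eta}$ at $\lambda = \pm\eta$, we get $\|\log_\eta^K\|_{\textup{Lip}} \leq \frac{1}{2\eta}$. Consequently $\log_\eta^K / \left(\frac{1}{2\eta} + \|\log_\eta^K\|_\infty\right)$ has $\|\cdot\|_{\textup{Lip}} + \|\cdot\|_{L^\infty} \leq 1$, so \eqref{eqn:dBL} gives $\abs{\int \log_\eta^K(\lambda)(\E[\hat\mu_{\Phi(X)}] - \mu_N)(\diff\lambda)} \leq \left(\frac{1}{2\eta} + \|\log_\eta^K\|_\infty\right) N^{-\kappa}$, the third term.

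There is no genuine obstacle in this lemma; it is essentially bookkeeping. The only mildly delicate points are the elementary derivative computation giving $\|\log_\eta\|_{\textup{Lip}} = \frac{1}{2\eta}$ and remembering the $L^\infty$-renormalization needed to make $\log_\eta^K$ admissible in the bounded-Lipschitz distance; both are already visible in the statement through the factor $\frac{1}{2\eta} + \|\log_\eta^K\|_\infty$.
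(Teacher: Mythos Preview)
Your proposal is correct and follows essentially the same approach as the paper: both proofs interpolate through $\E[\hat\mu_{\Phi(X)}]$, handle the first leg by splitting on $\mc{E}_{\textup{ss}}$ and invoking Lemma~\ref{lem:borcapcha2011}, and handle the second leg via the $\frac{1}{2\eta}$-Lipschitz bound on $\log_\eta^K$ together with \eqref{eqn:dBL}. Your write-up is slightly more explicit about the Jensen step and the derivative computation, but the argument is identical.
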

\begin{proof}
First, by inserting $\mathds{1}_{\mc{E}_{\textup{ss}}}$ and using Lemma \ref{lem:borcapcha2011}, we find
\begin{align*}
    \abs{\int \log_\eta^K(\lambda) (\E[\hat{\mu}_{\Phi(X_{\textup{cut}})}] - \E[\hat{\mu}_{\Phi(X)}])(\diff \lambda)}
    &\leq N^{-\kappa}\log\left(1+\frac{K^2}{\eta^2}\right) + 2\|\log_\eta^K\|_\infty \P((\mc{E}_{\textup{ss}})^c).
\end{align*}
Next, since $\log_\eta^K$ is $\frac{1}{2\eta}$-Lipschitz, \eqref{eqn:dBL} yields
\[
    \abs{\int \log_\eta^K(\lambda) (\E[\hat{\mu}_{\Phi(X)}] - \mu_N)(\diff \lambda)} \leq \left(\frac{1}{2\eta} + \|\log_\eta^K\|_\infty\right) d_{\textup{BL}}(\E[\hat{\mu}_{\Phi(X)}], \mu_N) \leq \left( \frac{1}{2\eta} + \|\log_\eta^K\|_\infty \right)N^{-\kappa}.
\]
Both equations above conclude the proof.
\end{proof}

\begin{lem}
\label{lem:prob_econc}
Let
$
    t_0(N) = 24\sqrt{2\pi}/(\eta N^{\frac{1}{2}+\kappa}).
$
If $t \geq t_0(N)$, then 
\[
    \P((\mc{E}_{\textup{conc}})^c) \leq 12\exp\left(-\frac{(t-t_0(N))^2\eta^2N^{1+2\kappa}}{288}\right).
\]
\end{lem}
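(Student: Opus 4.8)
The plan is to recognize the centered quantity inside $(\mc E_{\textup{conc}})^c$ as the fluctuation of a Lipschitz functional of the \emph{bounded} independent coordinates of $X_{\textup{cut}}$, apply Talagrand's concentration inequality for product measures, and account for the median-to-mean correction, which is exactly what produces the term $t_0(N)$.

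First I would set $g(y)=\frac1N\Tr\log_\eta^K(\Phi(y))$, so $(\mc E_{\textup{conc}})^c=\{|g(X_{\textup{cut}})-\E[g(X_{\textup{cut}})]|>t\}$; by \hyperlink{assn:I}{(I)} the coordinates of $X_{\textup{cut}}$ are independent, and by definition of the truncation each lies in an interval of length $\ell:=2N^{-\kappa}/\|\Phi\|_{\textup{Lip}}$. Since $\log_\eta^K$ is not convex, $g$ need not be quasiconvex, so I would split $\log_\eta^K=h_1-h_2$ into two convex, $\frac3{2\eta}$-Lipschitz functions: the derivative $(\log_\eta^K)'$ is odd and of bounded variation, with total variation $\frac2\eta(1+o(1))$, so its Jordan decomposition $(\log_\eta^K)'=h_1'-h_2'$ yields nondecreasing $h_1',h_2'$ with values in $[0,\frac1\eta(1+o(1))]\subseteq[0,\frac3{2\eta}]$. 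Then $g=g_1-g_2$ with $g_i(y)=\frac1N\Tr h_i(\Phi(y))$.

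The two facts I need about each $g_i$ are: it is Lipschitz, and its sublevel sets are convex. For the first, $A\mapsto\Tr h_i(A)$ is $\sqrt N\,\|h_i\|_{\textup{Lip}}$-Lipschitz on $\ms S_N$ in Frobenius norm (Hoffman--Wielandt), $\|\cdot\|_F\le\sqrt2\|\cdot\|_{\widetilde F}$, and $\Phi$ is $\|\Phi\|_{\textup{Lip}}$-Lipschitz, so $g_i$ is $L_i$-Lipschitz with $L_i\le\sqrt2\,\|h_i\|_{\textup{Lip}}\|\Phi\|_{\textup{Lip}}/\sqrt N\le 3\|\Phi\|_{\textup{Lip}}/(\sqrt2\,\eta\sqrt N)$, hence $L_i\ell\le 3\sqrt2/(\eta N^{1/2+\kappa})$. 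For the second, $\Tr h_i(\cdot)$ is convex on $\ms S_N$ (standard, as $h_i$ is convex), so $\{g_i\le c\}=\Phi^{-1}(\{A:\Tr h_i(A)\le c\})$ is the $\Phi$-preimage of a convex set and therefore convex by \hyperlink{assn:M}{(M)}. This is exactly the hypothesis Talagrand's convex-distance inequality needs: for a product measure supported in a cube of side $\ell$ one gets $\P(|g_i(X_{\textup{cut}})-m_i|\ge u)\le 4\exp(-u^2/(4L_i^2\ell^2))$ with $m_i$ a median, the only modification of the usual convex statement being that one invokes quasiconvexity $g_i(\sum_j\lambda_j z^{(j)})\le\max_j g_i(z^{(j)})$, which convex sublevel sets already provide.

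Integrating the tail gives $|\E[g_i(X_{\textup{cut}})]-m_i|\le 4\sqrt\pi\,L_i\ell\le 12\sqrt{2\pi}/(\eta N^{1/2+\kappa})=t_0(N)/2$, which is where $t_0$ enters. Then $\{|g-\E g|>t\}\subseteq\{|g_1-\E g_1|>t/2\}\cup\{|g_2-\E g_2|>t/2\}$, and on each piece $|g_i-\E g_i|>t/2$ forces $|g_i-m_i|>(t-t_0(N))/2$ since $t\ge t_0(N)$; Talagrand and a union bound then give $\P((\mc E_{\textup{conc}})^c)\le 8\exp(-(t-t_0(N))^2/(16L_i^2\ell^2))$, and since $16L_i^2\ell^2\le 288/(\eta^2N^{1+2\kappa})$ this is at most the claimed $12\exp(-(t-t_0(N))^2\eta^2N^{1+2\kappa}/288)$ (the looser prefactor $12$ absorbs whichever explicit constant Talagrand's inequality is stated with). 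The step I expect to be the real obstacle is the non-convexity of $\log_\eta^K$: one must produce the convex-minus-convex splitting with Lipschitz constants still of the optimal order $1/\eta$ (a worse constant would degrade the exponent in an important way), and one must check that assumption \hyperlink{assn:M}{(M)} --- convexity-\emph{preservation} of $\Phi$, not linearity --- is exactly what makes the convex-distance argument work, that is, that it is convexity of the sublevel sets of the $g_i$'s, rather than convexity of the $g_i$'s themselves, that is used. The remaining work is bookkeeping of constants and the median-to-mean comparison.
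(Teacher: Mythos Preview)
Your proof is correct and follows essentially the same strategy as the paper: decompose $\log_\eta^K$ into convex/concave Lipschitz pieces, observe that assumption \hyperlink{assn:M}{(M)} makes the sublevel sets of each $g_i$ convex (so Talagrand's convex-distance inequality applies), and perform the median-to-mean correction.

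The only notable difference is the decomposition itself. You use the Jordan decomposition of $(\log_\eta^K)'$ to write $\log_\eta^K=h_1-h_2$ as a difference of two convex $\tfrac{3}{2\eta}$-Lipschitz functions (in fact $\tfrac{1}{\eta}$ would suffice, since the positive and negative variations of $(\log_\eta^K)'$ are each exactly $\tfrac{1}{\eta}$). The paper instead writes an explicit three-piece decomposition $\log_\eta^K=\log_1+\log_2+\log_3$ with $\log_1$ convex and $\log_2,\log_3$ concave, each $\tfrac{1}{2\eta}$-Lipschitz; it then splits $t$ in three and applies Talagrand with the constant $16$ rather than your $4$. The arithmetic conspires so that both routes land on the same denominator $288$ and the same $t_0(N)$. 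Your Jordan-decomposition argument is slightly slicker and makes transparent why a DC (difference-of-convex) splitting with Lipschitz constant of order $1/\eta$ always exists; the paper's explicit formulas have the minor advantage that one can read off the optimal Lipschitz constant $\tfrac{1}{2\eta}$ per piece.
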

\begin{proof}
The function $\log_\eta^K$ is not convex (it is convex on $[-\eta,\eta]$ and concave outside this interval). But it is a linear combination of three convex functions. Indeed, for $i = 1, 2, 3$, consider $\log_i = \log_{i, \eta,K} : \R \to \R$ given by
\begin{align*}
	\log_1(x) &= \begin{cases} -\frac{x}{2\eta} - \frac{1}{2} + \log_\eta(\eta) & \text{if } x \leq -\eta, \\ \log_\eta(x) & \text{if } -\eta \leq x \leq \eta, \\ \frac{x}{2\eta} - \frac{1}{2} + \log_\eta(\eta) & \text{if } x \geq \eta ,\end{cases} \\
	\log_2(x) &= \begin{cases} \frac{x}{2\eta} & x \leq \eta, \\ \log_\eta^K(x) + \frac{1}{2} - \log_\eta(\eta) & \text{if } x \geq \eta, \end{cases} \\
	\log_3(x) &= \begin{cases} -\frac{x}{2\eta} & \text{if } x \geq -\eta, \\ \log_\eta^K(x) + \frac{1}{2} - \log_\eta(\eta) & \text{if } x \leq -\eta. \end{cases}
\end{align*}
Notice that $\log_\eta^K = \sum_{i=1}^3 \log_i$, that $\log_1$ is convex while $\log_2$ and $\log_3$ are concave, and that each $\log_i$ is $\frac{1}{2\eta}$-Lipschitz. For each $i$, consider the function $f_i : [-\frac{N^{-\kappa}}{\|\Phi\|_{\textup{Lip}}}, \frac{N^{-\kappa}}{\|\Phi\|_{\textup{Lip}}} ]^M \to \R$ given by
\[
	f_i(X_{\textup{cut}}) = (-1)^{\mathds{1}_{i \neq 1}} \frac{1}{N}\tr(\log_i(\Phi(X_{\textup{cut}}))) = (-1)^{\mathds{1}_{i \neq 1}} \int_\R \log_i(\lambda) \hat{\mu}_{\Phi(X_{\textup{cut}})}(\diff \lambda).
\]
The factors of $-1$ are for convenience, so that each $f_i$ will be convex. Notice that
\begin{equation}
\label{eqn:convexdecomposition}
	\P((\mc{E}_{\textup{conc}})^c) = \P\left(\abs{\sum_{i=1}^3 (-1)^{\mathds{1}_{i \neq 1}} (f_i(X_{\textup{cut}}) - \E[f_i(X_{\textup{cut}})])} > t\right) \leq \sum_{i=1}^3 \P\left(\abs{f_i(X_{\textup{cut}}) - \E[f_i(X_{\textup{cut}})]} > \frac{t}{3}\right).
\end{equation}
Each $f_i$ is a Lipschitz, convex function of the many independent compactly supported variables $(X_{\textup{cut}})_1, \ldots, (X_{\textup{cut}})_M$. Thus we can apply concentration-of-measure results of Talagrand. It will be useful to factor $f_i = g_i \circ \Phi$, where $g_i : \ms{S}_N \to \R$ is given by $g_i(T) = (-1)^{\mathds{1}_{i \neq 1}} \frac{1}{N}\tr(\log_i(T))$. 

Indeed, since $\log_i$ is $(2\eta)^{-1}$-Lipschitz, we know that $g_i$ is $(\eta\sqrt{2N})^{-1}$-Lipschitz (see, e.g., \cite[Lemma 2.3.1]{AndGuiZei2010}, recalling our norm \eqref{eqn:frobenius-like-norm}), and thus $f_i$ is $\|\Phi\|_{\textup{Lip}}/(\eta\sqrt{2N})$-Lipschitz. Furthermore, since $(-1)^{\mathds{1}_{i \neq 1}} \log_i$ is convex, by Klein's lemma (see, e.g., \cite[Lemma 1.2]{GuiZei2000}) $g_i$ is also convex; since we assumed that $\Phi$ pulls back convex sets to convex sets, we conclude that $\{X_{\textup{cut}} : f_i(X_{\textup{cut}}) \leq a\}$ is a convex set of $[-\frac{N^{-\kappa}}{\|\Phi\|_{\textup{Lip}}}, \frac{N^{-\kappa}}{\|\Phi\|_{\textup{Lip}}}]^M$ for every $a \in \R$.
Then \cite[Theorem 6.6]{Tal1996} implies that
\[
    \P(\abs{f_i(X_{\textup{cut}}) - \mf{M}_{f_i}} \geq t) \leq 4 \exp\left( - \frac{t^2\eta^2N^{1+2\kappa}}{32} \right)
\]
where $\mf{M}_{f_i}$ is a median of $f_i(X_{\textup{cut}})$.
We conclude using \eqref{eqn:convexdecomposition} and the estimate
\[
    \abs{\E(f_i(X_{\textup{cut}})) - \mf{M}_{f_i}} \leq \E\abs{f_i(X_{\textup{cut}}) - \mf{M}_{f_i}} \leq 4\int_0^\infty \exp\left( - \frac{t^2\eta^2N^{1+2\kappa}}{32}\right) \diff t = \frac{8\sqrt{2\pi}}{\eta N^{\frac{1}{2}+\kappa}} = \frac{1}{3}t_0(N)
\]
to substitute the median with the mean.
\end{proof}

\subsubsection{Upper bound.}\
After establishing one more estimate, we prove the upper bound of Theorem \ref{thm:convex_functional}.

\begin{lem}
\label{lem:detconub_ec}
With the parameter choices \eqref{eqn:detcon_parameters}, we have
\[
    \lim_{N \to \infty} \frac{1}{N}\log \E[\abs{\det(H_N)}( 1 - \mathds{1}_{\mc{E}_{\textup{ss}}} \mathds{1}_{\mc{E}_{\textup{conc}}} )] = -\infty.
\]
\end{lem}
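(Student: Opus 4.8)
The plan is to split the bad event and reduce to two probability estimates. First I would write
\[
    1 - \mathds{1}_{\mc{E}_{\textup{ss}}}\mathds{1}_{\mc{E}_{\textup{conc}}} = \mathds{1}_{(\mc{E}_{\textup{ss}} \cap \mc{E}_{\textup{conc}})^c} \leq \mathds{1}_{(\mc{E}_{\textup{ss}})^c} + \mathds{1}_{(\mc{E}_{\textup{conc}})^c},
\]
so it suffices to prove $\frac{1}{N}\log\E[\abs{\det(H_N)}\mathds{1}_A] \to -\infty$ for each of $A = (\mc{E}_{\textup{ss}})^c$ and $A = (\mc{E}_{\textup{conc}})^c$. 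For each such $A$ I would apply Hölder's inequality with exponents $1+\delta$ and $\frac{1+\delta}{\delta}$, where $\delta > 0$ is the exponent from \eqref{eqn:detconub_NlogN}:
\[
    \E[\abs{\det(H_N)}\mathds{1}_A] \leq \E[\abs{\det(H_N)}^{1+\delta}]^{\frac{1}{1+\delta}}\, \P(A)^{\frac{\delta}{1+\delta}}.
\]
By \eqref{eqn:detconub_NlogN} there is a constant $C$ with $\E[\abs{\det(H_N)}^{1+\delta}] \leq e^{CN\log N}$ for all large $N$, so the first factor contributes at most $e^{\frac{C}{1+\delta}N\log N}$; this is precisely why the assumption is placed at the $N\log N$ scale and with a power strictly larger than $1$.

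The decay of $\P(A)$ must then beat this $N\log N$ growth, and both events are designed to do so. For $A = (\mc{E}_{\textup{ss}})^c$ this is exactly assumption \hyperlink{assn:S}{(S)}: equation \eqref{eqn:ss} gives $\frac{1}{N\log N}\log\P((\mc{E}_{\textup{ss}})^c) \to -\infty$, so $\P((\mc{E}_{\textup{ss}})^c) \leq e^{-C'N\log N}$ for any fixed $C'$ and all large $N$. For $A = (\mc{E}_{\textup{conc}})^c$, I would first check that the parameter choices \eqref{eqn:detcon_parameters} meet the hypothesis $t \geq t_0(N)$ of Lemma \ref{lem:prob_econc}: with $\eta = N^{-\kappa/2}$ one has $t_0(N) = 24\sqrt{2\pi}\,\eta^{-1}N^{-1/2-\kappa} = 24\sqrt{2\pi}\,N^{-1/2-\kappa/2}$, which is much smaller than $t = N^{-\kappa/4}$, so in fact $t - t_0(N) \geq \tfrac{1}{2}t$ for $N$ large. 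Plugging this and $\eta = N^{-\kappa/2}$ into Lemma \ref{lem:prob_econc} gives $\P((\mc{E}_{\textup{conc}})^c) \leq 12\exp(-cN^{1+\kappa/2})$ for some $c > 0$; since $1+\kappa/2 > 1$, this too dominates $e^{-C'N\log N}$ for every $C'$ once $N$ is large.

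Combining, for every $C' > 0$ and all large $N$,
\[
    \frac{1}{N}\log\E[\abs{\det(H_N)}\mathds{1}_A] \leq \frac{C}{1+\delta}\log N - \frac{\delta C'}{1+\delta}\log N = \frac{C - \delta C'}{1+\delta}\log N,
\]
and choosing $C' > C/\delta$ drives the right-hand side to $-\infty$; since $C'$ is arbitrary, the left-hand side tends to $-\infty$ for each $A$, and adding the two contributions completes the proof. I do not expect a real obstacle here: the entire content is that the $(1+\delta)$-th moment of the determinant grows at most at the $N\log N$ scale while the complementary events have probability decaying faster than any $e^{-C'N\log N}$, so the Hölder split leaves ample room. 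The only points requiring care are the bookkeeping of the Hölder exponents and confirming that the explicit choices \eqref{eqn:detcon_parameters} give $t_0(N) \ll t$ — this is the one place the concrete values of the parameters are genuinely used.
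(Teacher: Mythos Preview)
Your proposal is correct and follows essentially the same approach as the paper: H\"older with exponents $1+\delta$ and $\tfrac{1+\delta}{\delta}$, the $N\log N$ bound \eqref{eqn:detconub_NlogN} for the determinant moment, and the fact that both $\P((\mc{E}_{\textup{ss}})^c)$ (via \eqref{eqn:ss}) and $\P((\mc{E}_{\textup{conc}})^c)$ (via Lemma~\ref{lem:prob_econc} with the parameters \eqref{eqn:detcon_parameters}) decay faster than any $e^{-C'N\log N}$. The only cosmetic difference is that the paper applies H\"older once to $\mc{E}^c = (\mc{E}_{\textup{ss}} \cap \mc{E}_{\textup{conc}})^c$ and then uses a union bound on $\P(\mc{E}^c)$, whereas you split the indicator first and apply H\"older twice; your explicit verification that $t_0(N) \ll t$ and that the exponent in Lemma~\ref{lem:prob_econc} is $\asymp N^{1+\kappa/2}$ is a nice addition.
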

\begin{proof}
 Writing $\mc{E} = \mc{E}_{\textup{ss}} \cap \mc{E}_{\textup{conc}}$, for any $\delta>0$ H\"{o}lder's inequality gives
\begin{align*}
    \frac{1}{N}\log \E[\abs{\det(H_N)} \mathds{1}_{\mc{E}^c}] &\leq \frac{1}{(1+\delta)N} \log \E[\abs{\det(H_N)}^{1+\delta}] + \frac{\delta}{(1+\delta)N} \log \P(\mc{E}^c).
\end{align*}
For $\delta$ satisfying \eqref{eqn:detconub_NlogN}, the first term is $\OO(\log N)$. Concerning the second term, we have 
\[
    \frac{1}{N}\log \P(\mc{E}^c) \leq \frac{1}{N}\log[\P((\mc{E}_{\textup{conc}})^c) + \P((\mc{E}_{\textup{ss}})^c)] \leq -C\log N,
\]
for any $C>0$ and $N\geq N_0(C)$, where the last inequality follows from Lemma \ref{lem:prob_econc}, our parameter choices \eqref{eqn:detcon_parameters}, and our assumption \eqref{eqn:ss}. 
\end{proof}

\begin{proof}[Proof of upper bound.]
From our assumptions on $\mu_N$ we have
$
    \liminf_{N \to \infty} \int \log\abs{\lambda} \mu_N(\diff \lambda) > -\infty.
$
Thus, by Lemma \ref{lem:detconub_ec}, it suffices to prove
\begin{equation}
\label{eqn:detconub_suffices}
    \limsup_{N \to \infty} \left( \frac{1}{N}\log \E[\abs{\det(H_N)} \mathds{1}_{\mc{E}_{\textup{ss}}} \mathds{1}_{\mc{E}_{\textup{conc}}}] - \int \log\abs{\lambda} \mu_N(\diff \lambda)\right) \leq 0.
\end{equation}
On the events $\mc{E}_{\textup{ss}}$ and $\mc{E}_{\textup{conc}}$, Lemmas \ref{lem:borcapcha2011} and \ref{lem:detcon_eps1} give us
\begin{align*}
    &\int \log_\eta^K(\lambda) \hat{\mu}_{\Phi(X)}(\diff \lambda) \\
    &= \int \log_\eta^K(\lambda) (\hat{\mu}_{\Phi(X)} - \hat{\mu}_{\Phi(X_{\textup{cut}})})(\diff \lambda) + \int \log_\eta^K(\lambda) (\hat{\mu}_{\Phi(X_{\textup{cut}})} - \E[\hat{\mu}_{\Phi(X_{\textup{cut}})}])(\diff \lambda) + \int \log_\eta^K(\lambda) \E[\hat{\mu}_{\Phi(X_{\textup{cut}})}](\diff \lambda) \\
    &\leq N^{-\kappa} \log\left(1+\frac{K^2}{\eta^2}\right) + t + \int \log_\eta^K(\lambda) \E[\hat{\mu}_{\Phi(X_{\textup{cut}})}](\diff \lambda) \leq 2\epsilon_1(N) + t + \int \log_\eta^K(\lambda) \mu_N(\diff \lambda).
\end{align*}
We use this estimate to obtain
\begin{align*}
    \frac{1}{N}\log \E[\abs{\det(H_N)} \mathds{1}_{\mc{E}_{\textup{ss}}} \mathds{1}_{\mc{E}_{\textup{conc}}} ] &= \frac{1}{N}\log \E\left[\left(\prod_{i : \abs{\lambda_i} \leq K} \abs{\lambda_i} \right)\left( \prod_{i : \abs{\lambda_i} > K} \abs{\lambda_i} \right) \mathds{1}_{\mc{E}_{\textup{ss}}} \mathds{1}_{\mc{E}_{\textup{conc}}} \right] \\
    &\leq \frac{1}{N}\log \E\left[ e^{N\int \log_\eta^K\rd \hat{\mu}_{\Phi(X)}} \left( \prod_{i=1}^N (1+\abs{\lambda_i}\mathds{1}_{\abs{\lambda_i} > K})\right) \mathds{1}_{\mc{E}_{\textup{ss}}} \mathds{1}_{\mc{E}_{\textup{conc}}} \right] \\
    &\leq 2\epsilon_1(N) +t + \frac{1}{N}\log \E\left[ \prod_{i=1}^N (1+\abs{\lambda_i}\mathds{1}_{\abs{\lambda_i} > K})\right] + \int \log_\eta^K(\lambda) \mu_N(\diff \lambda).
\end{align*}
From our choice of parameters \eqref{eqn:detcon_parameters} and the assumption \eqref{eqn:detconub_coarse}, this last term is $\int \log_\eta^K(\lambda) \mu_N(\diff \lambda) + \oo(1)$. Furthermore, since the $\mu_N$'s are supported on a common compact set and $K$ increases with $N$, we have $\int \log_\eta^K(\lambda) \mu_N(\diff \lambda) = \int \log_\eta(\lambda) \mu_N(\diff \lambda)$ for $N$ large enough. Thus to prove \eqref{eqn:detconub_suffices} we need only show
\begin{equation}
\label{eqn:detconub_logeta}
    \limsup_{N \to \infty} \int (\log_\eta(\lambda) - \log\abs{\lambda}) \mu_N(\diff \lambda) \leq 0.
\end{equation}
To show this, we use
\[
    \int_\kappa^\infty (\log_\eta(\lambda) - \log\abs{\lambda})\mu_N(\diff \lambda) \leq \frac{1}{2}\log\left(1 + \frac{\eta^2}{\kappa^2}\right) 
\]
which tends to zero since $\eta$ does, and
\[
    \abs{\int_{-\kappa}^\kappa (\log_\eta(\lambda) - \log\abs{\lambda}) \mu_N(\diff \lambda)} \leq \kappa^{-1} \int_{-\kappa}^\kappa (\log\abs{\lambda} - \log_\eta(\lambda)) \abs{\lambda}^{-1+\kappa} \diff \lambda,
\]
which tends to zero by dominated convergence. This completes the proof of \eqref{eqn:detconub_logeta} and thus of the upper bound.
\end{proof}

\subsubsection{Lower bound.}\
We first collect some  estimates.

\begin{lem}
\label{lem:eps2}
We have 
\[
    \frac{1}{N}\log\E[e^{N\int (\log\abs{\lambda} - \log_\eta(\lambda))\hat{\mu}_{\Phi(X)}(\diff \lambda)} \mathds{1}_{\mc{E}_{\textup{gap}}} \mathds{1}_{\mc{E}_{\textup{ss}}} \mathds{1}_{\mc{E}_{\textup{conc}}} \mathds{1}_{\mc{E}_b}] \geq -\epsilon_2(N),
\]
where
\[
    \epsilon_2(N) = \frac{p_b}{2}\log(1+e^{2N^\epsilon} \eta^2) + \frac{\eta^2}{2w_b^2} - \frac{1}{N}\log\P(\mc{E}_{\textup{gap}}, \mc{E}_{\textup{ss}}, \mc{E}_{\textup{conc}}, \mc{E}_b).
\]
\end{lem}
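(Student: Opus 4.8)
The plan is to reduce the statement to a deterministic (pathwise) lower bound for the exponent
\[
    Y := \int_\R (\log\abs{\lambda} - \log_\eta(\lambda))\,\hat\mu_{\Phi(X)}(\diff\lambda)
\]
on the event $\mc{E} := \mc{E}_{\textup{gap}} \cap \mc{E}_{\textup{ss}} \cap \mc{E}_{\textup{conc}} \cap \mc{E}_b$, after which the claim is immediate: if $Y \geq -c_N$ on $\mc{E}$, then $e^{NY}\mathds{1}_{\mc{E}} \geq e^{-Nc_N}\mathds{1}_{\mc{E}}$, so taking expectations and then $\tfrac1N\log$ gives $\tfrac1N\log\E[e^{NY}\mathds{1}_{\mc{E}}] \geq -c_N + \tfrac1N\log\P(\mc{E})$. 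The starting observation is the exact identity $\log\abs{\lambda} - \log_\eta(\lambda) = -\tfrac12\log(1+\eta^2/\lambda^2)$, which is everywhere $\leq 0$ and becomes very negative only for $\lambda$ near $0$; the role of $\mc{E}_{\textup{gap}}$ and $\mc{E}_b$ is to control both how small and how numerous the near-zero eigenvalues can be. (The indicators $\mathds{1}_{\mc{E}_{\textup{ss}}}$ and $\mathds{1}_{\mc{E}_{\textup{conc}}}$ play no role here beyond shrinking the event; they are carried along only because this same product reappears in the lower-bound argument to follow.)

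Concretely, I split the eigenvalues $(\lambda_i)_{i=1}^N$ of $\Phi(X)$ according to whether $\abs{\lambda_i} > w_b$ or $\abs{\lambda_i} \leq w_b$. For the former group, $\log(1+x)\leq x$ gives $-\tfrac12\log(1+\eta^2/\lambda_i^2) \geq -\eta^2/(2\lambda_i^2) \geq -\eta^2/(2w_b^2)$, and since there are at most $N$ such indices their total contribution to $Y$ is at least $-\eta^2/(2w_b^2)$. For the latter group, on $\mc{E}_{\textup{gap}}$ we have $\abs{\lambda_i}\geq e^{-N^\epsilon}$, hence $-\tfrac12\log(1+\eta^2/\lambda_i^2) \geq -\tfrac12\log(1+e^{2N^\epsilon}\eta^2)$; and on $\mc{E}_b$, since $b\geq\mathds{1}_{[-w_b,w_b]}$, the number of indices with $\abs{\lambda_i}\leq w_b$ is at most $N\int b\,\diff\hat\mu_{\Phi(X)} \leq Np_b$, so their total contribution to $Y$ is at least $-\tfrac{p_b}{2}\log(1+e^{2N^\epsilon}\eta^2)$. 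Summing the two estimates,
\[
    Y \;\geq\; -\frac{p_b}{2}\log(1+e^{2N^\epsilon}\eta^2) - \frac{\eta^2}{2w_b^2} \qquad \text{on } \mc{E},
\]
which is exactly $-c_N$ with $c_N = \tfrac{p_b}{2}\log(1+e^{2N^\epsilon}\eta^2) + \tfrac{\eta^2}{2w_b^2}$, so the reduction of the previous paragraph yields the claimed bound with $\epsilon_2(N)$.

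There is no genuine obstacle in this lemma: the one idea is the two-scale split of the spectrum, and its point is that the potentially dangerous near-zero eigenvalues are simultaneously \emph{few} (at most $Np_b$, via $\mc{E}_b$) and \emph{not too small} (at least $e^{-N^\epsilon}$ in modulus, via $\mc{E}_{\textup{gap}}$). This is precisely why the small factor $\tfrac{p_b}{2}$ — rather than the useless $\tfrac12$ one would get from a uniform bound over all eigenvalues — multiplies $\log(1+e^{2N^\epsilon}\eta^2)$ in $\epsilon_2(N)$: with the parameter choices of the proof, $p_b\log(1+e^{2N^\epsilon}\eta^2)$ and $\eta^2/w_b^2$ both tend to $0$, whereas $\log(1+e^{2N^\epsilon}\eta^2)$ alone would not.
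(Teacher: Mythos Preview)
Your proof is correct and follows essentially the same approach as the paper: both use the identity $\log|\lambda|-\log_\eta(\lambda)=-\tfrac12\log(1+\eta^2/\lambda^2)$, split the spectrum into a near-zero part (controlled via $\mc{E}_{\textup{gap}}$ for size and $\mc{E}_b$ for cardinality) and a far-from-zero part (controlled by $|\lambda|\geq w_b$), and then bound $e^{NY}\mathds{1}_{\mc{E}}$ below by a deterministic constant times $\mathds{1}_{\mc{E}}$. The only cosmetic difference is that the paper decomposes with the smooth partition $b,\,1-b$ whereas you use the sharp indicators $\mathds{1}_{|\lambda|\le w_b},\,\mathds{1}_{|\lambda|>w_b}$; the resulting bounds are identical.
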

\begin{proof}
On $\mc{E}_{\textup{gap}}$, for any eigenvalue $\lambda$ of $\Phi(X)$ we have
\[
    \log\abs{\lambda} - \log_\eta(\lambda) = -\frac{1}{2}\log\left(1+\frac{\eta^2}{\lambda^2}\right) \geq -\frac{1}{2}\log(1+e^{2N^\epsilon}\eta^2).
\]
Similarly, since $1-b(\lambda) \leq \mathds{1}_{\abs{\lambda} \geq w_b}$ and $\log(1+x) \leq x$ for $x>0$, we have
\[
    \int (\log\abs{\lambda} - \log_\eta(\lambda))(1-b(\lambda)) \hat{\mu}_{\Phi(X)}(\diff \lambda) \geq -\frac{1}{2} \log\left(1+\frac{\eta^2}{w_b^2}\right) \geq -\frac{\eta^2}{2w_b^2}.
\]
Thus
\begin{align*}
    &\E[e^{N\int (\log\abs{\lambda} - \log_\eta(\lambda)) \hat{\mu}_{\Phi(X)}(\diff \lambda)} \mathds{1}_{\mc{E}_{\textup{gap}}} \mathds{1}_{\mc{E}_{\textup{ss}}} \mathds{1}_{\mc{E}_{\textup{conc}}} \mathds{1}_{\mc{E}_b}] \\
    &\geq e^{-\frac{Np_b}{2}\log(1+e^{2N^\epsilon}\eta^2)}\E[e^{N\int (\log\abs{\lambda} - \log_\eta(\lambda))(1-b)(\lambda) \hat{\mu}_{\Phi(X)}(\diff \lambda)} \mathds{1}_{\mc{E}_{\textup{gap}}} \mathds{1}_{\mc{E}_{\textup{ss}}} \mathds{1}_{\mc{E}_{\textup{conc}}} \mathds{1}_{\mc{E}_b}] \\
    &\geq e^{-\frac{Np_b}{2}\log(1+e^{2N^\epsilon}\eta^2)} e^{-\frac{N\eta^2}{2w_b^2}} \P(\mc{E}_{\textup{gap}}, \mc{E}_{\textup{ss}}, \mc{E}_{\textup{conc}}, \mc{E}_b),
\end{align*}
which concludes the proof.
\end{proof}

\begin{lem}
\label{lem:prob_eb}
For $N$ large enough we have
\[
    \P((\mc{E}_b)^c) \leq \frac{2}{p_b}\left(\frac{N^{-\kappa}}{w_b} + \frac{(2w_b)^\kappa}{\kappa^2}\right).
\]
\end{lem}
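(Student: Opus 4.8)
The plan is to bound $\P((\mc{E}_b)^c) = \P(\int b(\lambda)\hat{\mu}_{\Phi(X)}(\diff\lambda) > p_b)$ by Markov's inequality, which reduces the task to estimating $\E[\int b(\lambda)\hat{\mu}_{\Phi(X)}(\diff\lambda)]$. Since $0 \leq b \leq \mathds{1}_{[-2w_b,2w_b]}$, this expectation is at most $\E[\hat{\mu}_{\Phi(X)}([-2w_b,2w_b])] = \int_{-2w_b}^{2w_b} \E\hat{\mu}_{\Phi(X)}(\diff\lambda)$, so it suffices to control the mass that the \emph{mean} spectral measure places near the origin. Here I would split the interval $[-2w_b,2w_b]$ into the piece inside $(-\kappa,\kappa)$ (which is all of it, for $N$ large, since $w_b = N^{-\kappa/4} \to 0$) and invoke assumption \hyperlink{assn:E}{(E)}: we know $d_{\textup{BL}}(\E\hat\mu_{\Phi(X)},\mu_N)\leq N^{-\kappa}$, and $\mu_N$ has a density bounded by $\kappa^{-1}|x|^{-1+\kappa}$ on $(-\kappa,\kappa)$.

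The two error terms in the claimed bound come from these two inputs respectively. First, comparing $\E\hat\mu_{\Phi(X)}$ to $\mu_N$ on the interval $[-2w_b,2w_b]$: one cannot directly plug the indicator of this interval into the $d_{\textup{BL}}$ bound since it is not Lipschitz, so I would instead test against $b$ itself (or against a Lipschitz bump dominating $\mathds{1}_{[-2w_b,2w_b]}$), noting $\|b\|_{\textup{Lip}} \leq 1/w_b$ and $\|b\|_{L^\infty} \leq 1$. Rescaling $b$ by $w_b$ to make $\|w_b\, b\|_{\textup{Lip}} + \|w_b\, b\|_{L^\infty} \leq 1$ (valid once $w_b \leq 1$) gives $|\int b\, (\E\hat\mu_{\Phi(X)} - \mu_N)(\diff\lambda)| \leq N^{-\kappa}/w_b$, the first term. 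Second, $\int b(\lambda)\mu_N(\diff\lambda) \leq \mu_N([-2w_b,2w_b]) \leq \int_{-2w_b}^{2w_b} \kappa^{-1}|x|^{-1+\kappa}\diff x = \kappa^{-1}\cdot \frac{2(2w_b)^\kappa}{\kappa} = \frac{2(2w_b)^\kappa}{\kappa^2}$, using that $|x| < 2w_b < \kappa$ for $N$ large so the density bound from \hyperlink{assn:E}{(E)} applies. Adding these, $\E[\int b\,\diff\hat\mu_{\Phi(X)}] \leq \frac{N^{-\kappa}}{w_b} + \frac{2(2w_b)^\kappa}{\kappa^2}$, wait — I should be careful with the constant; the $\int_{-2w_b}^{2w_b} |x|^{-1+\kappa}\diff x = \frac{2}{\kappa}(2w_b)^\kappa$ contributes $\frac{2(2w_b)^\kappa}{\kappa^2}$, and after Markov's inequality dividing by $p_b$ and using subadditivity I would aim to land on the stated $\frac{2}{p_b}(\frac{N^{-\kappa}}{w_b} + \frac{(2w_b)^\kappa}{\kappa^2})$ by absorbing the factor of $2$ into a slightly crude but harmless bookkeeping (or the paper's $b$ has a sharper profile making $\int b \leq \mu_N([-w_b,w_b]) + \text{boundary}$; either way the final form is a routine consequence).

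I do not expect a genuine obstacle here: the only mild subtlety is that $b$ is not compactly controlled in the $d_{\textup{BL}}$ norm without rescaling by $w_b$, and that one must check $2w_b < \kappa$ holds for $N$ large (immediate from $w_b = N^{-\kappa/4}$) so that the density bound in \hyperlink{assn:E}{(E)} is applicable on the whole support of $b$. The ``for $N$ large enough'' in the statement is exactly this requirement. Everything else is Markov plus the two elementary integral estimates above.
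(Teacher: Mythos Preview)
Your proposal is correct and essentially identical to the paper's proof: both test $b$ against the $d_{\textup{BL}}$ bound of assumption \hyperlink{assn:E}{(E)}, then bound $\int b\,\diff\mu_N \leq \mu_N([-2w_b,2w_b])$ via the density hypothesis, and conclude by Markov's inequality. The only cosmetic difference is that the paper uses $\|b\|_{\textup{Lip}}+\|b\|_\infty \leq 1/w_b+1 \leq 2/w_b$ directly (which is where the overall factor of $2$ comes from) rather than rescaling; note that your rescaled function actually satisfies $\|w_b b\|_{\textup{Lip}}+\|w_b b\|_\infty \leq 1+w_b$, not $\leq 1$, but this is the same harmless bookkeeping.
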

\begin{proof}
By our choice \eqref{eqn:detcon_parameters} of $w_b$ tending to zero,  $\mu_N$ admits a density on $[-2w_b,2w_b]$ for $N$ large enough. Since $b(\lambda)$ is $\frac{1}{w_b}$-Lipschitz and bounded above by $\mathds{1}_{\abs{\lambda} \leq 2w_b}$, we use \eqref{eqn:dBL} to find
\begin{align*}
    \E\left[ \int b(\lambda) \hat{\mu}_{\Phi(X)}(\diff \lambda)\right] \leq \left(\frac{1}{w_b}+1\right) d_{\textup{BL}}(\E[\hat{\mu}_{\Phi(X)}],\mu_N) + \mu_N([-2w_b,2w_b]) \leq \frac{2N^{-\kappa}}{w_b} + \frac{1}{\kappa}\int_{-2w_b}^{2w_b} \abs{x}^{-1+\kappa} \diff x.
\end{align*}
The conclusion follows by evaluating this integral  and applying Markov's inequality.
\end{proof}

\begin{proof}[Proof of lower bound.] Lemmas \ref{lem:borcapcha2011}, \ref{lem:detcon_eps1}, and \ref{lem:eps2}  show that 
$N^{-1}\log\E[\abs{\det(H_N)}]$ is larger than
\begin{align}
 &\frac{1}{N} \log \E\left[e^{N\left( \int (\log\abs{\lambda} - \log_\eta(\lambda))\hat{\mu}_{\Phi(X)}(\diff \lambda) + \int \log_\eta^K(\lambda) (\hat{\mu}_{\Phi(X)} - \hat{\mu}_{\Phi(X_{\textup{cut}})} + \hat{\mu}_{\Phi(X_{\textup{cut}})} - \E[\hat{\mu}_{\Phi(X_{\textup{cut}})}])(\diff \lambda)\right)} \mathds{1}_{\mc{E}_{\textup{gap}}} \mathds{1}_{\mc{E}_{\textup{ss}}} \mathds{1}_{\mc{E}_{\textup{conc}}}\right] \notag \\
 &+ \int \log_\eta^K(\lambda) \E[\hat{\mu}_{\Phi(X_{\textup{cut}})}](\diff \lambda) \notag\\
    \geq& \, \frac{1}{N}\log\E[e^{N\int (\log\abs{\lambda} - \log_\eta(\lambda))\hat{\mu}_{\Phi(X)}(\diff \lambda)} \mathds{1}_{\mc{E}_{\textup{gap}}} \mathds{1}_{\mc{E}_{\textup{ss}}} \mathds{1}_{\mc{E}_{\textup{conc}}}] - N^{-\kappa} \log\left(1+\frac{K^2}{\eta^2}\right) - t + \int \log_\eta^K(\lambda) \E[\hat{\mu}_{\Phi(X_{\textup{cut}})}](\diff \lambda) \notag\\
    \geq&  \int \log\abs{\lambda} \mu_N(\diff \lambda) - \epsilon(N),\label{eqn:detconlb_fundamental}
\end{align}
where $
    \epsilon(N) = \epsilon_1(N) + \epsilon_2(N) + N^{-\kappa}\log\left(1+\frac{K^2}{\eta^2}\right) + t
$ and we have used 
\begin{equation}
\label{eqn:truncate_muN}
    \int \log_\eta^K(\lambda) \mu_N(\diff \lambda) \geq \int \log(\min(\abs{\lambda},K)) \mu_N(\diff \lambda) = \int \log\abs{\lambda} \mu_N(\diff \lambda)
\end{equation}
for $N$ large enough in the last inequality \eqref{eqn:detconlb_fundamental}, as the $\mu_N$'s are supported on a common compact set and $K$ grows with $N$.
It remains to check that $\epsilon(N) \to 0$. This follows immediately from our parameter choices \eqref{eqn:detcon_parameters}, except possibly for the term $\epsilon_2(N)$. For this term, we note that $\P(\mc{E}_{\textup{ss}}) \to 1$ and $\P(\mc{E}_{\textup{gap}}) \to 1$ by assumption (\eqref{eqn:ss} and \eqref{eqn:wegner}, respectively), then use Lemmas \ref{lem:prob_econc} and \ref{lem:prob_eb} to show that $\P(\mc{E}_{\textup{conc}}) \to 1$ and $\P(\mc{E}_b) \to 1$. This shows that $\epsilon_2(N) \to 0$, which concludes the proof of the lower bound and thus of \eqref{eqn:conv}. 
\end{proof}


\subsection{Proof of Theorem \ref{thm:concentrated_input}.}\
In this subsection we prove Theorem \ref{thm:concentrated_input}. The proof is largely similar to that of Theorem \ref{thm:convex_functional}, so we will omit some steps.

We make the same parameter choices as in \eqref{eqn:detcon_parameters}. We also work with the events $\mc{E}_{\textup{gap}}$ and $\mc{E}_b$ from \eqref{eqn:manyevents}, but $\mc{E}_{\textup{ss}}$ is no longer relevant, and $\mc{E}_{\textup{conc}}$ is replaced by
\[
    \mc{E}_{\textup{Lip}} = \left\{\abs{ \int \log_\eta(\lambda) (\hat{\mu}_{H_N} - \E[\hat{\mu}_{H_N}])(\diff \lambda)} \leq t\right\}.
\]

\begin{proof}[Proof of upper bound of Theorem \ref{thm:concentrated_input}]
From \eqref{eqn:lipschitztrace} and some elementary estimates, there exists a universal constant $c_{\epsilon_0}$ such that, for $N$ large enough, we have
\[
    \E[e^{N\int \log_\eta(\lambda) (\hat{\mu}_{H_N} - \E[\hat{\mu}_{H_N}])(\diff \lambda)}] \leq c_{\epsilon _0}\exp\left[ \left(\frac{2N^\zeta}{c_\zeta}\right)^{1/\epsilon_0} \left(\frac{1}{2\eta}\right)^2\right].
\]
Hence
\begin{align*}
    \frac{1}{N}\log \E[\abs{\det(H_N)}] &\leq \frac{1}{N}\log \E[e^{N \int \log_\eta(\lambda) \hat{\mu}_{H_N}(\diff \lambda)}] \leq \left(\frac{2}{4^{\epsilon_0} c_\zeta}\right)^{1/\epsilon_0} \frac{N^{\zeta/\epsilon_0-1}}{\eta^2} + \int \log_\eta(\lambda) \E[\hat{\mu}_{H_N}](\diff \lambda) \\
    &\leq \left(\frac{2}{4^{\epsilon_0} c_\zeta}\right)^{1/\epsilon_0} \frac{N^{\zeta/\epsilon_0-1}}{\eta^2} + \frac{1}{2\eta}{\rm W}_1(\E[\hat{\mu}_{H_N}],\mu_N) + \int \log_\eta(\lambda) \mu_N(\diff \lambda).
\end{align*}
For $\zeta$ small enough, the first term decays with $N$. We complete the proof by applying \eqref{eqn:wasserstein} and \eqref{eqn:detconub_logeta}. 
\end{proof}

\begin{proof}[Proof of lower bound of Theorem \eqref{thm:concentrated_input}]
Arguing as in \eqref{eqn:detconlb_fundamental}, $N^{-1}\log\E[\abs{\det(H_N)}]$ is larger than
\begin{align*}
    & \frac{1}{N}\log \E[e^{N \left( \int (\log\abs{\lambda}-\log_\eta(\lambda)) \hat{\mu}_{H_N}(\diff \lambda) + \int \log_\eta(\lambda) (\hat{\mu}_{H_N} - \E[\hat{\mu}_{H_N}])(\diff \lambda)\right)} \mathds{1}_{\mc{E}_{\textup{Lip}}} \mathds{1}_{\mc{E}_{\textup{gap}}}] + \int \log_\eta(\lambda) \E[\hat{\mu}_{H_N}](\diff \lambda) \\
    &\geq \frac{1}{N}\log \E\left[e^{N \int (\log\abs{\lambda} - \log_\eta(\lambda)) \hat{\mu}_{H_N}(\diff \lambda)} \mathds{1}_{\mc{E}_{\textup{Lip}}} \mathds{1}_{\mc{E}_{\textup{gap}}} \right] - t - \frac{1}{2\eta} {\rm W}_1(\E[\hat{\mu}_{H_N}], \mu_N) + \int \log\abs{\lambda} \mu_N(\diff \lambda).
\end{align*}
As in Lemma \ref{lem:eps2}, we have
\[
    \frac{1}{N}\log \E\left[e^{N \int (\log\abs{\lambda} - \log_\eta(\lambda)) \hat{\mu}_{H_N}(\diff \lambda)} \mathds{1}_{\mc{E}_{\textup{Lip}}} \mathds{1}_{\mc{E}_{\textup{gap}}} \mathds{1}_{\mc{E}_b} \right] \geq -\frac{p_b}{2}\log(1+e^{2N^\epsilon}\eta^2) - \frac{\eta^2}{2w_b^2} + \frac{1}{N}\log\P(\mc{E}_{\textup{Lip}}, \mc{E}_{\textup{gap}}, \mc{E}_b),
\]
so by our parameter choices \eqref{eqn:detcon_parameters} it suffices to show $\P(\mc{E}_{\textup{Lip}}, \mc{E}_{\textup{gap}}, \mc{E}_b) \to 1$. The event $\mc{E}_{\textup{gap}}$ is handled by assumption \eqref{eqn:wegner}; the event $\mc{E}_b$ is handled by Lemma \ref{lem:prob_eb} (replacing $d_{\textup{BL}}$ there with ${\rm W}_1$ here); and the event $\mc{E}_{\textup{Lip}}$ is handled by assumption \hyperlink{assn:L}{(L)}, since \eqref{eqn:lipschitztrace} gives
$
    \P(\mc{E}_{\textup{Lip}}^c) \leq \exp\left(-\frac{c_\zeta}{N^\zeta} \min\{(2Nt\eta)^2, (2Nt\eta)^{1+\epsilon_0}\}\right).
$
\end{proof}


\section{Applications to matrix models}
\label{sec:modelproofs}

In this section, we check the assumptions of our general theorems, \ref{thm:convex_functional} and \ref{thm:concentrated_input}, for our different matrix models. First we present two general and classical techniques that will help us check these assumptions. Informally speaking, the first technique shows how local laws for the Stieltjes transform along lines of the form $\{E + \ii N^{-\epsilon} : E \in [-C, C]\}$ give polynomial convergence rates of the averaged empirical spectral measure, corresponding to assumptions \hyperlink{assn:E}{(E)} and \hyperlink{assn:W}{(W)}. The second technique proves Wegner estimates of the form \eqref{eqn:wegner} using the Schur complement formula.

In the last Section \ref{sec:assumptions}, we prove the claims made just after Theorem \ref{thm:convex_functional} about the necessity of its assumptions.


\subsection{General technique: Convergence rates via local laws.}\
In this subsection, we summarize the general technique for using local laws to derive estimates like \eqref{eqn:dBL} and \eqref{eqn:wasserstein}. We will use this technique repeatedly for specific matrix models. This idea is classical; see for instance \cite{Bai1993} for the specific estimates we need.

Write $s_N(z)=\int\hat{\mu}_{H_N}(\diff \lambda)/(\lambda-z)$ for the Stieltjes transform of $\hat{\mu}_{H_N}$, and $m_N(z)=\int\mu_N(\diff \lambda)/(\lambda-z)$ for the Stieltjes transform of $\mu_N$. Define the distribution functions $F_{\E{\hat{\mu}}}(x) = \E[\hat{\mu}_{H_N}]((-\infty,x])$, $F_{\mu_N}(x) = \mu_N((-\infty,x])$.

\begin{prop}
\label{prop:bai}
Suppose the measures $\mu_N$ have densities $\mu_N(\cdot)$ on all of $\R$, not just near the origin, and $\sup_N \|\mu_N(\cdot)\|_{L^\infty} < \infty$.
Assume also that there exist fixed ($N$-independent) constants $A, \epsilon_1, \epsilon_2 > 0$ such that
\begin{align}
    \int_{-3A}^{3A} \abs{\E[s_N(E+\ii N^{-\epsilon_1})] - m_N(E+\ii N^{-\epsilon_1})} \diff E &\leq N^{-\epsilon_2}, \label{eqn:baibulk}\\
    \int_{\abs{x} > A} \abs{F_{\E[\hat{\mu}]}(x) - F_{\mu_N}(x)} \diff x &\leq N^{- \epsilon_1 - \epsilon_2}. \label{eqn:baitail}
\end{align}
Then there exists $\gamma > 0$ with
$
    d_{\textup{KS}}(\E[\hat{\mu}_{H_N}], \mu_N) =\OO(N^{-\gamma})
$. If in addition $\supp(\mu_N) \subset (-A,A)$ for each $N$, and 
\begin{equation}
\label{eqn:baidecay}
	\abs{F_{\E[\hat{\mu}]}(x) - F_{\mu_N}(x)} = o_{\abs{x} \to \infty} \left(\frac{1}{\abs{x}}\right),
\end{equation}
then there exists $\gamma' > 0$ with 
$
    d_{\textup{BL}}(\E[\hat{\mu}_{H_N}], \mu_N) \leq {\rm W}_1(\E[\hat{\mu}_{H_N}], \mu_N)=\OO(N^{-\gamma'}).
$
\end{prop}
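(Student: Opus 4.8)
The plan is to relate the various metrics on probability measures to integrals of resolvents and cumulative distribution functions, which is the classical Bai inequality strategy, and then to feed in the two hypotheses \eqref{eqn:baibulk} and \eqref{eqn:baitail} (and later \eqref{eqn:baidecay}).

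First, for the Kolmogorov--Smirnov bound I would invoke the Bai-type smoothing inequality (as in \cite{Bai1993}): for any two distribution functions $F, G$ where $G$ has a bounded density, one has, for every $v > 0$,
\[
    \|F - G\|_\infty \lesssim \int_{-3A}^{3A} \abs{\im(s_F(E+\ii v) - s_G(E+\ii v))} \diff E + v \|G'\|_\infty + \frac{1}{v}\sup_x \int_{\abs{y}\leq 2v} \abs{F(x+y) - F(x)} \diff y + (\text{tail terms}),
\]
where $s_F, s_G$ are the Stieltjes transforms; the tail terms outside $[-3A,3A]$ are controlled by $\int_{\abs{x}>A}\abs{F - G}\diff x$. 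Applying this with $F = F_{\E\hat\mu}$, $G = F_{\mu_N}$, and $v = N^{-\epsilon_1}$: the first term is bounded by \eqref{eqn:baibulk} after noting $\abs{\im(\cdots)} \leq \abs{\E[s_N] - m_N}$; the second term is $O(N^{-\epsilon_1})$ since $\sup_N \|\mu_N(\cdot)\|_{L^\infty} < \infty$; the third term is $O(N^{-\epsilon_1})$ because $\E\hat\mu_{H_N}$ is also a distribution function (actually one should be slightly careful and bound $\frac1v \int_{|y|\leq 2v} \sup_x|F(x+y)-F(x)|\diff y$ using the bounded density of $\mu_N$ together with $\|F_{\E\hat\mu} - F_{\mu_N}\|_\infty$ to close a bootstrap, or more simply replace this self-referential term using the standard form of Bai's inequality which already handles it); and the tail term is $\leq N^{-\epsilon_1-\epsilon_2} \leq N^{-\epsilon_2}$ by \eqref{eqn:baitail}. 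Optimizing gives $d_{\textup{KS}} = O(N^{-\gamma})$ for $\gamma = \min(\epsilon_1,\epsilon_2)$ (or a fixed fraction thereof depending on the exact form of the smoothing inequality).

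Second, for the bounded-Lipschitz and Wasserstein-$1$ bounds, I would use the identity ${\rm W}_1(\mu,\nu) = \int_\R \abs{F_\mu(x) - F_\nu(x)}\diff x$ together with $d_{\textup{BL}} \leq {\rm W}_1$. Split the integral into $\{\abs{x} \leq A\}$ and $\{\abs{x} > A\}$. On the compact part, $\int_{-A}^A \abs{F_{\E\hat\mu} - F_{\mu_N}} \diff x \leq 2A \cdot d_{\textup{KS}}(\E\hat\mu_{H_N},\mu_N) = O(N^{-\gamma})$ by the first part. On the tail, use that $\mu_N$ is supported in $(-A,A)$, so $F_{\mu_N}(x) \in \{0,1\}$ there, and $\abs{F_{\E\hat\mu}(x) - F_{\mu_N}(x)}$ is just a tail of $\E\hat\mu_{H_N}$; by assumption \eqref{eqn:baidecay} this is $o(1/\abs{x})$, which is not quite integrable, so I would instead interpolate: combine \eqref{eqn:baidecay} with the $L^1$ bound \eqref{eqn:baitail} to get, for any $A < R$, $\int_{A < \abs{x} < R} \leq \int_{\abs{x}>A}\abs{F_{\E\hat\mu} - F_{\mu_N}} \leq N^{-\epsilon_1-\epsilon_2}$ and $\int_{\abs{x}>R} \lesssim o(1)\cdot(\text{something})$ — more carefully, since $\E\hat\mu_{H_N}$ has finite mass, $\int_{\abs x > R}\abs{F_{\E\hat\mu}-F_{\mu_N}}\diff x = \int_{|x|>R}\big(1_{x<-R}F_{\E\hat\mu}(x) + 1_{x>R}(1-F_{\E\hat\mu}(x))\big)\diff x \to 0$ as $R\to\infty$ uniformly in $N$ by \eqref{eqn:baitail} (which already gives a uniform integrable tail bound), so the whole tail contribution is $O(N^{-\epsilon_1-\epsilon_2}) + o_N(1)$; combined with \eqref{eqn:baitail} again this is genuinely $O(N^{-\epsilon_1-\epsilon_2})$ for the relevant range and $o(1)$ beyond, and one extracts a power $\gamma' > 0$.

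The main obstacle is the bookkeeping around the tails: \eqref{eqn:baidecay} alone gives a non-integrable $1/\abs{x}$ decay, so one must use it only to guarantee that the (self-improving) tail terms in Bai's inequality vanish, while the \emph{quantitative} rate for ${\rm W}_1$ must come entirely from \eqref{eqn:baitail} on $\{\abs{x}>A\}$ plus the KS bound on $\{\abs{x}\leq A\}$; making sure the polynomial rate $\gamma'$ survives this splitting (and that no term secretly needs more than $o(1/\abs x)$ decay) is the delicate point. The resolvent-to-KS step itself is entirely standard once the correct version of the smoothing inequality is quoted, so I would lean on \cite{Bai1993} for that and concentrate the write-up on verifying that the three hypotheses plug in with the stated exponents.
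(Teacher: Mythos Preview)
Your Kolmogorov--Smirnov argument is essentially the paper's: both invoke Bai's smoothing inequality \cite[Theorem 2.2]{Bai1993} and plug in the two hypotheses. Two small corrections will make your version match cleanly. First, in the form of Bai's inequality the paper uses, the smoothing term is $\eta^{-1}\sup_x\int_{|y|\leq 10\eta}|F_{\mu_N}(x+y)-F_{\mu_N}(x)|\diff y$, i.e.\ it involves $F_{\mu_N}$ (the one with bounded density), not $F_{\E\hat\mu}$, so there is no bootstrap to close; this term is immediately $O(\eta)$. Second, the tail term in that inequality carries a prefactor $\eta^{-1}$, so with $\eta=N^{-\epsilon_1}$ it becomes $N^{\epsilon_1}\cdot N^{-\epsilon_1-\epsilon_2}=N^{-\epsilon_2}$; this is precisely why hypothesis \eqref{eqn:baitail} has exponent $\epsilon_1+\epsilon_2$ rather than just $\epsilon_2$.

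Your ${\rm W}_1$ argument takes a genuinely different route from the paper, and in fact a cleaner one. The paper works with the dual formulation, fixes a $1$-Lipschitz test function $f$, and integrates by parts separately on $[-2A,2A]$ and on each tail; the integration by parts on the tail produces a boundary term at infinity of the form $x\cdot(1-F_{\E\hat\mu}(x))$, and \eqref{eqn:baidecay} is exactly what is needed to make this vanish. Your approach via the identity ${\rm W}_1(\mu,\nu)=\int_\R|F_\mu-F_\nu|\diff x$ bypasses integration by parts entirely: on $[-A,A]$ the integrand is $\leq d_{\textup{KS}}$, and on $\{|x|>A\}$ the integral is $\leq N^{-\epsilon_1-\epsilon_2}$ directly by \eqref{eqn:baitail}. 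This gives ${\rm W}_1\leq 2A\cdot d_{\textup{KS}}+N^{-\epsilon_1-\epsilon_2}$ without ever invoking \eqref{eqn:baidecay}. You should therefore delete the entire paragraph about $o(1/|x|)$ not being integrable and interpolating with a cutoff $R$: that discussion is unnecessary and obscures the fact that \eqref{eqn:baitail} alone controls the tail in $L^1$. The hypothesis \eqref{eqn:baidecay} is a genuine requirement for the paper's proof strategy but is redundant in yours.
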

\begin{proof}
From \cite[Theorem 2.2]{Bai1993}, we have
\begin{multline*}
    d_{\textup{KS}}(\E[\hat{\mu}_{H_N}], \mu_N) \leq\eta^{-1} \sup_x \int_{\abs{y} \leq 10\eta} \abs{F_{\mu_N}(x+y)-F_{\mu_N}(x)} \diff y 
    + 2\pi \eta^{-1} \int_{\abs{x} > A} \abs{F_{\E[\hat{\mu}]}(x) - F_{\mu_N}(x)} \diff x \\
    + \int_{-3A}^{3A} \abs{\E[s_N(E+\ii\eta)] - m_N(E+\ii\eta)} \diff E.
\end{multline*}
Since the measures $\mu_N$ have densities bounded by $S$, say, the function $F_{\mu_N}$ is $S$-Lipschitz; hence the first term is at most $100S\eta$. With the choice $\eta = N^{-\epsilon_1}$, the second and third terms are handled by assumption.

For the Wasserstein distance, let $f$ be a test function with $\|f\|_{\textup{Lip}} \leq 1$. We integrate by parts (notice \eqref{eqn:baidecay} gives us the decay at infinity necessary to do this) to find
\begin{align*}
    \abs{\int_{2A}^\infty f(x) (\E[\hat{\mu}_{H_N}] - \mu_N)(\diff x)} &= \abs{\int_{2A}^\infty f(x) \E[\hat{\mu}_{H_N}](\diff x)} \leq \int_{2A}^\infty (x-(2A-1)) \E[\hat{\mu}_{H_N}](\diff x) \\
    &\leq d_{\textup{KS}}(\E[\hat{\mu}_{H_N}], \mu_N) + \int_{2A}^\infty \abs{F_{\E[\hat{\mu}]}(x) - F_{\mu_N}(x)} \diff x \leq N^{-\gamma} + N^{-\epsilon_1-\epsilon_2}
\end{align*}
and similarly for the left tail. For the bulk, we approximate $f$ on $[-2A,2A]$ with test functions smooth enough to integrate by parts on $f$ directly, which gives
\[
    \abs{\int_{-2A}^{2A} f(x)(\E[\hat{\mu}_{H_N}] - \mu_N)(\diff x)} \leq (8A+4)d_{\textup{KS}}(\E[\hat{\mu}_{H_N}], \mu_N).
\]
This completes the proof.
\end{proof}


\subsection{General technique: Wegner estimates via Schur complements.}\
\label{subsec:wegner}
In this subsection, we summarize the classical idea of using the Schur-complement formula to derive Wegner estimates on the probability that there are no eigenvalues in a small gap around energy level $E$. These will be used to check \eqref{eqn:wegner} for a wide variety of models.

For compactness, we temporarily drop the $N$-dependence from the notation $H_N$. For any $j$ in $\llbracket 1, N \rrbracket$, write $H^{(j)}$ for the matrix obtained by erasing the $j$th column and row from $H$, write $h_j$ for the $(N-1)$-vector consisting of the $j$th column of $H$ with the entry $H_{jj}$ removed, and write $H_{\widehat{jj}}$ for the collection of every entry of $H$ except for $H_{jj}$.

\begin{prop}
\label{prop:schur}
Fix $E \in \R$ and suppose there exists a sequence $\eta = \eta_N$ tending to zero such that
\begin{equation}
\label{eqn:schurassumption}
    \sup_{j \in \llbracket 1, N \rrbracket}\E\left[ \E\left[ \left. \im \left( \frac{1}{H_{jj} - (E+\ii\eta + h_j^T(H^{(j)} - (E+\ii\eta))^{-1}h_j)} \right) \right| H_{\widehat{jj}} \right] \right] = o\left(\frac{1}{N\eta}\right).
\end{equation}
Then 
\[
    \lim_{N \to \infty} \P(H_N \text{ has no eigenvalues in } [E-\eta, E+\eta]) = 1.
\]
\end{prop}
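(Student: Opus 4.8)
The plan is to relate the number of eigenvalues in the interval $[E-\eta, E+\eta]$ to the imaginary part of the Stieltjes transform $s_N(E+\ii\eta) = \frac{1}{N}\Tr\bigl((H-(E+\ii\eta))^{-1}\bigr)$ at the spectral scale $\eta$, and then show the latter is small in expectation via the Schur-complement hypothesis \eqref{eqn:schurassumption}. First I would record the elementary deterministic bound: if $\mc{N}_E(\eta)$ denotes the number of eigenvalues of $H_N$ in $[E-\eta,E+\eta]$, then since each such eigenvalue $\lambda_i$ contributes $\im\frac{1}{\lambda_i - E - \ii\eta} = \frac{\eta}{(\lambda_i-E)^2+\eta^2} \geq \frac{1}{2\eta}$, we get
\[
    \mc{N}_E(\eta) \leq 2\eta \sum_{i=1}^N \frac{\eta}{(\lambda_i-E)^2+\eta^2} = 2\eta N \cdot \im s_N(E+\ii\eta).
\]
Hence, by Markov's inequality, $\P(\mc{N}_E(\eta) \geq 1) \leq \E[\mc{N}_E(\eta)] \leq 2N\eta\, \E[\im s_N(E+\ii\eta)]$, so it suffices to show $\E[\im s_N(E+\ii\eta)] = o(1/(N\eta))$.

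Next I would expand $\im s_N(E+\ii\eta)$ using the Schur complement formula. Writing $z = E+\ii\eta$, the standard identity gives, for each diagonal entry of the resolvent,
\[
    \bigl((H-z)^{-1}\bigr)_{jj} = \frac{1}{H_{jj} - z - h_j^T(H^{(j)} - z)^{-1}h_j},
\]
so that $\im s_N(z) = \frac{1}{N}\sum_{j=1}^N \im\bigl((H-z)^{-1}\bigr)_{jj}$ is exactly $\frac{1}{N}\sum_j$ of the quantities whose conditional expectation (given $H_{\widehat{jj}}$, i.e., given everything except $H_{jj}$) appears inside \eqref{eqn:schurassumption}. Taking the full expectation, using the tower property over the conditioning on $H_{\widehat{jj}}$, and bounding the average of the $N$ terms by the supremum over $j$, we obtain
\[
    \E[\im s_N(E+\ii\eta)] \leq \sup_{j} \E\left[ \E\left[ \left. \im\left( \frac{1}{H_{jj} - z - h_j^T(H^{(j)}-z)^{-1}h_j} \right) \right| H_{\widehat{jj}} \right] \right] = o\left( \frac{1}{N\eta} \right)
\]
by assumption \eqref{eqn:schurassumption}. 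Combining this with the Markov bound above gives $\P(\mc{N}_E(\eta)\geq 1) \to 0$, which is the claim.

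The argument is almost entirely routine once the reduction is set up; there is essentially no hard step. The only point requiring a touch of care is the well-definedness of the Schur complement expression: one must note that $H^{(j)} - z$ is invertible for $z$ in the upper half-plane (it is a Hermitian matrix shifted by $-\ii\eta$ with $\eta>0$), and that the interchange of $\im$ with the sum and the expectation is justified because $\im(H-z)^{-1}$ is a positive semidefinite matrix, so every diagonal entry $\im((H-z)^{-1})_{jj}$ is nonnegative and the terms being summed and integrated are all of one sign. This sign property also means no integrability issue arises in applying Fubini/tower property. So the proof is a short concatenation of the resolvent–counting bound, the Schur complement identity, and the hypothesis.
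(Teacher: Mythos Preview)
Your proof is correct and follows essentially the same approach as the paper: bound the expected number of eigenvalues in $[E-\eta,E+\eta]$ by $2N\eta\,\E[\im s_N(E+\ii\eta)]$, then use the Schur complement identity for $((H-z)^{-1})_{jj}$ and bound the average over $j$ by the supremum to invoke \eqref{eqn:schurassumption}. The paper's write-up is just a more compressed version of the same chain of inequalities.
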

\begin{proof}
We have
\begin{multline*}
	\P(H_N \text{ has an eigenvalue in } [E-\eta, E+\eta]) \leq \E[\#\{j : \abs{\lambda_j-E} \leq \eta \}] 
	  \leq \E\left[2\sum_{j=1}^N \frac{\eta^2}{\eta^2+(\lambda_j-E)^2}\right] \\= 2\eta \E\left[ \im\left( \sum_{j=1}^N \frac{1}{\lambda_j - E - \ii\eta} \right)\right] 
	= 2\eta\E\left[ \im\left( \Tr \frac{1}{H - (E+\ii\eta)} \right) \right] \leq 2N\eta \sup_{j \in \llbracket 1, N \rrbracket} \E[\im(((H-(E+\ii\eta))^{-1})_{jj})].
\end{multline*}
Moreover,  the Schur complement formula gives
\[
	((H-(E+\ii\eta))^{-1})_{jj} =  \frac{1}{H_{jj}-(E+\ii\eta + h_j^T(H^{(j)}-(E+\ii\eta))^{-1}h_j)},
\]
which concludes the proof by the assumption \eqref{eqn:schurassumption}. 
\end{proof}

\begin{lem}
\label{lem:schur_bdd_density}
Write $\widetilde{H}_{jj}$ for the law of $H_{jj}$ conditioned on $H_{\widehat{jj}}$. Suppose that there exists a single probability measure $\mu$ on $\R$ (independent of $N$ and $j$) with a bounded density $\mu(\cdot)$, and constants $\widetilde{\sigma}_{jj} = \widetilde{\sigma}_{jj}^{(N)}$ and $\widetilde{m}_{jj} = \widetilde{m}_{jj}^{(N)}$ such that 
\[
	\frac{\widetilde{H}_{jj}-\widetilde{m}_{jj}}{\widetilde{\sigma}_{jj}} \sim \mu
\]
for every $N$ and $j \in \llbracket 1, N \rrbracket$. If there exist $\alpha, C > 0$ with
\[
	\inf_{j \in \llbracket 1, N \rrbracket} \widetilde{\sigma}_{jj} \geq \frac{1}{C}N^{-\alpha},
\]
then \eqref{eqn:schurassumption} holds with $\eta = o(N^{-1-\alpha})$ for every $E \in \R$. 
\end{lem}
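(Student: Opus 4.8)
The plan is to bound the inner conditional expectation in \eqref{eqn:schurassumption} pointwise in the randomness of $H_{\widehat{jj}}$, using only the conditional law of $H_{jj}$. First I would fix $j$ and condition on $H_{\widehat{jj}}$; then $H^{(j)}$ and $h_j$ are fixed, so $\zeta_j := E + \ii\eta + h_j^T(H^{(j)} - (E+\ii\eta))^{-1}h_j$ is a fixed complex number with $\im \zeta_j = \eta + \eta \, h_j^T \abs{H^{(j)} - (E+\ii\eta)}^{-2} h_j \geq \eta > 0$, using that $H^{(j)}$ is real symmetric. Hence $\re\zeta_j$ and $\im\zeta_j \geq \eta$ are deterministic given $H_{\widehat{jj}}$, and the inner expectation is
\[
    \E\left[ \im\left( \frac{1}{H_{jj} - \zeta_j} \right) \;\middle|\; H_{\widehat{jj}} \right] = \E\left[ \frac{\im\zeta_j}{(H_{jj} - \re\zeta_j)^2 + (\im\zeta_j)^2} \;\middle|\; H_{\widehat{jj}} \right] \leq \E\left[ \frac{\im\zeta_j}{(H_{jj} - \re\zeta_j)^2 + (\im \zeta_j)^2} \;\middle|\; H_{\widehat{jj}} \right].
\]
This is a Poisson-kernel-type integral against the conditional law $\widetilde{H}_{jj}$, and since $\im\zeta_j > 0$ it is bounded above by $\pi$ times the value at height $\im\zeta_j$ of the Poisson extension of the density of $\widetilde{H}_{jj}$.

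Next I would change variables using the hypothesis $(\widetilde{H}_{jj} - \widetilde{m}_{jj})/\widetilde{\sigma}_{jj} \sim \mu$. Writing $Y \sim \mu$ with bounded density $\mu(\cdot)$, the integral becomes $\int \frac{\im\zeta_j}{(\widetilde{\sigma}_{jj} y + \widetilde{m}_{jj} - \re\zeta_j)^2 + (\im\zeta_j)^2}\mu(y)\diff y = \frac{1}{\widetilde{\sigma}_{jj}}\int \frac{\im\zeta_j/\widetilde{\sigma}_{jj}}{(y - (\re\zeta_j - \widetilde{m}_{jj})/\widetilde{\sigma}_{jj})^2 + (\im\zeta_j/\widetilde{\sigma}_{jj})^2}\mu(y)\diff y$. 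The integral on the right is $\pi$ times the Poisson extension of $\mu(\cdot)$ evaluated at some real point and at height $\im\zeta_j/\widetilde{\sigma}_{jj} > 0$, which is bounded by $\pi \norm{\mu(\cdot)}_{L^\infty}$ uniformly (the Poisson extension of an $L^\infty$ density never exceeds its sup norm). Therefore the inner conditional expectation is at most $\pi \norm{\mu(\cdot)}_{L^\infty} / \widetilde{\sigma}_{jj}$, and taking the outer expectation and supremum over $j$ gives the bound $\pi \norm{\mu(\cdot)}_{L^\infty} \sup_j \widetilde{\sigma}_{jj}^{-1} \leq \pi C \norm{\mu(\cdot)}_{L^\infty} N^\alpha$.

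Finally I would compare this with $\frac{1}{N\eta}$: with $\eta = o(N^{-1-\alpha})$ we have $\frac{1}{N\eta} \gg N^\alpha$, so $\pi C \norm{\mu(\cdot)}_{L^\infty} N^\alpha = o(1/(N\eta))$, which is exactly \eqref{eqn:schurassumption}. Invoking Proposition \ref{prop:schur} then yields the Wegner estimate for every $E \in \R$. I expect the only slightly delicate point to be the bookkeeping that $\im\zeta_j \geq \eta$ holds surely given $H_{\widehat{jj}}$ (so that $1/(H_{jj}-\zeta_j)$ genuinely has positive imaginary part and the Poisson-kernel bound applies), together with the observation that the bound $\pi\norm{\mu(\cdot)}_{L^\infty}$ on the Poisson extension is independent of the real shift and of the height — everything else is a routine change of variables. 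There is no real obstacle here; the content is entirely in the normalization lower bound $\widetilde{\sigma}_{jj} \geq C^{-1} N^{-\alpha}$ forcing the conditional density to be spread out at scale $\gg \eta$.
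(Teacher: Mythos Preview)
Your proposal is correct and follows essentially the same approach as the paper: condition on $H_{\widehat{jj}}$ so that $\zeta_j$ is fixed with $\im\zeta_j\ge\eta$, change variables via $(\widetilde{H}_{jj}-\widetilde{m}_{jj})/\widetilde{\sigma}_{jj}\sim\mu$, and bound the resulting Poisson-type integral by $\pi\|\mu(\cdot)\|_{L^\infty}/\widetilde{\sigma}_{jj}\le \pi C\|\mu(\cdot)\|_{L^\infty}N^{\alpha}=o(1/(N\eta))$. The only cosmetic remark is that the displayed inequality in your first paragraph repeats the same expression on both sides, and the final sentence about invoking Proposition~\ref{prop:schur} goes slightly beyond what the lemma asserts (the lemma only claims \eqref{eqn:schurassumption}); neither affects correctness.
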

\begin{proof}
For any \emph{deterministic} $z = E+\ii\eta$, and with the notation $S:=\|\mu(\cdot)\|_{L^\infty}$, we have
\[
	\E_{\widetilde{H_{jj}}} \left[ \im\left(\frac{1}{\widetilde{H_{jj}} - z}\right) \right] = \int_{\R} \frac{\eta}{(\widetilde{\sigma}_{jj} x + \widetilde{m}_{jj} -E)^2 + \eta^2} \mu(x) \diff x \leq S\frac{1}{\widetilde{\sigma}_{jj}} \int_{\R} \frac{\eta}{x^2+\eta^2} \diff x \leq \pi SC N^{\alpha}.
\]
Define
$
    z_j = E+\ii\eta+ h_j^T(H^{(j)} - (E+\ii\eta))^{-1}h_j$, and
$    
    \widetilde{z_j} = z_j - \E[\widetilde{H}_{jj}],
$
and notice that $\widetilde{z_j}$ is measurable with respect to $H_{\widehat{jj}}$ with $\im(\widetilde{z_j}) \geq \eta$ deterministically; thus
\[
	\sup_{j \in \llbracket 1, N \rrbracket} \E\left[ \E \left[ \left. \im\left(\frac{1}{H_{jj} - z_j}\right) \right| H_{\widehat{jj}} \right] \right] = \sup_{j \in \llbracket 1, N \rrbracket} \E_{\widetilde{z_j}}\left[ \E_{\widetilde{H_{jj}}} \left[ \im \left(\frac{1}{\widetilde{H_{jj}} - \widetilde{z_j}} \right) \right] \right] \leq \pi SCN^{\alpha}
\]
which is $o(1/(N\eta))$ for our choice of $\eta$.
\end{proof}


\subsection{Wigner matrices.}\
\label{subsec:wigner}
We will use Theorem \ref{thm:convex_functional} (convexity-preserving functional) and model a Wigner matrix $W_N - E$ as $W_N - E = \Phi(X_1, \ldots, X_M)$, where $M = \frac{N(N+1)}{2}$, the $X_i$'s are independent random variables distributed according to $\mu$, and $\Phi$ is $\frac{1}{\sqrt{N}}$ times the identity map which places these entries in the upper triangle of an $N \times N$ matrix, minus $E\Id$. This $\Phi$ is trivially convex and satisfies $\|\Phi\|_{\textup{Lip}} = \frac{1}{\sqrt{N}}$.

Our assumption \eqref{eqn:subexponentialtails} that the underlying measure $\mu$ has subexponential tails is only used to check assumption \hyperlink{assn:S}{(S)}. To check the remaining conditions of Theorem \ref{thm:convex_functional}, we need only assume that $\mu$ has $2+\epsilon$ finite moments for some $\epsilon > 0$. In the interest of generality, in the following we give these minimal-assumptions proofs.

Now we check assumption \hyperlink{assn:E}{(E)} on expectations, with all $\mu_N$'s equal to the semicircle law $\rho_{\text{sc}}$. A. Tikhomirov \cite[Theorem 1.1]{Tik2009A} showed that for every $\epsilon$ in the assumption of $2+\epsilon$ finite moments, there exists $\eta = \eta(\epsilon) > 0$ with
\begin{equation}
\label{eqn:tikhomirov}
	d_{\textup{KS}}(\E[\hat{\mu}_{W_N}],\rho_{\text{sc}}) \leq N^{-\eta}.
\end{equation}
Now we transfer this inequality from $d_{\textup{KS}}$ to $d_{\textup{BL}}$: If $M > 2$ and $\|f\|_\infty \leq 1$, then
\[
	\abs{\int_{-\infty}^{-M} f(x) (\E[\hat{\mu}_{W_N}] - \rho_{\text{sc}})(\diff x)} = \abs{\int _{-\infty}^{-M} f(x) \E[\hat{\mu}_{W_N}](\diff x)} \leq \int_{-\infty}^{-M} \E[\hat{\mu}_{W_N}](\diff x) \leq N^{-\eta}
\]
from \eqref{eqn:tikhomirov}, and similarly for $\int_M^\infty$; on $[-M,M]$ we proceed exactly as in the proof of Proposition \ref{prop:bai}, to obtain \hyperlink{assn:E}{(E)}\footnote{We also briefly sketch another possible proof of assumption \hyperlink{assn:E}{(E)}. First, by following the usual Hoffman-Wielandt-based proof that two moments suffice for the Wigner semicircle law (see, e.g., \cite[Theorem 2.1.21]{AndGuiZei2010}), we can assume that the entries $W_{ij}$ are replaced with $W_{ij}\mathds{1}_{\abs{W_{ij}} \leq N^{10\epsilon}}$, if the $2+\epsilon$ moment is finite. Second, for this new matrix, one can apply the usual Stieltjes-transform-based proof of the Wigner semicircle law using Schur complements (see, e.g., \cite[Section 2.4.2]{AndGuiZei2010}); the fourth moments of the new matrix are $\OO(N^{40\epsilon})$, which is more than compensated by $1/N$ prefactors in the error terms.}.

Now we check the three estimates comprising assumption \hyperlink{assn:C}{(C)} on coarse bounds. 
\begin{itemize}
\item[\eqref{eqn:detconub_coarse}] Fix $\epsilon > 0$ and write $W = W_N = A + B = A_N + B_N$, where $A$ is defined entrywise by
$
    A_{ij} = (W_{ij})\mathds{1}_{\abs{W_{ij}} \leq \frac{1}{10N}e^{N^\epsilon}}.
$
Notice that all eigenvalues of $A$ have absolute value at most $\frac{1}{10}e^{N^\epsilon}$. The Weyl inequalities give us
\[
    \lambda_i(W) = \lambda_i(A+B) \leq \lambda_{\textup{max}}(A) + \lambda_i(B) \leq \frac{1}{10}e^{N^\epsilon} + \lambda_i(B)
\]
and similarly $\lambda_i(W) \geq \lambda_i(B) - \frac{1}{10}e^{N^\epsilon}$, so that for fixed $E$,  for large enough $N$ we have, for any $i$,
\begin{align*}
    1+\abs{\lambda_i(W-E)}\mathds{1}_{\abs{\lambda_i(W-E)} > e^{N^\epsilon}} &\leq 1+2\abs{\lambda_i(W)} \mathds{1}_{\abs{\lambda_i(W)} > \frac{1}{2}e^{N^\epsilon}} \leq 1 + 2\abs{\lambda_i(W)} \mathds{1}_{\abs{\lambda_i(B)} > \frac{1}{4}e^{N^\epsilon}} \\
    &\leq 1 + (\abs{\lambda_{\textup{max}}(A)} + \abs{\lambda_i(B)})\mathds{1}_{\abs{\lambda_i(B)} > \frac{1}{4}e^{N^\epsilon}} \leq 1 + 2\abs{\lambda_i(B)}\mathds{1}_{\abs{\lambda_i(B)} > \frac{1}{4}e^{N^\epsilon}}.
\end{align*}
For $x > 1$ we have $(1+2x) < (1+100x^2)^{1/2}$, so 
\[
    \prod_{i=1}^N (1+2\abs{\lambda_i(B)}\mathds{1}_{\abs{\lambda_i(B)} > \frac{1}{4}e^{N^\epsilon}}) \leq \prod_{i=1}^N (1+100\lambda_i(B)^2)^{1/2} = \det(\Id + 100 B^2)^{1/2}.
\]
By Fischer's inequality this can be bounded above by the product of its diagonal entries; that is,
\[
    \det(\Id + 100 B^2)^{1/2} \leq \prod_{i=1}^N \left( 1+100 \sum_{j=1}^N B_{ij}^2 \right)^{1/2} \leq \prod_{i=1}^N \left( 1 + 10 \sum_{j=1}^N \abs{B_{ij}} \right), 
\]
where for the last inequality we used $\sum a_i^2 \leq (\sum a_i)^2$ for positive numbers $a_i$. Now, for some constant $C$ we have
$
    \E[\abs{B_{ij}}], \E[\abs{B_{ij}}^2] \leq CNe^{-N^\epsilon} \leq e^{-\frac{1}{2}N^\epsilon},
$
and notice that we can calculate
$
    \E \left[ \prod_{i=1}^N \left( 1 + 10 \sum_{j=1}^N \abs{B_{ij}} \right) \right]
$
by expansion and factorization again. All matrix elements appear with a power at most two, and for any set $I$ of couples $(i,j)$ which can appear in the expansion, we have
$
    \E \left[ \prod_{\alpha \in I} \abs{B_\alpha} \right] \leq (e^{-\frac{1}{2}N^\epsilon})^{\abs{I}}
$
so that
\[
    \frac{1}{N}\log \E \left[ \prod_{i=1}^N \left( 1 + 10 \sum_{j=1}^N \abs{B_{ij}} \right) \right] \leq \frac{1}{N}\log \prod_{i=1}^N \left(1 + 10 \sum_{j=1}^N e^{-\frac{1}{2}N^\epsilon} \right) \to 0.
\]
\item[\eqref{eqn:wegner}] The existence of gaps near zero with high probability (indeed, gaps of polynomial size) was established by Nguyen \cite[Theorem 1.4]{Ngu2012}, including the case of general energy levels $E$. 
\item[\eqref{eqn:detconub_NlogN}] Fix $\delta$ so small that $\mu$ has finite $2+2\delta$ moment. Let $S_N$ be the symmetric group on $N$ letters, and for any permutation $\sigma \in S_N$ define $X_\sigma = \abs{(W-E)_{1,\sigma(1)} \cdot \ldots \cdot (W-E)_{N,\sigma(N)}}$. Then $\abs{\det(W_N-E)} \leq \sum_{\sigma} X_\sigma$, and by convexity of $x \mapsto x^{1+\delta}$ we have
\[
    \abs{\det (W_N-E)}^{1+\delta} \leq \left( \sum_{\sigma \in S_N} X_\sigma \right)^{1+\delta} \leq (N!)^{1+\delta} \frac{\sum_\sigma X_\sigma^{1+\delta}}{N!}.
\]
If $\sqrt{N}Y$ is distributed according to $\mu$, then for each $E \in \R$ there exists $c_E = c_E(\mu,\delta)$ such that
\[
    \max(\E[\abs{Y-E}^{1+\delta}], \E[\abs{Y-E}^{2+2\delta}], \E[\abs{Y}^{1+\delta}], \E[\abs{Y}^{2+2\delta}]) \leq c_E < \infty.
\]
Thus $\sup_\sigma \E[X_\sigma^{1+\delta}] \leq (c_E)^N$. Since $N!\leq N^N$, this gives
$
    \E[\abs{\det (W_N-E)}^{1+\delta}] \leq c_E^N N^{(1+\delta)N}
$
up to factors of lower order, which suffices.
\end{itemize}

To prove assumption \hyperlink{assn:S}{(S)} on spectral stability, we follow Bordenave, Caputo and Chafa\"{i}, see  \cite[Lemma C.2]{BorCapCha2011} and \cite[Lemma 2.2]{BorCap2014}. Write $W_N^{\textup{cut}} = \Phi(X_{\textup{cut}})$ for the matrix $W_N$ with entries truncated at level $N^{-\kappa}$ for some $\kappa < \frac{1}{2(2\alpha+1)}$, where $\alpha$ is from \eqref{eqn:subexponentialtails}. From interlacing (see, e.g., \cite[Theorem A.43]{BaiSil2010}) we find
\[
    d_{\textup{KS}}(\hat{\mu}_{W_N}, \hat{\mu}_{W_N^{\textup{cut}}}) \leq \frac{1}{N} \rank(W_N - W_N^{\textup{cut}}) \leq \frac{2}{N}\sum_{i \leq j} \mathds{1}_{\abs{W_{ij}} > N^{-\kappa}},
\]
where the last inequality follows since the rank of a matrix is at most the number of its nonzero entries. The $\frac{N(N+1)}{2}$ random variables $(\mathds{1}_{\abs{W_{ij}} > N^{-\kappa}})_{1 \leq i \leq j \leq N}$ are i.i.d. Bernoulli variables with parameter 
\[
    p_N = \P(\abs{W_{ij}} > N^{-\kappa}) \leq \beta \exp(-N^{(\frac{1}{2}-\kappa)\frac{1}{\alpha}}),
\]
from \eqref{eqn:subexponentialtails}. Writing $h(x) = (x+1)\log(x+1)-x$, Bennett's inequality \cite{Ben1965} gives
\[
    \P\left( \sum_{i \leq j} \mathds{1}_{\abs{W_{ij}} > N^{-\kappa}} - \frac{N(N+1)}{2} p_N \geq t \right) \leq \exp\left(-\sigma^2 h\left(\frac{t}{\sigma^2}\right)\right)
\]
with
\[
    \sigma^2 = \frac{N(N+1)}{2}p_N(1-p_N) \leq \frac{N(N+1)}{2} p_N \leq \beta \exp(-N^{(\frac{1}{2}-\kappa)\frac{1}{2\alpha}}),
\]
for $N$ large enough. With the choice $t = N^{1-\kappa} - \frac{N(N+1)}{2} p_N \geq \frac{1}{2}N^{1-\kappa}$ (for $\kappa$ small enough) we have $\frac{t}{\sigma^2} \to +\infty$, and using $h(x) \sim x \log x$ as $x \to +\infty$ (more precisely, $h(x) \geq \frac{1}{2}x\log x$ for $x$ large enough, say), we obtain 
\begin{align*}
    \log\P(d_{\textup{KS}}(\hat{\mu}_{W_N}, \hat{\mu}_{W_N^{\textup{cut}}}) > N^{-\kappa})
    &\leq -\sigma^2h\left(\frac{N^{1-\kappa}}{2\sigma^2}\right) \leq -C N^{1-\kappa}\log\left(\frac{N^{1-\kappa}}{2\sigma^2}\right) \leq -CN^{1-\kappa+(\frac{1}{2}-\kappa)\frac{1}{2\alpha}}
\end{align*}
for some constant $C$ and $N$ large enough. From our choice of $\kappa$, the last exponent is larger than one, which completes the proof of \eqref{eqn:ss}.


\subsection{Erd{\H{o}}s-R{\'e}nyi matrices.}\
We will use Theorem \ref{thm:convex_functional} (convexity-preserving functional) and model an Erd{\H{o}}s-R{\'e}nyi matrix $H_N-E$ as $H_N-E = \Phi(X_1, \ldots, X_M)$, where $M = \frac{N(N+1)}{2}$, the $X_i$'s are independent Bernoulli random variables with parameter $p_N$, and $\Phi$ is $\frac{1}{\sqrt{Np_N(1-p_N)}}$ times the identity map which places these entries in the upper triangle of an $N \times N$ matrix, minus $E\Id$. This clearly satisfies assumptions \hyperlink{assn:I}{(I)} and \hyperlink{assn:M}{(M)} with $\|\Phi\|_{\textup{Lip}} = \frac{1}{\sqrt{Np_N(1-p_N)}}$.

Now we verify assumption \hyperlink{assn:E}{(E)} with all $\mu_N$'s equal to the semicircle law $\rho_{\text{sc}}$. In the proof, we control the extreme eigenvalues (more precisely the smallest and second-largest) with results of Vu \cite{Vu2007}, improving on earlier results of F\"{u}redi-Koml\'{o}s \cite{FurKom1981}; and we control the bulk eigenvalues using the local law of Erd\H{o}s \emph{et al.} \cite{ErdKnoYauYin2013}. Often we use much weaker consequences of the results, replacing $\log N$ factors by polynomial factors and so on.

More precisely, consider $\widetilde{H_N} = H_N - \E[H_N]$. This matrix has centered entries of variance $\sigma^2 = \frac{1}{N}$, supported in $[-K,K]$ with $K = \frac{1}{\sqrt{\epsilon N^\epsilon}}$. Thus the proof of \cite[Theorem 1.3, Theorem 1.4]{Vu2007} shows that there exist $C, \gamma > 0$ with
\begin{equation}
\label{eqn:vu_erdosrenyi}
	\P\left(\|\widetilde{H_N}\| > 2 + C\frac{\log N}{(\epsilon N^\epsilon)^{1/4}}\right) \leq N^{-\gamma}
\end{equation}
for $N$ large enough. Recall we order eigenvalues as $\lambda_1 \leq \cdots \leq \lambda_N$; since $\E[H_N]$ is rank-one and positive semidefinite, interlacing tells us that $\max(\abs{\lambda_1(H_N)}, \abs{\lambda_{N-1}(H_N)}) \leq \|\widetilde{H_N}\|$, and thus we have the very coarse bound
\[
	\P(\max(\abs{\lambda_1(H_N)}, \abs{\lambda_{N-1}(H_N)}) \geq 3) \leq N^{-\gamma}
\]
for $N$ large enough. In particular, whenever $f$ is a test function with $\|f\|_\infty \leq 1$, we have
\[
	\abs{\int_3^\infty f(x) (\E[\hat{\mu}_{H_N}] - \rho_{\text{sc}})(\diff x)} \leq \frac{1}{N}\sum_{i=1}^N \P(\lambda_i(H_N) \geq 3) \leq \frac{1}{N} + N^{-\gamma},
\]
and similarly for the left tail, which is even easier because we do not need to separate out the smallest eigenvalue. 

Now we handle the bulk eigenvalues. Let $F_{\rho_{\text{sc}}}$, $F_{\hat{\mu}}$, and $F_{\E[\hat{\mu}]}$ be the distribution functions for $\rho_{\text{sc}}$, $\hat{\mu}_{H_N}$, and $\E[\hat{\mu}_{H_N}]$, respectively. Then \cite[Theorem 2.12]{ErdKnoYauYin2013} shows that there exists $\nu > 0$ such that, for $N$ large enough,
\[
	\P\left(\sup_{x \in [-3,3]} \abs{F_{\rho_{\text{sc}}}(x) - F_{\hat{\mu}}(x)} \leq N^{-1+\epsilon} \right) \geq 1-\exp(-\nu(\log N)^{5\log\log N}).
\]
Since $\sup_x \abs{F_{\rho_{\text{sc}}}(x) - F_{\hat{\mu}}(x)} \leq 2$ deterministically, this gives
\[
	\sup_{x \in [-3,3]} \abs{F_{\rho_{\text{sc}}}(x) - F_{\E[\hat{\mu}]}(x)} \leq \frac{N^\epsilon}{N} + 2\exp(-\nu(\log N)^{5\log \log N}).
\]
The proof of  \hyperlink{assn:E}{(E)} is then easily completed as in the case of Wigner matrices.

Now we check the three estimates comprising assumption \hyperlink{assn:C}{(C)} on coarse bounds.
\begin{itemize}
\item[\eqref{eqn:detconub_coarse}] We have
\begin{equation}
\label{eqn:erdosrenyi_coarse}
    \|H_N\|^2 \leq \sum_{i,j} \abs{H_{ij}}^2 \leq \frac{N}{p_N(1-p_N)} \leq \frac{1}{\epsilon} N^{2-\epsilon}
\end{equation}
almost surely, so \eqref{eqn:detconub_coarse} is trivially satisfied.
\item[\eqref{eqn:wegner}] For bulk energy levels, meaning $E \in (-2,2)$, one can show 
\[
	\P\left(H_N \text{ has no eigenvalues in } \left(E-\frac{1}{N^2}, E + \frac{1}{N^2}\right) \right) = 1 - o(1)
\]
using the bulk fixed-energy universality results of Landon-Sosoe-Yau \cite[Section 1.1.1]{LanSosYau2019}; the argument is given in our discussion below of the free-addition model. For $\abs{E} > 2$, eigenvalues other than $\lambda_N$ are handled with the result of Vu above \eqref{eqn:vu_erdosrenyi}. For $\lambda_N$ (only a concern for positive $E$ values), the Weyl inequalities give
\[
	\lambda_N(H_N) \geq \lambda_N(\E[H_N]) + \lambda_1(H_N - \E[H_N]) = \sqrt{\frac{Np_N}{1-p_N}} + \lambda_1(H_N - \E[H_N]) \geq \frac{1}{\sqrt{\epsilon}}N^{\epsilon/2} + \lambda_1(H_N - \E[H_N]).
\]
By \eqref{eqn:vu_erdosrenyi}, the last term is at least $-3$ with probability $1 - o(1)$; thus $\lambda_N$ cannot stick to any fixed $E > 2$.
\item[\eqref{eqn:detconub_NlogN}] This follows from \eqref{eqn:erdosrenyi_coarse}, using $\abs{\det(H_N-E)} \leq \|H_N-E\|^N$.
\end{itemize}

For assumption \hyperlink{assn:S}{(S)} on spectral stability, we note that the threshold for cutting is
\[
    \frac{N^{-\kappa}}{\|\Phi\|_{\textup{Lip}}} = N^{\frac{1}{2}-\kappa} \sqrt{p_N(1-p_N)} \geq \sqrt{\epsilon} N^{\frac{\epsilon}{2}-\kappa} \geq 1
\]
for $\kappa < \frac{\epsilon}{2}$ and large enough $N$. Since the $X_i=0$ or $1$, this means that $X = X_{\textup{cut}}$, and hence \eqref{eqn:ss} is trivially satisfied.


\subsection{$d$-regular matrices.}\ \label{sec:dreg}
We will prove Proposition \eqref{prop:dreg} by mimicking the proof of Theorem \ref{thm:concentrated_input}, but we will, informally speaking, prove \eqref{eqn:lipschitztrace} only for special test functions that we need to approximate the logarithm, rather than in full generality. Precisely, a careful reading of the proof of Theorem \ref{thm:concentrated_input} finds that it suffices to verify the following:
\begin{itemize}
\item Assumption \hyperlink{assn:W}{(W)} for some $\kappa > 0$, with all measures $\mu_N$ equal to the semicircle law $\rho_{\textup{sc}}$ (this is translation-invariant, so it suffices to check it at $E = 0$)
\item the Wegner estimate \eqref{eqn:wegner} around energy level $E$,
\item for the same $\kappa$, with the parameters
\begin{align*}
	\eta &= N^{-\kappa/2}, \\
	t &= N^{-\kappa/4},
\end{align*}
that (for the lower bound)
\begin{equation}
\label{eqn:dreg-lb}
	\P\left( \abs{ \int_\R \log_\eta(\lambda-E)(\hat{\mu}_{H_N} - \E[\hat{\mu}_{H_N}])(\diff \lambda)} \leq t\right) \to 1,
\end{equation}
\item and that (for the upper bound)
\begin{equation}
\label{eqn:dreg-ub}
	\limsup_{N \to \infty} \frac{1}{N} \log \E[e^{N \int \log_\eta(\lambda-E)(\hat{\mu}_{H_N} - \E[\hat{\mu}_{H_N}])(\diff \lambda)}] \leq 0.
\end{equation}
\end{itemize}
We now verify these four conditions.
\begin{itemize}
\item We will use Proposition \ref{prop:bai} and the local laws of Bauerschmidt-Knowles-Yau and Bauerschmidt-Huang-Knowles-Yau \cite{BauKnoYau2017, BauHuaKnoYau2020}. The former paper scales the $d$-regular adjacency matrix slightly differently from us; recalling that $H'_N$ is the adjacency matrix with entries in $\{0,1\}$, it considers
\[
	\widetilde{H_N} = \frac{1}{\sqrt{d-1}}\left(H'_N - \frac{d}{N}J\right)
\]
where $J$ is the $N \times N$ matrix of all ones. This normalization is close enough to ours (which was $H_N = \frac{1}{\sqrt{d(1-\frac{d}{N})}}H'_N$), in the sense that whenever $f : \R \to \R$ is $1$-Lipschitz, from Hoffman-Wielandt we have
\begin{align*}
	\abs{\int f(x) (\E[\hat{\mu}_{\widetilde{H_N}}] - \E[\hat{\mu}_{H_N}])(\diff x)} &\leq \E\left[ \frac{1}{N} \sum_{i=1}^N \abs{f(\lambda_i(\widetilde{H_N})) - f(\lambda_i(H_N))} \right] \\
	&\leq \E\left[ \frac{1}{\sqrt{N}} \left( \sum_{i=1}^N \abs{\lambda_i(\widetilde{H_N}) - \lambda_i(H_N)}^2\right)^{1/2} \right]\leq \E\left[\frac{1}{\sqrt{N}}\|\widetilde{H_N} - H_N\|_F\right]
\end{align*}
but deterministically we have
\begin{align*}
	\|\widetilde{H_N} - H_N\|_F &\leq \left\|\left( \frac{1}{\sqrt{d-1}} - \frac{1}{\sqrt{d(1-\frac{d}{N})}} \right) H'_N\right\|_F + \left\|\frac{d}{N\sqrt{d-1}} J\right\|_F = \OO(N^{\frac{1}{2}-\epsilon})
\end{align*}
by our choice of $d$. Thus
\[
	{\rm W}_1(\E[\hat{\mu}_{\widetilde{H_N}}],\E[\hat{\mu}_{H_N}]) = \OO(N^{-\epsilon}),
\]
and we can use Proposition \ref{prop:bai} to estimate ${\rm W}_1(\E[\hat{\mu}_{\widetilde{H_N}}], \rho_{\textup{sc}})$. Writing $\widetilde{s_N}(z)$ for the Stieltjes transform of $\hat{\mu}_{\widetilde{H_N}}$ and $m(z)$ for the Stieltjes transform of the semicircle law, a weaker consequence of \cite{BauKnoYau2017} shows that, for some absolute constant $C$,
\[
	\P\left(\text{there exists $z \in \C$ with $\eta \geq \frac{(\log N)^9}{N}$ such that } \abs{s_N(z) - m(z)} \geq C\sqrt{(\log N)^3\left(\frac{1}{\sqrt{N\eta}} + \frac{1}{N^{\epsilon/2}}\right)}\right) \leq e^{-(\log N)^3}.
\]
This suffices to check \eqref{eqn:baibulk}. Since $\hat{\mu}_{\widetilde{H_N}}$ is deterministically supported in $[-3\sqrt{d},3\sqrt{d}]$, \eqref{eqn:baidecay} is trivial. For \eqref{eqn:baitail}, with say $A = 10$, by separating out the largest eigenvalue we find
\[
	\int_{10}^\infty \abs{F_{\E[\hat{\mu}_{\widetilde{H_N}}]}(x) - F_{\rho_{\text{sc}}}(x)} \diff x = \int_{10}^{3\sqrt{d}} \E[\hat{\mu}_{\widetilde{H_N}}((x,\infty))] \diff x \leq \frac{4\sqrt{d}}{N} + 4\sqrt{d}\P(\lambda_{N-1}(\widetilde{H_N}) \geq 10).
\]
Now, with the same $\epsilon$ from $N^\epsilon \leq d \leq N^{2/3-\epsilon}$, (a weak consequence of) the edge-rigidity result of Bauerschmidt, Huang, Knowles, and Yau \cite[Theorem 1.1]{BauHuaKnoYau2020} gives
\begin{equation}
\label{eqn:bauhuaknoyau}
	\P\left(\max\left\{\abs{\lambda_{N-1}((d-1)^{-1/2}H'_N)-2},\abs{\lambda_1((d-1)^{-1/2}H'_N)+2}\right\} \geq 10N^{-\epsilon}\right) \leq N^{-1/\epsilon}.
\end{equation}
(This is still stronger than what we need, both in where it localizes the eigenvalues -- in a shrinking region around $\pm 2$ -- and in its right-hand side, which we only need to be $\OO(N^{-\epsilon'}d^{-1/2})$ for some $\epsilon'$.) Since $\lambda_{N-1}(\widetilde{H_N}) \leq \lambda_{N-1}((d-1)^{-1/2}H'_N)$ by interlacing, this suffices (along with analogous estimates at the left edge) to check \eqref{eqn:baitail}.
\item As in the Erd\H{o}s-R\'enyi case, for $E \in (-2,2)$ one can show
\[
	\P\left(H_N \text{ has no eigenvalues in } \left(E-\frac{1}{N^2}, E+\frac{1}{N^2}\right) \right) = 1-\oo(1)
\]
using the bulk fixed-energy universality results of Landon, Sosoe, and Yau \cite{LanSosYau2019}, with details given in the free-addition section below. For $\abs{E} > 2$, eigenvalues other than the largest one are handled with \eqref{eqn:bauhuaknoyau} (even with the slightly different normalization); the largest eigenvalue is deterministically $\sqrt{\frac{d}{1-\frac{d}{N}}} \geq \sqrt{d} \geq N^{\epsilon/2}$, i.e., cannot stick to any finite $E$.
\item We study $H_N$ by thinking of it as the adjacency matrix of an Erd\H{o}s-R\'enyi random graph, conditioned to be $d$-regular. Notice that the resulting law is indeed \emph{uniform} on $d$-regular matrices, since all graphs on $N$ vertices with a given number of edges are equiprobable under the Erd\H{o}s-R\'enyi measure. Precisely, write $\P_d$ for the law (and $\E_d$ associated expectation) of $H_N$ with the $d$-regular law \eqref{eqn:dreg-law}. Let $A'_N$ be the adjacency matrix of an Erd\H{o}s-R\'enyi random graph $G(N,\frac{d}{N})$, and consider the normalization
\[
    A_N = \frac{1}{\sqrt{d\left(1-\frac{d}{N}\right)}} A'_N
\]
whose law we denote as $\P_{ER}$ with corresponding expectation $\E_{ER}$. It will be convenient to use the notation
\[
	\mc{E}_{\textup{Lip}}(M_N,\delta) = \left\{ \abs{ \int_\R \log_\eta(\lambda-E)(\hat{\mu}_{M_N} - \E[\hat{\mu}_{M_N}])(\diff \lambda)} \leq \delta\right\}
\]
for $M_N = A_N$ or $M_N = H_N$, and for $\delta > 0$ (possibly depending on $N$). 

Now \cite[Lemma 2.1]{TraVuWan2013} shows that, if $d \to \infty$, then there exists a constant $C$ with
\begin{equation}
\label{eqn:dreg-conditioning}
    \P_{ER}(A_N \text{ is $d$-regular}) \geq \exp(-CN\sqrt{d}).
\end{equation}
Thus
\[
    \P_d((\mc{E}_{\textup{Lip}}(H_N,\delta))^c) = \P_{ER}( (\mc{E}_{\textup{Lip}}(A_N,\delta))^c \mid A_N \text{ is $d$-regular}) \leq e^{CN\sqrt{d}}\P_{ER}((\mc{E}_{\textup{Lip}}(A_N,\delta))^c).
\]
In the proof of Lemma \ref{lem:prob_econc} above, we wrote a decomposition $\log_\eta = \sum_{i=1}^3 \log_i$, where each $\log_i = \log_{i,\eta}$ was $\frac{1}{2\eta}$-Lipschitz and either convex or concave. Now
\begin{align*}
    \P_{ER}((\mc{E}_{\textup{Lip}}(A_N,\delta))^c) &= \P_{ER}\left(\abs{ \sum_{i=1}^3 \int_\R \log_i(\lambda-E)(\hat{\mu}_{A_N} - \E[\hat{\mu}_{A_N}])(\diff \lambda) } > \delta\right) \\
    &\leq \sum_{i=1}^3 \P_{ER}\left(\abs{ \int_\R \log_i(\lambda-E)(\hat{\mu}_{A_N} - \E[\hat{\mu}_{A_N}])(\diff \lambda) } > \frac{\delta}{3}\right)
\end{align*}
Since $\log_i(\cdot-E)$ is Lipschitz and convex (or concave), and since $\sqrt{N}A_N$ has entries compactly supported in, say, $[-\sqrt{\frac{2N}{d}},\sqrt{\frac{2N}{d}}]$, we can use concentration results of Guionnet and Zeitouni, namely \cite[Theorem 1.1(a)]{GuiZei2000}, which gives, for any (possibly $N$-dependent) $\delta > \delta_0(N) = 100N^{-\frac{1+\kappa-\epsilon}{2}}$, the estimate
\[
	\P_{ER}\left(\abs{\int_\R \log_i(\lambda-E)(\hat{\mu}_{A_N} - \E[\hat{\mu}_{A_N}])(\diff \lambda)} > \delta \right) \leq 4\exp\left(-\frac{dN\eta^2(\delta-\delta_0(N))^2}{32}\right).
\]
With the choice $\delta = \frac{t}{3} = \frac{1}{3}N^{-\kappa/4}$, this gives
\begin{equation}
\label{eqn:dreg-guizei-t}
    \P_d((\mc{E}_{\textup{Lip}}(H_N,t))^c) \leq 12\exp\left(CN\sqrt{d}-\frac{dN^{1-\frac{3\kappa}{2}}}{1000}\right).
\end{equation}
Since we can take $\kappa$ arbitrarily small, this tends to zero.
\item Since $\hat{\mu}_{H_N}$ is deterministically supported on $[-\frac{d}{\sqrt{d(1-d/N)}},\frac{d}{\sqrt{d(1-d/N)}}] \subset [-\sqrt{2d},\sqrt{2d}]$, we have
\[
	\abs{\int_\R \log_\eta(\lambda-E)(\hat{\mu}_{H_N} - \E[\hat{\mu}_{H_N}])(\diff \lambda)} \leq 2\max_{\abs{x} \leq \sqrt{2d}}\abs{\log_\eta(x-E)} \leq 10\log(N),
\]
almost surely, for $N \geq N_0(E)$. Thus
\begin{align*}
	&\E[e^{N \int \log_\eta(\lambda-E)(\hat{\mu}_{H_N}-\E[\hat{\mu}_{H_N}])(\diff \lambda)}] \\
	&=\E[e^{N \int \log_\eta(\lambda-E)(\hat{\mu}_{H_N}-\E[\hat{\mu}_{H_N}])(\diff \lambda)}\mathds{1}_{\mc{E}_{\textup{Lip}}(H_N,t)}] + \E[e^{N \int \log_\eta(\lambda-E)(\hat{\mu}_{H_N}-\E[\hat{\mu}_{H_N}])(\diff \lambda)}\mathds{1}_{(\mc{E}_{\textup{Lip}}(H_N,t))^c}] \\
	&\leq e^{Nt} + e^{10N\log(N)}\P((\mc{E}_{\textup{Lip}}(H_N,t))^c)
\end{align*}
From \eqref{eqn:dreg-guizei-t}, this is enough.
\end{itemize}


\subsection{Band matrices.}\
We will use Theorem \ref{thm:convex_functional} (convexity-preserving functional) and model a band matrix $H_N$ as $H_N = \Phi(X_1, \ldots, X_M)$, where $M = (W+1)N$, the $X_i$'s are independent random variable distributed according to $\mu$, and $\Phi$ is $\frac{1}{\sqrt{2W+1}}$ times the identity map which arranges these entries into a band matrix. This $\Phi$ is trivially convex and satisfies $\|\Phi\|_{\textup{Lip}} = \frac{1}{\sqrt{2W+1}}$. Throughout this section, the constant $\epsilon$ will be the same as in the assumption $W \geq N^\epsilon$. 

To check assumption \hyperlink{assn:E}{(E)} with $\mu_N\equiv \rho_{\text{sc}}$, we will use Proposition \ref{prop:bai} with $A = 3$. (By translation invariance, it suffices to check \hyperlink{assn:E}{(E)} at $E = 0$.) The bulk estimate \eqref{eqn:baibulk} follows from the stronger local law of Erd{\H{o}}s \emph{et al.} \cite{ErdYauYin2012}; the tail estimate \eqref{eqn:baitail} uses the tail estimates of Benaych-Georges/P{\'e}ch{\'e} \cite{BenPec2014}.

Write $s_N(z)$ for the Stieltjes transform of $\hat{\mu}_{H_N}$ and $m_{\text{sc}}(z)$ for the Stieltjes transform of the semicircle law. The local law \cite[Theorem 2.1]{ErdYauYin2012} gives constants $C$ and $c$ such that, if $z= E+\ii\eta$ with $E \leq 3$, $\kappa := \abs{\abs{E} - 2} \geq N^{-\delta}$ for $\delta = 3\epsilon/20$, and $\eta = N^{6\delta - \epsilon}$, then 
\[
    \P(\abs{s_N(z) - m_{\text{sc}}(z)} \geq N^{-\delta}) \leq CN^{-c(\log\log N)}. 
\]
Together with the trivial bound $\abs{\E[s_N(E+\ii\eta)] - m_{\text{sc}}(E+\ii\eta)} \leq \frac{2}{\eta}$, this gives
\[
    \abs{\E[s_N(z)] - m_{\text{sc}}(z)} \leq N^{-\delta} + 2CN^{\epsilon-6\delta-c(\log\log N)} \lesssim N^{-\delta}
\]
for such $z$ values. Writing $\epsilon_1 = \epsilon-6\delta > 0$ and using again the trivial bound for $\kappa < N^{-\delta}$, we obtain
\[
    \int_{-3}^3 \abs{\E[s_N(E+\ii N^{-\epsilon_1})] - m_{\text{sc}}(E+\ii N^{-\epsilon_1})} \diff E \lesssim 6N^{-\delta} + 8N^{\epsilon_1-\delta}.
\]
By our choice of $\delta$ we have $\epsilon_1 - \delta < 0$; this suffices to check \eqref{eqn:baibulk}.

For the tail estimate \eqref{eqn:baitail}, we note that $\abs{F_{\E[\hat{\mu}]}(x) - F_{\rho_{\text{sc}}}(x)} \leq \P(\|H_N\| \geq x)$ for, say, $x \geq 3$. The proof of \cite[Theorem 1.4]{BenPec2014} gives, for any $k \geq 1$, 
\[
    \P(\|H_N\| \geq x) \leq Nx^{-2k} 4^k \left(1-\frac{(12\alpha/e)^6 k^{12}}{W}\right)^{-1}.
\]
Choosing $k = k_N = N^{\epsilon/20}$, we verify \eqref{eqn:baidecay} and find, for $N$ large enough,
\[
    \int_3^\infty \P(\|H_N\| \geq x) \diff x \leq 6N^{1-\frac{\epsilon}{20}} \left(\frac{4}{9}\right)^{N^{\epsilon/20}},
\]
which is much faster than we need. The left tail is estimated similarly, and this verifies \eqref{eqn:baitail} and thus \eqref{eqn:dBL}. 

Now we check assumption \hyperlink{assn:C}{(C)} on coarse bounds.
\begin{itemize}
\item[\eqref{eqn:detconub_coarse}] The proof for Wigner matrices works verbatim here (in particular, $2+\epsilon$ finite moments is enough).
\item[\eqref{eqn:wegner}] Since we assumed our entries have a bounded density, this follows from Proposition \ref{prop:schur} and Lemma \ref{lem:schur_bdd_density}. 
\item[\eqref{eqn:detconub_NlogN}] The proof for Wigner matrices works verbatim here (in particular, $2+\epsilon$ finite moments is enough).
\end{itemize}

The proof of assumption \hyperlink{assn:S}{(S)} is similar to the case of Wigner matrices; in particular it holds assuming only that $\mu$ has $2+\epsilon$ finite moments.


\subsection{Sample covariance matrices.}\ \label{sec:covarianceMat}
As noted above, this model is not covered by either of our theorems directly. But it can be proved by mimicking the proof of Theorem \ref{thm:convex_functional} (convexity-preserving functional) with the following changes. We let $M = pN$, let $X_1, \ldots, X_M$ be independent copies of $\mu$, and consider the map $\Phi = \Phi_E : \R^M \to \ms{S}_p$ that places its arguments in the entries of the $p \times N$ matrix $Y = Y_{p,N}$ and returns $\frac{1}{N} YY^T - E$. There are two problems with applying Theorem \ref{thm:convex_functional} as written, but we will implement the following workarounds: 
\begin{enumerate}
\item $\Phi$ is not convex (but we will use the standard Hermitization trick that compares eigenvalues of $YY^T$ with eigenvalues of the $(p+N) \times (p+N)$ block matrix $(\begin{smallmatrix} 0 & Y \\ Y^T & 0 \end{smallmatrix})$, which \emph{is} a convex function of the entries of $Y$).
\item $\Phi$ is not Lipschitz, since it grows too quickly at infinity (but the Hermitization is Lipschitz).
\end{enumerate}

As in the Wigner case, the assumption of subexponential tails is only used to check assumption \hyperlink{assn:S}{(S)}, and we will give the remainder of the proofs only assuming that $\mu$ has $2+\epsilon$ finite moments.

Below, we will verify assumption \hyperlink{assn:E}{(E)} with some value of $\kappa$. For now, we redefine $X_{\textup{cut}}$ (for this model only), using this same $\kappa$, as 
\begin{equation}
\label{eqn:newxcut}
	(X_{\textup{cut}})_i = X_i \mathds{1}_{\abs{X_i} \leq N^{-\kappa+\frac{1}{2}}}.
\end{equation}
We choose this scaling so that each $\frac{1}{N}Y_{ij}^2$ is at most $N^{-2\kappa}$, similar to what happens in the Wigner and Erd{\H{o}}s-R{\'e}nyi cases. Later we will check assumption \hyperlink{assn:S}{(S)} with this new definition, as well as assumption \hyperlink{assn:C}{(C)}. First we show that all of these assumptions yield determinant concentration.

Much of the proof of Theorem \ref{thm:convex_functional} works verbatim in this new setting, since for example it never uses the old definition of $X_{\textup{cut}}$ directly, using instead the stability estimate \eqref{eqn:ss} which will still be true for us under the new definition. The biggest change is in the proof of Lemma \ref{lem:prob_econc}, where we applied results of Talagrand using the convexity and Lipschitz properties which no longer hold. The replacement for Lemma \ref{lem:prob_econc} is as follows. 

\begin{lem}
For every $E \in \R$, there exist $c_E, C_E > 0$ with the following properties: If we let $\widetilde{t_0}(N) = \frac{C_E}{\eta^2N^{\frac{1}{2}+\kappa}}$, then whenever $t \geq \widetilde{t_0}(N)$ we have
\[
	\P((\mc{E}_{\textup{conc}})^c) \leq 20\exp\left(-c_E(t-\widetilde{t_0}(N))^2\eta^4 N^{1+2\kappa}\right).
\]
\end{lem}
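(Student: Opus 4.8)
The plan is to rerun the proof of Lemma \ref{lem:prob_econc} after Hermitizing the covariance matrix. First I would introduce
\[
    \mathcal{H}(X) = \begin{pmatrix} 0 & \tfrac{1}{\sqrt{N}}Y \\ \tfrac{1}{\sqrt{N}}Y^{T} & 0 \end{pmatrix} \in \ms{S}_{p+N},
\]
which is a \emph{linear} function of $X$ --- hence automatically convex-set-preserving and, from $\R^M$ with its $\ell^2$ norm to $(\ms{S}_{p+N}, \|\cdot\|_{\widetilde{F}})$, exactly $N^{-1/2}$-Lipschitz --- and whose spectrum is $\{\pm\sqrt{\mu_j} : 1 \leq j \leq p\} \cup \{0\}^{(N-p)}$, where $\mu_1, \ldots, \mu_p$ are the eigenvalues of $\tfrac{1}{N}YY^{T}$. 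Setting $\Psi_\eta(\nu) := \log_\eta(\nu^2 - E)$, one then has the exact identity
\[
    \sum_{\nu \in \Spec \mathcal{H}} \Psi_\eta(\nu) = 2 \sum_{j=1}^p \log_\eta(\mu_j - E) + (N-p)\log_\eta(-E),
\]
whose last term is deterministic and therefore vanishes after centering. Since for truncated entries one has $\|\tfrac1N Y_{\textup{cut}}Y_{\textup{cut}}^T\| \leq \mathrm{poly}(N) \ll K = e^{N^\epsilon}$, the outer cutoff $\log_\eta^K$ agrees with $\log_\eta$ on the relevant spectrum; thus $\int \log_\eta^K \hat{\mu}_{\Phi(X_{\textup{cut}})}(\diff\lambda)$ equals, up to a deterministic constant, $\tfrac{p+N}{2p}\cdot\tfrac{1}{p+N}\tr(\Psi_\eta(\mathcal{H}(X_{\textup{cut}})))$.

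The middle step is the delicate one. Unlike in Lemma \ref{lem:prob_econc}, $\Psi_\eta$ is built from $\log_\eta$ precomposed with $\nu \mapsto \nu^2$, so the piecewise decomposition $\log_\eta = \log_1 + \log_2 + \log_3$ does not transfer directly. Instead I would restrict to a ball $[-D,D]$ of \emph{fixed} radius $D = D_E$ --- replacing $\Psi_\eta$ by a version $\Psi_\eta^D$ that is constant outside $[-D,D]$, which loses nothing on the overwhelmingly likely event $\{\|\tfrac1N Y_{\textup{cut}}Y_{\textup{cut}}^T\| \leq D^2\}$ (a sample-covariance operator-norm bound, provable by a crude moment estimate for bounded entries) --- and then split $\Psi_\eta^D(\nu) = \bigl(\Psi_\eta^D(\nu) + \tfrac{C}{2}\nu^2\bigr) - \tfrac{C}{2}\nu^2$ into a convex and a concave part, where $C = C_E$ is an upper bound for $-\inf(\Psi_\eta^D)''$, which is of order $\eta^{-2}$. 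On $[-D,D]$ the convex part is then $\leq C_E\eta^{-2}$-Lipschitz, so that for each of the (boundedly many) pieces $\Psi_{\eta,i}^D$, the function $f_i(X_{\textup{cut}}) := \pm\tfrac{1}{2p}\tr(\Psi_{\eta,i}^D(\mathcal{H}(X_{\textup{cut}})))$ is a convex function of the box-bounded independent variables $(X_{\textup{cut}})_1, \ldots, (X_{\textup{cut}})_M \in [-N^{1/2-\kappa}, N^{1/2-\kappa}]^M$ (by \eqref{eqn:newxcut}) with $\|f_i\|_{\textup{Lip}} \leq C_E/(\eta^2 N)$, after composing the $(p+N)^{-1/2}$ gain of $T \mapsto \tfrac{1}{p+N}\tr(g(T))$ (cf. \cite[Lemma 2.3.1]{AndGuiZei2010}) with the $N^{-1/2}$-Lipschitz constant of $\mathcal{H}$. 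Talagrand's inequality \cite[Theorem 6.6]{Tal1996} then yields $\P(|f_i - \mf{M}_{f_i}| \geq s) \leq 4\exp(-c_E s^2 \eta^4 N^{1+2\kappa})$ around a median $\mf{M}_{f_i}$; integrating this tail gives $|\E[f_i] - \mf{M}_{f_i}| \leq \tfrac13 \widetilde{t_0}(N)$, which performs the median-to-mean swap, and a union bound over the $O(1)$ convex/concave pieces together with the exceptional norm event closes the lemma, the constant $20$ absorbing the combinatorics.

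The main obstacle is precisely this convexification bookkeeping through the square root: one must check that $\Psi_\eta$ restricted to $\Spec\mathcal{H}$ reproduces \emph{exactly} the regularized logarithm entering $\mc{E}_{\textup{conc}}$ here (so the discarded term is genuinely deterministic), that a constant-radius spectral cutoff is harmless with overwhelming probability, and --- crucially --- that the convexifying quadratic has coefficient of order $\eta^{-2}$ rather than $\eta^{-1}$, which is exactly why $\widetilde{t_0}(N)$ carries $\eta^{-2}$ and the Gaussian exponent carries $\eta^4$, and why $c_E$ and $C_E$ must be allowed to depend on $E$ (through $|E|$ and $\mathrm{dist}(E,0)$). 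Everything else carries over from the proof of Theorem \ref{thm:convex_functional} verbatim: the events $\mc{E}_{\textup{gap}}$, $\mc{E}_{\textup{ss}}$, $\mc{E}_b$, the upper and lower bounds, and Lemmas \ref{lem:borcapcha2011}--\ref{lem:prob_eb} never used convexity or Lipschitz-ness of $\Phi$, only the stability estimate \eqref{eqn:ss}, which is checked separately for the redefined $X_{\textup{cut}}$.
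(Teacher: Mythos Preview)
Your Hermitization and the identity relating $\tr(\Psi_\eta(\mathcal{H}))$ to the sample-covariance trace are exactly the paper's first two moves, and your recognition that the Lipschitz scale becomes $C_E/\eta^2$ (hence $\eta^4$ in the exponent and $\eta^{-2}$ in $\widetilde{t_0}$) is also correct. The divergence is in how the convex/concave pieces are produced. The paper does not restrict to a bounded window and convexify with a quadratic; instead it locates the inflection points of $x\mapsto\log_\eta^K(x^2-E)$ explicitly (one pair if $E\le0$, two pairs if $E>0$) and splices in linear segments there, obtaining a \emph{global} decomposition into three or five pieces, each convex or concave on all of $\R$ and $C_E/\eta^2$-Lipschitz (this is Lemma~\ref{lem:convex-log-decomposition}). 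Because this works on all of $\R$, no exceptional spectral event enters and Talagrand applies cleanly to each piece.

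Your route has a real gap at the exceptional-event step. The entries of $X_{\textup{cut}}$ live in $[-N^{1/2-\kappa},N^{1/2-\kappa}]$, so the convex $N^{-1/2}$-Lipschitz map $X\mapsto\|\mathcal{H}(X)\|$ sits on a cube of side $2N^{1/2-\kappa}$; Talagrand then gives only $\P(\|\mathcal{H}(X_{\textup{cut}})\|>D)\le Ce^{-cN^{2\kappa}}$ for fixed $D$, and nothing sharper is available at this generality. But the target bound $20\exp(-c_E(t-\widetilde{t_0})^2\eta^4N^{1+2\kappa})$ has exponent of order $N$ when $t$ is order one (since $\eta^4 N^{1+2\kappa}=N$), so for such $t$ the exceptional event dominates and cannot be ``absorbed into the constant $20$.'' Even at the specific $t=N^{-\kappa/4}$ the target exponent is $N^{1-\kappa/2}$, still far larger than $N^{2\kappa}$ for small $\kappa$. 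A secondary issue: making $\Psi_\eta^D$ \emph{constant} outside $[-D,D]$ leaves a downward derivative jump at $\pm D$ (since $\Psi_\eta'(D)>0$ for $D^2>E$), so $\Psi_\eta^D+\tfrac{C}{2}\nu^2$ is not globally convex; a linear extension would repair this but does not resolve the first problem. The paper's explicit piecewise-linear splicing is precisely what sidesteps both issues.
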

\begin{proof}
We use the classical trick of considering the $(p+N) \times (p+N)$ matrix
\[
	\mf{H}_N = \mf{H}_N(X) = \frac{1}{\sqrt{N}} \begin{pmatrix} 0_{p \times p} & Y_{p,N} \\ Y_{p,N}^T & 0_{N \times N} \end{pmatrix}.
\]
For any test function $f$, we have 
\begin{equation}
\label{eqn:blocktrick}
	\tr(f(\mf{H}_N^2)) = 2\tr(f(YY^T/N)) + (N-p)f(0).
\end{equation}
We show in Lemma \ref{lem:convex-log-decomposition} below that for every fixed $E \in \R$, there exists $C_E > 0$ such that the function $x \mapsto \log_\eta^K(x^2-E)$ can be decomposed as a sum of five functions
\[
	\log_\eta^K(x^2-E) = \sum_{i=1}^5 \widetilde{\log}_i(x) = \sum_{i=1}^5 \widetilde{\log}_{i,E,\eta,K}(x),
\]
where each $\widetilde{\log}_i$ is $\frac{C_E}{\eta^2}$-Lipschitz and either convex or concave (in fact, if $E \leq 0$ we only need three terms, and the last two will be set to zero). For simplicity, we choose powers $p_i \in \{0,1\}$ such that each $(-1)^{p_i}\widetilde{\log}_i$ is convex. From these we define functions $\widetilde{f_i} : [-N^{-\kappa+1/2}, N^{-\kappa+1/2}]^M \to \R$ given by
\[
	\widetilde{f_i}(X_{\textup{cut}}) = (-1)^{p_i} \frac{1}{N}\tr(\widetilde{\log}_i(\mf{H}_N(X_{\textup{cut}}))).
\]
Using \eqref{eqn:blocktrick} and mimicking the proof of Lemma \ref{lem:prob_econc}, we find
\begin{align*}
	\P(\mc{E}_{\textup{conc}}^c) = \P\left( \abs{ \frac{1}{2N}  \tr(\log_\eta^K(\mf{H}_N^2)) - \frac{1}{2N} \E[\tr(\log_\eta^K(\mf{H}_N^2))]} > t\right) \leq \sum_{i=1}^N \P\left( \abs{\widetilde{f_i}(X_{\textup{cut}}) - \E[\widetilde{f_i}(X_{\textup{cut}})]} > \frac{2}{5}t \right).
\end{align*}
As in the original proof, each $\widetilde{f_i}$ is $\sqrt{2(p+N)} \frac{1}{N} \frac{C_E}{\eta^2}\frac{1}{\sqrt{N}} \leq \frac{C_E}{N\eta^2}$-Lipschitz (for a new $C_E$), and since the map $X_{\textup{cut}} \mapsto \mf{H}_N$ is convex (this is the point of the Hermitization) we know that each $\widetilde{f_i}$ is convex as well. Then Talagrand's inequality gives
\[
	\P\left(\abs{\widetilde{f_i} - \mf{M}_{\widetilde{f_i}}} \geq t\right) \leq 4\exp\left(-\frac{t^2\eta^4N^{1+2\kappa}}{16(C_E)^2} \right)
\]
and we conclude as before.
\end{proof}

\begin{lem}
\label{lem:convex-log-decomposition}
For every $E \in \R$, there exists $C_E > 0$ with the following property: For every $\eta \leq \eta_0(E)$ and every $K \geq K_0(E)$, there exist functions $\widetilde{\log}_i = \widetilde{\log}_{i,E,\eta,K} : \R \to \R$ for $i = 1, 2, 3, 4, 5$ that are $\frac{C_E}{\eta^2}$-Lipschitz and either convex or concave, and such that
\[
	\log_\eta^K(x^2-E) = \sum_{i=1}^5 \widetilde{\log}_i(x).
\]
\end{lem}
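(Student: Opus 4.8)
The plan is to work directly with the single-variable function $g(x):=\log_\eta^K(x^2-E)$ and to read off the five (resp.\ three) summands from the sign of $g''$. I would first record the structural facts: $g$ is even and continuous; choosing $K_0(E)$ so large that $E+K>0$ and all zeros of the cubic below lie well inside $(-K,K)$, one has $g(x)=\tfrac12\log\big((x^2-E)^2+\eta^2\big)$ on the ``active'' interval $|x|<\sqrt{E+K}$, while $g\equiv\log_\eta(K)$ for $|x|\geq\sqrt{E+K}$; there
\[
	g'(x)=\frac{2x(x^2-E)}{(x^2-E)^2+\eta^2},
\]
which extends continuously to all of $\R$ except for a downward jump of size $O(1/\sqrt K)$ at $x=\pm\sqrt{E+K}$. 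Splitting according to whether $|x^2-E|$ is $\leq 1$ or $>1$ and combining $\tfrac{|t|}{t^2+\eta^2}\leq\tfrac1{2\eta}$ with $|x|^2\leq|x^2-E|+|E|$ gives $\|g'\|_\infty\leq C_E/\eta\leq C_E/\eta^2$ for $\eta\leq\eta_0(E)\leq 1$, so every piece produced below will automatically be $C_E/\eta^2$-Lipschitz.

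The heart of the matter is the sign of $g''$. A direct computation on the active interval yields $g''(x)=f(x^2-E)\big/\big((x^2-E)^2+\eta^2\big)^2$ with the cubic $f(v)=-2v^3-4Ev^2+6\eta^2 v+4E\eta^2$, and $v=x^2-E$ ranges over $[-E,K]$. Since $f'(v)=-6v^2-8Ev+6\eta^2$ has exactly one zero in $(-E,\infty)$, $f$ is increasing then decreasing on that range. For $E\leq 0$ one checks $f(|E|)=2|E|(E^2+\eta^2)\geq 0$ and $f(+\infty)=-\infty$, so $f$ has a single sign change $v_0\in(|E|,\infty)$ with $v_0=O_E(1)<K$; hence $g''>0$ on $\{|x|<\sqrt{v_0+E}\}$ and $g''<0$ on the two flanking intervals, a pattern $(\mathrm{const},-,+,-,\mathrm{const})$. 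For $E>0$ one checks $f(-E)=-2E(E^2+\eta^2)<0<f(0)=4E\eta^2$ and $f(+\infty)=-\infty$, so $f$ has exactly two sign changes $v_1<v_2$ in $(-E,\infty)$, both $O_E(\eta)$-close to $0$ and hence $<K$; this gives the pattern $(\mathrm{const},-,+,-,+,-,\mathrm{const})$ for $g''$ on $\R$. Folding the (trivially convex-and-concave) constant end-blocks into their neighbours leaves $3$ maximal sign-blocks when $E\leq 0$ and $5$ when $E>0$, separated by at most four breakpoints $x_1<\dots<x_k$ ($k\le 4$).

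Given these breakpoints I would build the $\widetilde{\log}_i$ by the standard telescoping device: write $g'=\sum_i a_i$ where $a_i$ carries $g'$ across the $i$-th block, i.e.\ $a_i\equiv 0$ to the left of the block, $a_i(x)=g'(x)-g'(x_{i-1})$ on it, and $a_i$ is constant (equal to $g'(x_i)-g'(x_{i-1})$) to the right of it. Because $g''$ has a fixed sign on each block (and the $\pm\sqrt{E+K}$ jumps are downward, consistent with the concave blocks containing them), each $a_i$ is monotone on all of $\R$ with $\|a_i\|_\infty\leq 2\|g'\|_\infty+O(1/\sqrt K)\leq C_E/\eta^2$. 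Setting $\widetilde{\log}_i(x)=c_i+\int_{x_0}^x a_i(u)\,\mathrm{d}u$ makes $\widetilde{\log}_i$ convex when $a_i$ is nondecreasing and concave when $a_i$ is nonincreasing, $\|a_i\|_\infty$-Lipschitz, and, choosing $\sum_i c_i=g(x_0)$, it satisfies $\sum_i\widetilde{\log}_i=g$. This produces $5$ functions in general and $3$ when $E\leq 0$ (set the last two to zero).

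The main obstacle I anticipate is the sign analysis of $g''$: establishing, uniformly in $\eta\leq\eta_0(E)$ and $K\geq K_0(E)$, that the cubic $f$ has exactly one zero (for $E\leq 0$) or exactly two zeros (for $E>0$) on the relevant $v$-range, and that these zeros sit strictly inside the active interval; the rest is the telescoping bookkeeping and the elementary Lipschitz bound, essentially a variable-argument version of the decomposition $\log_\eta^K=\log_1+\log_2+\log_3$ from the proof of Lemma \ref{lem:prob_econc}.
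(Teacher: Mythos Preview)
Your approach is correct and shares the paper's core idea: locate the inflection points of $g(x)=\log_\eta^K(x^2-E)$ via the sign of $g''$ (equivalently, of the cubic $f$), then assemble each piece as ``$g$ on one sign-block, linear off it.'' The paper executes this by writing down explicit piecewise formulas with free slope parameters ($c$ when $E\le 0$; $c_1,c_3$ when $E>0$), checking that these can be taken large enough for convexity/concavity yet $\le C_E/\eta^2$, and, in the $E>0$ case, solving a linear compatibility constraint \eqref{eqn:convex-log-decomposition-freedom} so that the five pieces actually sum to $g$. Your telescoping $g'=\sum_i a_i$ is a tidier packaging of the same construction: it implicitly makes the tangent (minimal) choice of slope at each breakpoint, so the pieces sum to $g$ by construction and no constraint-solving is needed; as a byproduct you get the slightly sharper Lipschitz bound $C_E/\eta$, which of course implies the stated $C_E/\eta^2$.
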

\begin{proof}
The proof has two cases, according to whether $E \leq 0$ or $E > 0$.
\begin{itemize}
\item \textbf{Case $E \leq 0$:} Here we only need three functions (i.e., we set $\widetilde{\log}_4 = \widetilde{\log}_5 = 0$). One can check that there exists $0 \leq \sqrt{-E} < b = b^{(E,\eta)}$ such that the function $x \mapsto \log_\eta^K(x^2-E)$ is concave on $(-\infty,-b) \cup (b,\infty)$ and convex on $(-b,b)$. (Explicitly, it is given as the largest positive solution of the degree-six equation $2b^6 - 2Eb^4 - 2(E^2+3\eta^2)b^2 + 2E(E^2+\eta^2) = 0$, but the exact form does not matter so much as its stability for small $\eta$: As $\eta \downarrow 0$, $b \downarrow \sqrt{-E}$.) We want to pick our three functions (which we will actually call $\widetilde{\log}_1$ and $\widetilde{\log}_{2,\pm}$) in the form
\begin{align*}
	\widetilde{\log}_1(x) &= 
		\begin{cases}
			-c(x+b) + \log_\eta(b^2-E) & \text{if } x \leq -b \\ 
			\log_\eta(x^2-E) & \text{if } -b \leq x \leq b \\ 
			c(x-b) + \log_\eta(b^2-E) & \text{if } x \geq b
		\end{cases} \\
	\widetilde{\log}_{2,+}(x) &= 
		\begin{cases}
			cx & \text{if } x \leq b \\
			\log_\eta^K(x^2-E) + cb - \log_\eta(b^2-E) & \text{if } x \geq b
		\end{cases} \\
	\widetilde{\log}_{2,-}(x) &= \widetilde{\log}_{2,+}(-x)
\end{align*}
for some $c = c_{E,\eta} > 0$. It is easy to check that these functions sum to $\log_\eta^K(x^2-E)$, and they are all concave or convex as long as 
\[
	c \geq \partial_x \log_\eta(x^2-E)|_{x = b} = \frac{2b(b^2-E)}{\eta^2+(b^2-E)^2},
\]
in which case each function is $c$-Lipschitz; but since $\frac{2b(b^2-E)}{\eta^2+(b^2-E)^2} \leq \frac{2b(b^2-E)}{\eta^2} \leq \frac{C_E}{\eta^2}$ for small $\eta$, we are done. 
\item \textbf{Case $E > 0$:} This is similar to the case $E \leq 0$, except since the function is more complicated one needs five summands: One can check that there exist $0 < b_n = b_n^{(E,\eta)} < \sqrt{E} < b_f = b_f^{(E,\eta)}$ (for ``near boundary'' and ``far boundary'') such that the function $x \mapsto \log_\eta^K(x^2-E)$ is concave on $(-\infty,-b_f) \cup (-b_n,b_n) \cup (b_f,+\infty)$ and convex on $(-b_f,-b_n) \cup (b_n,b_f)$. (As before, they are given as the two positive solutions of the degree-six equation $2x^6 - 2Ex^4 - 2(E^2+3\eta^2)x^2 + 2E(E^2+\eta^2) = 0$, and they are stable for small $\eta$ in the sense that as $\eta \downarrow 0$, $b_n \uparrow \sqrt{E}$, and $b_f \downarrow \sqrt{E}$.) We want to pick our five functions (which we will actually call $\widetilde{\log}_1$, $\widetilde{\log}_{2,\pm}$, and $\widetilde{\log}_{3,\pm}$) in the form
\begin{align*}
	\widetilde{\log}_1(x) &= 
		\begin{cases}
			c_1(x+b_n) + \log_\eta(b_n^2-E) & \text{if } x \leq -b_n \\ 
			\log_\eta(x^2-E) & \text{if } -b_n \leq x \leq b_n \\ 
			-c_1(x-b_n) + \log_\eta(b_n^2-E) & \text{if } x \geq b_n
		\end{cases} \\
	\widetilde{\log}_{2,+}(x) &= 
		\begin{cases}
			-c_1(x-b_n) + \log_\eta(b_n^2-E) & \text{if } x \leq b_n \\ 
			\log_\eta(x^2-E) & \text{if } b_n \leq x \leq b_f \\
			c_3(x-b_f) + \log_\eta(b_f^2-E) & \text{if } x \geq b_f 
		\end{cases} \\
	\widetilde{\log}_{2,-}(x) &= \widetilde{\log}_{2,+}(-x) \\
	\widetilde{\log}_{3,+}(x) &= 
		\begin{cases}
			c_3(x-b_f) + \log_\eta(b_f^2-E) & \text{if } x \leq b_f \\
			\log_\eta^K(x^2-E) & \text{if } b_f \leq x
		\end{cases} \\
	\widetilde{\log}_{3,-}(x) &= \widetilde{\log}_{3,+}(-x) 
\end{align*}
for some $c_1, c_3 = c_{1,E,\eta},c_{3,E,\eta} > 0$. What are the constraints on these constants? To guarantee that all these functions are concave or convex, we need
\begin{align}
	c_1 &\geq \partial_x \log_\eta(x^2-E) |_{x = {-b_n}} = \frac{-2b_n(b_n^2-E)}{\eta^2+(b_n^2-E)^2}, \label{eqn:convex-log-decomposition-c1} \\
	c_3 &\geq \partial_x \log_\eta(x^2-E) |_{x = b_f} = \frac{2b_f(b_f^2-E)}{\eta^2+(b_f^2-E)^2}, \label{eqn:convex-log-decomposition-c3}
\end{align}
in which case each function is $\max(c_1,c_3)$-Lipschitz. Furthermore, the sum of these five functions is equal to $\log_\eta^K(x^2-E)$ as long as the constants satisfy the constraint
\begin{equation}
\label{eqn:convex-log-decomposition-freedom}
	c_1b_n + \log_\eta(b_n^2-E) - c_3b_f + \log_\eta(b_f^2-E) = 0,
\end{equation}
through which $c_1$ and $c_3$ determine each other: $c_3 = \frac{c_1b_n+\log_\eta(b_n^2-E)+\log_\eta(b_f^2-E)}{b_f}$. If we take $c_1$ very large to satisfy \eqref{eqn:convex-log-decomposition-c1}, then through \eqref{eqn:convex-log-decomposition-freedom} we find that $c_3$ must be very large, hence satisfies \eqref{eqn:convex-log-decomposition-c3}; thus it is possible to find \emph{some} pair $(c_1, c_3)$ satisfying all the constraints. Now we want pairs with small value, i.e., such that $\max(c_1,c_3) \leq\frac{C_E}{\eta^2}$: For small $\eta$ one can show that the lower bounds on the right-hand sides of \eqref{eqn:convex-log-decomposition-c1} and \eqref{eqn:convex-log-decomposition-c3} are upper-bounded by $\frac{100E^2}{\eta^2}$; and if we choose $c_1 = \frac{100E^2}{\eta^2}$, then through \eqref{eqn:convex-log-decomposition-freedom} we find that $c_3 \lesssim \frac{1}{\eta^2}$ as well, which finishes the proof.
\end{itemize}
\end{proof}

It remains to check assumptions \hyperlink{assn:E}{(E)}, \hyperlink{assn:C}{(C)}, and \hyperlink{assn:S}{(S)} (the latter under the new definition \eqref{eqn:newxcut}). The only assumption that is not translation-invariant (i.e., that depends on the energy level $E$) is assumption \hyperlink{assn:C}{(C)}.

For assumption \hyperlink{assn:E}{(E)},  \cite{Tik2009B} proved that if $\mu$ has $2+\epsilon$ moments then there exists (explicit) $\kappa(\epsilon) > 0$ such that
\[
	d_{\textup{KS}}(\E[\hat{\mu}_{\frac{1}{N}YY^T}], \mu_{\textup{MP},\frac{p_N}{N}}) \lesssim N^{-\kappa(\gamma)}.
\]
From this Kolmogorov-Smirnov distance information we evaluate $d_{\textup{BL}}$ in the same way as for Wigner matrices with $2+\epsilon$ moments. It remains only to understand $d_{\textup{BL}}(\mu_{MP, \frac{p_N}{N}}, \mu_{MP, \gamma})$, and this is only necessary in the case $\gamma < 1$ (since when $\gamma = 1$ we assumed $p_N = N$). If $\gamma_1, \gamma_2 \in [\epsilon,1-\epsilon]$, then  the difference between the densities gives
\[
	d_{\textup{BL}}(\mu_{\textup{MP}, \gamma_1}, \mu_{\textup{MP}, \gamma_2}) = O_\epsilon\left(\sqrt{\abs{\gamma_1-\gamma_2}}\right).
\]
Since we assumed in \eqref{eqn:wishartspeed} that $\abs{\frac{p_N}{N}-\gamma}$ is polynomially small, this suffices to prove \eqref{eqn:dBL}.

We check the three estimates of assumption \hyperlink{assn:C}{(C)} as follows:
\begin{itemize}
\item[\eqref{eqn:detconub_coarse}] This follows the proof of the Wigner case, but using the Weyl inequalities for singular values instead of those for eigenvalues. We write out the beginning of the argument because some of the powers change. For some $\epsilon > 0$, write $Y/\sqrt{N} = A + B$, where $A$ is defined entrywise by
\[
	A_{ij} = \frac{1}{\sqrt{N}}Y_{ij} \mathds{1}_{\frac{1}{\sqrt{N}} \abs{Y_{ij}} \leq \frac{1}{10N} e^{\frac{1}{2}N^\epsilon}}.
\]
Then $A$ has singular values at most $\frac{1}{10}e^{\frac{1}{2}N^\epsilon}$, and the Weyl inequalities give 
\[
	\sigma_i(Y/\sqrt{N}) \leq \sigma_{\textup{max}}(A) + \sigma_i(B) \leq \frac{1}{10}e^{\frac{1}{2}N^\epsilon} + \sigma_i(B)
\]
and similarly $\sigma_i(Y/\sqrt{N}) \geq \sigma_i(B) - \frac{1}{10}e^{\frac{1}{2} N^\epsilon}$, so that for each $i$ we have
\begin{align*}
	1+\abs{\lambda_i(YY^T/N - E)}\mathds{1}_{\abs{\lambda_i(YY^T/N - E)} > e^{N^\epsilon}} &\leq 1 + 2\lambda_i(YY^T/N) \mathds{1}_{\lambda_i(YY^T/N) > \frac{1}{2}e^{N^\epsilon}} \\
	&= 1+2\sigma_i^2(Y/\sqrt{N}) \mathds{1}_{\sigma_i(Y/\sqrt{N}) > \frac{1}{\sqrt{2}}e^{\frac{1}{2}N^\epsilon}} \leq 1+ 8\sigma_i^2(B) \mathds{1}_{\sigma_i(B) > \frac{1}{2}e^{\frac{1}{2}N^\epsilon}}.
\end{align*}
Then from Fischer's inequality we have
\[
	\prod_{i=1}^p (1+8\sigma_i^2(B)\mathds{1}_{\sigma_i(B) > \frac{1}{2}e^{\frac{1}{2}N^\epsilon}}) \leq \det(\Id + 8BB^T) \leq \prod_{i=1}^p \left(1+8\sum_{j=1}^N B_{ij}^2 \right).
\]
Since $B$ is non-Hermitian with independent entries, the same argument as in the Wigner case goes through here: when we expand and factor, each matrix entry appears at a power at most two.
\item[\eqref{eqn:wegner}] We mimic the proofs from Section \ref{subsec:wegner}, making the following changes. We closely follow the proof of some Wegner estimates for complex Wigner matrices from \cite[Theorem 3.4]{ErdSchYau2010}, as adapted in \cite[Proposition B.1]{BouErdYauYin2016} to the symmetric case.
Our estimates below will be coarser as we can afford any polynomial error, contrary to the optimal estimates from these references.
Let $E,\eta>0$, $\eta=\e/N$, $I=[E-\eta,E+\eta]$, $z=E+\ii\eta$ and $\mathcal{N}_I=|\{\mu_i\in I\}|$.  In the covariance matrix setting, the Schur complement formula gives, for any $1\leq j\leq N$ and defining $X=Y/\sqrt{N}$ and $H=YY^*/N$ (see e.g. \cite[Equation (3.8)]{BloErdKnoYauYin2014}) 
$$
((H-z)^{-1})_{ii}=\big(-z-z X_i^*R_i(z)X_i \big)^{-1}
$$
where we define $X_i=(X_{ij})_{j}$, $X^{(i)}_{jk}=X_{jk}\1_{j\neq i}$ and  $R_i(z)=((X^{(i)})^*X^{(i)}-z)^{-1}$. 
This implies, by the Cauchy-Schwarz inequality, 
$$
\E[\mathcal{N}_I^2\mathds{1}_{A}]\leq C(N\eta)^2\E\left[\Big(\im\frac{1}{-z-\frac{z}{N}\sum_{\alpha=1}^{N}\frac{\xi_\alpha}{\lambda_\alpha-z}}\Big)^2\mathds{1}_{A}\right]\leq C \e^2\E\left[\big(
(\sum_{\alpha=1}^Nc_\alpha \xi_\alpha)^2+(E-\sum_{\alpha=1}^{N} d_\alpha\xi_\alpha)^2
\big)^{-1}\mathds{1}_{A}\right]
$$
for any event $A$,
where
$$
d_\alpha=-\frac{1}{N}-\frac{N\lambda_\alpha(E-\lambda_\alpha)}{N^2(\lambda_\alpha-E)^2+\e^2},\ \ c_{\alpha}=\frac{\lambda_\alpha\e}{N^2(\lambda_\alpha-E)^2+\e^2}, \ \
$$
with $(\lambda_\alpha)_{1\leq \alpha\leq N-1}$ the eigenvalues of $(X^{(1)})^*X^{(1)}$, with corresponding $L^2$-normalized eigenvectors $u_\alpha$'s, and $\xi_\alpha=|u_\alpha\cdot Y_1|^2$. 

Let $(\gamma_k)_{1\leq k\leq N}$ be implicitly defined through  $\int_0^{\gamma_k} \mu_{\textup{MP},\gamma}(\diff x)=k/N$, with   $\mu_{\textup{MP},\gamma}$ 
from \eqref{eqn:MP}. If $E<\gamma_{\lfloor N/2\rfloor}$, we define $m=\lfloor 3N/4\rfloor$. If $E>\gamma_{\lfloor N/2\rfloor}$, let $m=\lfloor N/4\rfloor$.
Convergence of $N^{-1}\sum_{k=1}^N\delta_{\mu_k}$ ($\mu_1,\dots,\mu_N$ are the eigenvalues of $H$) to $\mu_{\textup{MP},\gamma}$  under the minimal assumption of finite second moment of the entries \cite{Wac1978} has the following elementary consequence: For any $c>0$,  $\mathbb{P}(A_N)=1-\oo(1)$ where $A_N=\cap_{N/7<k<8N/7}\{|\mu_k-\gamma_k|<c\}$. By interlacing, on $A_N$ the $(d_{m+\ell})_{0\leq \ell\leq 3}$ all have the same sign and absolute value greater than $N^{-2}$, and $c_{m},c_{m+1}>c\e/N^2$. Hence we can apply \cite[Equation (B.4)]{BouErdYauYin2016}\footnote{The assumption \eqref{eqn:smooth} is exactly the needed input for \cite[Lemma B.4]{BouErdYauYin2016}. Note that although this Lemma assumes $\mu$ has finite moments of all orders, this is actually not used in its proof.} with $\tau=0, r=p=2$ (and either $E$ or $-E$ depending on the sign of the $d_{m+\ell}$'s) to obtain, on $A_N$,
$$
\E_{Y_1}\left[\big(
(\sum_{\alpha=1}^Nc_\alpha \xi_\alpha)^2+(E-\sum_{\alpha=1}^{N-1} d_\alpha\xi_\alpha)^2
\big)^{-1}\right]
\leq
\frac{C}{\sqrt{c_{m}c_{m+1}}\min(d_{m+1},d_{m+2},d_{m+3})}\leq C \frac{N^{10}}{\e},
$$
so that $\E[\mathcal{N}_{I}^2\mathds{1}_{A_N}]\leq N^{10}\e$ and in particular $\mathbb{P}(\mathcal{N}_I\geq 1)\to 0$ for $\e=e^{-N^\e}$.
\item[\eqref{eqn:detconub_NlogN}] This proof has the same idea as the one for Wigner matrices; the only difference is that the product of entries associated to one permutation is estimated as follows. Fix $\delta$ so small that $\mu$ has finite $2+2\delta$ moment. For any permutation $\sigma \in S_p$ define $X_\sigma = \abs{(YY^T/N - E)_{1, \sigma(1)} \cdot \ldots \cdot (YY^T/N - E)_{p, \sigma(p)}}$. Let
\[
	c_\delta = \max(\E[\abs{Y_{1,1}}^{1+\delta}], \E[\abs{Y_{1,1}}^{2+2\delta}]) < \infty. 
\]
Then from convexity of $x \mapsto x^{1+\delta}$ we have
\[
	\left(\frac{1}{N^p} \sum_{j_1, \ldots, j_p = 1}^N \prod_{i=1}^p \abs{Y_{i,j_i}Y_{\sigma(i),j_i} - E\delta_{i,\sigma(i)}} \right)^{1+\delta} \leq \frac{1}{N^p} \sum_{j_1, \ldots, j_p = 1}^N \left( \prod_{i=1}^p \abs{Y_{i,j_i} Y_{\sigma(i),j_i} - E\delta_{i,\sigma(i)}} \right)^{1+\delta},
\]
and thus 
\begin{align*}
	\E[X_\sigma^{1+\delta}] &= \E\left[ \left( \prod_{i=1}^p \abs{\frac{1}{N} \sum_{j=1}^N (Y_{i,j} Y_{\sigma(i),j} - E\delta_{i,\sigma(i)})} \right)^{1+\delta} \right] \leq \frac{1}{N^p} \sum_{j_1, \ldots, j_p = 1}^N \E\left[ \left( \prod_{i=1}^p (\abs{Y_{i,j_i} Y_{\sigma(i), j_i}} + \abs{E} ) \right)^{1+\delta} \right] \\
	&=: \frac{1}{N^p} \sum_{j_1, \ldots, j_p = 1}^N \E[(Z_{j_1, \ldots, j_p})^{1+\delta}].
\end{align*}
Now each $Z_{j_1, \ldots, j_p}$ is the sum of $2^p$ terms, each of the form $\abs{E}^{p-k} \prod_{\ell=1}^k |Y_{i_\ell, j_{i_\ell}} Y_{\sigma(i_\ell), j_{i_\ell}}|$ for some $k \in \llbracket 1, p \rrbracket$ and some collection of \emph{distinct} integers $i_1, \ldots, i_k \in \llbracket 1, p \rrbracket$. Since they are distinct, each entry of the matrix $Y$ appears with power at most two in such a term; since these entries are independent, we have
\[
	\E\left[\left( \abs{E}^{p-k} \prod_{\ell=1}^k |Y_{i_\ell, j_{i_\ell}} Y_{\sigma(i_\ell), j_{i_\ell}}| \right)^{1+\delta}\right] \leq \abs{E}^{(p-k)(1+\delta)} c_\delta^{2k} \leq \max(\abs{E}, c_\delta, 1)^{2p(1+\delta)} =: c_{\delta,E}^{2p(1+\delta)} 
\]
Then Minkowski's inequality in $L^{1+\delta}$ gives
\[
	\E[X_\sigma^{1+\delta}] \leq \sup_{j_1, \ldots, j_p} \E[(Z_{j_1, \ldots, j_p})^{1+\delta}] \leq (2c_{\delta,E}^2)^{p(1+\delta)}.
\]
The rest of the proof is similar to the Wigner case. 
\end{itemize}

Finally we check assumption \hyperlink{assn:S}{(S)} with the new definition \eqref{eqn:newxcut}. Write $Y_{\textup{cut}} = \Phi(X_{\textup{cut}})$ for the $p \times N$ matrix $Y$ with entries truncated at level $N^{-\kappa+1/2}$; then it is classical that
\[
	d_{\textup{KS}}(\hat{\mu}_{YY^T/N}, \hat{\mu}_{Y_{\textup{cut}}Y_{\textup{cut}}^T/N}) \leq \frac{1}{p} \rank(Y - Y_{\textup{cut}})
\]
(this follows from interlacing of singular values; see, e.g., \cite[Theorem A.44]{BaiSil2010}). The rest of the argument with Bennett's inequality goes through from here; note that $\P(\abs{W_{ij}} > N^{-\kappa})$ and $P(\abs{Y_{ij}} > N^{-\kappa+1/2})$ are of similar order because $Y$ has order-one entries but the Wigner matrix $W$ has order-$\frac{1}{\sqrt{N}}$ entries.


\subsection{Gaussian matrices with a (co)variance profile.}\ \label{sec:GausCovProof}
We will use Theorem \ref{thm:concentrated_input} (concentrated input) to prove Corollary \ref{cor:GaussianCovarianceB}. First we need the following sequence of lemmas establishing consequences of our model assumptions (such as the log-Sobolev inequality and  tail decay estimates). 

\begin{lem}
\label{lem:covariance_lb}
Let $\mc{C} = \mc{C}_N$ be the covariance matrix of the upper triangle of $H_N$ considered as a Gaussian vector, i.e., $\mc{C}$ is an $\frac{N(N+1)}{2} \times \frac{N(N+1)}{2}$ matrix with entries
\[
    \mc{C}_{(i,j),(k,\ell)} = \Cov(H_{ij},H_{k\ell}) = \Cov(W_{ij}, W_{k\ell}).
\]
Let $p$ be as in the weak-fullness assumption \hyperlink{assn:wF}{(wF)}. Then, in the sense of quadratic forms,
\[
    \mc{C} \geq N^{-1-p}\Id.
\]
\end{lem}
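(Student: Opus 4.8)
The plan is to reduce the quadratic-form inequality to a single application of the weak-fullness assumption \hyperlink{assn:wF}{(wF)}. Identify the upper triangle of $W_N = H_N - A_N$ with a random vector $\vec W = (W_{ij})_{1 \le i \le j \le N} \in \R^{N(N+1)/2}$; since $A_N = \E[H_N]$ the vector $\vec W$ is centered, and by definition $\mc C = \Cov(\vec W)$, so that for any test vector $v = (v_{ij})_{1 \le i \le j \le N}$ one has $v^T \mc C v = \Var\!\big(\langle v, \vec W\rangle\big) = \E\big[\langle v, \vec W\rangle^2\big]$.

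First I would encode $v$ as a symmetric matrix. Let $B = B(v) \in \R^{N\times N}$ be the symmetric matrix with $B_{ii} = v_{ii}$ and $B_{ij} = B_{ji} = v_{ij}/2$ for $i < j$. A direct computation using symmetry of $W_N$ gives $\Tr(BW_N) = \sum_i B_{ii}W_{ii} + 2\sum_{i<j}B_{ij}W_{ij} = \sum_{i \le j} v_{ij}W_{ij} = \langle v, \vec W\rangle$. Hence $v^T\mc C v = \E[(\Tr(BW_N))^2]$, and assumption \hyperlink{assn:wF}{(wF)} applied to the symmetric matrix $B$ yields $v^T \mc C v \ge N^{-1-p}\Tr(B^2)$.

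Finally I would bound $\Tr(B^2)$ from below: $\Tr(B^2) = \sum_{i} v_{ii}^2 + 2\sum_{i<j}(v_{ij}/2)^2 = \sum_i v_{ii}^2 + \tfrac12\sum_{i<j}v_{ij}^2 \ge \tfrac12 \|v\|^2$, where $\|v\|^2 = \sum_{i \le j}v_{ij}^2$. Combining, $v^T\mc C v \ge \tfrac12 N^{-1-p}\|v\|^2$ for every $v$, i.e.\ $\mc C \ge \tfrac12 N^{-1-p}\Id$. The residual factor $\tfrac12$ is harmless: since $\tfrac12 N^{-1-p} \ge N^{-1-(p+1)}$ for $N \ge 2$ and $p$ is an arbitrary fixed parameter (only its existence is used downstream), one may record the bound in the stated form after relabeling $p$; alternatively, one simply carries the constant through.

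I do not expect a genuine obstacle: the content is entirely the correspondence $v \leftrightarrow B(v)$ together with one invocation of \hyperlink{assn:wF}{(wF)}. The only points requiring care are the factor-of-two bookkeeping in passing between the Euclidean inner product on the half-vector $\vec W$ and the trace pairing $\Tr(B\,\cdot)$ on symmetric matrices (off-diagonal entries are counted once in $\vec W$ but twice in $\Tr$), and remembering that centering of $W_N$ is what turns $\E[(\Tr(BW_N))^2]$ into the variance $v^T\mc C v$. Note also that Gaussianity plays no role here — only that $W_N$ is centered and satisfies \hyperlink{assn:wF}{(wF)}.
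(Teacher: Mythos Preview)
Your argument is correct and more direct than the paper's. The paper takes a detour: it first works with the full $N^2\times N^2$ covariance $\ms{C}_W$ of \emph{all} entries of $W$, shows $\ms{C}_W \ge N^{-p}\ms{C}_{\text{GOE}}$ by applying \hyperlink{assn:wF}{(wF)} to the symmetrization $(\tilde B+\tilde B^T)/2$ of an arbitrary (not necessarily symmetric) test matrix, interprets this as a distributional decomposition $W \overset{(d)}{=} N^{-p/2}W^{(\text{GOE})}+W'$ with $W'$ independent of the GOE part, and only then restricts to the upper triangle, using that $\mc{C}_{\text{GOE}}$ is diagonal with entries $\tfrac{1+\delta_{ij}}{N}\ge \tfrac1N$. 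You instead apply \hyperlink{assn:wF}{(wF)} directly to the symmetric matrix $B(v)$ encoding the upper-triangular test vector, bypassing the GOE comparison entirely. What the paper's route buys is the clean constant $N^{-1-p}$ (the extra factor on the diagonal of $\mc{C}_{\text{GOE}}$ exactly absorbs the off-diagonal doubling), plus the conceptually appealing GOE decomposition; what your route buys is brevity, at the cost of the harmless factor $\tfrac12$. Since the lemma is only used downstream to produce \emph{some} polynomial lower bound on $\lambda_{\min}(\mc C)$ (for the log-Sobolev constant and the Wegner estimate), your version is entirely adequate, and your remark about relabeling $p$ or carrying the constant through is well taken.
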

\begin{proof}
We claim that
\begin{equation}
\label{eqn:fullness_decomp}
    W \overset{(d)}{=} N^{-\frac{p}{2}} W^{(\text{GOE})} + W'
\end{equation}
where $W_{\text{GOE}}$ is distributed as a GOE matrix (i.e., independent Gaussian entries up to symmetry with $\E[(W^{(\text{GOE})}_{ij})^2] = \frac{1+\delta_{ij}}{N}$) and where $W'$ is some real symmetric Gaussian matrix independent of $W^{(\text{GOE})}$. 

Indeed, consider the $N^2 \times N^2$ covariance matrix $\ms{C}_W$ of the \emph{full} matrix $W$ (not just the upper triangle). We will index this by matrix locations, i.e., $\ms{C}_W$ has entries $(\ms{C}_W)_{(i,j),(k,\ell)}$. Write $\ms{C}_{\text{GOE}}$ for the covariance matrix for GOE. We index a vector $B \in \R^{N^2}$ similarly, writing $B_{(i,j)}$, and associate with it the matrix $\tilde{B} \in \R^{N \times N}$ defined by
\[
    \tilde{B}_{ij} = B_{(i,j)}.
\]
Notice that the matrix $\tilde{B}$ need not be symmetric. Whenever $B$ has unit norm, we have
\begin{align*}
    \ip{B,\ms{C}_{\text{GOE}}B} &= \frac{1}{N} \sum_{i,j,k,\ell} B_{(i,j)} (\delta_{ik}\delta_{j\ell}+\delta_{i\ell}\delta_{jk}) B_{(k,\ell)} 
    = \frac{1}{N}\Tr(\tilde{B}\tilde{B}^{T} + \tilde{B}^2) = \frac{1}{N}\Tr\left(\left(\frac{\tilde{B}+\tilde{B}^{T}}{2}\right)^2\right).
\end{align*}
Thus by the weak-fullness assumption \hyperlink{assn:wF}{(wF)} we have
\begin{align*}
    \ip{B,\ms{C}_WB}
    = \E\left[ (\Tr(\tilde{B}W))^2 \right] = \E\left[ \left(\Tr\left(\left(\frac{\tilde{B}+\tilde{B}^{T}}{2}\right)W\right)\right)^2\right] \geq N^{-1-p} \Tr\left(\left(\frac{\tilde{B}+\tilde{B}^{T}}{2}\right)^2\right) = \ip{B,N^{-p}\ms{C}_{\text{GOE}}B}.
\end{align*}
To complete the proof of \eqref{eqn:fullness_decomp}, we write
\[
    \ms{C}_W = N^{-p}\ms{C}_{\text{GOE}} + (\ms{C}_W - N^{-p}\ms{C}_{\text{GOE}})
\]
and interpret the matrix in parentheses on the right-hand side, which we just showed is positive semi-definite, as the covariance matrix for $W'$.

Now we consider the $\frac{N(N+1)}{2} \times \frac{N(N+1)}{2}$ covariance matrix $\mc{C} = \mc{C}_W$ of the \emph{upper triangle} of $W$, and define $\mc{C}_{\text{GOE}}$ and $\mc{C}_{W'}$ similarly. Then whenever $v \in \R^{\frac{N(N+1)}{2}}$ is indexed with upper-triangular entries we have
\begin{align*}
    \ip{v,\mc{C}_Wv} &= \ip{v,N^{-p}\mc{C}_{\text{GOE}}v} + \ip{v,\mc{C}_{W'}v} \geq N^{-p} \ip{v,\mc{C}_{\text{GOE}}v} = N^{-1-p}\left(\sum_{i \leq j} v_{(i,j)}^2 + \sum_i v_{(i,i)}^2\right) \geq N^{-1-p} \|v\|_2^2
\end{align*}
which concludes the proof.
\end{proof}

\begin{lem}\label{lem:LSI}
For every $\zeta > 0$, there exists $c_\zeta > 0$ such that the law of the upper triangle of $H_N$, considered as a vector, satisfies the logarithmic-Sobolev inequality with constant $c_\zeta \frac{N^\zeta}{N}$.
\end{lem}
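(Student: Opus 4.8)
The plan is to identify the logarithmic-Sobolev constant of the Gaussian vector in question with the operator norm of its covariance matrix, and then to bound that operator norm by a Schur test fed by the correlation-decay assumption \hyperlink{assn:D}{(D)}. Write $h = h_N$ for the upper triangle of $H_N$, regarded as a vector in $\R^{N(N+1)/2}$. Since $H_N = A_N + W_N$ with $A_N$ deterministic and $W_N$ centered Gaussian, $h$ is a Gaussian vector with mean the upper triangle of $A_N$ and covariance $\mc{C} = \mc{C}_N$ as in Lemma \ref{lem:covariance_lb}. It is classical---e.g.\ by applying the Bakry--\'Emery criterion to the image of a standard Gaussian under $\mc{C}^{1/2}$, which also makes the mean irrelevant---that $h$ satisfies the logarithmic-Sobolev inequality with constant $\|\mc{C}\|$ (the precise normalization convention affects only the value of $c_\zeta$). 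Hence it suffices to show $\|\mc{C}\| \leq c_\zeta N^\zeta / N$; in fact I expect to obtain $\|\mc{C}\| \leq C/N$ directly from \hyperlink{assn:D}{(D)} as written, so that even $\zeta = 0$ is admissible, the $N^\zeta$ slack being relevant only under the weaker correlation-decay hypotheses permitted by \cite[Example 2.12]{ErdKruSch2019}.

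Since $\mc{C}$ is symmetric, the Schur test (equivalently Gershgorin) gives $\|\mc{C}\| \leq \max_{i \leq j}\sum_{k \leq \ell}|\Cov(W_{ij},W_{k\ell})|$. Each covariance is an order-two cumulant, so \hyperlink{assn:D}{(D)} applied with the coordinate maps $f_1(W) = W_{ij}$ and $f_2(W) = W_{k\ell}$ yields
\[
    |\Cov(W_{ij},W_{k\ell})| \leq \frac{C}{1 + d(\{(i,j)\},\{(k,\ell)\})^s}\,\|f_1\|_2\,\|f_2\|_2 .
\]
Here $\|f_1\|_2^2 = \E[W_{ij}^2] = s_{ij}$, and applying the flatness assumption \hyperlink{assn:F}{(F)} with $T$ the rank-one orthogonal projection onto the $j$-th coordinate axis and reading off the $(i,i)$ diagonal entry of the resulting quadratic-form inequality gives $s_{ij} \leq p/N$ (and likewise for $\|f_2\|_2$). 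The number of lattice points $(k,\ell) \in \llbracket 1, N\rrbracket^2$ at distance exactly $r$ from a fixed pair is at most $C_0(r+1)$ for an absolute constant $C_0$ (the minimum over the transposed pair in the definition of $d$ costs only a further factor $2$), so, summing over $(k,\ell)$,
\[
    \sum_{k \leq \ell}|\Cov(W_{ij},W_{k\ell})| \leq \frac{Cp}{N}\sum_{r \geq 0}\frac{C_0(r+1)}{1 + r^s} = \frac{C'}{N},
\]
the series converging because $s > 12 > 2$. This bound is uniform in $(i,j)$, whence $\|\mc{C}\| \leq C'/N$ and the lemma follows.

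The argument is essentially routine once the pieces are assembled; the points requiring some care are (i) checking that the coordinate maps $W \mapsto W_{ij}$ are admissible test functions for \hyperlink{assn:D}{(D)} with the $L^2$-normalization giving $\|f_1\|_2 = \sqrt{s_{ij}}$, which is where the crucial factor $N^{-1}$ enters; (ii) the minor bookkeeping of whether one uses the support $\{(i,j)\}$ or the symmetrized support $\{(i,j),(j,i)\}$, which only changes the constant; and (iii) recording the Gaussian logarithmic-Sobolev inequality in the form ``constant equals covariance operator norm''. Conceptually, the one input genuinely needed beyond the variance bound from \hyperlink{assn:F}{(F)} is the correlation decay \hyperlink{assn:D}{(D)}: without it the off-diagonal covariances could inflate $\|\mc{C}\|$ far past $1/N$, whereas \hyperlink{assn:D}{(D)} keeps the row sums of $|\mc{C}|$ at order $1/N$.
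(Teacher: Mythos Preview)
Your proof is correct and follows essentially the same approach as the paper's: both reduce to bounding the operator norm of the covariance $\mc{C}$ of the upper triangle and then invoke the Gaussian log-Sobolev inequality via Bakry--\'Emery. The only difference is that the paper outsources the bound $\|\,|\mc{C}|\,\| \leq_\zeta N^{\zeta-1}$ to \cite[(6b), Assumption (C)]{ErdKruSch2019}, whereas you carry out the Schur test explicitly using \hyperlink{assn:D}{(D)} and the variance bound from \hyperlink{assn:F}{(F)}; as you correctly note, this yields $\|\mc{C}\| \leq C/N$ without the $N^\zeta$ loss under \hyperlink{assn:D}{(D)} as stated, the extra slack only being needed for the weaker decay conditions of \cite[Example 2.12]{ErdKruSch2019}.
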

\begin{proof}
Since the logarithmic-Sobolev inequality is preserved under translations, it suffices to prove the statement with $H_N = W_N + \E[H_N]$ replaced by $W_N$. This is essentially an exercise in spelling out our model assumptions, which come from \cite{ErdKruSch2019}. 

The upper triangle of $W_N$ is a Gaussian vector with covariance matrix $\mc{C}$. Define the matrix $\abs{\mc{C}}$ by $\abs{\mc{C}}_{(i,j),(k,\ell)} = \abs{\mc{C}_{(i,j),(k,\ell)}}$, and whenever $u \in \R^{\frac{N(N+1)}{2}}$ is a unit vector, define the unit vector $\abs{u}$ by $\abs{u}_{(i,j)} = \abs{u_{(i,j)}}$. Then 
\[
    \ip{u,\mc{C}u} \leq \ip{\abs{u},\abs{\mc{C}}\abs{u}} \leq \| \abs{\mc{C}} \|.
\]
But our assumptions \hyperlink{assn:D}{(D)} on correlation decay imply that $\|\abs{\mc{C}}\| \leq_\zeta \frac{N^\zeta}{N}$; see \cite[(6b), Assumption (C)]{ErdKruSch2019}, specifically noting that $\vertiii{\kappa}_2^{\textup{av}}$ in their notation is the same as $N\|\abs{\mc{C}}\|$ in ours (the factor $N$ appears since their normalization is $H_N = A_N + \frac{1}{\sqrt{N}} W_N$ to our $H_N = A_N + W_N$).

Since $\mc{C}$ is invertible by Lemma \ref{lem:covariance_lb}, this implies the  log-Sobolev inequality via the Bakry-\'Emery criterion.
\end{proof}

\begin{lem}
\label{lem:mfv}
The flatness assumption \hyperlink{assn:F}{(F)} implies, for each $i, j, N$, 
\[
    \frac{1}{pN} \leq \Var((W_N)_{ij}) \leq \frac{p}{N}.
\]
\end{lem}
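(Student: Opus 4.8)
The plan is to read off the variance of each entry directly from \hyperlink{assn:F}{(F)} by feeding it a suitable rank-one test matrix. Fix indices $i,j$ and a dimension $N$, and set $T = e_j e_j^T \in \R^{N\times N}$, the orthogonal projection onto the $j$-th coordinate axis. This $T$ is positive semi-definite with $\Tr(T)=1$, so it is an admissible input to the flatness hypothesis.

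The key computation is that $W_N T W_N = (W_N e_j)(W_N e_j)^T$, so its $(i,i)$ entry is exactly $((W_N)_{ij})^2$. Taking expectations and using that $W_N$ is centered, the $(i,i)$ entry of the (Hermitian, positive semi-definite) matrix $\E[W_N T W_N]$ equals $\Var((W_N)_{ij})$.

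Finally I would read the bound in \hyperlink{assn:F}{(F)} in the sense of quadratic forms, i.e. as $\frac{1}{pN}\Id \leq \E[W_N T W_N] \leq \frac{p}{N}\Id$ (using $\Tr(T)=1$), and test both inequalities against the unit vector $e_i$ to get $\frac{1}{pN}\leq \Var((W_N)_{ij})\leq \frac{p}{N}$, which is precisely the claim. I do not expect any genuine obstacle here: the only decision is the choice $T=e_je_j^T$, and the rest is a one-line expansion, so the lemma is really just flatness specialized to rank-one inputs (the non-invariant analogue of reading a variance profile $s_{ij}$ off the diagonal of $\E[W_N T W_N]$ in the independent-entries case).
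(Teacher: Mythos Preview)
Your proposal is correct and is essentially identical to the paper's proof: both take $T=e_je_j^T$ and then extract the $(i,i)$ diagonal entry of $\E[W_NTW_N]$ (equivalently, test the quadratic-form inequality against $e_i$) to obtain $\Var((W_N)_{ij})$.
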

\begin{proof}
By writing $e_j$ for the $j$th canonical basis vector, understood as a column, and writing $(\cdot)^T$ for transposition, we have
$
	\E[W_{ij}^2] = \E[W_{ij}W_{ji}] = \E[We_j(e_j)^TW]_{ii} = (e_i)^T\E[We_j(e_j)^TW]e_i,
$
but by the flatness assumption \hyperlink{assn:F}{(F)} we have
$
	\frac{1}{pN} = \frac{1}{pN} \Tr(e_j(e_j)^T) \leq (e_i)^T\E[We_j(e_j)^TW]e_i \leq \frac{p}{N}\Tr(e_j(e_j)^T) = \frac{p}{N}.
$
\end{proof}

\begin{lem}
\label{lem:corr_L1}
We have $\sup_N \E[\|H_N\|] < \infty$. 
\end{lem}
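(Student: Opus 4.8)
The plan is to reduce to bounding $\E[\|W_N\|]$, obtain Gaussian concentration of $\|W_N\|$ around its median from the log-Sobolev inequality of Lemma~\ref{lem:LSI}, and then bound the median uniformly in $N$ using the local law for this model.

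First, since $H_N = A_N + W_N$ we have $\|H_N\| \leq \|A_N\| + \|W_N\|$, so by assumption \hyperlink{assn:B}{(B)} it suffices to prove $\sup_N \E[\|W_N\|] < \infty$. Observe that $S \mapsto \|S\|$ on $\ms{S}_N$ is $\sqrt{2}$-Lipschitz for the norm \eqref{eqn:frobenius-like-norm}: indeed $\big|\|S\| - \|T\|\big| \leq \|S-T\| \leq \|S-T\|_F \leq \sqrt{2}\,\|S-T\|_{\widetilde{F}}$, using $\|S-T\|_F^2 \leq 2\|S-T\|_{\widetilde{F}}^2$ for real symmetric matrices. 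Fix any $\zeta \in (0,1)$, say $\zeta = \frac12$. Since the upper triangle of $W_N$ is the upper triangle of $H_N$ translated by the deterministic matrix $A_N$, Lemma~\ref{lem:LSI} tells us it satisfies the log-Sobolev inequality with constant $c_{1/2} N^{-1/2}$, and the Herbst argument then yields a constant $c > 0$, independent of $N$, with
\[
    \P\big(\,\big|\|W_N\| - \mf{M}_N\big| > s\,\big) \leq 2\exp(-c\,s^2\sqrt{N}) \qquad \text{for all } s > 0,
\]
where $\mf{M}_N$ is a median of $\|W_N\|$.

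It remains to bound $\mf{M}_N$ uniformly in $N$. Under assumptions \hyperlink{assn:B}{(B)}, \hyperlink{assn:F}{(F)}, \hyperlink{assn:wF}{(wF)}, \hyperlink{assn:D}{(D)}, the local law of \cite{ErdKruSch2019} applies to $H_N$; together with the fact that the self-consistent density of states $\mu_N$ has support contained in a fixed compact set uniformly in $N$ (a consequence of $\sup_N \|A_N\| < \infty$, flatness, and the regularity theory of the Matrix Dyson Equation, see \cite{AjaErdKru2019}), this provides a constant $C_0$, independent of $N$, with $\P(\|H_N\| \leq C_0) \to 1$. Hence for $N \geq N_0$ we have $\P\big(\|W_N\| \leq C_0 + \sup_N\|A_N\|\big) \geq \P(\|H_N\| \leq C_0) > \tfrac12$, so $\mf{M}_N \leq C_0 + \sup_N\|A_N\| =: C_1$. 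Integrating the concentration bound,
\[
    \E[\|W_N\|] \leq \mf{M}_N + \int_0^\infty \P\big(\|W_N\| > \mf{M}_N + s\big)\,\diff s \leq C_1 + \int_0^\infty 2e^{-c\,s^2\sqrt{N}}\,\diff s = C_1 + \OO(N^{-1/4})
\]
for $N \geq N_0$, while $\E[\|W_N\|] < \infty$ for each of the finitely many $N < N_0$ since $W_N$ is then a finite-dimensional centered Gaussian vector. Therefore $\sup_N \E[\|W_N\|] < \infty$, and the lemma follows.

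The main obstacle is the uniform a priori bound on $\|H_N\|$ (equivalently on $\mf{M}_N$). This is the step where the correlation-decay assumption \hyperlink{assn:D}{(D)} is genuinely needed: a naive $\epsilon$-net estimate only controls $\E[\|W_N\|]$ by a multiple of $\big\||\mc{C}|\big\|^{1/2} N^{1/2} \lesssim_\zeta N^{\zeta/2}$, which diverges, and in fact $\E[\|W_N\|]$ does blow up if the entrywise correlations fail to decay. The local law of \cite{ErdKruSch2019}, with its edge (outlier-free) estimate, is precisely the tool that converts the correlation-decay hypothesis into the required $O(1)$ bound on the spectral radius; the rest of the argument is routine Gaussian concentration.
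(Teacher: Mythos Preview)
Your proof is correct and uses the same key external input as the paper---the no-outliers consequence of the local law of \cite{ErdKruSch2019}---but the way you convert this into a bound on $\E[\|W_N\|]$ differs. The paper applies the local law directly to $W_N$ to obtain $\P(\|W_N\| \geq 2\sqrt{2p}+1) \leq CN^{-100}$, then controls $\E[\|W_N\|^2]$ via Cauchy--Schwarz as $(2\sqrt{2p}+1)^2 + \sqrt{\E[\|W_N\|^4]\,\P(\|W_N\|\geq 2\sqrt{2p}+1)}$, with the fourth moment bounded crudely by $\E[\Tr(W_N^4)] \leq 3p^2N^2$ using H{\"o}lder and Lemma~\ref{lem:mfv}. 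You instead use Lemma~\ref{lem:LSI} and Herbst to get sub-Gaussian concentration of $\|W_N\|$ around its median, then invoke the local law only to pin down the median uniformly in $N$, and integrate the tail. Your route avoids the fourth-moment computation and is slightly cleaner; in effect it anticipates (and partially absorbs) the argument of the subsequent Lemma~\ref{lem:corr_tails}, which in the paper is deduced \emph{from} Lemma~\ref{lem:corr_L1} by the same Herbst mechanism. The paper's route, on the other hand, keeps Lemma~\ref{lem:corr_L1} independent of the log-Sobolev machinery.
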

\begin{proof}
Since we assumed $\sup_N \|A_N\| < \infty$, we need only check $\sup_N \E[\|W_N\|] < \infty$ where $W=W_N=H_N-\E(H_N)$. We apply the relevant local law  from \cite{ErdKruSch2019}. This local law provides a sequence of measures $\widetilde{\mu_N}$ which well-approximate the empirical measure of $W$. The exact form of $\widetilde{\mu_N}$ does not matter for our purpose; what does matter is \cite[Proposition 2.1, Equation (4.2)]{AjaErdKru2019}, which we combine to obtain $\supp(\widetilde{\mu_N}) \subset [-2\sqrt{2p}, 2\sqrt{2p}]$ uniformly in $N$. Then the local law  \cite[Corollary 2.3]{ErdKruSch2019} implies that eigenvalues of $W$ stick to $\supp(\widetilde{\mu_N})$ in the sense that, for some constant $C$, we have
\[
    \P(\|W\| \geq 2\sqrt{2p}+1) \leq CN^{-100}.
\]
Thus
\begin{align*}
    \E[\|W\|^2] &\leq (2\sqrt{2p}+1)^2 + \E[\|W\|^2 \mathds{1}_{\|W\| \geq 2\sqrt{2p}+1}] \leq (2\sqrt{2p}+1)^2 + \sqrt{\E[\|W\|^4] \P(\|W\| \geq 2\sqrt{2p}+1)}
\end{align*}
and the last term is $\oo(1)$ provided  $\E[\|W\|^4]$ satisfies some weak bound: Since the entries $W_{ij}$ are centered Gaussian with variance at most $\frac{p}{N}$ by Lemma \ref{lem:mfv}, H\"{o}lder's inequality gives
$$
    \E[\|W\|^4] \leq \E[\Tr(W^4)] \leq \sum_{i,j,k,\ell} \E[W_{ij}W_{jk}W_{k\ell}W_{\ell i}] 
    \leq \sum_{i,j,k,\ell} (\E[W_{ij}^4] \E[W_{jk}^4] \E[W_{k\ell}^4] \E[W_{\ell i}^4])^{1/4} \leq 3p^2N^2,
$$
which is sufficient. 
\end{proof}

\begin{lem}
\label{lem:corr_tails}
There exists $C$ such that, for every $t > 0$, we have
\[
	\P(\|H_N\| \geq t) \leq e^{-\sqrt{N}\max(t-C,0)}.
\]
\end{lem}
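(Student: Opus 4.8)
The plan is to combine the uniform first-moment bound $m := \sup_N \E[\|H_N\|] < \infty$ from Lemma \ref{lem:corr_L1} with the logarithmic-Sobolev inequality of Lemma \ref{lem:LSI} and the Herbst concentration argument, choosing the parameter $\zeta = 1/2$ so that the resulting Gaussian concentration rate is exactly $\sqrt{N}$.

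First, note that for symmetric matrices $A, B \in \ms{S}_N$ we have $\abs{\|A\| - \|B\|} \leq \|A-B\| \leq \|A-B\|_F \leq \sqrt{2}\,\|A-B\|_{\widetilde{F}}$, since $\|A-B\|_F^2 = \sum_i (A-B)_{ii}^2 + 2\sum_{i<j}(A-B)_{ij}^2 \leq 2\|A-B\|_{\widetilde{F}}^2$. Hence $T \mapsto \|T\|$ is a $\sqrt{2}$-Lipschitz function of the upper triangle of $T$ with respect to the norm \eqref{eqn:frobenius-like-norm}; in particular $\|H_N\|$ is a $\sqrt{2}$-Lipschitz function of the Gaussian upper-triangle vector of $H_N$. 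By Lemma \ref{lem:LSI} applied with $\zeta = 1/2$, this vector satisfies the logarithmic-Sobolev inequality with constant $c_{1/2} N^{1/2}/N = c_{1/2} N^{-1/2}$. The Herbst argument (Gaussian concentration for Lipschitz functions under a log-Sobolev inequality; see, e.g., \cite{AndGuiZei2010}) then gives, for every $u > 0$,
\[
    \P\big(\|H_N\| \geq \E[\|H_N\|] + u\big) \leq \exp\left(-\frac{u^2\sqrt{N}}{4 c_{1/2}}\right).
\]

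Now fix $C := m + c_{1/2}$, which is a finite constant independent of $N$. If $t \leq C$, then $\max(t-C,0) = 0$ and the claimed inequality is trivial since its right-hand side equals $1$. If $t > C$, then $u := t - m > c_{1/2} > 0$, and since $\E[\|H_N\|] \leq m$ the displayed concentration bound yields $\P(\|H_N\| \geq t) \leq \exp(-u^2\sqrt{N}/(4c_{1/2}))$. Completing the square, $u^2 \geq 4c_{1/2}(u - c_{1/2})$, so $\tfrac{u^2}{4 c_{1/2}} \geq u - c_{1/2} = t - m - c_{1/2} = t - C$, whence $\P(\|H_N\| \geq t) \leq e^{-\sqrt{N}(t-C)}$. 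This proves the lemma. The only points requiring care are tracking the Lipschitz constant against the $\|\cdot\|_{\widetilde{F}}$ normalization used in Lemma \ref{lem:LSI}, and ensuring $C$ is uniform in $N$ — which is why we invoke the $N$-uniform bounds of Lemmas \ref{lem:corr_L1} and \ref{lem:LSI}.
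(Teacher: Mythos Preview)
Your proof is correct and follows essentially the same route as the paper's: use the log-Sobolev inequality of Lemma~\ref{lem:LSI} with $\zeta=1/2$, apply the Herbst argument to the $\sqrt{2}$-Lipschitz map $T\mapsto\|T\|$, and absorb the uniform bound $\sup_N\E[\|H_N\|]<\infty$ from Lemma~\ref{lem:corr_L1} into the constant $C$. The only cosmetic difference is that the paper bounds the moment generating function and plugs in $\alpha=\sqrt{N}$ directly, whereas you first pass to the Gaussian tail and then use the elementary inequality $u^2\geq 4c_{1/2}(u-c_{1/2})$; both yield the same $e^{-\sqrt{N}(t-C)}$ bound with slightly different admissible values of $C$.
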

\begin{proof}
For definiteness,  we consider the logarithmic Sobolev inequality from Lemma \ref{lem:LSI} with constant $cN^{-1/2}$, $c=c_{1/2}$.  We apply Herbst's lemma with the map $H_N \mapsto\|H_N\|$, which is 
Lipschitz with constant $\sqrt{2}$ (by the Hoffman-Wielandt inequality),  to obtain for any $\alpha>0$
\[
	\E[e^{\alpha \|H_N\|}] \leq e^{\alpha \sup_N\E[\|H_N\|] + \frac{c}{2}N^{-1/2} \alpha^2}.
\]
To finish,  we bound $\E\|H_N\|$ with Lemma \ref{lem:corr_L1}, choose $\alpha = \sqrt{N}$, and apply Markov's inequality, so that the result applies for any $C>\sup_N\E[\|H_N\|]+c/2$.
\end{proof}

\begin{proof}[Proof of Corollary \ref{cor:GaussianCovarianceB}]
By the Herbst argument, Lemma \ref{lem:LSI} implies assumption \hyperlink{assn:L}{(L)} on Lipschitz concentration.

We now check assumption \hyperlink{assn:W}{(W)}, with the measures $\mu_N$ given as the solutions of the Matrix Dyson Equation. Most of this argument consists of importing results of Ajanki \emph{et al.} and Erd{\H o}s \emph{et al.} Indeed, combining \cite[Proposition 2.1, Equation (4.2)]{AjaErdKru2019}, we find that the supports of the measures $\mu_N$ satisfy
\begin{equation}
\label{eqn:gauss_cov_bounded_support}
    \supp(\mu_N) \subseteq (-(\|A_N\| + 2\sqrt{2p}), \|A_N\| + 2\sqrt{2p}).
\end{equation}
Since the right-hand side is uniformly bounded in $N$, so is the left-hand side. Furthermore, \cite[Proposition 2.2]{AjaErdKru2019} shows that each $\mu_N$ admits a density $\mu_N$ with respect to Lebesgue measure (on all of $\R$), and that these densities are $c$-H\"{o}lder continuous for some universal $c$; hence they are bounded, uniformly in $N$.

To check \eqref{eqn:wasserstein}, we use Proposition \ref{prop:bai}. Write $s_N$ for the (random) Stieltjes transform of $\hat{\mu}_{H_N}$. For the Stieltjes-transform estimate \eqref{eqn:baibulk}, we use the local law \cite[Theorem 2.1(4b)]{ErdKruSch2019}, which implies that there exists a universal constant $c$ such that, for every sufficiently small $\epsilon > 0$, there exists $C_\epsilon > 0$ with
\[
    \P\left( \abs{s_N(E+\ii N^{-c\epsilon}) - m_N(E+\ii N^{-c\epsilon})} \geq N^{\epsilon(1+2c)-1} \text{ for some }\abs{E} \leq N^{100} \right) \leq C_{\epsilon}N^{-100}.
\]
Using the trivial bound $\frac{1}{\eta}$ for a Stieltjes transform evaluated at $E+\ii\eta$, we obtain
\begin{align*}
    \abs{\E[s_N(E+\ii N^{-c\epsilon})] - m_N(E+\ii N^{-c\epsilon})} \leq N^{\epsilon(1+2c)-1} + 2C_{\epsilon}N^{c\epsilon-100}
\end{align*}
for all $\abs{E} \leq N^{100}$, which suffices to check \eqref{eqn:baibulk}.
Moreover,  if $x > \max \supp(\mu_N)$ we have
\[
	\abs{F_{\E[\hat{\mu}]}(x) - F_{\mu_N}(x)} = 1-F_{\E[\hat{\mu}]}(x) \leq \P(\|H_N\| > x) \leq e^{-\sqrt{N}\max(x-C,0)}
\]
from Lemma \ref{lem:corr_tails}, and similarly for the left edge, which gives \eqref{eqn:baitail} and \eqref{eqn:baidecay}. This verifies assumption \hyperlink{assn:W}{(W)}.

Finally we check the Wegner estimate,  with the general Schur-complement strategy. Recall we wrote $\mc{C}$ for the covariance matrix of the upper triangle of $H = H_N$ (we will drop the subscript $N$ for the remainder of this proof). Now we will write $\mc{C}_{\widehat{jj}}$ for its minor obtained by erasing the column and row corresponding to $H_{jj}$. Since $\mc{C}$ is invertible by Lemma \ref{lem:covariance_lb} (and positive semidefinite), so is its minor $\mc{C}_{\widehat{jj}}$ by interlacing. Conditioned on $H_{\widehat{jj}}$, we have that $H_{jj}$ is a Gaussian random variable with (an explicit mean that does not matter now and) variance
\[
    \left(\widetilde{\sigma_{jj}}\right)^2 := \Var(H_{jj}) - \sum_{\substack{k \leq \ell, k' \leq \ell' \\ (k,\ell) \neq (j,j) \neq (k',\ell')}} \mc{C}_{(j,j),(k,\ell)}  ((\mc{C}_{\widehat{jj}})^{-1})_{(k,\ell),(k',\ell')} \mc{C}_{(k',\ell'),(j,j)} = \frac{1}{(\mc{C}^{-1})_{jj}} \geq \lambda_{\min{}}(\mc{C}) \geq N^{-1-p},
\]
where we used Lemma \ref{lem:covariance_lb} in the last step. By Lemma \ref{lem:schur_bdd_density} and Proposition \ref{prop:schur}, this proves \eqref{eqn:wegner}.
\end{proof}


\subsection{Block-diagonal Gaussian matrices.}\ As in subsection \ref{sec:GausCovProof},
we will use Theorem \ref{thm:concentrated_input} (concentrated input). Considered as a vector, the upper triangle of $H_N$ satisfies log-Sobolev with constant $\frac{p}{N}$, since it consists of independent (possibly degenerate) Gaussians with variance at most $\frac{p}{N}$. This implies the Lipschitz-concentration assumption \hyperlink{assn:L}{(L)}.

Now we check assumption \hyperlink{assn:W}{(W)}. We assumed in \hyperlink{assn:R}{(R)} that the MDE measures $\mu_N$ have a bounded density; they lie in a common compact set by the estimate \cite[(3.32a)]{AltErdKruNem2019} and arguments like those around \eqref{eqn:gauss_cov_bounded_support}, so it remains only to check \eqref{eqn:wasserstein}, through Proposition \ref{prop:bai}. If $s_N$ denotes the Stieltjes transform of $H_N$, then the local law \cite[(B.5)]{AltErdKruNem2019} implies that there exist universal constants $\delta > 0$ and $P \in \N$ such that, for every $0 < \gamma < \delta$, there exists $C_\gamma$ with
\[
	\P\left(\abs{s_N(E+iN^{-\gamma}) - m_N(E+iN^{-\gamma})} \geq \frac{N^{\gamma P}}{N} \text{ for some } E \in \R\right) \leq C_\gamma N^{-100}.
\]
For the tail estimate \eqref{eqn:baitail}, we essentially mimic the proof in the case of Gaussian matrices with a (co)variance profile, with the following differences: Here the estimate $\sup_N\E[\|W_N\|^2] <\infty$ is easier, since (recall that $W_N$ is block-diagonal with blocks $X_1, \ldots, X_K$) we have $\E[\|W_N\|^2]^{1/2} \leq \sum_{i=1}^K \E[\|X_i\|^2]^{1/2}$, and it is classical that $\sup_N \E[\|X_i\|^2] < \infty$ since $X_i$ is a Gaussian matrix whose entries all have variance order $\frac{1}{N}$, by assumption \hyperlink{assn:MF}{(MF)}. Since the log-Sobolev constant is now at most $p/N$, we obtain $\P(\|H_N\| \geq t) \leq e^{-cN\max(0,t-C)}$ for some constants $c,C>0$, which verifies \eqref{eqn:baitail} and \eqref{eqn:baidecay}. This completes the proof of \eqref{eqn:wasserstein}.

Finally we check the Wegner estimate \eqref{eqn:wegner} with Proposition \ref{prop:schur}. Here Lemma \ref{lem:schur_bdd_density} applies immediately, since the conditioning is trivial, and we assumed in \hyperlink{assn:MF}{(MF)} that the variances on the diagonal are all at least of order $\frac{1}{N}$.


\subsection{Free addition.}\
We will use Theorem \ref{thm:concentrated_input} (concentrated input). Write $H_N = E + A_N + O_NB_NO_N^T$. Concentration for Lipschitz test functions follows from classical results of Gromov-Milman: If $S = E + \sup_{N \geq 1}(\|A_N\| + \|B_N\|)$ and $f : \R \to \R$ is Lipschitz, then (see, e.g., \cite[Corollary 4.4.30]{AndGuiZei2010})
\[
    \P\left( \abs{\frac{1}{N}\Tr(f(H_N)) - \frac{1}{N}\E[\Tr(f(H_N))]} \geq \delta\right) \leq 2\exp\left(-\frac{\delta^2N^2}{128S^2\|f\|_{\textup{Lip}}^2}\right),
\]
which suffices to check \eqref{eqn:lipschitztrace} and thus assumption \hyperlink{assn:L}{(L)}.

For assumption \hyperlink{assn:E}{(E)} with the reference measure $\mu_N \equiv \mu_A \boxplus \mu_B$, we will use the local law of Bao \emph{et al}, \cite[Corollary 2.8]{BaoErdSch2020}:  for every $\epsilon > 0$ and all $N \geq N_0(\epsilon)$, we have
\[
    \P\left(d_{\textup{KS}}(\hat{\mu}_{H_N}, \mu_A \boxplus \mu_B) > N^{-1+\epsilon} \right) \leq N^{-100}.
\]
This implies $d_{\textup{KS}}(\E[\hat{\mu}_{H_N}],\mu_A \boxplus \mu_B) \lesssim N^{-1+\epsilon}$. We obtain the same estimate for ${\rm W}_1$ as in the proof of Proposition \ref{prop:bai} (there are no tail estimates because all the measures $\hat{\mu}_{H_N}$ and $\mu_A \boxplus \mu_B$ are supported on a common compact set).

It remains only to check the Wegner estimate \eqref{eqn:wegner}. The argument is different depending if $E$ is in the bulk of $\mu_A \boxplus \mu_B$ (meaning in the interior of the single-interval support), or if $E$ is outside the support. In the first case, we prove the Wegner estimate with the much stronger fixed-energy universality results of Che-Landon \cite[Theorem 2.1]{CheLan2019}. This result implies
\[
    \lim_{N \to \infty} \P\left(H_N \text{ has no eigenvalues in } \left(E-\frac{\epsilon}{N(\mu_A \boxplus \mu_B)(E)}, E+\frac{\epsilon}{N(\mu_A \boxplus \mu_B)(E)}\right)\right) = 1-F(\epsilon),
\]
where $F(\epsilon)$ is a special function found by solving the Painlev{\'e} V equation satisfying $\lim_{\epsilon \downarrow 0} F(\epsilon) = 0$. Thus
\[
    \liminf_{N \to \infty} \P\left(H_N \text{ has no eigenvalues in } \left(E-\frac{1}{N^2}, E+\frac{1}{N^2}\right)\right) \geq 1 - \limsup_{\epsilon \downarrow 0} F(\epsilon) = 1.
\]
In the second case (if $E$ is outside the support of $\mu_A \boxplus \mu_B$), the Wegner estimate is much easier, since indeed $\P(\text{no eigenvalues in } (E-\delta,E+\delta)) \to 1$ for small enough $\delta$. This follows, e.g., from the large-deviations principle for the extremal eigenvalues of this model established by Guionnet and Ma{\"i}da \cite{GuiMai2020}, or from the edge rigidity of Bao \emph{et al.} \cite{BaoErdSch2020}.


\subsection{Proofs of examples showing necessity of assumptions.}\
\label{sec:assumptions}
In this subsection we show the importance of two of the trickier assumptions of Theorem \ref{thm:convex_functional}. Precisely, for each of \eqref{eqn:detconub_coarse} and assumption \hyperlink{assn:S}{(S)}, we give an explicit example satisfying all the assumptions of that theorem except for the one in question, for which the conclusion fails. All notations refer back to that section.

Our example where \eqref{eqn:detconub_coarse} fails and determinant concentration fails is the following: Let $(X_{ij})_{1 \leq i \leq j \leq N}$ be centered, i.i.d. with variance $1$ and a compactly supported and bounded density; choose some $\theta\in(0,1)$ (e.g. $\theta = 1/8$ works) and let $A$ be deterministic, diagonal, and defined through $A_{ii} = e^{N^\theta} \mathds{1}_{i < N^{1-\theta}}$ with all other entries zero; and define symmetric $H = \Phi(X)$ as $H_{ij} = \Phi(X)_{ij} = \frac{X_{ij}}{\sqrt{N}} + A_{ij}$ for $i \leq j$. In this example, $\mu_N = \rho_{\text{sc}}$. 

Our example where assumption \hyperlink{assn:S}{(S)} fails and determinant concentration fails is the following: Let $(X_{ij})_{1 \leq i \leq j \leq N}$ be as above, include the additional random variable $X_0$ with $\P(X_0 = N) = N^{-1} = 1 - \P(X_0 = 0)$, and define $A = X_0\Id_N$; then we let $H = \Phi(X)$ be symmetric defined by $H_{ij} = \Phi(X)_{ij} = \frac{X_{ij}}{\sqrt{N}} + A_{ij}$ for $i \leq j$. In this example, $\mu_N = \rho_{\text{sc}}$. 

In the remainder of this subsection, we prove that these examples have the claimed properties. 

\subsubsection{Necessity of bounds on large eigenvalues.}\

Write the compact support of the $X_i$'s as $[-T,T]$. This proof is essentially an application of the Weyl inequalities. Note that $\|\Phi\|_{\textup{Lip}} = N^{-1/2}$; since the $X_i$'s are compactly supported, this means $X_{\textup{cut}} = X$ for $\kappa < 1/2$ and $N$ large enough, and hence \hyperlink{assn:S}{(S)} is trivially satisfied. Equation \eqref{eqn:wegner} holds by Lemma \ref{lem:schur_bdd_density}. If $\kappa < \theta$, then \hyperlink{assn:E}{(E)} holds with $\mu_N = \rho_{\text{sc}}$ by interlacing; indeed, defining the matrix $G$ by $G_{ij} = \frac{X_{ij}}{\sqrt{N}}$, we have $d_{\textup{KS}}(\hat{\mu}_G, \hat{\mu}_H) \leq \frac{1}{N} \rank(A) = N^{-\theta}$. Since $G$ is a Wigner matrix with all moments finite, \cite[Theorem 4.1]{Bai1993} shows $d_{\textup{KS}}(\E[\hat{\mu}_G], \rho_{\text{sc}}) \leq N^{-1/4}$, and thus if $\theta < 1/4$ we have
\[
	d_{\textup{KS}}(\E[\hat{\mu}_H], \rho_{\text{sc}}) \lesssim N^{-\theta}.
\]
We transfer this from $d_{\textup{KS}}$ to $d_{\textup{BL}}$ in the same way as for Wigner matrices, above. For \eqref{eqn:detconub_NlogN}, the Weyl inequalities give deterministically
\[
	\abs{\det(H_N)} = \prod_{i=1}^N \abs{\lambda_i(H_N)} \leq \prod_{i=1}^N (\lambda_i(A) + T\sqrt{N}) = (e^{N^\theta}+T\sqrt{N})^{N^{1-\theta}}(T\sqrt{N})^{N-N^{1-\theta}} \leq (2e^{N^\theta})^{N^{1-\theta}} (T\sqrt{N})^N
\]
which suffices to check \eqref{eqn:detconub_NlogN} (with any $\delta > 0$). 

On the other hand,  \eqref{eqn:detconub_coarse} fails. Indeed, by the Weyl inequalities the $N^{1-\theta}$ large eigenvalues of $H$ satisfy
\begin{equation}
\label{eqn:example_large_evals}
	\lambda_i \geq e^{N^\theta} - T\sqrt{N} \geq \frac{1}{2}e^{N^\theta},
\end{equation}
so for $\epsilon < \theta$ the failure of \eqref{eqn:detconub_coarse}  follows from the deterministic estimate
\[
	\prod_{i=1}^N (1+\abs{\lambda_i}\mathds{1}_{\abs{\lambda_i} > e^{N^\epsilon}}) \geq \prod_{i=N-N^{1-\theta}+1}^N \abs{\lambda_i}\mathds{1}_{\abs{\lambda_i} > e^{N^\epsilon}} \geq \left(\frac{1}{2}e^{N^\theta}\right)^{N^{1-\theta}} = 2^{-N^{1-\theta}}e^N.
\] 

The proof that determinant concentration fails is somewhat involved, but mimics the proof of the lower bound of Theorem \ref{thm:convex_functional}. The idea is that the largest $N^{1-\theta}$ eigenvalues contribute a factor of size $e^N$, as above, and the rest of the eigenvalues behave as if semicircular (this is the difficulty), so we get a lower bound for the determinant asymptotics that is order-one above what the semicircle would predict. We now sketch how to prove this rigorously. Since $X = X_{\textup{cut}}$, we simplify our notation and write $\hat{\mu} = \hat{\mu}_{\Phi(X)}$. Recall our eigenvalues are ordered $\lambda_1 \leq \cdots \leq \lambda_N$; we decompose this measure as
\[
	\hat{\mu} = \hat{\mu}^{\textup{trunc}} + \hat{\mu}^{\textup{r. tail}}, \quad \hat{\mu}^{\textup{trunc}} = \frac{1}{N} \sum_{i=1}^{N-N^{1-\theta}} \delta_{\lambda_i}, \quad \hat{\mu}^{\textup{r. tail}} = \frac{1}{N} \sum_{i=N-N^{1-\theta}+1}^N \delta_{\lambda_i}.
\]
Notice that $\hat{\mu}^{\textup{trunc}}$ has mass $1-N^{-\theta}$ and $\hat{\mu}^{\textup{r. tail}}$ has mass $N^{-\theta}$. Compared to \eqref{eqn:manyevents}, the event $\mc{E}_{\textup{ss}}$ is no longer necessary; the events $\mc{E}_{\textup{gap}}$ and $\mc{E}_b$ remain the same (since they clearly imply the analogues for $\hat{\mu}^{\textup{trunc}}$), and each still has probability $1-o(1)$; the event $\mc{E}_{\textup{conc}}$ is replaced with
\[
	\mc{E}_{\textup{conc}}^{\textup{trunc}} = \left\{ \abs{ \int \log_\eta^K(\lambda) (\hat{\mu}^{\textup{trunc}} - \E[\hat{\mu}^{\textup{trunc}}])(\diff \lambda)} \leq t\right\}.
\]
This is a likely event, since 
\begin{equation}
\label{eqn:log_righttail}
	\abs{ \int \log_\eta^K(\lambda) (\hat{\mu}^{\textup{r. tail}} - \E[\hat{\mu}^{\textup{r. tail}}])(\diff \lambda)} \leq 2 \log_\eta(K) \hat{\mu}^{\textup{r. tail}}(\R) \lesssim N^{\epsilon - \theta} < \frac{t}{2}
\end{equation}
(here it matters that $\theta$ not be too small), and thus if $\epsilon < \theta$
\[
	1-\P(\mc{E}_{\textup{conc}}^{\textup{trunc}}) \leq \P\left(\abs{ \int \log_\eta^K(\lambda) (\hat{\mu} - \E[\hat{\mu}])(\diff \lambda)} \geq \frac{t}{2}\right),
\]
but the right-hand probability is $o(1)$ by arguments as in the proof of Lemma \ref{lem:prob_econc}. By mimicking \eqref{eqn:detconlb_fundamental} but handling the large eigenvalues instead with \eqref{eqn:example_large_evals},  $\frac{1}{N} \log \E[\abs{\det(H_N)}]$ is larger than
\begin{align*}
	&1 - \frac{\log 2}{N^{\theta}} + \frac{1}{N} \E\left[e^{N \int (\log\abs{\lambda} - \log_\eta(\lambda)) \hat{\mu}^{\textup{trunc}}(\diff \lambda)} \mathds{1}_{\mc{E}_{\textup{gap}}} \mathds{1}_{\mc{E}_{\textup{conc}}^{\textup{trunc}}} \right] - t + \int \log_\eta^K(\lambda) \E[\hat{\mu}^{\textup{trunc}}](\diff \lambda) \\
	&= 1 + \int \log_\eta^K(\lambda) \E[\hat{\mu}^{\textup{trunc}}](\diff \lambda) - \oo(1),
\end{align*}
where the last equality follows by mimicking Lemma \ref{lem:eps2}. Now $\E[\hat{\mu}^{\textup{trunc}}] = \E[\hat{\mu}] - \E[\hat{\mu}^{\textup{r. tail}}]$, and by \eqref{eqn:truncate_muN} and arguments as in the proof of Lemma \ref{lem:detcon_eps1}, we have 
$
	\int \log_\eta^K(\lambda) \E[\hat{\mu}](\diff \lambda) \geq \int \log\abs{\lambda} \rho_{\text{sc}}(\lambda) \diff \lambda + \oo(1).
$
The term $\int \log_\eta^K(\lambda) \E[\hat{\mu}^{\textup{r. tail}}](\diff \lambda)$ is handled as in \eqref{eqn:log_righttail}. Overall, this gives
$
	\liminf_{N \to \infty} \frac{1}{N} \log \E[\abs{\det(H_N)}] \geq 1 + \int \log\abs{\lambda} \rho_{\text{sc}}(\lambda) \diff \lambda
$
which contradicts \eqref{eqn:conv}. 

\subsubsection{Necessity of spectral stability.}\
With $T$ as above, the eigenvalues of $H$ are at most $N + T\sqrt{N}$ deterministically; this implies \eqref{eqn:detconub_coarse} and \eqref{eqn:detconub_NlogN}. For \eqref{eqn:wegner}, we note that on the event $\{X_0 = N\}$, the eigenvalues are at least $N - T\sqrt{N} > 0$, so there are clearly no eigenvalues near zero; on the event $\{X_0 = 0\}$, the matrix $H$ is just a Wigner matrix, for which we proved \eqref{eqn:wegner} above. Now we claim that assumption \hyperlink{assn:E}{(E)} holds with $\mu_N = \rho_{\text{sc}}$. Indeed, for test functions $f$ with $\|f\|_{\textup{Lip}} + \|f\|_{L^\infty} \leq 1$ we have
\[
	\E\left[\abs{\int f(x) (\hat{\mu}_{H} - \rho_{\text{sc}})(\diff x)} \mathds{1}_{X_0 = N}\right] \leq 2\P(X_0 = N) = \frac{2}{N},
\]
and on the event $\mathds{1}_{X_0 = 0}$ we revert to the Wigner case studied above. 

On the other hand, notice that $(X_{\textup{cut}})_0$ is always zero, so on the event $\{X_0 = N\}$ the measure $\hat{\mu}_{\Phi(X_{\textup{cut}})}$ is supported on $[-T\sqrt{N},T\sqrt{N}]$ while $\hat{\mu}_{\Phi(X)}$ is supported on $[N-T\sqrt{N}, N+T\sqrt{N}]$. For large enough $N$ these are disjoint, so the measures are one apart in KS distance, and thus
$
	\P(d_{\textup{KS}}(\hat{\mu}_{\Phi(X)}, \hat{\mu}_{\Phi(X_{\textup{cut}})}) > N^{-\kappa}) \geq \frac{1}{N},
$
which shows that \hyperlink{assn:S}{(S)} fails. 

Finally, since
$
	\E[\abs{\det(H)}] \geq \E[\abs{\det(H)}\mathds{1}_{X_0 = N}] \geq (N-T\sqrt{N})^N \P(X_0 = N),
$
we have $\frac{1}{N} \log \E[\abs{\det(H)}] \to +\infty$ and determinant concentration fails.


\section{Variational principles and long-range correlations}
\label{sec:longrange}


\subsection{General scheme.}\
In this section, we study expected determinants in the presence of long-range matrix correlations. The prototypical example to keep in mind is
\[
	H_N = W_N + \xi \Id,
\] 
where $W_N$ is drawn from the Gaussian Orthogonal Ensemble (GOE), and $\xi \sim \mc{N}(0,1/N)$ is independent of $W_N$. Matrices of this style are very common in the landscape-complexity program, but our main theorems do not apply directly because of the presence of long-range correlations (here, along the diagonal of $H_N$). Nevertheless, there is still a general procedure to understand the determinant asymptotics for such matrices, which we illustrate in the case of this example. We first notice
\[
	\E[\abs{\det(H_N)}] = \frac{1}{\sqrt{2\pi/N}} \int_\R e^{-N \frac{u^2}{2}} \E[\abs{\det(W_N + u)}] \diff u.
\]
Our determinant asymptotics do apply to $W_N + u$, giving $\E[\abs{\det(W_N + u)}] \approx e^{N\Sigma(u)}$ for some constants $\Sigma(u)$; then the Laplace method suggests
\begin{equation}
\label{eqn:longrangedetcon}
	\lim_{N \to \infty} \frac{1}{N} \log \E[\abs{\det(H_N)}] = \sup_{u \in \R} \left\{ \Sigma(u) - \frac{u^2}{2}\right\}.
\end{equation}
This method has appeared before in special cases, for example in \cite{AufChe2014} and \cite{FyoLeD2020}. In Section \ref{subsec:unrestricted}, we prove results of this type without reference to any particular matrix model. In Section \ref{subsec:restricted}, we prove extensions necessary to understand asymptotics of the form
\[
	\lim_{N \to \infty} \frac{1}{N} \log \E[\abs{\det(H_N)} \mathds{1}_{H_N \geq 0}].
\]
In complexity computations, these ``restricted determinants'' correspond to counting just the local minima among all critical points. The upshot is that this limit is also a variational problem as in \eqref{eqn:longrangedetcon}, but restricted to $u$ in some good set instead of all Euclidean space.


\subsection{Variational principles for unrestricted determinants.}\
\label{subsec:unrestricted}
For applications to complexity, we will need not just one matrix $H_N$, but a field of matrices $H_N(u)$ for $u \in \R^m$ (here $m$ is independent of $N$), with approximating measures $\mu_N(u)$. 
\begin{thm}
\label{thm:variational_formula}
Assume the following:
\begin{itemize}
\item \textbf{(Assumptions locally uniform in $u$)} Each $H_N(u)$ satisfies all the assumptions of Theorem \ref{thm:convex_functional}, or all the assumptions of Theorem \ref{thm:concentrated_input}. In addition, all limits, powers, and rates in these assumptions are uniform over compact sets of $u$.\footnote{For example, writing $(\lambda_i(u))_{i=1}^N$ for the eigenvalues of $H_N(u)$, the condition \eqref{eqn:detconub_coarse} becomes: for every compact $K \subset \R^m$,
$
    \lim_{N \to \infty} \sup_{u \in K} \frac{1}{N}\log \E\left[ \prod_{i=1}^N (1+\abs{\lambda_i(u)} \mathds{1}_{\abs{\lambda_i(u)} > e^{N^\epsilon}}) \right] = 0
$
.}
\item \textbf{(Limit measures)} There exist probability measures $\mu_\infty(u)$ such that 
\begin{align*}
    d_{\textup{BL}}(\mu_N(u), \mu_\infty(u)) \leq N^{-\kappa} \qquad &\text{if we are in the setting of Theorem \ref{thm:convex_functional}, or} \\
    {\rm W}_1(\mu_N(u), \mu_\infty(u)) \leq N^{-\kappa} \qquad &\text{if we are in the setting of Theorem \ref{thm:concentrated_input}}
\end{align*}
for $\kappa = \kappa(u)>0$ that can, again, be chosen uniformly on compact sets of $u$. These measures also admit densities $\mu_\infty(u,\cdot)$ on $[-\kappa,\kappa]$ that satisfy $\mu_\infty(u,x) < \kappa^{-1}\abs{x}^{-1+\kappa}$ for all $\abs{x} < \kappa$. 
\item \textbf{(Continuity and decay in $u$)} For each $N$, the map $u \mapsto H_N(u)$ is entrywise continuous. Furthermore, there exists $C > 0$ such that
\begin{equation}
\label{eqn:det_sublinear_in_u}
    \E[\abs{\det(H_N(u))}] \leq (C \max(\|u\|,1))^N.
\end{equation}
\end{itemize}
Then for any $\alpha > 0$, any fixed $p \in \R$, and any $\mf{D} \subset \R^m$ with positive Lebesgue measure that is the closure of its interior, we have
\[
    \lim_{N \to \infty} \frac{1}{N} \log \int_{\mf{D}} e^{-(N+p)\alpha\|u\|^2}\E[\abs{\det(H_N(u))}] \diff u = \sup_{u \in \mf{D}} \left\{ \int_\R \log\abs{\lambda}\mu_\infty(u)(\diff \lambda) - \alpha \|u\|^2 \right\}.
\]
\end{thm}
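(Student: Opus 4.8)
\textbf{Proof plan for Theorem \ref{thm:variational_formula}.}
The plan is to establish the result by matching upper and lower bounds, using the pointwise asymptotics $\frac{1}{N}\log\E[\abs{\det(H_N(u))}]\to\int\log\abs{\lambda}\mu_\infty(u)(\diff\lambda)=:\Sigma(u)$ that come from Theorem \ref{thm:convex_functional} or Theorem \ref{thm:concentrated_input} combined with the replacement of $\mu_N(u)$ by $\mu_\infty(u)$ (permissible since $d_{\textup{BL}}$ or $\mathrm{W}_1$ convergence controls $\int\log_\eta$ after the same regularization of the logarithm used in those proofs). The first thing I would record is that the locally-uniform hypotheses upgrade this to \emph{locally uniform} convergence: for every compact $K\subset\mf D$,
\[
    \sup_{u\in K}\abs*{\frac{1}{N}\log\E[\abs{\det(H_N(u))}]-\Sigma(u)}\xrightarrow[N\to\infty]{}0,
\]
by re-running the proofs of the two theorems with all error terms taken as suprema over $K$, exactly as in the footnote's reformulation of \eqref{eqn:detconub_coarse}. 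I would also note that $\Sigma$ is continuous on $\mf D$ (upper semicontinuity from the uniform limit of the continuous-in-$u$ finite-$N$ quantities — continuity of $u\mapsto\frac1N\log\E[\abs{\det H_N(u)}]$ follows from entrywise continuity of $u\mapsto H_N(u)$ plus the integrability furnished by \eqref{eqn:det_sublinear_in_u} and dominated convergence; lower semicontinuity can be obtained similarly or from weak continuity of $u\mapsto\mu_\infty(u)$), and that $\Sigma(u)\le\log(C\max(\norm u,1))$ pointwise by \eqref{eqn:det_sublinear_in_u}, so that $\Sigma(u)-\alpha\norm u^2\to-\infty$ as $\norm u\to\infty$ and the supremum on the right-hand side is attained.

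For the \textbf{lower bound}, fix $u_0\in\mf D$ near the maximizer; since $\mf D$ is the closure of its interior I may take $u_0$ interior, so a small ball $B(u_0,r)\subset\mf D$ has positive Lebesgue measure. Restricting the integral to this ball and using that on the compact $\overline{B(u_0,r)}$ we have $\frac1N\log\E[\abs{\det H_N(u)}]\ge\Sigma(u)-o(1)\ge\Sigma(u_0)-\omega(r)-o(1)$ (with $\omega$ the modulus of continuity of $\Sigma$), together with $e^{-(N+p)\alpha\norm u^2}\ge e^{-(N+p)\alpha(\norm{u_0}+r)^2}$, gives
\[
    \frac1N\log\int_{\mf D}e^{-(N+p)\alpha\norm u^2}\E[\abs{\det H_N(u)}]\diff u\ge \Sigma(u_0)-\alpha\norm{u_0}^2-\omega(r)-\alpha((\norm{u_0}+r)^2-\norm{u_0}^2)-o(1),
\]
and letting $N\to\infty$ then $r\downarrow0$ and optimizing over $u_0$ yields the lower bound (the $\frac1N\log(\mathrm{vol}\,B(u_0,r))$ term vanishes).

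For the \textbf{upper bound}, the main work is to control the contribution of large $\norm u$, which is exactly where \eqref{eqn:det_sublinear_in_u} is designed to be used: split $\mf D=(\mf D\cap B_R)\cup(\mf D\setminus B_R)$ for a large radius $R$. On the tail, bound $\E[\abs{\det H_N(u)}]\le(C\max(\norm u,1))^N$ and note $\int_{\norm u>R}e^{-(N+p)\alpha\norm u^2}(C\norm u)^N\diff u$ is, after the substitution $u=\sqrt N v$ or a direct Laplace estimate, at most $e^{N(\sup_{\norm u>R}\{\log(C\norm u)-\alpha\norm u^2\}+o(1))}$, which can be made $\le e^{-N}$ by choosing $R=R(\alpha,C)$ large; this is negligible. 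On the compact part $\mf D\cap B_R$, cover by finitely many small balls, on each of which locally-uniform convergence gives $\E[\abs{\det H_N(u)}]\le e^{N(\Sigma(u_i)+\varepsilon)}$ and $e^{-(N+p)\alpha\norm u^2}\le e^{-(N+p)\alpha\norm{u_i}^2+N\varepsilon'}$ for $u$ in the $i$th ball; summing the finitely many pieces costs only a $\frac1N\log(\#\text{balls})=o(1)$, and we get the bound $\sup_{u\in\mf D\cap B_R}\{\Sigma(u)-\alpha\norm u^2\}+\varepsilon+\varepsilon'$, then let the mesh and $\varepsilon,\varepsilon'\to0$.

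\textbf{Main obstacle.} The delicate point is not the Laplace asymptotics themselves but justifying the \emph{locally uniform} version of the two main theorems — i.e.\ that every rate, limit, and exceptional-probability estimate in assumptions (I)–(S) or (W)–(L)–\eqref{eqn:wegner}, and in the regularization lemmas (Lemmas \ref{lem:borcapcha2011}–\ref{lem:prob_eb} and their analogues), can be taken uniformly over a compact set of parameters $u$. This is what the hypothesis ``all limits, powers, and rates are uniform over compact sets of $u$'' is meant to provide, so the task reduces to observing that each estimate in the proofs of Theorems \ref{thm:convex_functional} and \ref{thm:concentrated_input} depends on $u$ only through those quantities and through $\Sigma(u)$, hence inherits uniformity; a secondary technical nuisance is ensuring $\Sigma$ is genuinely continuous (not merely upper semicontinuous) so that the lower-bound ball argument loses nothing, which I would handle via weak continuity of $u\mapsto\mu_\infty(u)$ on compacts together with the uniform logarithmic-singularity bound $\mu_\infty(u,x)<\kappa^{-1}\abs x^{-1+\kappa}$ that makes $\int\log\abs\lambda\,\mu_\infty(u)(\diff\lambda)$ continuous under weak convergence.
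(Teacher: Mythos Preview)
Your proposal is correct and follows essentially the same strategy as the paper: locally uniform determinant asymptotics (from Theorems \ref{thm:convex_functional}/\ref{thm:concentrated_input} with reference measures $\mu_\infty(u)$), continuity of $\Sigma(u)-\alpha\|u\|^2$, a tail bound from \eqref{eqn:det_sublinear_in_u} to localize to a large ball, and restriction to a small neighborhood of the maximizer for the lower bound. The only cosmetic difference is that your upper-bound covering argument on $\mf D\cap B_R$ is unnecessary: once you have $\E[\abs{\det H_N(u)}]\le e^{N(\Sigma(u)+\varepsilon)}$ uniformly on $B_R$, you can bound the integral directly by $e^{\abs{p}\alpha R^2}\abs{B_R\cap\mf D}\,e^{N(\sup_{u\in\mf D}\mc S_\alpha[u]+\varepsilon)}$ without discretizing, as the paper does.
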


\begin{rem}
\label{rem:closure_of_interior}
A close inspection of the proof shows that the condition ``$\mf{D}$ is the closure of its interior'' is only necessary for the lower bound in Theorem \ref{thm:variational_formula}. For the upper bound, it suffices to assume that $\mf{D}$ is simply closed (and has positive measure). We will use this below.
\end{rem}

The proof of this theorem relies on the following two lemmas, in addition to determinant concentration in the form of Theorem \ref{thm:convex_functional} or \ref{thm:concentrated_input}. We postpone their proofs until after the proof of the theorem. Recall that $B_R$ is the ball of radius $R$ around zero in $\R^m$. 

\begin{lem}
\label{lem:restricttocompacts}
\[
    \lim_{R \to \infty} \limsup_{N \to \infty} \frac{1}{N} \log \int_{B_R^c} e^{-(N+p) \alpha \|u\|^2} \E[\abs{\det(H_N(u))}] \diff u = -\infty.
\]
\end{lem}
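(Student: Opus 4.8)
The plan is to prove this by the crude bound \eqref{eqn:det_sublinear_in_u} alone, combined with a Laplace-type estimate in the radial variable; since $m$ is fixed, the volume factors coming from polar coordinates are at most polynomial in the radius and thus negligible at exponential scale. The randomness plays no role here: once we bound $\E[\abs{\det(H_N(u))}] \leq (C\max(\|u\|,1))^N$, everything is deterministic one-variable calculus.

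First I would work on $B_R^c$ with $R \geq 1$, so that $\|u\| \geq 1$ and \eqref{eqn:det_sublinear_in_u} reads $\E[\abs{\det(H_N(u))}] \leq (C\|u\|)^N$, and I would restrict to $N \geq \max(1, 2\abs{p})$, which guarantees $N + p \geq N/2$ and so absorbs the fixed shift $p$. On $B_R^c$ the integrand is then at most $(C\|u\|)^N e^{-\frac{\alpha}{2} N \|u\|^2} = e^{N \phi(\|u\|)}$, where $\phi(r) := \log(Cr) - \frac{\alpha}{2} r^2$. Passing to polar coordinates gives $\int_{B_R^c} e^{N\phi(\|u\|)} \diff u = \omega_{m-1} \int_R^\infty e^{N\phi(r)} r^{m-1} \diff r$, with $\omega_{m-1}$ the surface area of the unit sphere in $\R^m$.

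Next I would exploit that $\phi$ is strictly decreasing on $[1/\sqrt{\alpha}, \infty)$. Choosing $R \geq \max(1, 2/\alpha)$, the elementary inequalities $\log(r/R) \leq (r-R)/R \leq r-R$ and $\frac{\alpha}{2}(r^2 - R^2) \geq \alpha R(r-R) \geq 2(r-R)$ for $r \geq R$ yield $\phi(r) \leq \phi(R) - (r-R)$ on $[R,\infty)$. Hence $\int_R^\infty e^{N\phi(r)} r^{m-1} \diff r \leq e^{N\phi(R)} \int_R^\infty e^{-N(r-R)} r^{m-1} \diff r \leq e^{N\phi(R)} P_m(R)$ for $N \geq 1$, where $P_m(R) := \int_0^\infty e^{-s}(R+s)^{m-1}\diff s$ is a polynomial in $R$ (obtained by substituting $r = R+s$). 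Therefore
\[
    \frac{1}{N} \log \int_{B_R^c} e^{-(N+p)\alpha\|u\|^2} \E[\abs{\det(H_N(u))}] \diff u \leq \frac{1}{N}\log\big(\omega_{m-1} P_m(R)\big) + \phi(R),
\]
and taking $\limsup_{N\to\infty}$ removes the first term, leaving $\phi(R) = \log(CR) - \frac{\alpha}{2}R^2$; letting $R \to \infty$ then sends this to $-\infty$, which is the claim.

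I do not anticipate a genuine obstacle: the two slightly delicate points are (i) absorbing the fixed constant $p$, handled by taking $N$ large as above, and (ii) ensuring $R$ lies past the maximum of $\phi$ so that the tail $[R,\infty)$ is controlled by its left endpoint, handled by the explicit scalar inequalities. The polynomial factor $r^{m-1}$ from the change of variables is harmless precisely because $m$ is independent of $N$.
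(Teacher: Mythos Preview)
Your proof is correct and follows essentially the same approach as the paper: both use the crude bound \eqref{eqn:det_sublinear_in_u}, pass to polar coordinates, and apply a Laplace-type estimate on the radial integral. The paper's own proof is terser---it simply writes the bound $\int_{B_R^c} e^{N(\log(Cr)-\alpha r^2)-p\alpha r^2} r^{m-1}\,dr$ and says ``which suffices by the Laplace method''---whereas you spell out the tail estimate explicitly and absorb $p$ by halving $\alpha$ for large $N$; these are cosmetic differences only.
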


\begin{lem}
\label{lem:Siscts}
The function
\[
    \mc{S}_\alpha[u] = \int_\R \log\abs{\lambda} \mu_\infty(u) (\diff \lambda) - \alpha \|u\|^2,
\]
is continuous, and $
    \lim_{\|u\| \to +\infty} \mc{S}_\alpha[u] = -\infty.$
\end{lem}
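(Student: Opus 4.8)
The plan is to write $\mc{S}_\alpha[u] = \mc{L}(u) - \alpha\|u\|^2$, where $\mc{L}(u) := \int_\R \log\abs{\lambda}\,\mu_\infty(u)(\diff\lambda)$. The quadratic term is manifestly continuous and tends to $-\infty$, so everything reduces to three claims: (a) $u \mapsto \mu_\infty(u)$ is weakly continuous; (b) $\mc{L}$ is continuous; (c) $\mc{L}(u) \leq \log C + \log^+\|u\|$, which is $\oo(\|u\|^2)$ and hence forces $\mc{S}_\alpha[u]\to-\infty$.

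For (a) I would fix $u_0$, pick a compact neighborhood $K \ni u_0$, and let $\kappa>0$ and the common compact support $L\subset\R$ be the ones valid uniformly over $u\in K$ (this is where the ``locally uniform in $u$'' part of the hypotheses enters). For $u\in K$ the triangle inequality through $\mu_N(u)$ and $\E[\hat\mu_{H_N(u)}]$ gives
\[
  d_{\textup{BL}}(\mu_\infty(u),\mu_\infty(u_0)) \;\leq\; 4N^{-\kappa} + d_{\textup{BL}}\bigl(\E[\hat\mu_{H_N(u)}],\,\E[\hat\mu_{H_N(u_0)}]\bigr),
\]
using $d_{\textup{BL}}(\mu_\infty(v),\mu_N(v))\leq N^{-\kappa}$ (the ``limit measures'' hypothesis, since $d_{\textup{BL}}\leq{\rm W}_1$) and $d_{\textup{BL}}(\mu_N(v),\E[\hat\mu_{H_N(v)}])\leq N^{-\kappa}$ (assumption \hyperlink{assn:E}{(E)} or \hyperlink{assn:W}{(W)}), both uniform over $v\in K$. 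Then I would observe that for fixed $N$ the map $u\mapsto H_N(u)$ is entrywise continuous, so the eigenvalues of $H_N(u)$ are continuous in $u$ and $\hat\mu_{H_N(u)}\to\hat\mu_{H_N(u_0)}$ weakly almost surely as $u\to u_0$; testing against a fixed bounded Lipschitz function and invoking bounded convergence upgrades this to $d_{\textup{BL}}(\E[\hat\mu_{H_N(u)}],\E[\hat\mu_{H_N(u_0)}])\to0$. Given $\e>0$ one picks $N$ with $4N^{-\kappa}<\e/2$ and then $\delta>0$ making the second term $<\e/2$ for $\|u-u_0\|<\delta$.

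For (b), given weak continuity it suffices to establish uniform integrability of $\log\abs{\lambda}$ over $\{\mu_\infty(u):u\in K\}$. At infinity there is nothing to do, since all these measures are supported in the common compact set $L$; near the origin the density bound $\mu_\infty(u,x)<\kappa^{-1}\abs{x}^{-1+\kappa}$ (with $\kappa$ uniform over $K$) gives $\int_{\abs{x}<\delta}\abs{\log\abs{x}}\,\mu_\infty(u)(\diff x)\leq\kappa^{-1}\int_{\abs{x}<\delta}\abs{\log\abs{x}}\,\abs{x}^{-1+\kappa}\,\diff x\to0$ as $\delta\to0$, uniformly in $u\in K$. Splitting $\log\abs{\lambda}$ into a bounded-Lipschitz piece plus a piece supported near $0$ — exactly as in the estimates around \eqref{eqn:detconub_logeta} — and combining with weak convergence then yields continuity of $\mc{L}$ on $K$, hence everywhere.

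For (c) I would use determinant concentration at each fixed $u$: Theorem \ref{thm:convex_functional} or \ref{thm:concentrated_input} gives $\tfrac1N\log\E[\abs{\det(H_N(u))}]-\int_\R\log\abs{\lambda}\mu_N(u)(\diff\lambda)\to0$, while the truncation argument behind \eqref{eqn:detconub_logeta} and \eqref{eqn:truncate_muN} (using $d_{\textup{BL}}(\mu_N(u),\mu_\infty(u))\leq N^{-\kappa}$ and the near-zero density bound) gives $\int_\R\log\abs{\lambda}\mu_N(u)(\diff\lambda)\to\mc{L}(u)$; hence $\mc{L}(u)=\lim_{N\to\infty}\tfrac1N\log\E[\abs{\det(H_N(u))}]\leq\log(C\max(\|u\|,1))$ by \eqref{eqn:det_sublinear_in_u}, so $\mc{S}_\alpha[u]\leq\log(C\max(\|u\|,1))-\alpha\|u\|^2\to-\infty$. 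The only delicate point is bookkeeping in (a)--(b): since the common support $L$ and the near-origin regularity of $\mu_\infty(u)$ are only \emph{locally} uniform in $u$, one must localize to a compact neighborhood of each $u_0$ before running the triangle inequality and the uniform-integrability estimate; the rest is routine.
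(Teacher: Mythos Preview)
Your proof is correct, but the route differs from the paper's. For continuity, the paper does not work with the measures $\mu_\infty(u)$ directly at all: it observes that for each fixed $N$ the function $u \mapsto \frac{1}{N}\log\E[\abs{\det(H_N(u))}]$ is continuous (entrywise continuity of $H_N(u)$ plus dominated convergence via \eqref{eqn:det_sublinear_in_u}), and then invokes the locally-uniform determinant concentration of Theorems~\ref{thm:convex_functional}/\ref{thm:concentrated_input} to exhibit $\mc{L}(u)$ as a locally uniform limit of continuous functions. Your approach instead establishes weak continuity of $u\mapsto\mu_\infty(u)$ via the triangle inequality through $\E[\hat\mu_{H_N(u)}]$, and then argues uniform integrability of $\log\abs{\lambda}$ from the near-origin density bound and common compact support. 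The paper's argument is shorter and gets continuity of $\mc{L}$ essentially for free from the main theorems; your argument is more hands-on but has the virtue of yielding weak continuity of $\mu_\infty(u)$ as an explicit intermediate step, which the paper in fact proves separately later (in Lemma~\ref{lem:minima_topology}) by the same triangle-inequality device you use. For the decay at infinity your step (c) matches the paper's exactly.
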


\begin{proof}[Proof of Theorem \ref{thm:variational_formula}]
First we prove the upper bound. We apply Theorem \ref{thm:convex_functional} or \ref{thm:concentrated_input} with the reference measures $\mu_\infty(u)$.  Since all inputs are uniform over compact sets of $u$, so is the conclusion; that is, for all $R$, we have
\[
    \limsup_{N \to \infty} \frac{1}{N} \log  \sup_{u \in B_R} \left\{  \E[\abs{\det(H_N(u))}]e^{-N\int_\R \log\abs{\lambda} \mu_\infty(u) (\diff \lambda)} \right\} \leq 0
\]
and a matching lower bound we will use momentarily. If $R$ is large enough that $\abs{B_R \cap \mf{D}} > 0$, then we conclude
\begin{align*}
    \limsup_{N \to \infty} \frac{1}{N} \log \int_{B_R \cap \mf{D}} e^{-(N+p) \alpha \|u\|^2} \E[\abs{\det(H_N(u))}] \diff u &\leq \limsup_{N \to \infty} \frac{1}{N}\log e^{\abs{p}\alpha R^2} \int_{B_R \cap \mf{D}} e^{-N \alpha \|u\|^2 + N\int_\R \log\abs{\lambda} \mu_\infty(u,\lambda) \diff \lambda} \diff u \\
    &\leq \sup_{u \in \mf{D}} \mc{S}_\alpha[u] + \limsup_{N \to \infty} \left[ \frac{\log(\abs{B_R \cap \mf{D})}}{N} \right].
\end{align*}
An application of Lemma \ref{lem:restricttocompacts} finishes the proof of the upper bound. 

Now we prove the lower bound. Lemma \ref{lem:Siscts} tells us that $\sup_{u \in \mf{D}} \mc{S}_\alpha[u]$ is achieved at some (possibly not unique) $u_0$. Since $\mc{S}_\alpha$ is continuous, for every $\epsilon > 0$ there exists a bounded neighborhood $\mc{U}_\epsilon$ of $u_0$ on which $\mc{S}_\alpha[u] \geq \mc{S}_\alpha[u_0] - \epsilon$. Since $\mf{D}$ is the closure of its interior, we have $\abs{\mc{U}_\epsilon \cap \mf{D}} > 0$.

For each $R$, applying Theorem \ref{thm:convex_functional} or \ref{thm:concentrated_input} with arguments as above yields
\[
    \liminf_{N \to \infty} \frac{1}{N} \log  \inf_{u \in B_R} \left\{  \E[\abs{\det(H_N(u))}]e^{-N\int_\R \log\abs{\lambda} \mu_\infty(u) (\diff \lambda)} \right\} \geq 0.
\]
If $R$ is so large that $\mc{U}_\epsilon \subset B_R$, then
\begin{align*}
    &\liminf_{N \to \infty} \frac{1}{N}\log \int_{\mf{D}} e^{-(N+p) \alpha \|u\|^2} \E[\abs{\det(H_N(u))}] \diff u \\
    &\geq \liminf_{N \to \infty} \frac{1}{N} \log \left\{ e^{-\abs{p}\alpha R^2} \int_{\mc{U}_\epsilon \cap \mf{D}} e^{-N \alpha \|u\|^2} \E[\abs{\det(H_N(u))}] \diff u \right\}  \\
    &\geq \liminf_{N \to \infty} \frac{1}{N} \log \int_{\mc{U}_\epsilon \cap \mf{D}} e^{N\mc{S}_\alpha[u]} \diff u \geq \liminf_{N \to \infty} \frac{1}{N} \log \int_{\mc{U}_\epsilon \cap \mf{D}} e^{N(\mc{S}_\alpha[u_0] - \epsilon)} \diff u \geq \mc{S}_\alpha[u_0] - \epsilon + \liminf_{N \to \infty} \frac{\log(\abs{\mc{U}_\epsilon \cap \mf{D}})}{N}.
\end{align*}
Letting $\epsilon \to 0$ completes the proof. 
\end{proof}

\begin{proof}[Proof of Lemma \ref{lem:restricttocompacts}]
If $\omega_m$ is the surface area of the unit ball in $\R^m$, then from \eqref{eqn:det_sublinear_in_u} we have
\begin{align*}
    \int_{B_R^c} e^{-(N+p) \alpha \|u\|^2} \E[\abs{\det(H_N(u))}] \diff u &\leq \int_{B_R^c}  e^{N\left(\log(C \|u\|) - \alpha \|u\|^2 \right)-p\alpha \|u\|^2} \diff u = \omega_m \int_R^\infty e^{N\left(\log(C r) - \alpha r^2 \right)-p\alpha r^2} r^{m-1} \diff r
\end{align*}
which suffices by the Laplace method.
\end{proof}

\begin{proof}[Proof of Lemma \ref{lem:Siscts}]
Fix $N$. We assumed that $H_N(u)$ is an entrywise continuous function of $u$. Since the determinant is a continuous function of the matrix entries, dominated convergence (with dominating function given by \eqref{eqn:det_sublinear_in_u}) says that $\E[\abs{\det(H_N(u))}]$ is continuous in $u$, hence so is $\frac{1}{N}\log \E[\abs{\det(H_N(u))}]$.  Then Theorem \ref{thm:convex_functional} or \ref{thm:concentrated_input} shows
\begin{equation}
\label{eqn:detconlimit}
    \lim_{N \to \infty} \sup_{u \in B_R} \abs{ \frac{1}{N}\log \E[\abs{\det(H_N(u))}] - \int_\R \log\abs{\lambda} \mu_\infty(u) (\diff \lambda)} = 0,
\end{equation}
and $\int_\R \log\abs{\lambda} \mu_\infty(u)(\diff \lambda)$ is the locally uniform limit of continuous functions. Thus $\mc{S}_\alpha[u]$ is continuous. 

The decay at infinity follows from $
    \int_\R \log\abs{\lambda} \mu_\infty(u) (\diff \lambda) \leq \liminf_{N \to \infty} \frac{1}{N}\log \E[\abs{\det(H_N(u))}] \leq \log(C\max(\|u\|,1))
$, obtained by \eqref{eqn:detconlimit} and \eqref{eqn:det_sublinear_in_u}.
\end{proof}


\subsection{Variational principles for restricted determinants.}\
\label{subsec:restricted}
Let $\mc{G} \subset \R^m$ be the set of ``good'' $u$ values 
\begin{equation}
\label{eqn:def_good_set}
    \mc{G} = \{u \in \R^m : \mu_\infty(u)((-\infty,0)) = 0\} = \{u \in \R^m : \mathtt{l}(\mu_\infty(u)) \geq 0\}.
\end{equation}
For each $\epsilon > 0$, consider the following inner and outer approximations of $\mc{G}$:
\begin{align}
\label{eqn:def_gplusminus}
\begin{split}
    \mc{G}_{+\epsilon} &= \{u \in\R^m: \mathtt{l}(\mu_\infty(u)) \geq 2\epsilon\}, \\
    \mc{G}_{-\epsilon} &= \{u \in \R^m : \mu_\infty(u)((-\infty,-\epsilon)) \leq \epsilon\}.
\end{split}
\end{align}

\begin{thm}
\label{thm:minima}
Fix some $\mf{D} \subset \R^m$, and suppose that $\mf{D}$ and the matrices $H_N(u)$ satisfy the following.
\begin{itemize}
\item All the assumptions of Theorem \ref{thm:variational_formula}.
\item \textbf{(Superexponential concentration)} For every $R > 0$ and every $\epsilon > 0$, we have
\begin{equation}
\label{eqn:concentration_NlogN}
    \lim_{N \to \infty} \frac{1}{N\log N}\log \left[ \sup_{u \in B_R} \P(d_{\textup{BL}}(\hat{\mu}_{H_N(u)}, \mu_\infty(u)) > \epsilon) \right] = -\infty.
\end{equation}
\item \textbf{(No outliers)} For every $R > 0$ and every $\epsilon > 0$, we have
\begin{equation}
\label{eqn:nooutliers}
    \lim_{N \to \infty} \inf_{u \in \mf{D} \cap \mc{G}_{+\epsilon} \cap B_R} \P(\Spec(H_N(u)) \subset [\mathtt{l}(\mu_\infty(u)) - \epsilon, \mathtt{r}(\mu_\infty(u)) + \epsilon]) = 1.
\end{equation}
\item \textbf{(Topology)} Each $\mc{G}_{+\epsilon}$ is convex; $\mf{D}$ is convex and closed; the set $\mf{D} \cap \mc{G}_{+1}$ has positive Lebesgue measure; and
\begin{equation}
\label{eqn:minima_closure}
    \overline{\mf{D} \cap \left(\bigcup_{\epsilon > 0} \mc{G}_{+\epsilon} \right)} = \mf{D} \cap \mc{G}.
\end{equation}
\end{itemize}
Then for any $\alpha > 0$ and any fixed $p \in \R$, we have
\[
    \lim_{N \to \infty} \frac{1}{N} \log \int_{\mf{D}} e^{-(N+p)\alpha \|u\|^2} \E[\abs{\det(H_N(u))} \mathds{1}_{H_N(u) \geq 0}] \diff u = \sup_{u \in \mf{D} \cap \mc{G}} \left\{ \int_\R \log\abs{\lambda} \mu_\infty(\diff \lambda) - \alpha \|u\|^2 \right\}.
\]
\end{thm}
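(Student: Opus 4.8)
The plan is to mimic the proof of Theorem \ref{thm:variational_formula}, splitting into matching upper and lower bounds, but now the restriction $\mathds{1}_{H_N(u) \geq 0}$ forces us to control not just the approximate location of the bulk but also the absence of negative outliers. The key heuristic is that $\E[\abs{\det(H_N(u))}\mathds{1}_{H_N(u)\geq 0}]$ is comparable to $\E[\abs{\det(H_N(u))}]$ when $u$ is safely inside $\mc{G}$ (so $\mathtt{l}(\mu_\infty(u)) > 0$ and there are no outliers with high probability), and is negligible when $u$ is outside $\mc{G}$. The good set $\mc{G}$ and its approximations $\mc{G}_{\pm\epsilon}$ are exactly the bookkeeping devices for interpolating between these two regimes; condition \eqref{eqn:minima_closure} guarantees that approximating $\mf{D}\cap\mc{G}$ from inside by $\mf{D}\cap\mc{G}_{+\epsilon}$ loses nothing in the supremum (using continuity of $\mc{S}_\alpha$ from Lemma \ref{lem:Siscts}), and the convexity of the $\mc{G}_{+\epsilon}$ together with convexity of $\mf{D}$ ensures that $\mf{D}\cap\mc{G}_{+\epsilon}$ is the closure of its interior, so the lower-bound machinery of Theorem \ref{thm:variational_formula} applies verbatim on that set.

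For the \textbf{lower bound}: fix $\epsilon > 0$ and restrict the integral to $\mf{D}\cap\mc{G}_{+\epsilon}$. On this set, \eqref{eqn:nooutliers} tells us that $\P(\Spec(H_N(u))\subset[\mathtt{l}(\mu_\infty(u))-\epsilon,\ldots])\to 1$ uniformly, and since $\mathtt{l}(\mu_\infty(u))\geq 2\epsilon$ there, the event $\{H_N(u)\geq 0\}$ holds with probability tending to one, uniformly on compacts. Combining this with the superexponential concentration \eqref{eqn:concentration_NlogN} (which plays the role of $\mc{E}_{\textup{conc}}$, $\mc{E}_{\textup{ss}}$ etc.\ in the proof of Theorem \ref{thm:convex_functional}), one reruns the lower-bound argument of Theorem \ref{thm:variational_formula}: the indicator $\mathds{1}_{H_N(u)\geq 0}$ costs only a factor $\P(H_N(u)\geq 0)^{1}$ which is $1-\oo(1)$ and hence contributes $\oo(N)$ to the log. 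This gives $\liminf \geq \sup_{u\in\mf{D}\cap\mc{G}_{+\epsilon}}\mc{S}_\alpha[u]$; letting $\epsilon\downarrow 0$ and invoking \eqref{eqn:minima_closure} plus continuity of $\mc{S}_\alpha$ upgrades this to $\sup_{u\in\mf{D}\cap\mc{G}}\mc{S}_\alpha[u]$.

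For the \textbf{upper bound}: first restrict to a ball $B_R$ using Lemma \ref{lem:restricttocompacts} (the restricted integrand is dominated by the unrestricted one, so the same tail estimate applies). Inside $B_R$, split $\mf{D}$ into $\mf{D}\cap\mc{G}_{-\epsilon}$ and its complement. On $\mf{D}\setminus\mc{G}_{-\epsilon}$, the measure $\mu_\infty(u)$ puts mass $>\epsilon$ below $-\epsilon$; by concentration \eqref{eqn:concentration_NlogN} the same is true of $\hat\mu_{H_N(u)}$ with overwhelming probability, so $H_N(u)$ has at least $\sim\epsilon N$ eigenvalues below $-\epsilon$, forcing $\mathds{1}_{H_N(u)\geq 0}=0$ there except on a superexponentially small event; a Hölder/Cauchy-Schwarz argument against the moment bound \eqref{eqn:det_sublinear_in_u} (cf.\ Lemma \ref{lem:detconub_ec}) shows this contributes $-\infty$ to the normalized log. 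On $\mf{D}\cap\mc{G}_{-\epsilon}\cap B_R$ we drop the indicator, bound $\E[\abs{\det(H_N(u))}\mathds{1}_{H_N(u)\geq0}]\leq\E[\abs{\det(H_N(u))}]$, apply the locally uniform determinant asymptotics (Theorem \ref{thm:convex_functional} or \ref{thm:concentrated_input}) with reference measures $\mu_\infty(u)$, and obtain $\limsup\frac1N\log(\cdots)\leq\sup_{u\in\mf{D}\cap\mc{G}_{-\epsilon}}\mc{S}_\alpha[u]+\oo(1)$. Finally let $\epsilon\downarrow0$: since $\bigcap_{\epsilon>0}\mc{G}_{-\epsilon}=\mc{G}$ and $\mc{S}_\alpha$ is continuous and $\to-\infty$ at infinity, $\sup_{\mf{D}\cap\mc{G}_{-\epsilon}}\mc{S}_\alpha\to\sup_{\mf{D}\cap\mc{G}}\mc{S}_\alpha$ (one must check the supremum over the shrinking sets converges, using the no-outliers/decay structure to rule out mass escaping).

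The \textbf{main obstacle} I anticipate is the interface between the inner approximation $\mc{G}_{+\epsilon}$ and the outer approximation $\mc{G}_{-\epsilon}$: the lower bound naturally lives on $\mc{G}_{+\epsilon}$ (where positivity is robust) while the upper bound naturally lives on $\mc{G}_{-\epsilon}$ (where violation of positivity is robust), and one must check that $\sup_{\mf{D}\cap\mc{G}_{+\epsilon}}\mc{S}_\alpha$ and $\sup_{\mf{D}\cap\mc{G}_{-\epsilon}}\mc{S}_\alpha$ both converge to the \emph{same} limit $\sup_{\mf{D}\cap\mc{G}}\mc{S}_\alpha$ as $\epsilon\downarrow0$. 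This requires the topological hypotheses, especially \eqref{eqn:minima_closure} (so the inner sets are genuinely dense in $\mf{D}\cap\mc{G}$) and continuity of $u\mapsto\mathtt{l}(\mu_\infty(u))$, which is not assumed outright and may need to be extracted from continuity of $u\mapsto\int\log|\lambda|\,\mu_\infty(u)(\diff\lambda)$ together with the no-outlier and density regularity assumptions; care is also needed because on the boundary $\partial\mc{G}$ the left edge is exactly $0$, where $\int\log|\lambda|\mu_\infty(u)(\diff\lambda)$ could in principle be $-\infty$, though the density bound $\mu_\infty(u,x)<\kappa^{-1}|x|^{-1+\kappa}$ near $0$ (inherited from the assumptions of Theorem \ref{thm:variational_formula}) precisely prevents that and keeps $\mc{S}_\alpha$ finite and continuous up to the boundary.
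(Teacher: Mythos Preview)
Your plan matches the paper's proof almost exactly: the upper bound splits $\mf{D}$ into $\mf{D}\cap\mc{G}_{-\epsilon}$ (drop the indicator, apply Theorem \ref{thm:variational_formula} via Remark \ref{rem:closure_of_interior}) and its complement (superexponential concentration plus H\"older kills it), and the lower bound restricts to $\mf{D}\cap\mc{G}_{+\epsilon}$ where positivity is robust; then $\sup_{\mf{D}\cap\mc{G}_{\pm\epsilon}}\mc{S}_\alpha\to\sup_{\mf{D}\cap\mc{G}}\mc{S}_\alpha$ via \eqref{eqn:minima_closure} and continuity. Two points to tighten:

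\textbf{(i)} In the lower bound, the sentence ``the indicator $\mathds{1}_{H_N(u)\geq 0}$ costs only a factor $\P(H_N(u)\geq 0)$'' is not correct as written: $|\det(H_N(u))|$ and $\mathds{1}_{H_N(u)\geq 0}$ are correlated, so you cannot factor. The paper's fix is to introduce the event $\{\hat\mu_{H_N(u)}\in M(u,\delta,\epsilon)\}$ (close in $d_{\textup{BL}}$ \emph{and} with support in $[\mathtt{l}(\mu_\infty(u))-\epsilon,\mathtt{r}(\mu_\infty(u))+\epsilon]$); on this event $H_N(u)\geq 0$ holds \emph{and} $\frac{1}{N}\log|\det(H_N(u))|$ is deterministically bounded below by $\int\log|\lambda|\,\mu_\infty(u)(\diff\lambda)-2\delta/\epsilon$ (since $\log|\cdot|$ is $(2/\epsilon)$-Lipschitz-plus-bounded on $[\epsilon,\infty)$). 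Then the only probabilistic cost is $\P(\hat\mu\in M(u,\delta,\epsilon))\to 1$, which \emph{can} be pulled out.

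\textbf{(ii)} In the upper bound on $(\mc{G}_{-\epsilon})^c$, the H\"older step needs a $(1+\delta)$-th moment of $|\det|$, not a first moment; \eqref{eqn:det_sublinear_in_u} only gives the latter. The paper uses \eqref{eqn:detconub_NlogN}, which is part of assumption \hyperlink{assn:C}{(C)} and therefore inherited through ``all the assumptions of Theorem \ref{thm:variational_formula}'' (and your reference to Lemma \ref{lem:detconub_ec} already points to the right mechanism). Against this, the superexponential bound \eqref{eqn:concentration_NlogN} beats the $N\log N$ growth, exactly as in that lemma.
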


We prove the upper and lower bounds separately in the next two subsubsections.

\subsubsection{Upper bound.}\
The proof of the upper bound of Theorem \ref{thm:minima} relies on the following three lemmas, which we will prove after.

\begin{lem}
\label{lem:minima_topology}
Each $\mc{G}_{-\epsilon}$ is closed, and $\mc{G}$ is closed.
\end{lem}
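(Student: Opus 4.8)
\textbf{Proof proposal for Lemma \ref{lem:minima_topology}.}

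The plan is to show that each $\mc{G}_{-\epsilon}$ is closed, and then deduce that $\mc{G}$ is closed as a (decreasing) intersection of closed sets. The only nontrivial input is a continuity statement for the map $u \mapsto \mu_\infty(u)$ as a function into the space of probability measures on $\R$ equipped with weak convergence; this is exactly what \eqref{eqn:detconlimit} (used in the proof of Lemma \ref{lem:Siscts}) gives us, since $\int \log|\lambda|\,\mu_\infty(u)(\diff\lambda)$ is a locally uniform limit of the continuous functions $\frac{1}{N}\log\E[|\det H_N(u)|]$ --- but in fact I expect to re-derive the needed weak continuity directly from the assumptions of Theorem \ref{thm:variational_formula}, in particular the bounded-Lipschitz (or Wasserstein) closeness of $\mu_N(u)$ to $\mu_\infty(u)$ uniformly on compacts together with entrywise continuity of $u \mapsto H_N(u)$, which forces $u \mapsto \E\hat\mu_{H_N(u)}$ and hence $u \mapsto \mu_\infty(u)$ to be $d_{\textup{BL}}$-continuous.

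First I would establish: if $u_n \to u$ in $\R^m$ then $\mu_\infty(u_n) \to \mu_\infty(u)$ weakly. Indeed, fixing a large ball $B_R$ containing all $u_n$ and $u$, for each $N$ the map $u \mapsto \E\hat\mu_{H_N(u)}$ is $d_{\textup{BL}}$-continuous (dominated convergence on each bounded Lipschitz test function, using the entrywise continuity of $H_N(\cdot)$ and the uniform compact support of the relevant measures on $B_R$); combining this with the uniform bound $d_{\textup{BL}}(\E\hat\mu_{H_N(u)},\mu_N(u)) \leq N^{-\kappa}$ and $d_{\textup{BL}}(\mu_N(u),\mu_\infty(u)) \leq N^{-\kappa}$ (or their ${\rm W}_1$ analogues), a standard $3\varepsilon$ argument shows $u \mapsto \mu_\infty(u)$ is $d_{\textup{BL}}$-continuous on $B_R$, hence weakly continuous everywhere.

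Next, to see $\mc{G}_{-\epsilon}$ is closed, take $u_n \in \mc{G}_{-\epsilon}$ with $u_n \to u$, so $\mu_\infty(u_n)((-\infty,-\epsilon)) \leq \epsilon$ for all $n$. Since the set $(-\infty,-\epsilon)$ is open, the Portmanteau theorem gives $\mu_\infty(u)((-\infty,-\epsilon)) \leq \liminf_n \mu_\infty(u_n)((-\infty,-\epsilon)) \leq \epsilon$, so $u \in \mc{G}_{-\epsilon}$. For $\mc{G}$, one uses the identity $\mc{G} = \{u : \mathtt{l}(\mu_\infty(u)) \geq 0\} = \{u : \mu_\infty(u)((-\infty,0)) = 0\}$; I claim $\mc{G} = \bigcap_{k \geq 1} \mc{G}_{-1/k}$. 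The inclusion $\mc{G} \subseteq \mc{G}_{-1/k}$ is clear since $u\in\mc{G}$ forces $\mu_\infty(u)((-\infty,-1/k)) = 0 \leq 1/k$; conversely if $u \in \mc{G}_{-1/k}$ for all $k$ then $\mu_\infty(u)((-\infty,-1/k)) \leq 1/k$ for all $k$, and letting $k\to\infty$ via monotone convergence on the increasing sets $(-\infty,-1/k)\uparrow(-\infty,0)$ yields $\mu_\infty(u)((-\infty,0)) = \lim_k \mu_\infty(u)((-\infty,-1/k)) \leq \lim_k 1/k = 0$, so $u\in\mc{G}$. Hence $\mc{G}$ is a countable intersection of closed sets, thus closed.

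I expect no serious obstacle here; the only point requiring a little care is the weak continuity of $u \mapsto \mu_\infty(u)$, and even there all the ingredients --- uniform compact support, the $N^{-\kappa}$ approximation bounds, and entrywise continuity of $H_N(\cdot)$ --- are hypotheses already in force, so it is a routine $\varepsilon/3$ argument plus Portmanteau. If one prefers to avoid re-proving continuity, one can instead quote \eqref{eqn:detconlimit} and the fact that $\mathtt{l}(\mu_\infty(\cdot))$ is upper semicontinuous (weak limits can only push mass leftward in the limit inferior sense used above), which gives closedness of the superlevel sets $\{\mathtt{l}(\mu_\infty(u)) \geq c\}$ directly.
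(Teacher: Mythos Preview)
Your proposal is correct and follows essentially the same approach as the paper: establish $d_{\textup{BL}}$-continuity of $u \mapsto \mu_\infty(u)$ via the entrywise continuity of $H_N(\cdot)$, dominated convergence, and the uniform $N^{-\kappa}$ approximation bounds, then deduce closedness. The only minor difference is in the final step: the paper invokes the fact that each $\mu_\infty(u)$ has a density (so its distribution function is continuous, and weak convergence gives continuity of $u \mapsto F_{\mu_\infty(u)}(-\epsilon)$), whereas you use the Portmanteau $\liminf$ inequality on the open set $(-\infty,-\epsilon)$ and then write $\mc{G} = \bigcap_k \mc{G}_{-1/k}$; your route is slightly more elementary in that it does not need the density hypothesis at this point.
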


\begin{lem}
\label{lem:min_restriction_ub}
For every $\epsilon > 0$, we have
\[
    \lim_{N \to \infty} \frac{1}{N}\log \int_{(\mc{G}_{-\epsilon})^c} e^{-(N+p)\alpha \|u\|^2} \E[\abs{\det(H_N(u))} \mathds{1}_{H_N(u) \geq 0}] \diff u = -\infty.
\]
\end{lem}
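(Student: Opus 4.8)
The plan is to exploit the indicator $\mathds{1}_{H_N(u)\ge 0}$: outside $\mc{G}_{-\epsilon}$ the limiting measure $\mu_\infty(u)$ puts mass more than $\epsilon$ strictly below $-\epsilon$, so whenever the empirical measure $\hat\mu_{H_N(u)}$ is close to $\mu_\infty(u)$ the matrix $H_N(u)$ has a negative eigenvalue and the integrand vanishes; the rare complementary event is then crushed by the superexponential concentration estimate \eqref{eqn:concentration_NlogN}, and the region $\|u\|$ large is absorbed by Lemma \ref{lem:restricttocompacts}.

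Concretely, first I would record the geometric reduction. For $u\in(\mc{G}_{-\epsilon})^c$ we have $\mu_\infty(u)\big((-\infty,-\epsilon)\big)>\epsilon$. Fix a smooth $[0,1]$-valued function $\chi_\epsilon$ with $\chi_\epsilon\equiv 1$ on $(-\infty,-\epsilon]$, $\chi_\epsilon\equiv 0$ on $[-\epsilon/2,\infty)$ and $\|\chi_\epsilon\|_{\textup{Lip}}\le 10/\epsilon$; then there is $\epsilon'=\epsilon'(\epsilon)>0$ (one may take $\epsilon'=\epsilon^2/(2(10+\epsilon))$) such that $d_{\textup{BL}}(\hat\mu_{H_N(u)},\mu_\infty(u))\le\epsilon'$ forces
\[
\hat\mu_{H_N(u)}\big((-\infty,-\epsilon/2)\big)\ \ge\ \int\chi_\epsilon\,\diff\hat\mu_{H_N(u)}\ \ge\ \int\chi_\epsilon\,\diff\mu_\infty(u)-\epsilon'\big(\|\chi_\epsilon\|_{\textup{Lip}}+1\big)\ >\ \tfrac{\epsilon}{2}\ >\ 0,
\]
so $H_N(u)$ has a negative eigenvalue and $\mathds{1}_{H_N(u)\ge 0}=0$. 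Hence for every $u\in(\mc{G}_{-\epsilon})^c$,
\[
\E\big[\abs{\det(H_N(u))}\mathds{1}_{H_N(u)\ge 0}\big]\ \le\ \E\big[\abs{\det(H_N(u))}\,\mathds{1}_{d_{\textup{BL}}(\hat\mu_{H_N(u)},\mu_\infty(u))>\epsilon'}\big].
\]
Next I would apply Hölder's inequality with exponent $1+\delta$ to the right-hand side: for $u\in B_R$ the factor $\E[\abs{\det(H_N(u))}^{1+\delta}]^{1/(1+\delta)}$ is at most $e^{C_RN\log N}$, which in the convexity-preserving setting is exactly \eqref{eqn:detconub_NlogN} made uniform in $u$ by the hypotheses of Theorem \ref{thm:variational_formula}, and in the concentrated-input setting follows, as in the proof of the upper bound of Theorem \ref{thm:concentrated_input}, from $\abs{\det(H_N(u))}\le e^{N\int\log_\eta\hat\mu_{H_N(u)}}$ together with assumption \hyperlink{assn:L}{(L)} applied to the fixed Lipschitz function $\log_\eta$; while the factor $\P(d_{\textup{BL}}>\epsilon')^{\delta/(1+\delta)}$ is at most $e^{-\frac{\delta}{1+\delta}N\log N\,g(N)}$ with $g(N)\to\infty$, uniformly over $u\in B_R$, by \eqref{eqn:concentration_NlogN} applied with the fixed threshold $\epsilon'$. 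Thus $\sup_{u\in B_R}\frac1N\log\E[\abs{\det(H_N(u))}\mathds{1}_{H_N(u)\ge 0}]\le\frac{\log N}{1+\delta}\big(C_R-\delta g(N)\big)\to-\infty$, and integrating the bounded weight $e^{-(N+p)\alpha\|u\|^2}$ over $B_R\cap(\mc{G}_{-\epsilon})^c$ only costs a factor $e^{|p|\alpha R^2}|B_R|$, so this part of the integral tends to $-\infty$ on the exponential scale.

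Finally, since $\abs{\det(H_N(u))}\mathds{1}_{H_N(u)\ge 0}\le\abs{\det(H_N(u))}$, the tail $B_R^c$ is controlled by Lemma \ref{lem:restricttocompacts}: given any $M$, choose $R$ with $\limsup_{N\to\infty}\frac1N\log\int_{B_R^c}e^{-(N+p)\alpha\|u\|^2}\E[\abs{\det(H_N(u))}]\,\diff u\le -M$; combining with the previous paragraph and the elementary bound $\log(a+b)\le\log 2+\max(\log a,\log b)$ yields $\limsup_{N\to\infty}\frac1N\log\int_{(\mc{G}_{-\epsilon})^c}(\cdots)\le -M$, and letting $M\to\infty$ finishes the proof. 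The only mildly delicate point is the uniform $(1+\delta)$-moment bound on $\abs{\det(H_N(u))}$ over compact sets of $u$ in the concentrated-input case, but this is the same computation as in the proof of Theorem \ref{thm:concentrated_input}; the rest is bookkeeping.
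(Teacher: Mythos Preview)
Your proof is correct and follows essentially the same route as the paper: reduce to a ball $B_R$ via Lemma \ref{lem:restricttocompacts}, observe that on $(\mc{G}_{-\epsilon})^c$ the event $\{H_N(u)\ge 0\}$ forces $d_{\textup{BL}}(\hat\mu_{H_N(u)},\mu_\infty(u))$ above a fixed positive threshold, then apply H\"older and kill the rare-event factor with \eqref{eqn:concentration_NlogN} against the $N\log N$ growth of the $(1+\delta)$-moment. The paper's version is slightly slicker in the test-function step (it scales $f_\epsilon$ to have $\|f_\epsilon\|_{\textup{Lip}}+\|f_\epsilon\|_\infty\le 1$ directly, yielding the clean threshold $\epsilon^2/2$), but you are actually more careful than the paper in one place: you explicitly justify the $(1+\delta)$-moment bound in the concentrated-input setting via assumption \hyperlink{assn:L}{(L)} and the argument from the upper bound of Theorem \ref{thm:concentrated_input}, whereas the paper simply cites \eqref{eqn:detconub_NlogN}, which strictly speaking is only a stated hypothesis in the convexity-preserving case.
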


\begin{lem}
\label{lem:g_minus_eps}
We have
\[
    \lim_{\epsilon \downarrow 0} \sup_{u \in \mf{D} \cap \mc{G}_{-\epsilon}} \mc{S}_\alpha[u] \leq \sup_{u \in \mf{D} \cap \mc{G}} \mc{S}_\alpha[u].
\]
\end{lem}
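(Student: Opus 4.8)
\textbf{Proof plan for Lemma \ref{lem:g_minus_eps}.}

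The plan is to show that the function $\epsilon \mapsto \sup_{u \in \mf{D} \cap \mc{G}_{-\epsilon}} \mc{S}_\alpha[u]$ is monotone non-decreasing as $\epsilon \downarrow 0$ (since $\mc{G}_{-\epsilon} \subseteq \mc{G}_{-\epsilon'}$ for $\epsilon < \epsilon'$, so the supremum shrinks as $\epsilon$ shrinks), hence its limit as $\epsilon \downarrow 0$ equals its infimum over $\epsilon > 0$. The substance of the lemma is therefore to show that no mass of the supremum is ``lost at infinity or at the boundary'' as $\epsilon \to 0$: any value arbitrarily close to the $\limsup$ is achieved (in the limit) by some $u_0 \in \mf{D} \cap \mc{G}$.

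First I would fix a sequence $\epsilon_n \downarrow 0$ and pick near-maximizers $u_n \in \mf{D} \cap \mc{G}_{-\epsilon_n}$ with $\mc{S}_\alpha[u_n] \geq \sup_{u \in \mf{D} \cap \mc{G}_{-\epsilon_n}} \mc{S}_\alpha[u] - \epsilon_n$. By Lemma \ref{lem:Siscts}, $\mc{S}_\alpha[u] \to -\infty$ as $\|u\| \to \infty$; since the left-hand side of the claimed inequality is at least $\mc{S}_\alpha$ evaluated at any point of $\mf{D} \cap \mc{G}_{+1}$ (a set of positive measure, in particular nonempty, by the Topology hypothesis of Theorem \ref{thm:minima}, noting $\mc{G}_{+1} \subseteq \mc{G}_{-\epsilon}$ for every $\epsilon$), this quantity is bounded below by a fixed constant, so the near-maximizers $u_n$ can be taken in a fixed ball $B_R$. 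Passing to a subsequence, $u_n \to u_0$ for some $u_0$, which lies in $\mf{D}$ since $\mf{D}$ is closed. By continuity of $\mc{S}_\alpha$ (Lemma \ref{lem:Siscts}), $\mc{S}_\alpha[u_n] \to \mc{S}_\alpha[u_0]$, so $\mc{S}_\alpha[u_0] \geq \lim_{\epsilon \downarrow 0} \sup_{u \in \mf{D} \cap \mc{G}_{-\epsilon}} \mc{S}_\alpha[u]$.

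It then remains to check $u_0 \in \mc{G}$, i.e. $\mathtt{l}(\mu_\infty(u_0)) \geq 0$, equivalently $\mu_\infty(u_0)((-\infty,0)) = 0$. This is where the main obstacle lies: one needs lower semicontinuity of the left edge, or equivalently that $u \mapsto \mu_\infty(u)((-\infty,-\delta))$ cannot jump up in the limit. I would use the bounded-Lipschitz (or Wasserstein) continuity of $u \mapsto \mu_\infty(u)$, which follows from combining the ``limit measures'' hypothesis of Theorem \ref{thm:variational_formula} with the locally uniform convergence in Theorem \ref{thm:convex_functional}/\ref{thm:concentrated_input} and a triangle inequality (this continuity is already implicitly used in Lemma \ref{lem:Siscts}). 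Concretely: fix $\delta > 0$; for $n$ large, $\epsilon_n < \delta/2$, so $u_n \in \mc{G}_{-\epsilon_n}$ gives $\mu_\infty(u_n)((-\infty,-\delta/2)) \leq \mu_\infty(u_n)((-\infty,-\epsilon_n)) \leq \epsilon_n$. Testing against a Lipschitz function that is $1$ on $(-\infty,-\delta]$ and $0$ on $[-\delta/2,\infty)$, weak convergence $\mu_\infty(u_n) \to \mu_\infty(u_0)$ yields $\mu_\infty(u_0)((-\infty,-\delta]) \leq \liminf_n \mu_\infty(u_n)((-\infty,-\delta/2)) \leq \liminf_n \epsilon_n = 0$. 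Since $\delta > 0$ was arbitrary, $\mu_\infty(u_0)((-\infty,0)) = 0$, i.e. $u_0 \in \mf{D} \cap \mc{G}$, and the displayed inequality follows. I would also record that Lemma \ref{lem:minima_topology} (closedness of $\mc{G}$) can alternatively be invoked here, but the explicit weak-convergence argument above is self-contained and gives exactly what is needed.
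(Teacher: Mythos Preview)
Your proposal is correct and follows essentially the same approach as the paper: extract (near-)maximizers $u_\epsilon$, confine them to a compact set via the decay of $\mc{S}_\alpha$ (Lemma \ref{lem:Siscts}), pass to a limit point $u_0 \in \mf{D}$, and use continuity of $\mc{S}_\alpha$ together with the fact that $u_0 \in \mc{G}$. The only cosmetic difference is in the last step: the paper shows $u_0 \in \bigcap_\epsilon \mc{G}_{-\epsilon} = \mc{G}$ by a nestedness/closedness argument (invoking Lemma \ref{lem:minima_topology}), whereas you re-derive this via an explicit weak-convergence test-function computation, which is exactly the content underlying Lemma \ref{lem:minima_topology}.
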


\begin{proof}[Proof of the upper bound in Theorem \ref{thm:minima}]
For each $\epsilon > 0$, Lemma \ref{lem:min_restriction_ub} yields
\begin{align*}
    &\limsup_{N \to \infty} \frac{1}{N} \log \int_{\mf{D}} e^{-(N+p)\alpha \|u\|^2} \E[\abs{\det(H_N(u))}\mathds{1}_{H_N(u) \geq 0}] \diff u \\
    &\leq \limsup_{N \to \infty} \frac{1}{N}\log \int_{\mf{D} \cap \mc{G}_{-\epsilon}} e^{-(N+p) \alpha \|u\|^2} \E[\abs{\det(H_N(u))}\mathds{1}_{H_N(u) \geq 0}] \diff u \\
    &\leq \limsup_{N \to \infty} \frac{1}{N}\log \int_{\mf{D} \cap \mc{G}_{-\epsilon}} e^{-(N+p) \alpha \|u\|^2} \E[\abs{\det(H_N(u))}] \diff u \leq \sup_{u \in \mf{D} \cap \mc{G}_{-\epsilon}} \mc{S}_\alpha[u].
\end{align*}
The last inequality holds by Theorem \ref{thm:variational_formula} applied to $\mf{D} \cap \mc{G}_{-\epsilon}$, which is closed (by Lemma \ref{lem:minima_topology}) and has positive measure (as a superset of $\mf{D} \cap \mc{G}_{+1}$, which has positive measure by assumption). By Remark \ref{rem:closure_of_interior}, these are the only conditions we need to check. Letting $\epsilon$ tend to zero and applying Lemma \ref{lem:g_minus_eps} completes the proof.
\end{proof}

\begin{proof}[Proof of Lemma \ref{lem:minima_topology}]
Since we assumed that $u \mapsto H_N(u)$ is entrywise continuous and the spectrum is a continuous function of matrix entries, we have that $u \mapsto \hat{\mu}_{H_N(u)}$ is almost surely continuous with respect to the bounded-Lipschitz distance: $d_{\textup{BL}}(\hat{\mu}_{H_N(u)}, \hat{\mu}_{H_N(u')}) \leq \frac{1}{N}\sum_{i=1}^N \min(2,\abs{\lambda_i(u)-\lambda_i(u')})$. By dominated convergence, this means that $u \mapsto \E[\hat{\mu}_{H_N(u)}]$ is continuous with respect to $d_{\textup{BL}}$. But $d_{\textup{BL}}(\E[\hat{\mu}_{H_N(u)}], \mu_\infty(u)) \to 0$ uniformly on compact sets of $u$ by assumption (here we use $d_{\textup{BL}} \leq {\rm W}_1$ for the concentrated-input case), so we conclude that $u \mapsto \mu_\infty(u)$ is continuous with respect to $d_{\textup{BL}}$, as well. Since $d_{\textup{BL}}$ metrizes weak convergence, and since the defining properties of $\mc{G}$ and $\mc{G}_{-\epsilon}$ can be stated in terms of distribution functions of $\mu_\infty(u)$, which are continuous since each $\mu_\infty(u)$ has a density with respect to Lebesgue, the lemma follows.
\end{proof}

\begin{proof}[Proof of Lemma \ref{lem:min_restriction_ub}]
From Lemma \ref{lem:restricttocompacts}, it suffices to show
\[
    \lim_{N \to \infty} \frac{1}{N}\log \int_{(\mc{G}_{-\epsilon})^c \cap B_R} e^{-(N+p)\alpha \|u\|^2} \E[\abs{\det(H_N(u))} \mathds{1}_{H_N(u) \geq 0}] \diff u = -\infty
\]
for each $R > 0$. If $H_N(u) \geq 0$ and $u \in (\mc{G}_{-\epsilon})^c$, then by taking some $\frac{1}{2}$-Lipschitz $f_\epsilon$ satisfying
$
    \frac{\epsilon}{2} \mathds{1}_{x \leq 0} \geq f_\epsilon(x) \geq \frac{\epsilon}{2} \mathds{1}_{x \leq -\epsilon}
$
we obtain 
$
    d_{\textup{BL}}(\hat{\mu}_{H_N(u)},\mu_\infty(u)) \geq \frac{\epsilon}{2}\mu_\infty(u)((-\infty,-\epsilon)) \geq \frac{\epsilon^2}{2}.
$
For small $\delta > 0$, this gives (for $N > -p$)
\begin{align*}
    &\int_{(\mc{G}_{-\epsilon})^c \cap B_R} e^{-(N+p)\alpha \|u\|^2} \E[\abs{\det(H_N(u))} \mathds{1}_{H_N(u) \geq 0}] \diff u \\
    &\leq \abs{B_R} \left(\sup_{u \in B_R} \E[\abs{\det(H_N(u))}^{1+\delta}]^{\frac{1}{1+\delta}} \right) \left( \sup_{u \in B_R} \P(d_{\textup{BL}}(\hat{\mu}_{H_N(u)},\mu_\infty(u)) \geq \frac{\epsilon^2}{2}) \right)^{\frac{\delta}{1+\delta}}.
\end{align*}
This suffices by \eqref{eqn:detconub_NlogN} and \eqref{eqn:concentration_NlogN}.
\end{proof}

\begin{proof}[Proof of Lemma \ref{lem:g_minus_eps}]
From their definitions, we have
$
    \bigcap_{\epsilon > 0} \mc{G}_{-\epsilon} = \mc{G}.
$
We take the intersection of both sides with $\mf{D}$. Next, we claim that there exists some $R > 0$ such that 
\begin{equation}
\label{eqn:minimaub_restricttocpct}
    \sup_{u \in \mf{D} \cap \mc{G}} \mc{S}_\alpha[u] = \max_{u \in (\mf{D} \cap \mc{G} \cap B_R)} \mc{S}_\alpha[u] \quad \text{and} \quad \sup_{u \in \mf{D} \cap \mc{G}_{-\epsilon}} \mc{S}_\alpha[u] = \max_{u \in (\mf{D} \cap \mc{G}_{-\epsilon} \cap B_R)} \mc{S}_\alpha[u]
\end{equation}
for every $\epsilon > 0$. Indeed, the proof of Lemma \ref{lem:Siscts} shows that
\[
    \mc{S}_\alpha[u] \leq \log(C\|u\|) - \alpha \|u\|^2
\]
on $\R^m$. Since $\mc{S}_\alpha$ is continuous and $\mf{D} \cap \mc{G}$ is closed by Lemma \ref{lem:minima_topology}, let $u_\ast \in \mf{D} \cap \mc{G}$ satisfy $\sup_{u \in \mf{D} \cap \mc{G}} \mc{S}[u] = \mc{S}[u_\ast]$, and let $R > 1$ be so large that $\log(C R) - \alpha R^2 < \mc{S}_\alpha[u_\ast]$.

For each $\epsilon$, since $\mf{D} \cap \mc{G}_{-\epsilon}$ is closed (again by Lemma \ref{lem:minima_topology}), let $u_\epsilon$ be such that $\mc{S}_\alpha[u_\epsilon] = \sup_{u \in \mf{D} \cap \mc{G}_{-\epsilon}} \mc{S}_\alpha[u]$. Then $u_\epsilon \in B_R$; otherwise, we would have
\[
    \max_{u \in \mf{D} \cap \mc{G}_{-\epsilon}} \mc{S}_\alpha[u] = \mc{S}_\alpha[u_\epsilon] \leq \log(C R) - \alpha R^2 < \mc{S}_\alpha[u_\ast] = \max_{u \in \mf{D} \cap \mc{G}} \mc{S}_\alpha[u] \leq \max_{u \in \mf{D} \cap \mc{G}_{-\epsilon}} \mc{S}_\alpha[u].
\]
This verifies \eqref{eqn:minimaub_restricttocpct}.

Since the $\{u_\epsilon\}$ lie in a compact set, they have a limit point $u_0$ up to extraction. Furthermore, $u_0 \in \mf{D} \cap \mc{G} = \cap_\epsilon (\mf{D} \cap \mc{G}_{-\epsilon})$. Indeed, otherwise a neighborhood of $u_0$ would be contained in $(\mf{D} \cap \mc{G}_{-\epsilon_1})^c$ for some $\epsilon_1$, hence in $(\mf{D} \cap \mc{G}_{-\epsilon})^c$ for every $\epsilon < \epsilon_1$ (since the sets are nested). But then $u_0$ could not be a limit point of $\{u_\epsilon\}$.

Thus by continuity of $S_\alpha$ we have
$
    \lim_{\epsilon \downarrow 0} \sup_{u \in \mf{D} \cap \mc{G}_{-\epsilon}} \mc{S}_\alpha[u] = \lim_{\epsilon \downarrow 0} \mc{S}_\alpha[u_\epsilon] = \mc{S}_\alpha[u_0] \leq \sup_{u \in \mf{D} \cap \mc{G}} \mc{S}_{\alpha}[u]
$.
\end{proof}

\subsubsection{Lower bound.}\
The proof of the lower bound in Theorem \ref{thm:minima} relies on the following two lemmas, which we will prove after.
\begin{lem}
\label{lem:minima_inf_logpotential}
For each $u \in \R^m$ and each $\delta, \epsilon > 0$, define the set of probability measures
\[
    M(u,\delta,\epsilon) = \{ \mu : d_{\textup{BL}}(\mu,\mu_\infty(u)) < \delta \text{ and } \supp(\mu) \subset [\mathtt{l}(\mu_\infty(u))-\epsilon,\mathtt{r}(\mu_\infty(u))+\epsilon] \}
\]
that are close to $\mu_\infty(u)$ both in $d_{\textup{BL}}$ and in support. For all $R$, all $\delta$, and all $\epsilon$ sufficiently small depending on $R$, we have
\[
    \inf_{u \in \mf{D} \cap \mc{G}_{+\epsilon} \cap \overline{B_R}} \left( \inf_{\mu \in M(u,\delta,\epsilon)} \int \log\abs{\lambda} \mu(\diff \lambda) - \int\log\abs{\lambda} \mu_\infty(u)(\diff \lambda) \right) \geq -\frac{2\delta}{\epsilon}.
\]
\end{lem}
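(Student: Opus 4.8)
The plan is to reduce the statement to a single application of the bounded-Lipschitz bound against a well-chosen test function. First I would record a uniform boundedness fact: since Theorem~\ref{thm:variational_formula} assumes the hypotheses of Theorem~\ref{thm:convex_functional} (or Theorem~\ref{thm:concentrated_input}) uniformly over compact sets of $u$, in particular the ``common compact support'' part of assumption \hyperlink{assn:E}{(E)} (or \hyperlink{assn:W}{(W)}) holds uniformly in $u$, so there is $C_R>0$ with $\supp(\mu_N(u))\subseteq[-C_R,C_R]$ for all $N$ and all $u\in\overline{B_R}$; passing to the limit, $\supp(\mu_\infty(u))\subseteq[-C_R,C_R]$ as well. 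For $u\in\mc{G}_{+\epsilon}$ this sharpens to $\supp(\mu_\infty(u))\subseteq[2\epsilon,C_R]$ by definition of $\mc{G}_{+\epsilon}$, and hence any $\mu\in M(u,\delta,\epsilon)$ satisfies $\supp(\mu)\subseteq[\mathtt{l}(\mu_\infty(u))-\epsilon,\mathtt{r}(\mu_\infty(u))+\epsilon]\subseteq[\epsilon,C_R+\epsilon]$, an interval bounded away from $0$.

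Next I would introduce the truncated logarithm $F(\lambda)=\log\!\big(\max(\epsilon,\min(\lambda,C_R+\epsilon))\big)$, which agrees with $\log|\lambda|$ on all of $[\epsilon,C_R+\epsilon]$, hence on $\supp(\mu)$ and on $\supp(\mu_\infty(u))$, together with its recentering $\widetilde F = F-\tfrac12\big(\log\epsilon+\log(C_R+\epsilon)\big)$. One computes directly that $\|\widetilde F\|_{\textup{Lip}}=1/\epsilon$ (the derivative $1/\lambda$ is maximal at $\lambda=\epsilon$) and $\|\widetilde F\|_{L^\infty}=\tfrac12\log\frac{C_R+\epsilon}{\epsilon}$. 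The one genuinely useful observation is that $\log(1/\epsilon)=\oo(1/\epsilon)$, so for $\epsilon$ small enough depending only on $C_R$ --- hence only on $R$ --- one has $\|\widetilde F\|_{\textup{Lip}}+\|\widetilde F\|_{L^\infty}\le 2/\epsilon$; equivalently, $\tfrac{\epsilon}{2}\widetilde F$ is admissible in the definition of $d_{\textup{BL}}$.

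Finally, for any $u\in\mf{D}\cap\mc{G}_{+\epsilon}\cap\overline{B_R}$ and any $\mu\in M(u,\delta,\epsilon)$, applying the bound $d_{\textup{BL}}(\mu,\mu_\infty(u))<\delta$ to the admissible function $\tfrac{\epsilon}{2}\widetilde F$ gives $\big|\int\widetilde F\,\rd(\mu-\mu_\infty(u))\big|<2\delta/\epsilon$. Because $\mu$ and $\mu_\infty(u)$ are both probability measures the additive constant in $\widetilde F$ cancels, and because $F=\log|\lambda|$ on both supports, $\int\widetilde F\,\rd(\mu-\mu_\infty(u))=\int\log|\lambda|\,\rd\mu-\int\log|\lambda|\,\rd\mu_\infty(u)$; thus this difference is $>-2\delta/\epsilon$ uniformly over the relevant $u$ and $\mu$, and taking the infimum yields the claim.

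I expect the only delicate point to be the $L^\infty$ bookkeeping in the second step: since we have only bounded-Lipschitz (not Wasserstein-$1$) control over $\mu-\mu_\infty(u)$, the test function must be globally bounded, which forces the truncation at $C_R+\epsilon$ and costs an additive $\tfrac12\log(1/\epsilon)$ in the sup norm; the argument closes only because this is negligible against the Lipschitz cost $1/\epsilon$ for small $\epsilon$, which is exactly why $\epsilon$ is allowed to depend on $R$. One should also double-check that the uniform compact support of $\mu_\infty(u)$ over $u\in\overline{B_R}$ is really delivered by the hypotheses of Theorem~\ref{thm:variational_formula}; everything else is routine.
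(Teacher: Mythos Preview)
Your proof is correct and takes essentially the same approach as the paper: both observe that on $[\epsilon,C_R+\epsilon]$ the logarithm has Lipschitz constant $1/\epsilon$ and sup norm $\OO(\log(1/\epsilon))\le 1/\epsilon$ for small $\epsilon$, giving a BL-norm bound of $2/\epsilon$ and hence the estimate. Your version is slightly more careful in extending the test function globally via truncation and in recentering (which is harmless since constants cancel against $\mu-\mu_\infty(u)$), but the substance is identical.
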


\begin{lem}
\label{lem:g_plus_eps}
Each $\mc{G}_{+\epsilon}$ is closed, and for all $R$ large enough we have
\begin{equation}
\label{eqn:minimalb_restricttocpct}
    \sup_{u \in \mf{D} \cap \mc{G}} \mc{S}_\alpha[u] = \max_{u \in \mf{D} \cap \mc{G} \cap \overline{B_R}} \mc{S}_\alpha[u] \quad \text{and} \quad \sup_{u \in \mf{D} \cap \mc{G}_{+\epsilon}} \mc{S}_\alpha[u] = \max_{u \in \mf{D} \cap \mc{G}_{+\epsilon} \cap \overline{B_R}} \mc{S}_\alpha[u]
\end{equation}
for every $0 < \epsilon < 1$. Furthermore,
\begin{equation}
\label{eqn:g_plus_eps}
    \lim_{\epsilon \downarrow 0} \sup_{u \in \mf{D} \cap \mc{G}_{+\epsilon}} \mc{S}_\alpha[u] = \sup_{u \in \mf{D} \cap \mc{G}} \mc{S}_\alpha[u].
\end{equation}
\end{lem}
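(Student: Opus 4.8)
The plan is to prove the three assertions of the lemma in order, closely mirroring the proof of Lemma \ref{lem:g_minus_eps} but with the set inclusions reversed, and invoking the topological hypothesis \eqref{eqn:minima_closure} for the delicate last part. For closedness of $\mc{G}_{+\epsilon}$: recall from the proof of Lemma \ref{lem:minima_topology} that $u \mapsto \mu_\infty(u)$ is continuous with respect to $d_{\textup{BL}}$, hence continuous for the weak topology. The condition $u \in \mc{G}_{+\epsilon}$ is precisely $\mu_\infty(u)([2\epsilon,\infty)) = 1$. If $u_n \to u$ with each $u_n \in \mc{G}_{+\epsilon}$, then $\mu_\infty(u_n) \to \mu_\infty(u)$ weakly, and since $[2\epsilon,\infty)$ is closed the portmanteau theorem gives $1 = \limsup_n \mu_\infty(u_n)([2\epsilon,\infty)) \leq \mu_\infty(u)([2\epsilon,\infty))$, so $\mathtt{l}(\mu_\infty(u)) \geq 2\epsilon$ and $u \in \mc{G}_{+\epsilon}$.

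Next, for \eqref{eqn:minimalb_restricttocpct}, I would fix once and for all a point $u_1 \in \mf{D} \cap \mc{G}_{+1}$ (which is nonempty, since $\mf{D} \cap \mc{G}_{+1}$ has positive Lebesgue measure); note that $u_1 \in \mf{D}\cap\mc{G}_{+\epsilon}$ for every $\epsilon \in (0,1]$ and $u_1 \in \mf{D}\cap\mc{G}$, because $\mc{G}_{+1} \subseteq \mc{G}_{+\epsilon} \subseteq \mc{G}$, and $\mc{S}_\alpha[u_1]$ is finite (indeed $\mathtt{l}(\mu_\infty(u_1)) \geq 2$, so the logarithmic potential is finite). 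The bound $\mc{S}_\alpha[u] \leq \log(C\|u\|) - \alpha\|u\|^2$ from the proof of Lemma \ref{lem:Siscts} tends to $-\infty$, so I can choose $R_0 > \|u_1\|$ with $\log(Cr) - \alpha r^2 < \mc{S}_\alpha[u_1]$ for all $r \geq R_0$; then for any $R \geq R_0$, any $\epsilon \in (0,1)$, and any $u$ outside $\overline{B_R}$ one has $\mc{S}_\alpha[u] < \mc{S}_\alpha[u_1] \leq \sup_{v \in \mf{D}\cap\mc{G}_{+\epsilon}}\mc{S}_\alpha[v]$, so the suprema over $\mf{D}\cap\mc{G}_{+\epsilon}$ and over $\mf{D}\cap\mc{G}$ are unchanged upon intersecting with $\overline{B_R}$. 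Since $\mc{S}_\alpha$ is continuous (Lemma \ref{lem:Siscts}), while $\mf{D}\cap\mc{G}_{+\epsilon}\cap\overline{B_R}$ and $\mf{D}\cap\mc{G}\cap\overline{B_R}$ are compact (using the closedness of $\mc{G}_{+\epsilon}$ just proved, the closedness of $\mc{G}$ from Lemma \ref{lem:minima_topology}, and the closedness of $\mf{D}$) and nonempty (each contains $u_1$), these suprema are attained; this gives \eqref{eqn:minimalb_restricttocpct} and, in particular, a maximizer $u_\ast \in \mf{D}\cap\mc{G}$ of $\mc{S}_\alpha$.

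For \eqref{eqn:g_plus_eps}: the family $\{\mc{G}_{+\epsilon}\}$ is nested-increasing as $\epsilon \downarrow 0$ and contained in $\mc{G}$, so $\sup_{u \in \mf{D}\cap\mc{G}_{+\epsilon}}\mc{S}_\alpha[u]$ increases to a limit bounded above by $\sup_{u \in \mf{D}\cap\mc{G}}\mc{S}_\alpha[u]$; the content is the reverse inequality. By the topological hypothesis \eqref{eqn:minima_closure}, the maximizer $u_\ast$ lies in the closure of $\mf{D}\cap\bigl(\bigcup_{\epsilon>0}\mc{G}_{+\epsilon}\bigr) = \bigcup_{\epsilon>0}(\mf{D}\cap\mc{G}_{+\epsilon})$, so there is a sequence $v_n \to u_\ast$ with $v_n \in \mf{D}\cap\mc{G}_{+\epsilon_n}$ for some $\epsilon_n > 0$. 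Given $\delta > 0$, continuity of $\mc{S}_\alpha$ yields $n$ with $\mc{S}_\alpha[v_n] > \mc{S}_\alpha[u_\ast] - \delta$; then for every $\epsilon \leq \epsilon_n$ the nesting gives $v_n \in \mf{D}\cap\mc{G}_{+\epsilon}$, so $\sup_{u \in \mf{D}\cap\mc{G}_{+\epsilon}}\mc{S}_\alpha[u] \geq \mc{S}_\alpha[v_n] > \mc{S}_\alpha[u_\ast] - \delta$. Letting $\epsilon \downarrow 0$ and then $\delta \downarrow 0$ gives $\lim_{\epsilon\downarrow 0}\sup_{u \in \mf{D}\cap\mc{G}_{+\epsilon}}\mc{S}_\alpha[u] \geq \mc{S}_\alpha[u_\ast] = \sup_{u \in \mf{D}\cap\mc{G}}\mc{S}_\alpha[u]$.

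The main obstacle is exactly this reverse inequality in \eqref{eqn:g_plus_eps}: the inner approximations $\mc{G}_{+\epsilon}$ only see the region where $\mathtt{l}(\mu_\infty(u))$ is strictly bounded away from $0$, so a priori the supremum over $\mf{D}\cap\mc{G}$ could be realized at a ``boundary'' configuration with $\mathtt{l}(\mu_\infty(u)) = 0$ that is not a limit of points with $\mathtt{l}(\mu_\infty(u)) > 0$, and nothing soft rules this out. It is precisely the hypothesis \eqref{eqn:minima_closure}—which must be checked from the structure of the specific model in applications—that guarantees the maximizer is approachable, after which the argument is purely topological (continuity of $\mc{S}_\alpha$ and monotonicity in $\epsilon$). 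A secondary point requiring care is that all the compactness and attainment statements need a single radius $R$ valid uniformly over $\epsilon \in (0,1)$, which is why the fixed anchor $u_1 \in \mf{D}\cap\mc{G}_{+1}$ is used in place of $\epsilon$-dependent maximizers.
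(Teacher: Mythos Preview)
Your proof is correct and follows essentially the same approach as the paper: closedness via weak continuity of $u\mapsto\mu_\infty(u)$, uniform restriction to a compact ball using the decay bound $\mc{S}_\alpha[u]\leq\log(C\|u\|)-\alpha\|u\|^2$ anchored at a point of $\mf{D}\cap\mc{G}_{+1}$, and the limit \eqref{eqn:g_plus_eps} via the closure hypothesis \eqref{eqn:minima_closure} and continuity of $\mc{S}_\alpha$. The only cosmetic differences are that the paper phrases the last step as a sup-swap (defining auxiliary functions $f_{+\epsilon}$, $f_{+0}$) rather than your direct sequential approximation of the maximizer $u_\ast$, and that the paper takes $u_1$ to be the maximizer over $\mf{D}\cap\mc{G}_{+1}$ rather than an arbitrary anchor point; both variants work equally well.
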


\begin{proof}[Proof of the lower bound in Theorem \ref{thm:minima}]
Since
\[
    \P(\hat{\mu}_{H_N(u)} \not\in M(u,\delta,\epsilon)) \leq \P(\Spec(H_N(u)) \not\subset [\mathtt{l}(\mu_\infty(u)) - \epsilon, \mathtt{r}(\mu_\infty(u)) + \epsilon]) + \P(d_{\textup{BL}}(\hat{\mu}_{H_N(u)}, \mu_\infty(u)) > \delta), 
\]
\eqref{eqn:nooutliers} and \eqref{eqn:concentration_NlogN} tell us that
\begin{equation}
\label{eqn:minimalb_goodprobmeasures}
    \lim_{N \to \infty} \frac{1}{N} \log \left( \inf_{u \in \mf{D} \cap \mc{G}_{+\epsilon} \cap \overline{B_R}} \P(\hat{\mu}_{H_N(u)} \in M(u,\delta,\epsilon)) \right) = 0.
\end{equation}

Let $R$ satisfy Lemma \ref{lem:g_plus_eps}, and additionally be so large that $\abs{\mf{D} \cap \mc{G}_{+1} \cap \overline{B_R}} > 0$. If $\epsilon < 1$ is sufficiently small depending on $R$, then by Lemma \ref{lem:minima_inf_logpotential} we have
\begin{align*}
    &\int_{\mf{D}} e^{-(N+p)\alpha\|u\|^2} \E[\abs{\det(H_N(u))} \mathds{1}_{H_N(u) \geq 0}] \diff u \\
    &\geq e^{-\abs{p}\alpha R^2} \int_{\mf{D} \cap \mc{G}_{+\epsilon} \cap \overline{B_R}} e^{-N\alpha\|u\|^2} \exp\left(N \inf_{\mu \in M(u,\delta,\epsilon)} \int \log\abs{\lambda} \mu(\diff \lambda) \right) \P(\hat{\mu}_{H_N(u)} \in M(u,\delta,\epsilon)) \diff u \\
    &\geq e^{-\abs{p}\alpha R^2} \left( \inf_{u \in \mf{D} \cap \mc{G}_{+\epsilon} \cap \overline{B_R}} \P(\hat{\mu}_{H_N(u)} \in M(u,\delta,\epsilon)) \right) \exp\left(-\frac{2N\delta}{\epsilon}\right) \int_{\mf{D} \cap \mc{G}_{+\epsilon} \cap \overline{B_R}} e^{N\mc{S}_\alpha[u]} \diff u.
\end{align*}
Now we take the logarithm of both sides, divide by $N$, let $N \to \infty$, and then let $\delta \downarrow 0$. The set $\mf{D} \cap \mc{G}_{+\epsilon} \cap \overline{B_R}$ is closed and convex, as the finite intersection of such sets. Since closed convex sets in Euclidean space have empty interior if and only if they lie in a lower-dimensional affine space, we conclude that $\mf{D} \cap \mc{G}_{+\epsilon} \cap \overline{B_R}$ has nonempty interior from the fact that it has positive measure. Since a closed convex set with nonempty interior is the closure of its interior, we can apply Theorem \ref{thm:variational_formula} to this set. From this theorem and from \eqref{eqn:minimalb_goodprobmeasures}, we have
\[
    \liminf_{N \to \infty} \int_{\mf{D}} e^{-(N+p)\alpha\|u\|^2} \E[\abs{\det(H_N(u))} \mathds{1}_{H_N(u) \geq 0}] \diff u \geq \sup_{u \in \mf{D} \cap \mc{G}_{+\epsilon} \cap B_R} \mc{S}_\alpha[u] = \sup_{u \in \mf{D} \cap \mc{G}_{+\epsilon}} \mc{S}_\alpha[u].
\]
By \eqref{eqn:g_plus_eps}, this suffices.
\end{proof}

\begin{proof}[Proof of Lemma \ref{lem:minima_inf_logpotential}]
Consider the function $f_u$ defined on $[\mathtt{l}(\rho_\infty(u)) - \epsilon, \mathtt{r}(\rho_\infty(u)) + \epsilon]$ by
$
    f_u(\lambda) = \log\abs{\lambda}.
$
If $u \in \mf{D} \cap \mc{G}_{+\epsilon} \cap \overline{B_R}$, then
\[
    \|f_u\|_{\text{Lip}} + \|f_u\|_{L^\infty} \leq \frac{1}{\epsilon} + \max\left\{ \abs{\log(\epsilon)}, \abs{\log(\mathtt{r}(\rho_\infty(u))+\epsilon)} \right\} \leq \frac{2}{\epsilon}
\]
where the last inequality holds for $\epsilon$ sufficiently small, uniformly over $u \in \overline{B_R}$, since $\supp(\rho_\infty(u))$ is compactly supported uniformly over $u \in \overline{B_R}$. This implies that whenever $\mu \in M(u,\delta,\epsilon)$, we have
\[
	\abs{\int \log\abs{\lambda} \mu(\diff \lambda) - \int \log\abs{\lambda} \mu_\infty(u)(\diff \lambda)}  \leq \frac{2}{\epsilon} d_{\textup{BL}}(\mu,\mu_\infty(u)) \leq \frac{2\delta}{\epsilon}.
\]
\end{proof}

\begin{proof}[Proof of Lemma \ref{lem:g_plus_eps}]
The proof of Lemma \ref{lem:minima_topology} shows that the map $u \mapsto \rho_\infty(u)$ is continuous with respect to weak convergence; thus each $\mc{G}_{+\epsilon}$ is closed.

The proof of Lemma \ref{lem:Siscts} shows that $\mc{S}_\alpha[u] \leq \log(C\|u\|) - \alpha \|u\|^2$ on $\R^m$ and that $\mc{S}_\alpha$ is continuous. Since $\mf{D} \cap \mc{G}$ is closed by Lemma \ref{lem:minima_topology}, and each $\mf{D} \cap \mc{G}_{+\epsilon}$ is closed by the argument above, we can write $\sup_{u \in \mf{D} \cap \mc{G}} \mc{S}_\alpha[u] = \mc{S}_\alpha[u_\ast]$ for some $u_\ast$ and $\sup_{u \in \mf{D} \cap \mc{G}_{+\epsilon}} \mc{S}_\alpha[u] = \mc{S}_\alpha[u_\epsilon]$ for some $u_\epsilon$.

Let $R > 1$ be so large that $\log(C R) - \alpha R^2 < \mc{S}_\alpha[u_1]$. Then $u_\epsilon \in B_R$ for each $\epsilon < 1$; else we would have
\[
    \max_{u \in \mf{D} \cap \mc{G}_{+\epsilon}} \mc{S}_\alpha[u] = \mc{S}_\alpha[u_\epsilon] \leq \log(CR) - \alpha R^2 < \mc{S}_\alpha[u_1] = \max_{u \in \mf{D} \cap \mc{G}_{+1}} \mc{S}_\alpha[u] \leq \max_{u \in \mf{D} \cap \mc{G}_{+\epsilon}} \mc{S}_\alpha[u].
\]
This verifies \eqref{eqn:minimalb_restricttocpct}. 

For each $\epsilon > 0$, let 
\[
    f_{+\epsilon}(u) = \begin{cases} \mc{S}_\alpha[u] & \text{if } u \in \mf{D} \cap \mc{G}_{+\epsilon}, \\ -\infty & \text{otherwise,} \end{cases} \qquad f_{+0}(u) = \sup_{\epsilon > 0} f_{+\epsilon}(u) = \begin{cases} \mc{S}_\alpha[u] & \text{if } u \in \mf{D} \cap (\cup_{\epsilon > 0} \mc{G}_{+\epsilon}), \\ -\infty & \text{otherwise.} \end{cases}
\]
Since the $\mc{G}_{+\epsilon}$'s are nested and $\mc{S}_\alpha$ is continuous, we have
\begin{align*}
    \lim_{\epsilon \downarrow 0} \sup_{u \in \mf{D} \cap \mc{G}_{+\epsilon}} \mc{S}_\alpha[u] &= \sup_{\epsilon > 0} \sup_{u \in \mf{D} \cap \mc{G}_{+\epsilon}} \mc{S}_\alpha[u] = \sup_{\epsilon > 0} \sup_{u \in \R^m} f_{+\epsilon}(u) = \sup_{u \in \R^m} \sup_{\epsilon > 0} f_{+\epsilon}(u) = \sup_{u \in \R^m} f_{+0}(u) \\
    &= \sup_{u \in \mf{D} \cap (\cup_{\epsilon > 0} \mc{G}_{+\epsilon})} \mc{S}_\alpha[u] = \sup_{u \in \overline{\mf{D} \cap (\cup_{\epsilon > 0} \mc{G}_{+\epsilon})}} \mc{S}_\alpha[u] = \sup_{u \in \mf{D} \cap \mc{G}} \mc{S}_\alpha[u],
\end{align*}
where the last equality follows from \eqref{eqn:minima_closure}.
\end{proof}


\setcounter{equation}{0}
\setcounter{thm}{0}
\renewcommand{\theequation}{A.\arabic{equation}}
\renewcommand{\thethm}{A.\arabic{thm}}
\appendix
\setcounter{secnumdepth}{0}
\hypertarget{sec:multipoint}{}
\section[Appendix A\ \ \ Extensions to products of determinants]
{Appendix A\ \ \ Extensions to products of determinants}

In this section, we are interested in expectations of products of determinants like $\E[\prod_{i=1}^\ell |\det(H_N^{(i)})|]$, where $\ell$ is independent of $N$. In the landscape complexity program, these asymptotics help understand the $\ell$th moment of the number of critical points of some high-dimensional random function. Everything essentially is the same as in the case $\ell = 1$, and we obtain leading-order determinant asymptotics consistent with
\begin{equation}
\label{eqn:exponential_factoring}
	\E\left[ \prod_{i=1}^\ell |\det(H_N^{(i)})| \right] \approx \prod_{i=1}^\ell \E[|\det(H_N^{(i)})|],
\end{equation}
on exponential scale in $N$.
This is true \emph{no matter the correlation structure between the $H_N^{(i)}$'s}, which is perhaps surprising at first glance. However, note that \eqref{eqn:exponential_factoring} should hold at ``both ends of the correlation spectrum,'' so to speak: On the one hand, it holds with exact equality if the $H_N^{(i)}$'s are independent; on the other hand, if we believe in concentration then \eqref{eqn:exponential_factoring} is very plausible when the $H_N^{(i)}$'s are the same as each other.

However, \eqref{eqn:exponential_factoring} does require slightly stronger moment assumptions, which are encapsulated in the following generalization of assumption \hyperlink{assn:C}{(C)} (notice that \hyperlink{assn:C^ell}{(C${}^\ell$)} with $\ell = 1$ is the same as \hyperlink{assn:C}{(C)}).

\begin{enumerate}
\item[\hypertarget{assn:C^ell}{(C${}^\ell$)}] In addition to the Wegner assumption \eqref{eqn:wegner}, we require
\begin{equation}
\label{eqn:detconub_coarses}
	\lim_{N \to \infty} \frac{1}{N} \log \E\left[ \prod_{i=1}^N (1+|\lambda_i|\mathds{1}_{|\lambda_i| > e^{N^\epsilon}})^\ell\right] = 0
\end{equation}
for every $\epsilon > 0$ and 
\begin{equation}
\label{eqn:detconub_NlogNs}
	\limsup_{N \to \infty} \frac{1}{N \log N} \log \E[|\det(H_N)|^{\ell(1+\delta)}] < \infty \quad \text{for each $i$},
\end{equation}
for all sufficiently small $\delta > 0$. 

\end{enumerate}

Here is the analogue of Theorem \ref{thm:convex_functional}.

\begin{thm}
\label{thm:convex_functionals}
\textbf{(Convexity-preserving functionals)}
Fix $\ell \in \N$, and consider $\ell$ collections $(X^{(i)})_{i=1}^\ell$ each consisting of $M$ arbitrary independent entries. The collections can have any correlation structure with respect to each other. Consider matrices $H_N^{(i)} = \Phi^{(i)}(X^{(i)})$ that each satisfy Assumptions \hyperlink{assn:I}{(I)}, \hyperlink{assn:M}{(M)}, \hyperlink{assn:E}{(E)}, \hyperlink{assn:C^ell}{(C${}^\ell$)}, and \hyperlink{assn:S}{(S)} with reference measures $\mu_N^{(i)}$. 
Then
\begin{equation}
\label{eqn:detscon}
	\lim_{N \to \infty} \left( \frac{1}{N}\log \E\left[ \prod_{i=1}^\ell |\det(H_N^{(i)})|\right] - \sum_{i=1}^\ell \int_\R \log\abs{\lambda} \mu_N^{(i)}(\diff \lambda) \right) = 0.
\end{equation}
\end{thm}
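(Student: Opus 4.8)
The plan is to mimic the proof of Theorem \ref{thm:convex_functional} almost verbatim, replacing scalar quantities by their $\ell$-fold analogues throughout. Fix the parameter choices \eqref{eqn:detcon_parameters} (with $\kappa$ now taken to be the minimum of the $\kappa$'s arising for the various $i$), and for each $i$ introduce the events $\mc{E}_{\textup{gap}}^{(i)}$, $\mc{E}_{\textup{ss}}^{(i)}$, $\mc{E}_{\textup{conc}}^{(i)}$, $\mc{E}_b^{(i)}$ exactly as in \eqref{eqn:manyevents} but built from $\Phi^{(i)}(X^{(i)})$; then work on the intersection $\mc{E} = \bigcap_{i=1}^\ell (\mc{E}_{\textup{gap}}^{(i)} \cap \mc{E}_{\textup{ss}}^{(i)} \cap \mc{E}_{\textup{conc}}^{(i)} \cap \mc{E}_b^{(i)})$. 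Each individual event is likely — for $\mc{E}_{\textup{gap}}^{(i)}$ and $\mc{E}_{\textup{ss}}^{(i)}$ by assumption, for $\mc{E}_{\textup{conc}}^{(i)}$ and $\mc{E}_b^{(i)}$ by Lemmas \ref{lem:prob_econc} and \ref{lem:prob_eb} applied to the $i$-th model — and since $\ell$ is fixed (independent of $N$), a union bound shows $\P(\mc{E}^c) \to 0$ at the same $N^{-1}\log$-speed (indeed the same $N^{-1}\log N$-speed that Lemma \ref{lem:detconub_ec} uses). Crucially this argument does not care about the joint law of the $X^{(i)}$'s across $i$: the marginal law of each $X^{(i)}$ is all that enters each single-model estimate, and the union bound needs nothing more.

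For the upper bound I would write $\prod_{i=1}^\ell |\det(H_N^{(i)})| \mathds{1}_{\mc{E}}$, bound each factor on its events exactly as in the proof of the upper bound of Theorem \ref{thm:convex_functional}, obtaining
\[
	\prod_{i=1}^\ell |\det(H_N^{(i)})| \mathds{1}_{\mc{E}} \leq \prod_{i=1}^\ell \left( e^{N \int \log_\eta^K \rd \hat{\mu}_{\Phi^{(i)}(X^{(i)})}} \prod_{j=1}^N (1 + |\lambda_j^{(i)}| \mathds{1}_{|\lambda_j^{(i)}| > K}) \right),
\]
then take expectations. Here Hölder's inequality with exponents $(\ell, \ldots, \ell)$ (applied either to the whole product or just to the "coarse'' correction factors) reduces the expected $\ell$-fold product to a product of $\ell$-th moments of single-model quantities; the $\ell$-th power of the correction factor is handled by \eqref{eqn:detconub_coarses} in assumption \hyperlink{assn:C^ell}{(C${}^\ell$)}, which is exactly why the strengthened moment hypothesis is needed. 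The off-events term $\E[\prod_i |\det(H_N^{(i)})| (1 - \mathds{1}_{\mc{E}})]$ is controlled by the analogue of Lemma \ref{lem:detconub_ec}: Hölder's inequality splits it into $(\prod_i \E[|\det(H_N^{(i)})|^{\ell(1+\delta)}])^{1/(\ell(1+\delta))}$, which is $\OO(\log N)$ per factor by \eqref{eqn:detconub_NlogNs}, times $\P(\mc{E}^c)^{\delta/(1+\delta)}$, which decays faster than any polynomial. The logarithm-regularization step \eqref{eqn:detconub_logeta} is purely a statement about each $\mu_N^{(i)}$ and goes through unchanged.

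For the lower bound I would again follow \eqref{eqn:detconlb_fundamental} factor by factor: on $\mc{E}$, for each $i$ the quantity $N^{-1}\log |\det(H_N^{(i)})|$ is bounded below by $\int \log\abs{\lambda}\mu_N^{(i)}(\diff\lambda) - \epsilon^{(i)}(N)$ up to the same error terms (controlled via the analogues of Lemmas \ref{lem:borcapcha2011}, \ref{lem:detcon_eps1}, \ref{lem:eps2}, \ref{lem:prob_eb} for model $i$), after inserting $\mathds{1}_{\mc{E}}$ which only costs a factor $\P(\mc{E}) \to 1$. Summing the $\ell$ lower bounds and using $\frac{1}{N}\log\E[\,\cdot\,\mathds{1}_{\mc{E}}] \geq \frac{1}{N}\log(\P(\mc{E}) \cdot \inf(\cdot))$ on the event gives \eqref{eqn:detscon}. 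I expect the main (though modest) obstacle to be purely bookkeeping: one must be careful that the single-model estimates invoked for index $i$ are applied with $i$-specific constants and reference measures, and that the Hölder split in the upper bound is set up so that every expectation appearing is of a quantity that genuinely concerns only one model — this is what makes the correlation structure across $i$ irrelevant. No new idea beyond Theorem \ref{thm:convex_functional} is required; the strengthening from \hyperlink{assn:C}{(C)} to \hyperlink{assn:C^ell}{(C${}^\ell$)} is precisely what compensates for the Hölder losses.
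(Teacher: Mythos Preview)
Your proposal is correct and follows essentially the same approach as the paper's proof: the paper likewise works on the intersection of the per-$i$ events, bounds the exponential part deterministically on these events, applies H\"older with exponents $\ell$ to the coarse correction factors (invoking \eqref{eqn:detconub_coarses}), and handles the off-event contribution via H\"older with \eqref{eqn:detconub_NlogNs}; the lower bound is obtained exactly as you describe, via the $\ell$-fold analogue of Lemma \ref{lem:eps2} and the union bound $\P(\mc{E}) \to 1$. The only cosmetic difference is that the paper organizes the off-event term as a $2^\ell$-term expansion of $\prod_i(\mathds{1}_{\mc{E}^{(i)}} + \mathds{1}_{(\mc{E}^{(i)})^c})$ rather than a single $\mathds{1}_{\mc{E}^c}$, but the resulting H\"older estimate is identical.
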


\begin{proof}
We refer freely to objects from the proof of Theorem \ref{thm:convex_functional}, adding a parenthetical index $(i)$ to indicate their corresponding matrix. For example, 
\[
	\mc{E}_{\textup{ss}}^{(i)} = \{d_{\textup{KS}}(\hat{\mu}_{\Phi^{(i)}(X^{(i)})}, \hat{\mu}_{\Phi^{(i)}(X^{(i)}_{\textup{cut}})}) \leq N^{-\kappa}\}
\]
and so on. The main estimate in the upper bound is
\begin{align*}
	&\frac{1}{N}\log\E\left[ \prod_{i=1}^\ell |\det(H_N^{(i)})| \mathds{1}_{\mc{E}_{\textup{ss}}^{(i)}} \mathds{1}_{\mc{E}_{\textup{conc}}^{(i)}} \right] \\
	&\leq \frac{1}{N} \log \E\left[ e^{\sum_{i=1}^\ell N \int \log_\eta^K(\lambda) \hat{\mu}_{\Phi^{(i)}(X^{(i)})}(\diff \lambda)} \prod_{i=1}^\ell \left(\prod_{j=1}^N (1+|\lambda_j^{(i)}|\mathds{1}_{|\lambda_j^{(i)}| > K}) \right) \mathds{1}_{\mc{E}_{\textup{ss}}^{(i)}} \mathds{1}_{\mc{E}_{\textup{conc}}^{(i)}} \right] \\
	&\leq \ell(2 \epsilon_1(N) + t) + \sum_{i=1}^\ell \int_\R \log_\eta^K(\lambda)\mu_N^{(i)}(\diff \lambda) + \sum_{i=1}^\ell \frac{1}{\ell N}\log \E\left[ \prod_{j=1}^N (1+|\lambda_j^{(i)}|\mathds{1}_{|\lambda_j^{(i)}| > K})^\ell \right]
\end{align*}
where we use H{\" o}lder's inequality in the last line. Using the assumption \eqref{eqn:detconub_coarses} and arguments as in the one-determinant case, we use this to find
\[
	\limsup_{N \to \infty} \left( \frac{1}{N} \log \E\left[ \prod_{i=1}^\ell |\det(H_N^{(i)})| \mathds{1}_{\mc{E}_{\textup{ss}}^{(i)}} \mathds{1}_{\mc{E}_{\textup{conc}}^{(i)}} \right] - \sum_{i=1}^\ell \int \log\abs{\lambda} \mu_N^{(i)}(\diff \lambda) \right) \leq 0.
\]
To conclude the upper bound, write $\mc{E}^{(i)} = \mc{E}^{(i)}_{\textup{ss}} \cap \mc{E}^{(i)}_{\textup{conc}}$. We expand
\[
	\E\left[ \prod_{i=1}^\ell |\det(H_N^{(i)})| (\mathds{1}_{\mc{E}^{(i)}} + \mathds{1}_{(\mc{E}^{(i)})^c}), \right]
\]
as a sum over $2^\ell$ terms, each of which has a product of $\ell$ determinants and a product of $\ell$ indicators. We just studied the term with every indicator on $\mc{E}^{(i)}$, and now claim that any term with at least one indicator on the complement of $\mc{E}^{(i)}$ does not contribute. Indeed, suppose for concreteness that the indicator $\mathds{1}_{(\mc{E}^{(1)})^c}$ appears; then the term is bounded above by
\[
	\E\left[ \prod_{i=1}^\ell |\det(H_N^{(i)})| \mathds{1}_{(\mc{E}^{(1)})^c} \right] \leq \left( \prod_{i=1}^\ell \E[|\det(H_N^{(i)})|^{\ell(1+\delta)}]^{\frac{1}{\ell(1+\delta)}} \right) \P((\mc{E}^{(1)})^c)^{\frac{\delta}{1+\delta}}
\]
according to H{\"o}lder's. Using the new assumption \eqref{eqn:detconub_NlogNs}, we proceed as in the proof of Lemma \ref{lem:detconub_ec} to complete the proof of the upper bound.

The lower bound is easier to generalize; by following the proof of Lemma \ref{lem:eps2}, we find
\[
	\frac{1}{N} \log \E\left[\prod_{i=1}^\ell e^{ N \int(\log\abs{\lambda} - \log_\eta(\lambda)) \hat{\mu}_{\Phi^{(i)}(X^{(i)})}(\diff \lambda)} \mathds{1}_{\mc{E}^{(i)}_{\textup{gap}}} \mathds{1}_{\mc{E}^{(i)}_{\textup{ss}}} \mathds{1}_{\mc{E}^{(i)}_{\textup{conc}}} \right] \geq -\widetilde{\epsilon_2}(N)
\]
with
\[
	\widetilde{\epsilon_2}(N) = \ell\left(\frac{p_b}{2} \log(1+e^{2N^\epsilon}\eta^2) + \frac{\eta^2}{2w_b^2}\right) - \frac{1}{N}\log\P\left( \bigcap_{i=1}^\ell \mc{E}_{\textup{gap}}^{(i)}, \mc{E}_{\textup{ss}}^{(i)}, \mc{E}_{\textup{conc}}^{(i)}, \mc{E}_b^{(i)} \right),
\]
which tends to zero since each of the events $\mc{E}_{\cdots}^{(i)}$ has probability tending to one.
\end{proof}

Here is the analogue of Theorem \ref{thm:concentrated_input}.

\begin{thm}
\label{thm:concentrated_inputs}
\textbf{(Concentrated inputs)}
Fix $\ell \in \N$, and suppose that each of the matrices $(H_N^{(i)})_{i=1}^\ell$ satisfies the assumptions of the one-determinant Theorem \ref{thm:concentrated_input} with measures $\mu_N^{(i)}$. Then \eqref{eqn:detscon} holds.
\end{thm}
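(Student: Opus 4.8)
The strategy is to mimic the proof of the one-determinant Theorem~\ref{thm:concentrated_input}, carrying the extra determinants along via H\"older's inequality exactly as in the proof of Theorem~\ref{thm:convex_functionals}. As before we attach a parenthetical index $(i)$ to every object from the proof of Theorem~\ref{thm:concentrated_input}; for instance
\[
    \mc{E}_{\textup{Lip}}^{(i)} = \left\{\abs{ \int \log_\eta(\lambda) (\hat{\mu}_{H_N^{(i)}} - \E[\hat{\mu}_{H_N^{(i)}}])(\diff \lambda)} \leq t\right\}, \qquad \mc{E}_{\textup{gap}}^{(i)} = \{H_N^{(i)} \text{ has no eigenvalues in } [-e^{-N^\epsilon},e^{-N^\epsilon}]\},
\]
and similarly for $\mc{E}_b^{(i)}$. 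The parameter choices are the same as in \eqref{eqn:detcon_parameters}. Note that here we do not need any strengthened coarse assumption analogous to \hyperlink{assn:C^ell}{(C${}^\ell$)}: in the concentrated-input setting the large eigenvalues are controlled by assumption \hyperlink{assn:L}{(L)} together with \eqref{eqn:wegner} and the compact support of $\mu_N^{(i)}$, and the relevant inequality $\prod_i\abs{\det(H_N^{(i)})}\le\prod_i e^{N\int\log_\eta\rd\hat\mu_{H_N^{(i)}}}$ already factors cleanly.

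\textbf{Upper bound.} For each $i$, assumption \hyperlink{assn:L}{(L)} gives, exactly as in the proof of the upper bound of Theorem~\ref{thm:concentrated_input}, a universal $c_{\epsilon_0}$ with
\[
    \E[e^{N\int \log_\eta(\lambda) (\hat{\mu}_{H_N^{(i)}} - \E[\hat{\mu}_{H_N^{(i)}}])(\diff \lambda)}] \leq c_{\epsilon_0}\exp\left[ \left(\tfrac{2N^\zeta}{c_\zeta}\right)^{1/\epsilon_0} \left(\tfrac{1}{2\eta}\right)^2\right].
\]
Then $\frac1N\log\E\big[\prod_{i=1}^\ell\abs{\det(H_N^{(i)})}\big] \le \frac1N\log\E\big[\prod_{i=1}^\ell e^{N\int\log_\eta(\lambda)\hat\mu_{H_N^{(i)}}(\diff\lambda)}\big]$, and applying H\"older's inequality with exponents $(\ell,\dots,\ell)$ splits the right-hand side into $\ell$ single-determinant terms, each of which is handled by the one-determinant argument: using the bound above, then \eqref{eqn:wasserstein} and \eqref{eqn:detconub_logeta}, each contributes $\frac{1}{\ell}\big(\int\log_\eta(\lambda)\mu_N^{(i)}(\diff\lambda)+\oo(1)\big) \le \frac1\ell\big(\int\log\abs\lambda\,\mu_N^{(i)}(\diff\lambda)+\oo(1)\big)$. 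Summing gives the $\le 0$ direction of \eqref{eqn:detscon} (with $\frac1N$ in place of the $\frac{1}{N}$ normalization and $\sum_i$ of the log-potentials on the right).

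\textbf{Lower bound.} Here we follow the lower bound of Theorem~\ref{thm:concentrated_input} verbatim but with a product of $\ell$ copies. Writing $\mc{E}^{(i)} = \mc{E}_{\textup{Lip}}^{(i)}\cap\mc{E}_{\textup{gap}}^{(i)}\cap\mc{E}_b^{(i)}$, the argument of Lemma~\ref{lem:eps2} applied to each factor yields
\[
    \frac{1}{N}\log\E\Big[\prod_{i=1}^\ell e^{N\int(\log\abs\lambda-\log_\eta(\lambda))\hat\mu_{H_N^{(i)}}(\diff\lambda)}\mathds{1}_{\mc{E}_{\textup{gap}}^{(i)}}\mathds{1}_{\mc{E}_{\textup{Lip}}^{(i)}}\mathds{1}_{\mc{E}_b^{(i)}}\Big] \ge -\ell\Big(\tfrac{p_b}{2}\log(1+e^{2N^\epsilon}\eta^2)+\tfrac{\eta^2}{2w_b^2}\Big) + \tfrac1N\log\P\Big(\bigcap_{i=1}^\ell\mc{E}^{(i)}\Big),
\]
and inserting the $\log_\eta$-to-$\log_\eta^K$ and ${\rm W}_1$ corrections (via \eqref{eqn:wasserstein} and \eqref{eqn:truncate_muN} applied to each $\mu_N^{(i)}$) as in the one-determinant proof gives a lower bound of $\sum_{i=1}^\ell\int\log\abs\lambda\,\mu_N^{(i)}(\diff\lambda)-\epsilon(N)$ for an error $\epsilon(N)\to0$. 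The one point requiring a remark is that $\P\big(\bigcap_{i=1}^\ell\mc{E}^{(i)}\big)\to1$: by a union bound it suffices that each $\P((\mc{E}^{(i)})^c)\to0$, and this holds because $\P((\mc{E}_{\textup{gap}}^{(i)})^c)\to0$ by \eqref{eqn:wegner}, $\P((\mc{E}_b^{(i)})^c)\to0$ by Lemma~\ref{lem:prob_eb} (with $d_{\textup{BL}}$ replaced by ${\rm W}_1$), and $\P((\mc{E}_{\textup{Lip}}^{(i)})^c)\le\exp\big(-\tfrac{c_\zeta}{N^\zeta}\min\{(2Nt\eta)^2,(2Nt\eta)^{1+\epsilon_0}\}\big)\to0$ by \hyperlink{assn:L}{(L)}. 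Thus $\frac1N\log\P\big(\bigcap_i\mc{E}^{(i)}\big)\to0$, completing the lower bound and hence \eqref{eqn:detscon}.

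\textbf{Main obstacle.} There is no real obstacle: the only place where correlations between the $H_N^{(i)}$'s could plausibly cause trouble is in the lower bound, where we need the joint event $\bigcap_i\mc{E}^{(i)}$ to be likely, and this is immediate from the union bound since each individual event is likely regardless of the joint law. The upper bound never sees the correlation structure at all because H\"older's inequality decouples the factors. So the proof is genuinely a routine transcription of the $\ell=1$ case; the content is entirely in recognizing that no quantitative independence is needed.
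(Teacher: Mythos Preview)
Your approach is the same as the paper's: decouple the upper bound with H\"older and intersect the good events for the lower bound. There is one bookkeeping slip in the upper bound worth flagging: after H\"older, the $i$-th term is $\frac{1}{\ell N}\log\E[e^{\ell N\int\log_\eta\hat\mu_{H_N^{(i)}}}]$, and pulling out the deterministic part $\int\log_\eta\E[\hat\mu_{H_N^{(i)}}]$ gives $\int\log_\eta\mu_N^{(i)}+o(1)$, \emph{not} $\frac1\ell$ times this---the $\ell$ in the exponent exactly cancels the $1/\ell$ from H\"older on the deterministic piece, and you are left needing to show $\frac{1}{\ell N}\log\E[e^{\ell N\int\log_\eta(\hat\mu_{H_N^{(i)}}-\E\hat\mu_{H_N^{(i)}})}]=o(1)$. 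The displayed bound you quoted is for exponent $N$, not $\ell N$; the paper handles this by viewing $\ell\log_\eta$ as the test function (Lipschitz constant $\ell/(2\eta)$) and re-running the \hyperlink{assn:L}{(L)} estimate, which still gives $o(1)$. With that correction, summing over $i$ gives $\sum_i\int\log|\lambda|\mu_N^{(i)}+o(1)$ as needed.
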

\begin{proof}
For the upper bound, we mimic the proof of the one-determinant case, using H{\"o}lder's to obtain terms of the form $\E[e^{\ell N \int \log_\eta(\lambda)(\hat{\mu}_{H_N^{(i)}} - \E[\hat{\mu}_{H_N^{(i)}}])(\diff \lambda)}]^{1/\ell}$; we simply absorb this $\ell$ into the Lipschitz constant of $\log_\eta$. The lower bound is generalized as in the convexity-preserving-functional case, Theorem \ref{thm:convex_functionals}.
\end{proof}

We give two corollaries.

\begin{cor}
\textbf{(Products of $\ell$ Wigner matrices with subexponential tails)}
Let $(\mu^{(i)})_{i=1}^\ell$ be a collection of centered probability measures on $\R$ with unit variance and subexponential tails in the sense of \eqref{eqn:subexponentialtails} (the constants $\alpha$ and $\beta$ can depend on $i$). Let $W_N^{(i)}$ be a real symmetric Wigner matrix corresponding to $\mu^{(i)}$. Then for every collection $(E^{(i)})_{i=1}^\ell$ we have
\[
	\lim_{N \to \infty} \frac{1}{N} \log \E\left[\prod_{i=1}^\ell |\det(W_N^{(i)}-E^{(i)})|\right] = \sum_{i=1}^\ell \int_\R \log|\lambda - E^{(i)}|\rho_{\text{sc}}(\lambda) \diff \lambda.
\]
\end{cor}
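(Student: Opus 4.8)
The plan is to deduce this corollary from the product-of-determinants version of the convexity-preserving functional result, Theorem \ref{thm:convex_functionals}, in essentially the same way that the single-matrix Corollary \ref{cor:Wigner} follows from Theorem \ref{thm:convex_functional}. For each $i$, I model $W_N^{(i)} - E^{(i)}$ as $\Phi^{(i)}(X^{(i)})$, where $X^{(i)}$ is the collection of $M = \frac{N(N+1)}{2}$ independent entries distributed according to $\mu^{(i)}$, and $\Phi^{(i)}$ places them (scaled by $N^{-1/2}$) in the upper triangle of a symmetric matrix and subtracts $E^{(i)}\Id$; this $\Phi^{(i)}$ is affine, hence trivially convexity-preserving, with $\|\Phi^{(i)}\|_{\textup{Lip}} = N^{-1/2}$, so Assumptions \hyperlink{assn:I}{(I)} and \hyperlink{assn:M}{(M)} hold. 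The key point is that Theorem \ref{thm:convex_functionals} \emph{permits arbitrary correlation among the collections} $X^{(1)}, \ldots, X^{(\ell)}$; all that is required is that each $H_N^{(i)}$ individually satisfy (I), (M), (E), (C${}^\ell$), and (S). The reference measures are $\mu_N^{(i)} \equiv \rho_{\text{sc}}$, and the right-hand side of \eqref{eqn:detscon} then reads $\sum_{i=1}^\ell \int_\R \log|\lambda - E^{(i)}|\rho_{\text{sc}}(\lambda)\diff\lambda$ after the identification $\int \log|\lambda|\,\mu_N^{(i)}(\diff\lambda) = \int \log|\lambda - E^{(i)}|\rho_{\text{sc}}(\lambda)\diff\lambda$ coming from our model choice, exactly as in Corollary \ref{cor:Wigner}.

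The verification of the single-matrix assumptions is already done in Section \ref{subsec:wigner}: Assumption (E) with $\mu_N \equiv \rho_{\text{sc}}$ follows from A. Tikhomirov's estimate \eqref{eqn:tikhomirov} transferred from $d_{\textup{KS}}$ to $d_{\textup{BL}}$, and Assumption (S) follows from the Bennett-inequality argument using the subexponential-tails hypothesis \eqref{eqn:subexponentialtails} with a suitable choice of truncation exponent $\kappa$. The only genuinely new work is checking the strengthened coarse-bounds assumption (C${}^\ell$) in place of (C). For \eqref{eqn:detconub_coarses}, I would rerun the Weyl-inequality and Fischer-inequality argument of Section \ref{subsec:wigner}: writing $W = A + B$ with $A$ the matrix of entries truncated at level $\frac{1}{10N}e^{N^\epsilon}$, one bounds $\prod_i (1+|\lambda_i(W-E)|\mathds{1}_{|\lambda_i| > e^{N^\epsilon}})^\ell$ by $\det(\Id + 100B^2)^{\ell/2} \le \prod_i (1 + 10\sum_j |B_{ij}|)^\ell$; expanding the $\ell$th power, every matrix entry of $B$ now appears with power at most $2\ell$ instead of $2$, but since $\mathbb{E}[|B_{ij}|^k] \le CN e^{-N^\epsilon} \le e^{-\frac12 N^\epsilon}$ for every fixed $k \le 2\ell$, the same factorization-and-geometric-series argument still gives $\frac1N \log \mathbb{E}[\cdots] \to 0$. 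For \eqref{eqn:detconub_NlogNs}, one takes $\delta$ small enough that $\mu^{(i)}$ has a finite $\ell(2+2\delta)$ moment (which subexponential tails guarantee), and then the permanent-expansion bound $|\det(W_N - E)|^{\ell(1+\delta)} \le (N!)^{\ell(1+\delta)} \frac{1}{N!}\sum_\sigma X_\sigma^{\ell(1+\delta)}$ together with $\sup_\sigma \mathbb{E}[X_\sigma^{\ell(1+\delta)}] \le c_E^N$ gives the required $\OO(N^{(1+\ell(1+\delta))N})$-type bound, so the $\limsup$ of $\frac{1}{N\log N}\log\mathbb{E}[|\det(W_N^{(i)}-E^{(i)})|^{\ell(1+\delta)}]$ is finite. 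The Wegner estimate \eqref{eqn:wegner} at each energy level $E^{(i)}$ is Nguyen's result, unchanged.

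I do not expect a serious obstacle here: the argument is a bookkeeping extension of the single-matrix case, and the correlation-agnostic nature of Theorem \ref{thm:convex_functionals} does all the conceptual work. The one place to be slightly careful is that the constants $\alpha, \beta$ in the subexponential-tails assumption \eqref{eqn:subexponentialtails}, and hence the permissible truncation exponent in Assumption (S), may differ across the indices $i$; one simply takes the worst (smallest) admissible $\kappa$ over $i \in \{1, \ldots, \ell\}$, which is harmless since $\ell$ is fixed. The local uniformity in the energies $(E^{(i)})$ — i.e.\ that the limit holds uniformly as each $E^{(i)}$ ranges over a compact set — follows by inspecting the proof exactly as in the single-matrix corollary, since all the estimates above are locally uniform in $E^{(i)}$.
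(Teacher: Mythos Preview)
Your proposal is correct and follows essentially the same approach as the paper: apply Theorem \ref{thm:convex_functionals}, verify (I), (M), (E), (S) exactly as in the single-matrix Wigner case, and upgrade (C) to (C${}^\ell$) by noting that in the Fischer-inequality bound the entries of $B$ now appear with power at most $2\ell$, and in the permanent-expansion bound one needs $2\ell(1+\delta)$ moments, which subexponential tails provide. The paper's own proof is in fact terser than yours but makes exactly these two points.
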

\begin{proof}
We use Theorem \ref{thm:convex_functionals}, verifying its assumptions as in the case of one Wigner matrix. The verification of \eqref{eqn:detconub_coarses} and \eqref{eqn:detconub_NlogNs} is as follows: Dropping $(\cdots)^{(i)}$ from the notation and arguing as in the one-point case, we find
\[
	\E\left[ \prod_{j=1}^N (1+|\lambda_j|\mathds{1}_{|\lambda_j| > e^{N^\epsilon}})^\ell\right] \leq \E\left[ \prod_{j=1}^N \left(1+10\sum_{k=1}^N \abs{B_{jk}} \right)^{\ell}\right],
\] 
where $B$ is the matrix of tails, which has independent entries up to symmetry. When we expand and factor the right-hand side, entries of $B$ now appear with power at most $2\ell$ (instead of $2$ before). Similarly, to verify \eqref{eqn:detconub_NlogNs} we mimic the original notation and find 
\[
	\abs{\det(W_N+E)}^{\ell(1+\delta)} \leq (N!)^{\ell(1+\delta)} \frac{\sum_{\sigma} X_\sigma^{\ell(1+\delta)}}{N!}.
\]
\end{proof}

\begin{cor}
\textbf{(Products of $\ell$ non-invariant Gaussian matrices)}
If $(H_N^{(i)})_{i=1}^\ell$ are Gaussian matrices with a (co)variance profile satisfying the requirements of Corollary \ref{cor:GaussianCovarianceB}, or block-diagonal Gaussian matrices satisfying the requirements of Corollary \ref{cor:GaussianBlock} -- or a mixture of both -- and $\mu_N^{(i)}$ are the corresponding MDE measures, then
\[
	\lim_{N \to \infty} \left( \frac{1}{N} \log \E\left[ \prod_{i=1}^\ell |\det(H_N^{(i)})| \right] - \sum_{i=1}^\ell \int_\R \log\abs{\lambda} \mu_N^{(i)}(\lambda) \diff \lambda \right) = 0.
\]
\end{cor}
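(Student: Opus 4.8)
The plan is to deduce this corollary directly from Theorem \ref{thm:convex_functionals} and Theorem \ref{thm:concentrated_inputs} (the product versions of our two general theorems), in exactly the same way that Corollaries \ref{cor:GaussianCovarianceB} and \ref{cor:GaussianBlock} were deduced from Theorems \ref{thm:convex_functional} and \ref{thm:concentrated_input}. Both of these Gaussian models were handled through the concentrated-input route, so the product statement will follow from Theorem \ref{thm:concentrated_inputs}; the only thing to check is that each matrix $H_N^{(i)}$, taken in isolation, satisfies the hypotheses of the one-determinant Theorem \ref{thm:concentrated_input}, namely assumptions \hyperlink{assn:W}{(W)}, \hyperlink{assn:L}{(L)}, and the Wegner estimate \eqref{eqn:wegner}. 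But this verification was already carried out in Sections \ref{sec:GausCovProof} (for (co)variance-profile matrices satisfying the requirements of Corollary \ref{cor:GaussianCovarianceB}) and in the block-diagonal subsection (for matrices satisfying the requirements of Corollary \ref{cor:GaussianBlock}). So there is essentially nothing model-specific left to do: each factor of the product is a matrix we have already analyzed.

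Concretely, first I would recall that Theorem \ref{thm:concentrated_inputs} requires \emph{no} assumption whatsoever on the joint law of $(H_N^{(1)}, \ldots, H_N^{(\ell)})$ — the collections of underlying Gaussian variables may be arbitrarily correlated across $i$ — so a ``mixture of both'' model types poses no additional difficulty; each index $i$ is simply of one type or the other. Second, for each $i$ one invokes the relevant verification: if $H_N^{(i)}$ is of (co)variance-profile type, then Lemmas \ref{lem:covariance_lb}, \ref{lem:LSI}, \ref{lem:mfv}, \ref{lem:corr_L1}, \ref{lem:corr_tails} together with Proposition \ref{prop:bai} and Propositions \ref{prop:schur}, \ref{lem:schur_bdd_density} show (W), (L) and \eqref{eqn:wegner}, with $\mu_N^{(i)}$ the size-$N$ Matrix Dyson Equation measure; if $H_N^{(i)}$ is block-diagonal, the corresponding subsection shows the same three facts, with $\mu_N^{(i)}$ the block-MDE measure defined through \eqref{eqn:block_mde}. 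Third, having verified the one-determinant hypotheses for every factor, Theorem \ref{thm:concentrated_inputs} immediately yields
\[
    \lim_{N \to \infty} \left( \frac{1}{N} \log \E\left[ \prod_{i=1}^\ell |\det(H_N^{(i)})| \right] - \sum_{i=1}^\ell \int_\R \log\abs{\lambda} \mu_N^{(i)}(\lambda) \diff \lambda \right) = 0,
\]
which is the claimed statement.

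There is essentially no obstacle here; the corollary is a bookkeeping consequence of work already done. The only point worth a sentence of care is the normalization: if some of the factors are block-diagonal $NK_i \times NK_i$ matrices, the corresponding single-determinant asymptotic is naturally stated with a $\frac{1}{NK_i}$ normalization (as in Corollary \ref{cor:GaussianBlock}), whereas the product statement is written with a global $\frac{1}{N}$ normalization; one absorbs the constant $K_i$ into the density $\mu_N^{(i)}$ exactly as in the passage from Corollary \ref{cor:GaussianBlock} to the present setting (equivalently, $\int \log|\lambda|\,\mu_N^{(i)}(\lambda)\,\diff\lambda$ already accounts for the matrix size through the Stieltjes-transform normalization $\frac{1}{NK_i}\sum_j \Tr m_j$), so that no inconsistency arises. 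Hence:

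\begin{proof}
By Theorem \ref{thm:concentrated_inputs}, it suffices to check that each $H_N^{(i)}$, considered on its own, satisfies the hypotheses of Theorem \ref{thm:concentrated_input} with reference measures $\mu_N^{(i)}$ (no joint hypothesis on the family $(H_N^{(i)})_{i=1}^\ell$ is required). For those indices $i$ for which $H_N^{(i)}$ is a Gaussian matrix with a (co)variance profile satisfying the requirements of Corollary \ref{cor:GaussianCovarianceB}, this was established in Section \ref{sec:GausCovProof}: assumption \hyperlink{assn:L}{(L)} follows from Lemma \ref{lem:LSI} via Herbst's argument, assumption \hyperlink{assn:W}{(W)} follows from Proposition \ref{prop:bai} together with the support and regularity estimates \eqref{eqn:gauss_cov_bounded_support} and Lemma \ref{lem:corr_tails}, and the Wegner estimate \eqref{eqn:wegner} follows from Lemma \ref{lem:covariance_lb}, Lemma \ref{lem:schur_bdd_density} and Proposition \ref{prop:schur}, with $\mu_N^{(i)}$ the corresponding Matrix Dyson Equation measure. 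For those indices $i$ for which $H_N^{(i)}$ is block-diagonal and satisfies the requirements of Corollary \ref{cor:GaussianBlock}, the same three facts were verified in the block-diagonal subsection, with $\mu_N^{(i)}$ the measure defined through \eqref{eqn:block_mde}. Thus every $H_N^{(i)}$ satisfies the hypotheses of Theorem \ref{thm:concentrated_input}, and Theorem \ref{thm:concentrated_inputs} applies.
\end{proof}
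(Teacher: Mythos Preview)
Your proposal is correct and matches the paper's approach: the paper states this corollary immediately after Theorem \ref{thm:concentrated_inputs} with no proof, treating it as an immediate consequence of that theorem together with the single-determinant verifications already done in Section \ref{sec:GausCovProof} and the block-diagonal subsection. Your write-up spells out exactly this logic; the only quibble is that your normalization remark (``absorb $K_i$ into the density $\mu_N^{(i)}$'') is not literally correct since $\mu_N^{(i)}$ is a probability measure, but this reflects an ambiguity in the paper's own statement rather than a flaw in your argument.
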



\setcounter{equation}{0}
\setcounter{thm}{0}
\renewcommand{\theequation}{B.\arabic{equation}}
\renewcommand{\thethm}{B.\arabic{thm}}
\appendix
\setcounter{secnumdepth}{0}
\hypertarget{sec:secondmoment}{}
\section[Appendix B\ \ \ Second moments and exponential finiteness]
{Appendix B\ \ \ Second moments and exponential finiteness}

In the main text we found \emph{sufficient} conditions to compute the exact value of $\lim_{N \to \infty} \frac{1}{N} \log \E[\abs{\det(H_N)}]$. In this section we ask just when this limit is finite, but the conditions we find are both necessary and sufficient.

For the theorem statement, we fix some probability measure $\mu$, and let $H_N$ be a Wigner matrix associated with $\mu$ in the sense of Section \ref{subsec:wignerstatement}. We emphasize that there are no regularity assumptions on $\mu$ other than the moment ones made in the theorem statement.

\begin{thm}
\label{thm:pthmoment}
Fix $p \geq 1$ and $E \in \R$. If $\mu$ has infinite $(2p)$-th moment, then
\[
	\E[\abs{\det(H_N-E)}^p] = +\infty
\]
for every finite $N$. On the other hand, if $\mu$ has finite $(2p)$-th moment, then
\[
	\limsup_{N \to \infty} \frac{1}{N} \log \E[\abs{\det(H_N-E)}^p] < \infty.
\]
\end{thm}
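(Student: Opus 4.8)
\textbf{Proof strategy for Theorem \ref{thm:pthmoment}.}
The plan is to handle the two directions separately, and in both cases to expand the determinant over the symmetric group and control each summand by independence.

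\textbf{Infinite moment direction.} Write $M = \sqrt{N}(H_N - E)$, so the off-diagonal entries of $M$ are i.i.d.\ copies of $\mu$ and the diagonal entries are i.i.d.\ copies of a shifted $\mu$ (shifted by $-\sqrt{N}E$). By the Leibniz formula, $\det(H_N-E) = N^{-N/2}\sum_{\sigma \in S_N} \sgn(\sigma) \prod_{i=1}^N M_{i\sigma(i)}$. The point is to isolate a single transposition-type term that already has infinite $p$-th moment. Concretely, fix the permutation $\sigma_0$ that is the product of the transposition $(1\,2)$ with the identity on $\{3,\dots,N\}$, so the corresponding monomial is $M_{12}M_{21}\prod_{i\geq 3} M_{ii}$. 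I would condition on all entries except $M_{12}$; the conditional expectation of $\abs{\det(H_N-E)}^p$ is, up to the deterministic factor $N^{-pN/2}$, the $p$-th absolute moment of an affine function $a M_{12} + b$ of the single variable $M_{12}$ (with $a,b$ measurable with respect to the other entries), where $a = \sgn(\sigma_0)M_{21}\prod_{i\geq 3}M_{ii}$ plus the contribution of all other permutations fixing $M_{12}$ — more carefully, $\det(H_N-E)$ is an affine (degree one) function of $M_{12}$ for fixed values of all other entries, since $M_{12}$ appears in at most one factor of each Leibniz monomial. On the positive-probability event that the coefficient $a$ of $M_{12}$ is nonzero (e.g.\ the event $\{M_{21}\neq 0,\ M_{ii}\neq 0 \text{ for } i\geq 3, \text{ and the total coefficient of }M_{12}\text{ doesn't cancel}\}$), the conditional $p$-th moment is $\E[\abs{aX+b}^p]$ with $X\sim\mu$, which is infinite whenever $\E_\mu[\abs{X}^p] = \infty$ — and since $\abs{M_{12}}$ appears only to the first power, ``$\mu$ has infinite $(2p)$-th moment'' must be leveraged via a \emph{pair} $M_{12}M_{21}$ rather than a single entry. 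So instead I would condition on everything except the pair $(M_{12}, M_{21})=(M_{12},M_{12})$: the matrix is symmetric, $M_{12}=M_{21}$, and this single variable appears to the second power in the Leibniz monomial for $\sigma_0$, giving a conditional expectation comparable to $\E[\abs{c X^2 + (\text{lower order in }X)}^p]$, which forces the $\abs{X}^{2p}$ moment. I would make precise that the lower-order-in-$X$ terms (degree $\leq 1$) cannot cause enough cancellation on a positive-probability event, using that the leading coefficient $c = -\prod_{i\geq 3} M_{ii}$ (up to sign) is nonzero with positive probability and is independent of $X$. This yields $\E[\abs{\det(H_N-E)}^p]=+\infty$ for every finite $N$.

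\textbf{Finite moment direction.} Here I would mimic the proof of \eqref{eqn:detconub_NlogN} for Wigner matrices given in Section \ref{subsec:wigner}, but tracking the exponent $p$ and allowing $p>1$. Write $X_\sigma = \abs{(H_N-E)_{1,\sigma(1)}\cdots(H_N-E)_{N,\sigma(N)}}$; by convexity of $x\mapsto x^p$ (valid since $p\geq 1$), $\abs{\det(H_N-E)}^p \leq (\sum_\sigma X_\sigma)^p \leq (N!)^p \frac{1}{N!}\sum_\sigma X_\sigma^p$. For a fixed $\sigma$, the monomial $X_\sigma^p$ is a product over the cycles of $\sigma$; within one cycle the factors are distinct entries, hence independent, but across cycles and via the symmetry constraint an entry $(i,j)$ can appear at most twice (once as $(i,j)$, once as $(j,i)$). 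Thus $\E[X_\sigma^p]$ factors into a product of terms each of which is a moment of the form $\E[\abs{Y-E/\sqrt N}^{p}]$ or $\E[\abs{Y-E/\sqrt N}^{2p}]$ (diagonal vs.\ off-diagonal, single vs.\ doubled appearance), where $\sqrt N Y\sim\mu$; crucially, finiteness of the $(2p)$-th moment of $\mu$ makes all these finite, and after rescaling each is bounded by $c_E^N$ for a constant $c_E=c_E(\mu,p,E)$ depending only on the first $2p$ moments of $\mu$ and on $E$ (the $1/N$ normalization only helps). Combined with $N!\leq N^N$, this gives $\E[\abs{\det(H_N-E)}^p]\leq c_E^N N^{pN}$ up to lower-order factors, so $\frac1N\log\E[\abs{\det(H_N-E)}^p]\leq \log c_E + p\log N + o(\log N)$, which is certainly $O(\log N)$ but we need it bounded. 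In fact $\limsup_{N\to\infty}\frac1N\log\E[\abs{\det(H_N-E)}^p]<\infty$ follows immediately since $\log c_E + p\log N$ divided by $N$ tends to $0$; so the $\limsup$ is $\leq 0$, and in particular finite.

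\textbf{Main obstacle.} The delicate point is the infinite-moment direction: one must exhibit a positive-probability event on which the conditional expectation, viewed as a polynomial of degree $2$ in the doubled entry $X=M_{12}=M_{21}$ with random coefficients, genuinely has infinite $p$-th absolute moment — i.e.\ ruling out that random cancellations among the lower-degree terms (or vanishing of the leading coefficient) kill the blow-up on a full-measure set. This requires a short argument that the leading coefficient $-\prod_{i\geq 3}M_{ii}$ (which is independent of $X$) is nonzero with positive probability unless $\mu(\{0\})>0$ in a degenerate way, together with the elementary fact that for fixed $c\neq 0$ and any real $b_0,b_1$, $\E[\abs{cX^2+b_1 X+b_0}^p]=\infty$ when $\E[\abs{X}^{2p}]=\infty$. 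One also needs the minor wrinkle that if $\mu$ is a point mass the matrix is deterministic and the statement is vacuous or trivial, so one may assume $\mu$ is non-degenerate throughout.
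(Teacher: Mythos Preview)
Your infinite-moment direction is essentially the paper's argument: view $\det(H_N-E)$ as a quadratic in the single symmetric entry $H_{12}=H_{21}$, and show the leading coefficient is nonzero with positive probability. One correction: the coefficient of $H_{12}^2$ is not $-\prod_{i\geq 3}(H-E)_{ii}$ but the full minor $-\det\bigl((H-E)_{ij}\bigr)_{3\leq i,j\leq N}$, since every permutation with $\sigma(1)=2,\sigma(2)=1$ contributes. This does not change the structure of the argument --- the minor is still independent of $H_{12}$ and is not almost surely zero --- but you should state it correctly.

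The finite-moment direction, however, has a genuine gap. Your Leibniz-expansion bound gives
\[
\E[\abs{\det(H_N-E)}^p]\leq c_E^N\, N^{pN},
\]
so that $\tfrac1N\log\E[\abs{\det(H_N-E)}^p]\leq \log c_E + p\log N$. You then write that ``$\log c_E+p\log N$ divided by $N$ tends to $0$'' --- but you have already divided by $N$; the right-hand side is unbounded in $N$, and your inequality only yields the weaker statement $\limsup_N \tfrac{1}{N\log N}\log\E[\abs{\det(H_N-E)}^p]<\infty$ (this is exactly what the Leibniz method proves for \eqref{eqn:detconub_NlogN} in Section~\ref{subsec:wigner}). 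The triangle inequality over $N!$ permutations is too lossy to reach $C^N$: the gain from the $N^{-1/2}$ entry scaling is at best $N^{-pN/2}$, which cannot kill the combinatorial $N!\sim N^N$ factor when $p<2$.

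The paper takes a different route for this half: it applies Hadamard's inequality $\abs{\det(H_N-E)}\leq\prod_i\|V_i\|$ and then, since the columns $V_i$ are correlated through the symmetry, uses a Lindeberg-style replacement of the off-diagonal entries one at a time by deterministic constants. Each replacement is justified by a concavity lemma (the map $x\mapsto (a+x^{1/p})^{p/2}(b+x^{1/p})^{p/2}$ is concave on $\R_+$), which shows the expectation only increases. After all replacements the columns are independent and $\E[\|V_i\|^p]\leq C$, giving the genuine $C^N$ bound. You would need either this argument or another idea that avoids the $N!$ loss.
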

The proof relies on the following elementary lemma, whose proof will be given at the end:
\begin{lem}
\label{lem:lphadamardconvexity}
For any $a, b \geq 0$ and $p \geq 1$, the function $f : \R_+ \to \R_+$ given by $f(x) = (a+x^{1/p})^{p/2}(b+x^{1/p})^{p/2}$ is concave.
\end{lem}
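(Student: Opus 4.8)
The plan is to reduce the concavity of $f$ to an explicit polynomial inequality via the substitution $t = x^{1/p}$, and then verify that the resulting polynomial has nonnegative coefficients. Since $a + x^{1/p} \ge 0$ and $b + x^{1/p} \ge 0$, we may write $f(x) = \psi(t)^{p/2}$ where $t = x^{1/p}$ and $\psi(t) = (a+t)(b+t) = t^2 + \sigma t + \pi$, with $\sigma = a+b \ge 0$ and $\pi = ab \ge 0$. For $x > 0$ one has $\psi(t) > 0$, so $f$ is smooth on $(0,\infty)$; as $f$ is also continuous on $[0,\infty)$, it suffices to prove $f'' \le 0$ on $(0,\infty)$.

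First I would apply the chain rule. Writing $' = \tfrac{d}{dt}$ for the derivative of $\psi^{p/2}$ in $t$ and using $\tfrac{dt}{dx} = \tfrac1p t^{1-p}$, a routine computation gives
\[
  \frac{d^2 f}{dx^2} = \frac{t^{1-2p}}{p^2}\Bigl[(1-p)\,f'(t) + t\,f''(t)\Bigr],
\]
so, the prefactor $t^{1-2p}/p^2$ being positive, concavity in $x$ is equivalent to $t\,f''(t) \le (p-1)\,f'(t)$ for all $t > 0$. With $f = \psi^{p/2}$ we have $f' = \tfrac p2\psi^{p/2-1}\psi'$ and $f'' = \tfrac p2\bigl[(\tfrac p2-1)\psi^{p/2-2}(\psi')^2 + 2\psi^{p/2-1}\bigr]$ (using $\psi'' = 2$), so after dividing by the positive quantity $\tfrac p2\psi^{p/2-2}$ the inequality becomes
\[
  (p-1)\,\psi\psi' \;\ge\; \Bigl(\tfrac p2 - 1\Bigr) t\,(\psi')^2 + 2 t\,\psi .
\]

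The final step is to expand both sides as polynomials in $t$ from $\psi = t^2+\sigma t+\pi$, $\psi' = 2t+\sigma$, and collect. The $t^3$ terms cancel, leaving
\[
  (p-1)\psi\psi' - \Bigl(\tfrac p2-1\Bigr)t(\psi')^2 - 2t\psi \;=\; (p-1)\sigma\,t^2 + \Bigl[\tfrac p2\sigma^2 + (2p-4)\pi\Bigr]t + (p-1)\pi\sigma .
\]
For $p \ge 1$ and $\sigma,\pi \ge 0$ the $t^2$- and $t^0$-coefficients are manifestly nonnegative; for the $t^1$-coefficient, when $p \ge 2$ both summands are nonnegative, and when $1 \le p < 2$ one uses the AM--GM bound $\sigma^2 = (a+b)^2 \ge 4ab = 4\pi$ together with $4 - 2p \le 2p$ (which is just $p \ge 1$) to conclude $\tfrac p2\sigma^2 \ge 2p\pi \ge (4-2p)\pi$. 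Hence the polynomial is $\ge 0$ on $t \ge 0$, giving $f'' \le 0$ on $(0,\infty)$ and thus concavity of $f$ on $[0,\infty)$. The only delicate point — and the step I would flag as the main obstacle — is precisely the sign of the middle coefficient $\tfrac p2\sigma^2 + (2p-4)\pi$ when $1 \le p < 2$, where the second term is negative; this is exactly where the hypothesis $p \ge 1$ (as opposed to $p > 0$) enters. The rest is elementary algebra plus the standard fact that a function continuous on $[0,\infty)$ and $C^2$ with nonpositive second derivative on $(0,\infty)$ is concave.
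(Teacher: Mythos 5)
Your proof is correct. The reduction via $t=x^{1/p}$ to $t\,g''(t)\le (p-1)\,g'(t)$ for $g=\psi^{p/2}$, the division by $\tfrac p2\psi^{p/2-2}>0$, and the polynomial expansion all check out: the $t^3$ terms do cancel, the remaining coefficients are $(p-1)\sigma$, $\tfrac p2\sigma^2+(2p-4)\pi$, and $(p-1)\pi\sigma$, and your AM--GM argument $\sigma^2\ge 4\pi$ combined with $4p\ge 4$ settles the only problematic coefficient when $1\le p<2$. You also correctly flag that this is exactly where $p\ge1$ is used, and the passage from concavity on $(0,\infty)$ to $[0,\infty)$ by continuity is fine.

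The paper's proof is likewise a direct second-derivative computation, but it organizes the endgame differently: after computing $f''$ in the original variable it substitutes $u=ax^{-1/p}$, $v=bx^{-1/p}$, reducing the sign question to the two-variable inequality $\bigl(\tfrac12-\tfrac1p\bigr)\bigl(\tfrac{1+u}{1+v}+\tfrac{1+v}{1+u}-2\bigr)+\bigl(\tfrac1p-1\bigr)(u+v)\le 0$, which it proves by checking the case $p=1$ and then showing the expression is non-increasing in $p$ (the latter step again coming down to an elementary inequality in $u,v$). Your route trades that monotonicity-in-$p$ argument for a single cubic expansion in $t$ with symmetric-function coefficients; the benefit is that the entire inequality is visible at once as three explicit coefficients, with AM--GM doing the one nontrivial job, whereas the paper's change of variables makes the dependence on $p$ transparent and isolates the role of the hypothesis $p\ge1$ in the same way your middle coefficient does. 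Both are self-contained calculus arguments of comparable length; either could replace the other.
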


\begin{proof}[Proof of Theorem \ref{thm:pthmoment}.]
First, suppose that $\mu$ has infinite $(2p)$-th moment. We can write $\det(H_N-E) = X H_{12}^2+Y H_{12}+Z$, where the random vector $(X,Y,Z)$ is independent of $H_{12}$ and $X = -\det((H_{ij}-E)_{3 \leq i, j \leq N})$.  Denoting $\mu_{X,Y,Z}$ the joint law of $(X,Y,Z)$ on $\R^3$ and $\E_{H_{12}}$ denote expectation with respect to $H_{12}$, we have
\[
	\E[\abs{\det(H_N)}^p] \geq \int_{\R^3} \mu_{X,Y,Z}(\diff x, \diff y, \diff z) \mathds{1}_{x \neq 0} \E_{H_{12}}[\abs{xH_{12}^2 + yH_{12} + z}^p].
\]
We have $\P(X = 0) < 1$, so 
it suffices to show that, for every deterministic $(x,y,z) \in \R^3$ with $x \neq 0$, we have $\E[\abs{xH_{12}^2 + yH_{12} + z}^p] = +\infty$. Indeed, suppose without loss of generality that $x > 0$; then there exists a threshold $t_{x,y,z}$ such that, for $\abs{H_{12}} > t_{x,y,z}$, we have $xH_{12}^2 + yH_{12} + z \geq \frac{x}{2}H_{12}^2$; thus
\[
	\E[\abs{xH_{12}^2 + yH_{12} + z}^p] \geq \left(\frac{x}{2}\right)^p \E[\abs{H_{12}}^{2p}\mathds{1}_{\abs{H_{12}} > t_{x,y,z}}] = +\infty.
\]

Now suppose that $\mu$ has finite $(2p)$-th moment, and write the columns of $H_N-E$, which are $N$ vectors each of length $N$, as $(V_i)_{i=1}^N = (V_i^{(E)})_{i=1}^N$. Write $\|\cdot\|$ for the 2-norm on vectors, and for simplicity of notation write $H$ for $H_N$. By Hadamard's inequality $\abs{\det(H-E)} \leq \prod_{i=1}^N \|V_i\|$, it suffices to prove, for some finite $C$, that $\E\left[ \prod_{i=1}^N \|V_i\|^p \right] \leq C^N$.

The problem is that the $V_i$'s are correlated (since $H$ is symmetric); to surmount this, we will apply a Lindeberg-style argument. Fix some ordering of the strict-upper-triangular \emph{positions}, i.e., a map $\ell : \{(i,j)\}_{1 \leq i < j \leq N} \to \llbracket 1, v_{\textup{max}} \rrbracket$ (of course, $v_{\textup{max}} = \frac{N(N-1)}{2}$). Then, for $v \in \llbracket 0, v_{\textup{max}} \rrbracket$, define the symmetric matrix $H^{(v)}$ entrywise through
\begin{align*}
	&\text{for } i < j, \qquad (H^{(v)})_{ij} = \begin{cases} H_{ij} & \text{if } v < \ell((i,j)), \\ \frac{1}{\sqrt{N}} \E[(\sqrt{N}H_{ij})^{2p}]^{\frac{1}{2p}} & \text{if } v \geq \ell((i,j)), \end{cases} \\
	&\text{for } i = j, \qquad (H^{(v)})_{ij} = H_{ij}, \\
	&\text{for } i > j, \qquad (H^{(v)})_{ij} = (H^{(v)})_{ji}.
\end{align*}
In other words, $H^{(0)} = H$, and in $H^{(v)}$ we have replaced $v$ of the entries in the (strict) upper triangle, along with their reflections in the lower triangle, with deterministic numbers. 

Write $(V_i^{(v)})_{1 \leq i \leq N}$ for the columns of $H^{(v)}-E$. For any $1 \leq v \leq v_{\textup{max}}$ we claim that
\begin{equation}
\label{eqn:lplindeberg}
	\E\left[ \prod_{i=1}^N \|V_i^{(v-1)}\|^p \right] \leq \E\left[ \prod_{i=1}^N \|V_i^{(v)}\|^p \right].
\end{equation}
This naturally follows from
\[
	\E_v\left[ \prod_{i=1}^N \|V_i^{(v-1)}\|^p \right] \leq \prod_{i=1}^N \|V_i^{(v)}\|^p,
\]
where $\E_v$ denotes the integration with respect to the single entry $H_{\ell(v)}$ (and of course its reflection in the lower triangle). Writing $H_{\ell(v)} = N^{-1/2}X$ for some random variable $X \sim \mu$ which has finite $(2p)$-th moment, the above equation can be rewritten as
\[
	\E\left[ (a+N^{-1}(X^{2p})^{\frac{1}{p}})^{\frac{p}{2}}(b+N^{-1}(X^{2p})^{\frac{1}{p}})^{\frac{p}{2}} \right] \leq (a+N^{-1}\E[X^{2p}]^{\frac{1}{p}})^{\frac{p}{2}}(b+N^{-1}\E[X^{2p}]^{\frac{1}{p}})^{\frac{p}{2}}
\]
for some $a, b \geq 0$. But this rewriting follows from Jensen's inequality and Lemma \ref{lem:lphadamardconvexity}.

Thus \eqref{eqn:lplindeberg} holds and by iterations we end up with
\[
	\E\left[ \prod_{i=1}^N \|V_i\|^p \right] \leq \E\left[ \prod_{i=1}^N \|V_i^{(v_{\textup{max}})}\|^p \right].
\]
On the right-hand side, all off-diagonal entries were replaced by deterministic ones, so the expectation splits. We have thus proved that
\[
	\E\left[ \prod_{i=1}^N \|V_i\|^p \right] \leq \left(\E\left[ \left((H_{11}-E)^2 + (N-1) \frac{1}{N}\E[(\sqrt{N} H_{12})^{2p}]^{\frac{1}{p}} \right)^{\frac{p}{2}} \right] \right)^{N} \leq C^N,
\]
which concludes the proof.
\end{proof}

\begin{proof}[Proof of Lemma \ref{lem:lphadamardconvexity}]
One can compute
\[
	2f'(x)=(a+x^{\frac{1}{p}})^{\frac{p}{2}-1}(b+x^{\frac{1}{p}})^{\frac{p}{2}}x^{\frac{1}{p}-1} + (a+x^{\frac{1}{p}})^{\frac{p}{2}} (b+x^{\frac{1}{p}})^{\frac{p}{2}-1}x^{\frac{1}{p}-1}.
\]
Thus
\begin{multline*}
2f''(x)=\left[(a+x^{\frac{1}{p}})^{\frac{p}{2}-1}(b+x^{\frac{1}{p}})^{\frac{p}{2}}
+
(a+x^{\frac{1}{p}})^{\frac{p}{2}} (b+x^{\frac{1}{p}})^{\frac{p}{2}-1}\right]\left(\frac{1}{p}-1\right)x^{\frac{1}{p}-2}
+
\left[
(a+x^{\frac{1}{p}})^{\frac{p}{2}-1}(b+x^{\frac{1}{p}})^{\frac{p}{2}-1}x^{\frac{1}{p}-1}\right.\\
\left.+
\left(\frac{p}{2}-1\right)(a+x^{\frac{1}{p}})^{\frac{p}{2}-2}\frac{1}{p}x^{\frac{1}{p}-1}(b+x^{\frac{1}{p}})^{\frac{p}{2}}
+
(a+x^{\frac{1}{p}})^{\frac{p}{2}} \left(\frac{p}{2}-1\right)(b+x^{\frac{1}{p}})^{\frac{p}{2}-2}\frac{1}{p}x^{\frac{1}{p}-1}\right]x^{\frac{1}{p}-1},
\end{multline*}
i.e.
\begin{multline*}
2(a+x^{\frac{1}{p}})^{1-\frac{p}{2}} (b+x^{\frac{1}{p}})^{1-\frac{p}{2}}x^{2-\frac{2}{p}}\, f''(x)=\left[(a+x^{\frac{1}{p}})
+
(b+x^{\frac{1}{p}})\right]\left(\frac{1}{p}-1\right)x^{-\frac{1}{p}}
+
1
+
\left(\frac{1}{2}-\frac{1}{p}\right)\left(\frac{(a+x^{\frac{1}{p}})}{(b+x^{\frac{1}{p}})}+\frac{(b+x^{\frac{1}{p}})}{(a+x^{\frac{1}{p}})}\right)
\end{multline*}
and $f''(x)$ has the same sign as 
\[
	\left(\frac{1}{2}-\frac{1}{p}\right)\left(\frac{(a+x^{\frac{1}{p}})}{(b+x^{\frac{1}{p}})}+\frac{(b+x^{\frac{1}{p}})}{(a+x^{\frac{1}{p}})}-2\right)+\left(\frac{1}{p}-1\right)x^{-\frac{1}{p}}(a+b).
\]
When we change variables as $u=ax^{-1/p}, v=bx^{-1/p}$, the above becomes
\[
	\left(\frac{1}{2}-\frac{1}{p}\right)\left(\frac{1+u}{1+v}+\frac{1+v}{1+u}-2\right)+\left(\frac{1}{p}-1\right)(u+v),
\]
and it suffices to show this is nonpositive for all $u, v \geq 0$ and $p \geq 1$. Since $y + y^{-1} - 2 \geq 0$, it is nonpositive when $p = 1$; it thus suffices to show that it is non-increasing in $p$ for every fixed $u$ and $v$. Its partial derivative in $p$ is $\frac{1}{p^2}\left(\frac{1+u}{1+v}+\frac{1+v}{1+u} - 2 - (u+v)\right)$, so we just want to show that $g(u,v) = \frac{1+u}{1+v}+\frac{1+v}{1+u} - 2 - (u+v)$ is nonpositive for all $u, v \geq 0$; but $g$ has the properties 
(i) $g(0,v) = \frac{1}{1+v}+1+v-2-v = \frac{1}{1+v}-1 \leq 0$, and 
(ii) $\partial_{u} g(u,v) = -1+\frac{1}{1+v}-\frac{1+v}{(1+u)^2} \leq -1+\frac{1}{1+v} = \frac{-v}{1+v} \leq 0$, which completes the proof.
\end{proof}

\addcontentsline{toc}{section}{References}
\bibliographystyle{alpha-abbrvsort}
\bibliography{complexitybib}

\end{document}